\def\gr{\mathrm{gr}}
\def\g{{\mathfrak g}}
\def\h{{\mathfrak h}}
\def\sl{{\mathfrak sl}}
\def\so{{\mathfrak so}}
\def\t{{\mathfrak t}}
\def\V{{\mathcal V}}
\def\E{{\mathcal E}}
\def\z{{\mathfrak z}}
\def\m{{\mathfrak m}}
\def\l{{\mathfrak l}}
\def\n{{\mathfrak n}}
\def\p{{\mathfrak p}}
\def\Z{{\mathbb Z}}
\def\O{{\mathcal O}}
\def\End{\mathop{\fam0 End}}
\def\Lie{\mathop{\fam0 Lie}}
\def\Ad{\mathrm{Ad}\,}
\def\sl{\mathop{\mathfrak{sl}}\nolimits}
\def\so{\mathop{\mathfrak{so}}\nolimits}
\def\ad{\mathrm{ad\,}}
\theoremstyle{plain}
\newtheorem{prop}{Proposition}[section]
\theoremstyle{definition}
\theoremstyle{remark}
\newtheorem{rem}{Remark}[section]
\def\subtitle#1. {{\medskip\bf#1\par\nobreak\smallskip}}
\def\proclaim#1. {\medbreak\bgroup\noindent\bf#1. \it}
\def\endproclaim{\egroup
\ifdim\lastskip<\medskipamount\removelastskip\medskip\fi}
\def\citedef#1 {\advance\citation by1
  \expandafter\edef\csname#1\endcsname{{\the\citation}}
  \checkendcitedef}
\def\checkendcitedef#1{\ifx#1\endcitedef\else\citedef#1\fi}
\def\cite#1{\csname#1\endcsname}
\newtoks\nextauth
\newif\iffirstauth
\def\checkendauth#1{\ifx\endauth#1
        \iffirstauth\the\nextauth
        \else{} and \the\nextauth\fi,
    \else\iffirstauth\the\nextauth\firstauthfalse
        \else, \the\nextauth\fi
        \expandafter\auth\expandafter#1\fi}
\def\auth#1 #2 {\nextauth={#1 #2}\checkendauth}
\newif\ifinbook
\newif\ifbookref
\def\nextref#1 {\bookreffalse\inbookfalse
    \bibitem[\cite{#1}]{}
    \firstauthtrue
    \ignorespaces}
\def\paper#1{{\it#1,}}
\def\In#1{\inbooktrue In #1,}
\def\book#1{\bookreftrue{\it#1,}}
\def\journal#1{#1\ifinbook,\fi}
\def\bookseries#1{#1,}
\def\Vol#1{\ifbookref Vol. #1,\else\ifinbook Vol. #1,\else{\bf#1}\fi\fi
    \space\ignorespaces}
\def\nombre#1{no. #1}
\def\publisher#1{#1,}
\def\Year#1{\ifbookref #1.\else\ifinbook #1,\else(#1)\fi\fi
    \space\ignorespaces}
\def\Pages#1{\ifinbook pp. #1.\else #1.\fi}
\begin{document}
\title{Multiplicity-free primitive ideals associated with rigid nilpotent orbits}
\author{Alexander Premet}
\thanks{\nonumber{\it Mathematics Subject Classification} (2000 {\it revision}).
Primary 17B35. Secondary 17B25.}
\address{School of Mathematics, University of Manchester, Oxford Road,
M13 9PL, UK} \email{Alexander.Premet@manchester.ac.uk} \maketitle

\begin{center}
{\it Dedicated to Evgenii Borisovich Dynkin on the accasion of his 90th birthday}
\end{center}

\bigskip

\begin{abstract}
\noindent Let $G$ be a simple algebraic group over $\mathbb{C}$. Let $e$ be a nilpotent element in $\g=\Lie(G)$ and denote by 
$U(\g,e)$ the finite $W$-algebra associated with the pair $(\g,e)$. It is known that the component group
$\Gamma$ of the centraliser of $e$ in $G$ acts on the set $\mathcal{E}$ of all one-dimensional representations of $U(\g,e)$. In this paper we prove that the fixed point set $\mathcal{E}^\Gamma$ is 
non-empty. As a corollary, all finite $W$-algebras associated with $\g$ admit one-dimensional representations. In the case of rigid nilpotent elements in exceptional Lie algebras we find irreducible highest weight $\g$-modules whose annihilators in
$U(\g)$ come from one-dimensional representations of $U(\g,e)$ via Skryabin's equivalence.
As a consequence, we show that for any nilpotent
orbit $\O$ in $\g$ there exists a multiplicity-free (and hence completely prime) primitive ideal of $U(\g)$ whose associated variety
equals the Zariski closure of $\O$ in $\g$.
\end{abstract}
\maketitle

\bigskip

\section{\bf Introduction}
Denote by $G$ a simple algebraic group of adjoint type over
$\mathbb C$ with Lie algebra $\g=\Lie(G)$ and let  $\mathcal{X}$ be the set of all primitive ideals of the universal enveloping algebra $U(\g)$. We shall occasionally identify $\g$ and $\g^*$ by means of an $(\Ad G)$-invariant non-degenerate symmetric bilinear form of $\g$. 
Given $x\in \g$ we denote by $G_x$ the centraliser of $x$ in $G$ and set $\g_x:=\Lie(G_x)$.

It is well known that for any finitely generated $S(\g^*)$-module $R$ there exist
prime ideals $\mathfrak{q}_1,\ldots,\mathfrak{q}_n$ containing ${\rm Ann}_{S(\g^*)}\,R$ and a chain
$0=R_0\subset R_1\subset\ldots
\subset R_n=R$ of $S(\g^*)$-modules such that $R_i/R_{i-1}\cong S(\g^*)/{\mathfrak q}_i$ for $1\le i\le n$.
Let $\p_1,\ldots,\p_l$ be the minimal elements in the set $\{\mathfrak{q}_1,\ldots,\mathfrak{q}_n\}$. The zero sets $\V(\p_i)$ of the $\p_i$'s in $\g$ are the irreducible components of the support ${\rm Supp}(R)$ of $R$. If $\p$ is one of the $\p_i$'s then we define $m(\p):=\{1\le i\le n\,|\,\,\mathfrak{q}_i=\p\}$ and we call $m(\p)$ the {\it multiplicity} of $\V(\p)$ in ${\rm Supp}(R)$. The formal linear combination $\sum_{i=1}^lm(\p_i)[\p_i]$ is sometimes referred to as the {\it associated cycle} of $R$ and denoted ${\rm AC}(R)$.

For any $I\in\mathcal{X}$ we can apply the above construction to the $S(\g^*)$-module $R=S(\g^*)/{\rm gr}(I)$ where $\gr(I)$ is the corresponding graded ideal in
$\gr(U(\g))=S(\g)\cong S(\g^*)$.
The support of $S(\g^*)/\gr(I)$ in $\g$ is called the {\it associated variety} of $I$ and denoted ${\rm VA}(I)$. We refer to [\cite{Ja1}, Section~9] for more detail on supports and associated cycles. By Joseph's theorem, ${\rm VA}(I)$ is the closure of a single nilpotent orbit $\O$ in $\g$. In particular, it is always irreducible.  Therefore, in our situation the set $\{\p_1,\ldots,\p_l\}$ is  the singleton containing $J:=\sqrt{\gr(I)}$ and we have that ${\rm AC}\big(S(\g^*)/\gr(I)\big)=m(J)[J]$. The positive integer
$m(J)$ is often referred to as the {\it multiplicity} of $\O$ in $U(\g)/I$ and denoted ${\rm mult}_\O(U(\g)/I)$.  
 
Given a nilpotent orbit $\O$ in $\g$ we denote by $\mathcal{X}_{\O}$ the set of all $I\in\mathcal{X}$ with ${\rm VA}(I)=\overline{\O}$. We call $I\in\mathcal{X}_\O$ {\it multiplicity free} if ${\rm mult}_\O(U(\g)/I)=1$ and we say that $I$ is {\it completely prime} if $U(\g)/I$ is a domain. 
In type
$\sf A$ all completely prime primitive ideals of $U(\g)$ are classified by M{\oe}glin in [\cite{Mo}], but for 
simple Lie algebras $\g\not\cong\sl_n$ of rank $\ge 3$ the problem is wide open, in general. Classification of completely prime primitive ideals of $U(\g)$ is an important problem of Lie Theory which finds applications in the theory of unitary representations of complex Lie groups; see [\cite{JoT}] and [\cite{Vo}] for motivation and detail. This subject has a very long and rich history and many partial results can be found in the literature. In particular, it is known that any multiplicity-free primitive ideal is completely prime and that the converse may fail outside type $\sf A$ for simple Lie algebras of rank $\ge 3$.
A description of multiplicity-free primitive ideals in classical Lie algebras  was very recently obtained in [\cite{PT}], but even in this case many interesting problems remain open.

If $\O=\{0\}$ then the set $\mathcal{X}_\O$ consists of all primitive ideals of finite codimension in $U(\g)$. 
So, if $I\in\mathcal{X}_{\{0\}}$ then $U(\g)/I$ is a full matrix algebra by Schur's lemma. Since ${\rm Mat}_n(\mathbb{C})$ is a domain if and only if $n=1$, the only completely prime ideal in $\mathcal{X}_{\{0\}}$ is the augmentation ideal of $U(\g)$.
From now on we assume that $\O$ is a nonzero nilpotent orbit in $\g$. Let $\{e,h, f\}$ be an $\sl_2$-triple in $\g$ with $e\in\O$ and let $U(\g,e)=({\rm End}_\g\, Q_e)^{\rm op}$ be the finite $W$-algebra associated with $(\g,e)$. This algebra will be discussed in more detail in Section~2 and here we just mention that $Q_e$ is a generalised Gelfand--Graev module associated
with $\{e,h, f\}$.  
We have started with the case of the zero orbit because the general case is rather similar albeit technically much more involved. Since it is hard to deal with $U(\g)$ directly in the general case, it is sometimes convenient to use $U(\g,e)$ as a stepping stone. As various objects of interest related to the primitive quotients $U(\g)/I$ with $I\in\mathcal{X}_\O$ turn out to be
{\it finite dimensional} $U(\g,e)$-modules, this in a sense brings us back to the case where $\O=\{0\}$, the difference being that the role of $U(\g)$ is played by $U(\g,e)$ now.

If $V$ is a finite dimensional irreducible
$U(\g,e)$-module, then Skryabin's theorem in conjunction with [\cite{P07}] implies $Q_e\otimes_{U(\g,\,e)}V$ is an irreducible $\g$-module
and its annihilator $I_V$ in $U(\g)$ lies in $\mathcal{X}_\O$. Conversely, any primitive ideal in $\mathcal{X}_\O$ has this form for some finite dimensional
irreducible $U(\g,e)$-module $V$; see
[\cite{Lo}, \cite{P10}, \cite{Gi}]. The ideal $I_V$ depends only on the image of $V$ in the set ${\rm Irr}\,U(\g,e)$ of all isoclasses of finite dimensional irreducible $U(\g,e)$-modules. We write $[V]$ for the class of $V$ in ${\rm Irr}\,U(\g,e)$.

By the Dynkin--Kostant theory, the algebraic
group $C(e):=G_e\cap G_f$  is reductive and its finite quotient
$\Gamma:=C(e)/C(e)^\circ$ identifies with the component group of
the centraliser $G_e$. From the Gan--Ginzburg realization of $U(\g,e)$ it follows that $C(e)$ acts
on $U(\g,e)$ by algebra automorphisms. 
Since the connected component
$C(e)^\circ$ preserves any two-sided ideal of $U(\g,e)$, by [\cite{P07}],  we have a natural action of $\Gamma$ on ${\rm Irr}\,U(\g,e)$. For $V$ as above, we let $\Gamma_V$ denote the stabiliser of $[V]$ in $\Gamma$. Confirming the author's conjecture, Losev proved in [\cite{Lo1}] that
$I_{V'}=I_V$ if an only if $[V']=\, ^{\gamma}[V]$ for some $\gamma\in\Gamma$. In particular, $\dim V=\dim V'$. 
This result implies that
${\rm mult}_{\O}(U(\g)/I_V)\,=\,[\Gamma:\Gamma_V]\cdot (\dim V)^2$. As a consequence,
a primitive ideal $I_V$ is multiplicity free if and only if $\dim V=1$ and $\Gamma_V=\Gamma$.

By Goldie theory, the prime Noetherian ring $U(\g)/I_V$ admits a ring of fractions
isomorphic to ${\rm Mat}_n(\mathcal{D})$ for some division algebra $\mathcal{D}$ and a positive integer $n$ which  coincides with the Goldie rank of $U(\g)/I_V$.
Moreover, it is well known that $I_V$ is completely prime if and only if $n=1$. Since $n$ divides $\dim V$ by the main result of [\cite{P11}], any primitive ideal $I_V\in\mathcal{X}_\O$ for which $\dim V=1$ is completely prime.
This brings our attention to the set $\mathcal{E}$ of all one-dimensional representations of $U(\g,e)$ and its subset $\mathcal{E}^\Gamma$ consisting of all $C(e)$-stable such representations. 
Since
$\mathcal{E}$ identifies with the maximal spectrum of the largest commutative quotient $U(\g,e)^{\rm ab}$ of $U(\g,e)$, it follows that $\mathcal{E}$ is an affine variety and $\mathcal{E}^\Gamma$ is a Zariski closed subset of $\mathcal{E}$.

In [\cite{P07}], the author
conjectured that $\mathcal{E}\ne\varnothing$ for any nilpotent element of $\g$.
This conjecture attracted a considerable amount of
interest in the past few years. In [\cite{Lo}], Losev proved that $\mathcal{E}^\Gamma\ne \varnothing$
for $\g$ classical, but realised later
that this also followed from earlier work of R.~Brylinski
[\cite{Bry}]. Both Brylinski and Losev rely on the classical work of
Kraft and Procesi on nilpotent orbit closures in orthogonal and symplectic Lie algebras. In [\cite{P10}], the
author computed the dimension of $\E$ under the assumption that the conjecture holds for all proper Levi subalgebras of $\g$  and reduced proving the conjecture to the case of {\it rigid} nilpotent elements, i.e. those nilpotent elements of $\g$ that cannot be
obtained by Lusztig--Spaltenstein induction from  proper Levi
subalgebras of $\g$. It turned out that if the conjecture is true, then $\mathcal{E}$ is a finite
set if and only if $e$ is rigid.
Now if $e$ is rigid and $\g$ is classical then $\g_e=[\g_e,\g_e]$
by [\cite{Y}], whilst if $\g$ is exceptional then 
either $\g_e=[\g_e,\g_e]$ or $\g_e=\mathbb{C}e\oplus[\g_e,\g_e]$ and the second case occurs just for one rigid orbit in types ${\sf G_2}$, ${\sf F_4}$, ${\sf E_7}$ and for three rigid orbits in type ${\sf E_8}$; see [\cite{dG}].
On the other hand,
in [\cite{PT}] it is proved that for any simple Lie algebra $\g$ the equality $\g_e=[\g_e,\g_e]$
implies that $\mathcal{E}$ is either empty or contains one element. Putting all this together we see that for any rigid nilpotent element in a classical Lie algebra the set
$\mathcal{E}=\mathcal{E}^\Gamma$ is a singleton, whilst for $\g$ exceptional the inequality $|\mathcal{E}|\ge 2$ may occur only for six rigid nilpotent orbits. 
Finally, if $\g$ is classical and $e$ is rigid and special in the sense of Lusztig, then it follows from [\cite{Mc}] that one of the Duflo realisations of the multiplicity-free primitive ideal in $\mathcal{X}_\O$ is obtained by using
the Arthur--Barbasch--Vogan recipe; see [\cite{BV}].

The case of rigid nilpotent orbits in exceptional Lie algebras was
investigated by Goodwin--R{\"o}hrle--Ubly by computational methods.
In [\cite{GRU}], it was shown with the help of a specially designed
GAP programme that the conjecture holds for all 
rigid  orbits in types ${\sf G_2}$, ${\sf F_4}$, ${\sf E_6}$ and ${\sf
E_7}$ and for a handful of small rigid orbits in type ${\sf E_8}$. After [\cite{GRU}] was submitted Ubly improved the programme and was able to verify the conjecture for all rigid orbits 
excepting the
three largest ones in type ${\sf E_8}$; see
[\cite{U}]. Incidentally, one of the orbits he excluded
was used as an example by
Losev who showed in [\cite{Lo2}] that the corresponding finite
$W$-algebra admits a one-dimensional representation and found a Duflo relaisation of the corresponding primitive ideal in $\mathcal{X}_\O$. 

At this point we
should mention that when (and if) the programme used in [\cite{GRU},
\cite{U}] terminates, it outputs all one-dimensional representations of
$U(\g,e)$ along with some numerical data, but provides no
information on Duflo realisations of the corresponding primitive ideals in $\mathcal{X}_\O$. Curiously, it turned out that in all cases considered in [\cite{GRU}]
the algebras $U(\g,e)$ possessed either $1$ or $2$ one-dimensional
representations. 
More precisely, the case of two representations
occurs for nilpotent orbits labelled $\widetilde{\sf A_1}$,
${\sf\widetilde{A}_2}+{\sf A_1}$, $({\sf A_3+ A_1})'$ and ${\sf
A_3+ A_1}$ in Lie algebras of type ${\sf G_2}$, ${\sf F_4}$,
${\sf E_7}$ and ${\sf E_8}$, respectively. At the end of his thesis,
Ubly predicts that the finite $W$-algebras of type ${\sf E_8}$
corresponding to the undecided orbits with Dynkin labels ${\sf A_5+A_1}$
and ${\sf D_5(a_1)+A_2}$ should afford at least $2$
one-dimensional representations. Not surprisingly, the six Dynkin labels above correspond to the rigid orbits $\O$  with the property that  $\g_e=\mathbb{C}e\oplus[\g_e,\g_e]$ for any $e\in\O$. We conjecture
that these are precisely 
the rigid nilpotent orbits $\O$ for which the Zariski closure $\overline{\O}$ is not normal in $\g$; see Remark~5.2 for more detail. 
 
The main result of this paper is the following:

\medskip

\noindent {\bf Theorem~A.} {\it Let $e$ be any nilpotent element
in a finite dimensional simple Lie algebra $\g$ over $\mathbb C$. Let $\mathcal{E}$ be the set of all one-dimensional representations of $U(\g,e)$ and denote by $\mathcal{E}^\Gamma$ the subset of $\E$ consisting of all elements fixed by the action of $\Gamma$. Then the following hold:
\begin{itemize}
\item[(1)] $\mathcal{E}^\Gamma\ne\varnothing$.

\smallskip

\item[(2)] If $e$ is rigid then $\mathcal{E}=\mathcal{E}^\Gamma$.

\smallskip 

\item[(3)] If $e$ is rigid then $\mathcal{E}$ has one element if and only if 
$\g_e=[\g_e,\g_e]$.
\end{itemize}}

\medskip

We also show, for $e$ rigid, that the one-dimensional representations of $U(\g,e)$ are separated by the action
of the image of a Casimir element in $U(\g,e)$.
As we explained earlier, Theorem~A implies that each set $\mathcal{X}_\O$ contains a multiplicity-free primitive ideal.

Let $T$ be a maximal torus of $G$ and $\t=\Lie(T)$. Let $\Phi$ be the root system of $\g$ with respect to $T$ and let $\Pi$ be a basis of simple roots in $\Phi$.  All multiplicity-free primitive ideals $I$ constructed in this paper are given in their Duflo realisations, i.e. presented in the form 
$I=I(\lambda):={\rm Ann}_{U(\g)}\,L(\lambda)$ for some irreducible highest weight $\g$-modules $L(\lambda)$ with $\lambda\in\t^*$.
It is well known that if 
$\langle\lambda,\alpha^\vee\rangle\in\Z$ for all $\alpha\in\Pi$ then ${\rm VA}(I)$ is the closure of a {\it special} nilpotent orbit in $\g$. One also knows that to any $\sl_2$-triple $\{e,h,f\}$ in $\g$ with $e$ special there corresponds an $\sl_2$-triple $\{e^\vee,h^\vee,f^\vee\}$ in the Langlands dual Lie algebra $\g^\vee$ with $h^\vee\in\t^*$.  As Barbasch--Vogan observed in [\cite{BV}], for $e$  special and rigid there is a unique choice of $h^\vee$ such that
$\langle\frac{1}{2}h^\vee,\alpha^\vee\rangle\in\{0,1\}$ for all $\alpha\in\Pi$. Furthermore, in this case we have that ${\rm VA}(I(\frac{1}{2}h^\vee-\rho))=\overline{\O}$ where $\rho$ is the half-sum of the positive roots with respect to $\Pi$ and $\O$ is the nilpotent orbit containing $e$; see [\cite{BV}, Proposition~5.10]. Our computations in conjunction with earlier results obtained in [\cite{Mc}] and [\cite{Lo2}] imply the following:
\medskip

\noindent {\bf Theorem~B.} {\it Let $\O$ be a nilpotent orbit in a finite dimensional simple Lie algebra $\g$ over $\mathbb{C}$. Suppose further that $e\in\O$ is special and rigid and let $h^\vee\in\t^*$ be as above. Then 
$I(\frac{1}{2}h^\vee-\rho)$ is the only multiplicity-free primitive ideal in $\mathcal{X}_{\O}$.}

\medskip

In the forthcoming paper [\cite{P2}] Theorem~A will be applied for proving Humphreys' conjecture
[\cite{Hu}, p.~110] on representations of minimal dimension for reduced enveloping algebras associated with Lie algebras of reductive groups over 
fields of characteristic $p>0$. Thanks to Theorem~A and [\cite{P11}, Theorem~1.4] this conjecture is now confirmed for $p\gg 0$ but, of course, one wants  an explicit  bound on $p$ here.
We shall also
determine the number of one-dimensional representations of $U(\g,e)$ for all rigid nilpotent elements in $\g$.

\smallskip

\noindent{\bf Acknowledgements.} Part of this work was done during my stay at RIMS (Kyoto)
and MSRI (Berkeley) in the first half of 2013. I would like to thank both institutions for warm hospitality,  excellent working conditions and support.  I would like to express my gratitude to Yu Chen, Meinolf Geck, Thierry Levasseur, Ivan Losev and Monty McGovern for very helpful discussions and email correspondence on
the subject of this paper. Yu Chen has kindly provided me with a proof of Proposition~3.1 which plays an important role in establishing Theorem~B.
I am also thankful to the referee for some helpful suggestions. 

\section{\bf Preliminaries and recollections}
\subsection{}\label{2.1}
Denote by $G$ a simple, simply connected algebraic group over
$\mathbb C$ and let $\g=\Lie(G)$. Given $x\in\g$ set $G_x:=\{g\in
G\,|\,\,(\Ad g)\,x=x\}$ and $\g_x=\{y\in\g\,|\,\,[x,y]=0\}$, so that $\g_x=\Lie(G_x)$. Let $\{e,h,f\}$ be a non-trivial
$\mathfrak{sl}_2$-triple in the Lie algebra $\g$ and denote by
$(\,\cdot\,,\,\cdot\,)$ a non-degenerate $G$-invariant bilinear form
on $\g$.  Let $\chi\in\g^*$ be such that $\chi(x)=(e,x)$ for all
$x\in\g$. It is well known that $e$ is a $G$-unstable vector of $\g$ (in the sense of geometric invariant theory) and admits an optimal
cocharacter $\tau\colon {\mathbb C}^\times\rightarrow G$ such that $({\rm d}\tau)(1)=h$. The adjoint action of $\tau(\mathbb{C}^\times)$ gives $\g$ a graded Lie algebra structure, $\g\,=\,\bigoplus_{i\in\Z}\,\g(i)$, and it follows from $\mathfrak{sl}_2$-theory that $e\in\g(2)$ and $\g_e\subset \bigoplus_{i\ge 0}\,\g(i)$. From this it follows that the linear function $\chi$ induces a {\it  non-degenerate}
skew-symmetric bilinear form on the graded subspace $\g(-1)$ given by $\langle x,y\rangle=(e,[x,y])$ for all $x,y\in\g(-1)$. Also, $\g_e=\,\bigoplus_{i\ge 0}\,\g_e(i)$ where $\g_e(i):=\g_e\cap\g(i)$.

Let $U(\g,e)$ denote the enveloping algebra of the
Slodowy slice $e+\g_f$ to the orbit $\O(e):=(\Ad G)e$; see [\cite{P02},
\cite{GG}].
Recall that
$U(\g,e)=({\End}_{\g}\,Q_e)^{\text{op}}$, where $Q_e$ is the
generalised Gelfand--Graev $\g$-module associated with the triple
$\{e,h,f\}$. The $\g$-module $Q_e$ is induced from a one-dimensional module
${\mathbb C}_\chi$ over the nilpotent subalgebra 
$\m=\m(\ell):=\ell\bigoplus_{i\le-2}\,\g(i)$ of $\g$, where
$\ell$ 
is a  maximal totally isotropic subspace of 
$\g(-1)$ with respect to the skew-symmetric form defined above. It follows from $\mathfrak{sl}_2$-theory that $\dim\m=d(e)$ where
$d(e):=\frac{1}{2}\dim \O(e)$. By construction, the Lie subalgebra $\m$ is $(\ad
h)$-stable, all eigenvalues of $\ad h$ on $\m$ are negative, and
$\chi$ vanishes on $[\m,\m]$. The action of $\m$ on ${\mathbb
C}_\chi={\mathbb C} 1_\chi$ is given by $x(1_\chi)=\chi(x)1_\chi$
for all $x\in\m$. 
Let $\m_\chi$ be the subspace of $U(\g)$ spanned by all $x-\chi(x)$ with $x\in\m$. The left ideal $U(\g)\m_\chi$ is stable under the adjoint action of $\m$ on $U(\g)$ and so $\ad \m$ acts on the quotient space $U(\g)/U(\g)\m_\chi$. The fixed point space $\big(U(\g)/U(\g)\m_\chi\big)^{\ad\m}$ inherits from $U(\g)$ an algebra structure and it is well known that
$$U(\g,e)\,\cong\big(U(\g)/U(\g)\m_\chi\big)^{\ad\m}$$ as algebras.
By some historical reasons, $U(\g,e)$ is referred to as the {\it
finite $W$-algebra} associated with the pair $(\g, e)$. According to [\cite{GG}], it is independent of the choice the subspace $\ell\subset\g(-1)$.

Skryabin proved in [\cite{Sk}] that for any irreducible $U(\g,e)$-module $V$ the $\g$-module
$\widetilde{V}:=Q_e\otimes_{U(\g,e)}V$ is irreducible and belongs to the full subcategory $\mathcal{C}_\chi$ of $U(\g)$\,--\,${\sf mod}$ consisting of all modules acted upon locally nilpotently by all $x-\chi(x)$ with $x\in\m$. He also showed that for any $E\in{\rm Ob}\,\mathcal{C}_\chi$ the subspace $${\rm Wh}_\chi(E)=\{v\in E\,|\,\,x\cdot v=\chi(x)v\ \mbox{ for all }\, x\in\m\}$$ is a nonzero
$U(\g,e)$-module and $E\cong Q_e\otimes_{U(\g,e)}{\rm Wh}_\chi(E)$ as $\g$-modules. 
Finally, Skryabin proved in {\it loc.\,cit.} that 
$Q_e$ is free as a right $U(\g,e)$-module.
For $V$ as above we denote by $I_V$ the annihilator of $\widetilde{V}$ in $U(\g)$. It follows from Skryabin's results that $I_V$ is a primitive ideal of $U(\g)$.

\subsection{}\label{2.2}
Let $\mathcal X$ be the set of all primitive ideals of $U(\g)$. 
By a well-known result of Joseph, for any $I\in\mathcal X$ the zero locus ${\rm VA}(I)$ of ${\rm gr}(I)\subset S(\g)$ coincides with the Zariski closure of a nilpotent
$G$-orbit in $\g$ (here we identify $\g$ with $\g^*$ by means of our $G$-invariant bilinear form). We call ${\rm VA}(I)$ the {\it associated variety} of $I$ and denote by $\mathcal{X}_{\O}$ the set of all $I\in\mathcal{X}$ with ${\rm VA}(I)=\overline{\O}$.
It is known that
$I_V\in\mathcal{X}_{\O(e)}$ for any finite dimensional irreducible $U(\g,e)$-module $V$ and any $I\in \mathcal{X}_{\O(e)}$ can be presented in this form for at least one such $V$; see 
[\cite{P07}], [\cite{P07'}], [\cite{Lo}], [\cite{Gi}], [\cite{P10}]. 

By the Dynkin--Kostant theory, the reductive group
$C(e):=G_e\cap G_f$ is a Levi subgroup of $G_e$ with $\Lie(C(e))=\g_e(0)$ and the factor-group $\Gamma(e):=C(e)/C(e)^\circ$ identifies with the component group $G_e/G_e^\circ$. The construction of $U(\g,e)$ given in [\cite{GG}] implies that $C(e)$ acts on $U(\g,e)$ as algebra automorphisms, whilst [\cite{P07}, Lemma~2.4] yields that any two-sided ideal of $U(\g,e)$ is stable under
action of the connected group $C(e)^\circ$. It follows that $\Gamma=\Gamma(e)$ acts naturally on the set ${\rm Irr}\,U(\g,e)$ of all isoclasses of finite dimensional irreducible $U(\g,e)$-modules. Given a finite dimensional irreducible $U(\g,e)$-module $M$ we write $[M]$ for the class of $M$ in ${\rm Irr}\,U(\g,e)$. Confirming the author's conjecture, Losev proved in [\cite{Lo1}] that $I_V=I_{V'}$ if and only if $[V']=\,{^\gamma\!\,}[V]$ for some $\gamma\in\Gamma$. In particular, $\dim V'=\dim V$.

If $I\in\mathcal{X}_\O$ then $J:=\sqrt{{\rm gr}(I)}$ coincides with the ideal defining the affine variety $\overline{\O}$. Since
$A:=S(\g)/{\rm gr}(I)$ is a finitely generated $S(\g)$-module and
$J$ is the only minimal prime ideal of $S(\g)$ containing the annihilator of $A$ in $S(\g)$, there exist prime
ideals $\mathfrak{p}_1,\ldots, \mathfrak{p}_l$ of $S(\g)$ and a
finite chain $\{0\}=A_0\subset A_1\subset \cdots\subset A_l=A$ of
$S(\g)$-submodules of $A$ such that $\p_i\supseteq J$ and $A_i/A_{i-1}\cong
S(\g)/\mathfrak{p}_i$ for all $1\le i\le l$. The {\it multiplicity} of
$\O$ in $U(\g)/I$, denoted ${\rm mult}_{\O}\,(U(\g)/I)$, is
defined as
$${\rm mult}_{\O}\,(U(\g)/I):={\rm Card}\,\{1\le i\le l\,|\,\, \mathfrak{p}_i=J\}.$$
It is well known that this number is independent of the choices
made; see [\cite{Ja1}, 9.6], for  example. 
We say that $I\in\mathcal{X}_{\O}$ is {\it multiplicity free} if ${\rm
mult}_{\O}(U(\g)/I)=1$.

If $I=I_V\in\mathcal{X}_{\O(e)}$ then Losev's results proved in [\cite{Lo}, \cite{Lo1}] imply that
$$
{\rm mult}_{\O(e)}\,(U(\g)/I)\,=\,[\Gamma:\Gamma_V]\cdot (\dim V)^2
$$ where $\Gamma_V$ stands for the stabiliser in $\Gamma$ of the isoclass of $V$. It follows that $I_V\in\mathcal{X}_{\O(e)}$ is multiplicity free if and only if $\dim V=1$ and $\Gamma_V=\Gamma$.

Let $\mathcal A$ be a prime Noetherian ring. An element of
$\mathcal{A}$ is called \emph{regular} if it is not a zero divisor
in $\mathcal{A}$. The multiplicative set $S$ of all regular elements
of $\mathcal{A}$ satisfies the left and right Ore conditions, hence
can be used to form a classical ring of fractions
$\mathcal{Q}(\mathcal{A})=S^{-1}\mathcal{A}$; see [\cite{Dix}, 3.6],
for example. By Goldie's theory, the ring $\mathcal{Q}(\mathcal{A})$
is isomorphic to ${\rm Mat}_n(\mathcal{D})$ for some
$n\in\mathbb{N}$ and some skew-field $\mathcal{D}$. We write $n={\rm
rk}(\mathcal{A})$ and call $n$ the \emph{Goldie rank} of
$\mathcal{A}$. The division ring $\mathcal{D}$ is called the {\it
Goldie field} of $\mathcal{A}$. It is well known that ${\rm
rk}(\mathcal{A})=1$ if and only if $\mathcal{A}$ is a domain. More
generally, it follows from the Feith--Utumi theorem that the Goldie
rank of $\mathcal A$ coincides with the maximum value of the nilpotency
classes of nilpotent elements of $\mathcal A$. For any primitive
ideal $I$ of $U(\g)$ the quotient $U(\g)/I$ is a prime Noetherian
ring. A two-sided ideal $I$ of $U(\g)$ is said to be {\it completely
prime} if $U(\g)/I$ is a domain. To prove that each set
$\mathcal{X}_\O$ contains such ideals is a classical open problem of
the theory of primitive ideals; see [\cite{JoT}, 3.5] for a related
discussion. 

In [\cite{Lo}], Losev shows that ${\rm rk}(U(\g)/I_V)\le \dim V$
for every finite dimensional irreducible $U(\g,e)$-module $V$.
Generalising this result the author proved in [\cite{P11}] that the
Goldie rank of $U(\g)/I_V$ always divides $\dim V$ and the equality
holds whenever the Goldie field of $U(\g)/I_V$ is isomorphic to
the skew-field of fractions of a Weyl algebra. By
a result of Joseph, the latter is always the case for Lie algebras of type $\sf A$. However, if ${\rm rk}\,\g\ge
3$ and $\g$ is not of type $\sf A$, then it can happen that $\dim
V>{\rm rk}(U(\g)/I_V)=1$; see [\cite{P11}, Remark~4.3].
The above discussion shows that any multiplicity-free
primitive ideal is completely prime, but outside type $\sf A$ there exist completely prime primitive ideals which are not multiplicity free. The Goldie fields of the primitive quotients associated with such ideals could be very interesting from the viewpoint of the theory of division algebras because they cannot be isomorphic to Weyl skew fields; see [\cite{P11}, Theorem~B].
\subsection{}\label{2.3}
Let $U(\g,e)^{\rm ab}$ denote the quotient of
$U(\g,e)$ by its two-sided ideal generated by all commutators. This is a finitely generated commutative algebra over $\mathbb C$ and the largest commutative quotient of $U(\g,e)$. It follows from from [\cite{P07}, Remark~2.1 and Lemma~2.4] that
the connected group $C(e)^\circ$ acts trivially on
$U(\g,e)^{\rm ab}$. As a consequence, the component group $\Gamma=C(e)/C(e)^\circ$ acts on $U(\g,e)^{\rm ab}$
by algebra automorphisms. 
Let $I_\Gamma$ denote the ideal of $U(\g,e)^{\rm ab}$ generated by all elements $u-u^\gamma$, where $u\in U(\g,e)^{\rm ab}$ and $\gamma\in\Gamma$, and set $U(\g,e)_\Gamma^{\rm ab}\,:=\,U(\g,e)^{\rm ab}/I_\Gamma$. 

We let $\mathcal{E}$ denote the maximal spectrum of $U(\g,e)^{\rm ab}$, an affine variety over $\mathbb C$. The finite group $\Gamma$ acts on $\mathcal{E}$
and the fixed point space $\mathcal{E}^\Gamma$ 
coincides with the zero locus of $I_\Gamma$ in $\mathcal{E}$, so that $\mathcal{E}^\Gamma={\rm Specm}\,U(\g,e)^{\rm ab}_\Gamma$; see [\cite{PT}, 5.1]. 
The above discussion shows that $\mathcal{E}^\Gamma$ parametrises multiplicity-free primitive ideals in $\mathcal{X}_{\O(e)}$. So in order to show that $\mathcal{X}_{\O(e)}$ contains such ideals we just need to check that $\mathcal{E}^\Gamma\ne\varnothing$. For $\g$ classical, this follows from results of R.~Brylinski
[\cite{Bry}] (reproved in [\cite{Lo}]) which, in turn, relies on earlier work of Kraft--Procesi on nilpotent orbit closures in the orthogonal and symplectic Lie algebras.

Recall that a {\it sheet} of $\g$ is one of the irreducible components of the quasi-affine varieties
$\g^{(k)}=\{x\in\g\,|\,\,\dim\g_x=k\}$ where $k\in\Z^{>0}$. Each sheet $\mathcal{S}\subset\g$ is $(\Ad G)$-stable, locally closed and contains a unique nilpotent $G$-orbit. By a result of Katsylo, each orbit set $\mathcal{S}/G$ has a
natural structure of an affine variety. The dimension of $\mathcal{S}/G$ is referred to as the {\it rank} of $\mathcal S$ and denoted ${\rm rk}(\mathcal{S})$. It is well known that ${\rm rk}(\mathcal{S})=0$ if and only if $\mathcal{S}=(\Ad G) x$ where $x\in\g$ is a nilpotent element which cannot be obtained by Lusztig--Spaltenstein induction from a proper Levi subalgebra of $\g$. Such elements are called {\it rigid} and they are our main object of study in this paper. Non-rigid nilpotent elements of $\g$ are called {\it induced}.  The standard reference on this topics is
[\cite{CM}].

Let $\mathcal{S}_1,\ldots,\mathcal{S}_t$ be all sheets of $\g$ containing $e$. Let $r_i={\rm rk}(\mathcal{S}_i)$ and define $r(e):=\max_{1\le i\le t}r_i$. In [\cite{P10}], the author proved that for $e$ induced $\E$ has at least $t$ irreducible components and $\dim\mathcal{E}=r(e)$, whilst for $e$ rigid the set $\E$ is finite  (and possibly empty  in type
${\sf E_8}$). The arguments in {\it loc.\,cit.} rely on reduction modulo $p$ and results of Goodwin--R\"ohrle--Ubly [\cite{GRU}] of one-dimensional representations of $U(\g,e)$ for $\g$ exceptional which were obtained by computational methods. It is also proved in [\cite{P10}] that
$U(\g,e)^{\rm ab}=U(\g,e)^{\rm ab}_\Gamma$
is a polynomial algebra when $\g=\sl_N$.

Very recently, a further study of $\E$ and $\E^\Gamma$ was undertaken in [\cite{PT}]. In particular, it is proved there that for any nilpotent element $e$ in $\g={\so}_N$ or $\g=\mathfrak{sp}_N$ the algebra
$U(\g,e)^{\rm ab}_\Gamma$ is polynomial 
and its Krull dimension is given by a simple formula
involving the partition of $N$ associated with $e$.
Furthermore, for $\g$ classical
the variety $\E$ is irreducible if and only if $e$ lies is a single sheet of $\g$ and when that happens the algebra $U(\g,e)^{\rm ab}$ is also polynomial. 
The nilpotent elements which lie in a single sheet of
$\g$ are described explicitly in combinatorial terms; see [\cite{PT}, Theorem~1]. One can also find in {\it loc.\,cit.} some strong results toward describing
the algebra $U(\g,e)^{\rm ab}_\Gamma$ for $\g$ exceptional. In particular, it is established that
$\E^\Gamma\ne\varnothing$ for any induced nilpotent element of $\g$ and $\dim\E^\Gamma$ is determined in all such cases.

Thanks to [\cite{GRU}] and [\cite{PT}], in order to show that $\mathcal{E}^\Gamma$ is non-empty for any finite $W$-algebra $U(\g,e)$ one just needs to sort out a handful of rigid orbits in exceptional Lie algebras. However, one is also interested in the  multiplicity-free primitive ideals of $U(\g)$ corresponding to the points of $\E^\Gamma$ via Skryabin's equivalence (as discussed at the end of Subsection~\ref{2.1}). By Duflo's theorem [\cite{Du1}], any $I\in\mathcal{X}$ has the form
$I={\rm Ann}_{U(\g)}\,L(\lambda)$ for some irreducible highest weight $\g$-module $L(\lambda)$. Such a presentation (which is generally not unique) will be referred to as a {\it Duflo realisation} of $I$.
Ideally, one would like to find at least one Duflo realisation for any  multiplicity-free primitive ideal $I=I_V$ in $\mathcal{X}_{\O(e)}$. In this paper and its continuation [\cite{P2}] we solve this problem for all rigid nilpotent elements in exceptional Lie algebras. For classical Lie algebras, the problem is open already in the case where $e$ is rigid and non-special in the sense of Lusztig.     
\subsection{}\label{2.4}
The finite $W$-algebra $U(\g,e)$ has two
natural filtrations, the {\it Kazhdan filtration} $\{{\sf
K}_i\,U(\g,e)\,|\,i\ge 0\}$ and the {\it loop filtration} $\{{\sf
L}_i\,U(\g,e)\,|\,i\ge 0\}$, which we are now going to describe. Assigning to any $x\in\g(i)$ degree $i+2$ one obtains a $\Z$-filtration on the enveloping algebra $U(\g)$
which then induces that on the the subquotient
$U(\g,e)\,=\,(U(\g)/U(\g)/\m_\chi)^{\ad\m}$. 
The filtration of $U(\g,e)$ thus obtained is called the  Kazhdan filtration of $U(\g,e)$. According to [\cite{P02}, Theorem~4.6], this filtration is 
non-negative and the corresponding graded algebra
$\gr_{\sf K}\,U(\g,e)$ is
isomorphic to the algebra of regular functions on the slice $e+\g_f$ endowed with its Slodowy grading. 
Since $\g_e$ and $\g_f$ are dual to each other 
with respect to $(\,\cdot\,,\,\cdot\,)$,
the latter algebra is often identified with the symmetric algebra $S(\g_e)$.

Let $z_1,\ldots,z_s$ be a basis of the subspace $\ell$ of $\m$. We extend it up to a Witt basis $\{z_1,\ldots,z_s,z_1',\ldots,z_s'\}$ of $\g(-1)$ relative to the skew-symmetric form $\langle\,\cdot\,,\,\cdot\,\rangle$ and denote by $\mathcal{A}$  the associative $\mathbb{C}$-algebra generated by  $z_i$ and $z_i'$ with $1\le i\le s$ subject to the usual relations $[z_i,z_j]=[z_i',z_j']=0$ and $[z_i',z_j]=\delta_{ij}$, for all $1\le 1,j\le s$.
Clearly, $\mathcal{A}$ is isomorphic to the 
$s$-th Weyl algebra over $\mathbb{C}$ (it may happen, of course, that $s=0$). The adjoint action of $\tau(\mathbb{C}^\times)$ on the parabolic subalgebra $\p_+:=\bigoplus_{i\ge 0}\,\g(i)$ of $\g$
extends to the action of $\tau(\mathbb{C}^\times)$ on the algebra $U(\p_+)\otimes\mathcal{A}$ such that $$\tau(c)(u\otimes a)\,=\big((\Ad \tau(c))\,u\big)\otimes a\quad\ \mbox{for all }\ c\in\mathbb{C}^\times,\,u\in U(\p_+),\,a\in\mathcal{A}.$$ This gives $U(\p_+)\otimes\mathcal{A}$ a $\Z^{\geqslant 0}$-graded algebra structure. It is worth mentioning that the zero part of $U(\p_+)\otimes\mathcal{A}$ coincides with
$U(\g(0))\otimes\mathcal{A}$. By [\cite{P07}, Proposition~2.2], the algebra $U(\g,e)$ embeds into $U(\p_+)\otimes\mathcal{A}$ and thus
acquires a $\Z^{\geqslant 0}$-filtration which is called the loop filtration of $U(\g,e)$. It is proved in
[\cite{BGK}, Theorem~3.8] that
the corresponding graded algebra 
$\gr_{\sf L}\,U(\g,e)$ is isomorphic to the universal
enveloping algebra $U(\g_e)$ regarded with its $\Z_{\ge 0}$-grading
induced by the action of $\ad\ h$ (see also [\cite{P07}, Proposition~2.1]). 

Let $x_1,\ldots, x_r$ be a basis of $\g_e$ such that $x_i\in\g(n_i)$ for some $n_i\in\Z^{\geqslant 0}$.
According to [\cite{P07}, 2.3], there exists a (non-unique) injective linear map $\Theta\colon \g_e\rightarrow U(\g,e)$ such that the monomials
$\Theta(x_1)^{k_1}\cdots\Theta(x_r)^{k_r}$ with
$k_i\in\Z^{\geqslant 0}$ forms a $\mathbb{C}$-basis of $U(\g,e)$. The elements
${\rm gr}_{\sf K}\,\Theta(x_i)$ and ${\rm gr}_{\sf L}\,\Theta(x_i)$ with $1\le i\le r$ have degree
$n_i+2$ and $n_i$, respectively, and generate the graded algebras
$S(\g_e)\cong\, {\rm gr}_{\sf K}(U(\g,e))$ and 
$U(\g_e)\cong \,{\rm gr}_{\sf L}(U(\g,e))$.
Furthermore,  $[\Theta(x),\Theta(y)]=\Theta([x,y])$ for all $x\in\g_e(0)$ and all $y\in\g_e$. From this it is immediate that the subalgebra of $U(\g,e)$ generated by $\Theta(\g_e(0))$ is isomorphic to the enveloping algebra $U(\g_e(0))$ and $U(\g,e)$ is free as a left $U(\g_e(0))$-module.

According to [\cite{Y}], if $e$ is a rigid nilpotent element in a classical Lie algebra, then $\g_e=[\g_e,\g_e]$. Thanks to [\cite{Bry}] and [\cite{Lo1}] this result implies that for $e$ rigid and $\g$ classical the finite $W$-algebra $U(\g,e)$ admits a unique one-dimensional representation, so that $\E=\E^\Gamma$ is a single point; see [\cite{PT}, Proposition~11]. If $e$
is rigid and $\g$ is exceptional, then it is proved in [\cite{dG}] by computational methods that either $\g_e=[\g_e,\g_e]$ or $\g_e=\mathbb{C}e\oplus[\g_e,\g_e]$.
Moreover, the inequality $\g_e\ne[\g_e,\g_e]$ occurs just for one rigid orbit in types ${\sf G_2}$, ${\sf F_4}, {\sf E_7}$ and for three rigid orbits in type ${\sf E_8}$ two of which have dimension $202$ (the largest dimension for the rigid orbits in type ${\sf E_8}$).
\begin{prop}
Suppose $e$ is a rigid nilpotent element in an exceptional Lie algebra $\g$ and assume further that $\E\ne \varnothing$. Then the following hold:
\begin{itemize}
\item[(i)\,]
$\E=\E^\Gamma$.

\smallskip

\item[(ii)\,] If $\g_e=[\g_e,\g_e]$ then $U(\g,e)$ affords a unique one-dimensional representation.

\smallskip
 
\item[(iii)\,] If $\g_e=\mathbb{C}e\oplus[\g_e,\g_e]$ then any 
one-dimensional representation of $U(\g,e)$ is uniquely determined by the action of the image of a Casimir element of $U(\g)$ in $U(\g,e)$. 
\end{itemize}
\end{prop}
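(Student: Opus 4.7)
The statement consists of three parts, which I address in the order (ii), (i), (iii).

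Part (ii) follows from the filtered--graded principle applied to the loop filtration, as in [\cite{PT}, Proposition~11]. The isomorphism $\gr_{\sf L}\,U(\g,e)\cong U(\g_e)$ together with the fact that the loop symbol of a commutator in $U(\g,e)$ equals a commutator in $U(\g_e)$ gives a surjection of graded algebras $U(\g_e)^{\rm ab}\cong S(\g_e/[\g_e,\g_e])\onto \gr_{\sf L}\,U(\g,e)^{\rm ab}$. Under the hypothesis $\g_e=[\g_e,\g_e]$ this forces $\gr_{\sf L}\,U(\g,e)^{\rm ab}\subseteq\CC$, so $U(\g,e)^{\rm ab}$ is zero or one-dimensional; the hypothesis $\E\ne\varnothing$ rules out the first alternative.

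For part (i), the case $\g_e=[\g_e,\g_e]$ is immediate from (ii) since $\E$ is then a singleton. In the complementary case $\g_e=\CC e\oplus[\g_e,\g_e]$ (which, by [\cite{dG}], accounts for the remaining six rigid orbits in exceptional types), the same surjection exhibits $\gr_{\sf L}\,U(\g,e)^{\rm ab}$ as a quotient of $\CC[\bar e]$; finite-dimensionality of $U(\g,e)^{\rm ab}$ (a consequence of rigidity of $e$, which makes $\E$ finite) forces $\gr_{\sf L}\,U(\g,e)^{\rm ab}\cong \CC[\bar e]/(\bar e^N)$ for some $N\ge 1$. Since the optimal cocharacter $\tau$ commutes with $C(e)$, the loop filtration is $\Gamma$-stable and $\Gamma$ acts on the associated graded; because $G_e$ fixes $e$ pointwise, this $\Gamma$-action is trivial on $\bar e$ and hence on all of $\gr_{\sf L}\,U(\g,e)^{\rm ab}$. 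A standard lifting argument now forces triviality of the $\Gamma$-action on $U(\g,e)^{\rm ab}$ itself: for any $\gamma\in\Gamma$ the operator $\gamma-1$ carries $F_i^{\sf L}\,U(\g,e)^{\rm ab}$ into $F_{i-1}^{\sf L}\,U(\g,e)^{\rm ab}$ and is therefore nilpotent, while $\gamma$ has finite order and in characteristic zero is semisimple; hence $\gamma=1$ on $U(\g,e)^{\rm ab}$ and $\E^\Gamma=\E$.

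For part (iii), the preceding argument yields $U(\g,e)^{\rm ab}\cong\CC[\theta]/(g(\theta))$, where $\theta$ is the image of $\Theta(e)$ and the distinct roots of $g$ are exactly the points of $\E$. My plan is to show that the image $\pi(C)\in U(\g,e)^{\rm ab}$ of a quadratic Casimir $C=\sum_i x_ix^i$ of $U(\g)$ has the affine form $\pi(C)=2\theta+b$ for some scalar $b$, which immediately separates the distinct points of $\E$. Using the PBW description of $U(\g,e)$ via $\Theta$, the Kazhdan filtration piece $F_3^{\sf K}\,U(\g,e)$ is spanned by $1$, $\Theta(\g_e(0))$, and $\Theta(\g_e(1))$; because $\gr_{\sf L}\,U(\g,e)^{\rm ab}=\CC[\bar e]/(\bar e^N)$ has no nonzero elements in loop-degrees $0$ and $1$ beyond scalars, the images of $\Theta(\g_e(0))$ and $\Theta(\g_e(1))$ in $U(\g,e)^{\rm ab}$ lie in $F_0^{\sf L}\,U(\g,e)^{\rm ab}=\CC$, whence $F_3^{\sf K}\,U(\g,e)^{\rm ab}=\CC$. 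A parallel analysis in $F_4^{\sf K}$ (noting that $\Theta(\g_e(2)\cap[\g_e,\g_e])$ maps to scalars by the same loop-filtered argument, and products $\Theta(\g_e(0))^2$ reduce to scalars since each factor does) shows $F_4^{\sf K}\,U(\g,e)^{\rm ab}=\CC\oplus\CC\theta$. Since $\pi(C)\in F_4^{\sf K}\,U(\g,e)^{\rm ab}$, it must therefore have the form $a\theta+b$. To identify $a$, I compute the leading Kazhdan symbol of $\pi(C)$ as the restriction of the function $c(x)=(x,x)$ to the Slodowy slice $e+\g_f$: at $e+y$ this equals $2(e,y)+(y,y)$, whose degree-$4$ contributions under the identification $\O(e+\g_f)\cong S(\g_e)$ are $2e\in\g_e$ and a quadratic form $Q\in S^2(\g_e(0))$ arising from the restriction of the bilinear form to $\g_f(0)$. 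The lift of $Q$ sits in $\Theta(U(\g_e(0)))\subseteq F_0^{\sf L}\,U(\g,e)$ and therefore reduces to a scalar in $U(\g,e)^{\rm ab}$, contributing only to $b$; this gives $a=2$, and the desired separation follows.

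The main obstacle I anticipate is the filtered book-keeping in part (iii): ensuring that all Kazhdan-degree-$<4$ contributions to $\pi(C)$ originating from $\g_e(0)$, $\g_e(1)$, and their products genuinely reduce to scalars in $U(\g,e)^{\rm ab}$ requires simultaneous control of the Kazhdan and loop filtrations, together with a clean understanding of the image of $\Theta(U(\g_e(0)))$ in $U(\g,e)^{\rm ab}$. The remaining steps are comparatively formal applications of the filtered--graded machinery already developed earlier in the paper.
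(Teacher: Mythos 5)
Your proof is correct, and the overall strategy---working in the associated graded of the loop filtration and then lifting---is the same as the paper's, but you arrange the argument differently in two places worth noting.

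For part~(i) in the case $\g_e=\CC e\oplus[\g_e,\g_e]$, the paper proceeds by first establishing $U(\g,e)^{\rm ab}=\CC[c]$ where $c$ is the image of a Casimir element $C\in U(\g)$, and then simply observing that $c$ is $\Gamma$-fixed because $C$ is $\Ad G$-invariant. Your route instead establishes $\Gamma$-triviality without identifying a concrete invariant generator: you note that $\Gamma$ acts trivially on $\gr_{\sf L}U(\g,e)^{\rm ab}$ (a quotient of $\CC[\bar e]$, with $\bar e$ fixed by $C(e)$), so each $\gamma$ acts unipotently on the finite-dimensional $U(\g,e)^{\rm ab}$; being of finite order in characteristic zero, $\gamma$ must then act trivially. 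This is a valid and slightly more structural argument; it does not need the Casimir computation to deliver~(i). One small point you rely on but should be kept in mind: the action of a representative $g\in C(e)$ of $\gamma$ is of finite order as an operator on $U(\g,e)^{\rm ab}$ only because $C(e)^\circ$ acts trivially (this is [\cite{P07}, Remark~2.1 and Lemma~2.4]), so $\gamma^n=1$ in $\Gamma$ does give $g^n=\mathrm{Id}$ on $U(\g,e)^{\rm ab}$.

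For part~(iii) you and the paper both need to relate a Casimir to the ``$e$-direction''; the paper computes ${\rm gr}_{\sf L}(C)=2e$ and then proves $U(\g,e)^{\rm ab}=\CC[c]$ by induction along the loop grading, whereas you prove $U(\g,e)^{\rm ab}\cong\CC[\theta]/(g(\theta))$ with $\theta=\pi(\Theta(e))$ directly from the graded structure, and then pin down $\pi(C)=2\theta+b$ by analysing the Kazhdan filtration. Your book-keeping in $F_3^{\sf K}$ and $F_4^{\sf K}$ is sound: the crucial facts are $F_1^{\sf L}U(\g,e)^{\rm ab}=\CC$ (since $S(\g_e/[\g_e,\g_e])=\CC[\bar e]$ with $\bar e$ in loop degree~$2$ has nothing in degrees $0,1$ beyond scalars) and the fact that loop symbols in $[\g_e,\g_e]$ vanish in $\gr_{\sf L}U(\g,e)^{\rm ab}$. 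The detour through the Kazhdan filtration is more work than necessary: once one knows the loop symbol ${\rm gr}_{\sf L}(C)=2e$ (as the paper computes), one gets $\pi(C)-2\theta\in F_1^{\sf L}U(\g,e)^{\rm ab}=\CC$ immediately, hence $\pi(C)=2\theta+b$; this is essentially the one-line bridge between your setup and the paper's. Either way the conclusion is the same and your argument closes the gap you yourself flagged at the end.
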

\begin{proof}
If $\g_e=[\g_e,\g_e]$ then [\cite{PT}, Proposition~11(i)] implies that $U(\g,e)$ affords a
unique one-dimensional representation. Hence $\E$ is a single point, implying $\E^\Gamma=\E$. 
Now suppose that $\g_e=\mathbb{C}e\oplus [\g_e,\g_e]$ and let $a_1,\ldots,a_n$ and $b_1,\ldots,b_n$ be dual bases of $\g$ with respect to $(\,\cdot\,,\,\cdot\,)$ such that $a_i\in\g(n_i)$ and $b_i\in\g(-n_i)$ for some $n_i\in\Z$, where $1\le i\le n$. We may (and will) assume further that $a_1,\ldots,a_m$ is a basis of $\g(2)$ and
$b_i=a_{m+i}$ for $1\le i\le m$.  Since $e\in\g(e)$, we have that $e= \sum_{i=1}^m(e,b_i)a_i$. 

Let
$C=\sum_{i=1}^na_ib_i$, a Casimir element of  $U(\g)$, which we regard as a central element of $U(\g,e)\,=\,({\rm End}_\g\,Q_e)^{\rm op}$. The above remark shows that $$C(1_\chi)=
\big(2e+\textstyle{\sum}_{i=1}^sp_iz_i'+C_0+h\big)\otimes 1_\chi$$ for some some
$C_0\in U(\g(0))$ of Kazhdan degree $4$, some $h\in \g(0)$, and $p_1,\ldots,p_s\in\g(1)$. As the embedding $U(\g,e)\hookrightarrow U(\p_+)\otimes\mathcal{A}$ in [\cite{P07}, 2.4] is described explicitly, it is now straightforward to check that
${\rm gr}_{\sf L}(C)=2e$. Let $c$ denote the image of $C$ in $U(\g,e)^{\rm ab}$. Since ${\rm gr}_{\sf L}\,U(\g,e)\cong U(\g_e)$ and $\g_e=\mathbb{C}e\oplus [\g_e,\g_e]$, straightforward induction on $k$ shows that for $x\in\g_e(k)$ the image of $\Theta(x)$ in $U(\g,e)^{\rm ab}$ lies in the subalgebra generated by $c$ (when $k=0$ this is clear because $\Theta(\g_e(0))\cong \g_e(0)$ is a Lie subalgebra of $U(\g,e)$ under the commutator product). As a consequence, $U(\g,e)^{\rm ab}\,=\,\mathbb{C}[c]$.
Since the action of $\Gamma$ on $U(\g,e)^{\rm ab}$ is induced by the adjoint action of $C(e)\subset G$ on $U(\g)$ we have that $\gamma(c)=c$ for all $\gamma\in\Gamma$. This yields $\E=\E^\Gamma$ completing the proof.
\end{proof}
\subsection{}\label{2.5}
Let $T$ be a maximal torus of $G$, $\t=\Lie(T)$ and $W=N_G(T)/T$, the Weyl group of $\g$. We shall always assume in what follows that $T$ contains  $\tau(\mathbb{C}^\times)$. Let $\Phi$ be the root system of $\g$ with respect to $T$. We fix a positive system $\Phi^+$ in $\Phi$ and let $\Pi=\{\alpha_1,\ldots,\alpha_\ell\}$ be the basis of $\Phi$ contained in $\Phi^+$. We denote by $\{\varpi_1,\ldots,\varpi_\ell\}$ the corresponding system of fundamental weights in $\t^*$.
Let $\t_e$ be a maximal toral subalgebra of $\g_e$ contained in
$\g_e(0)$ and denote by $\l$ the centraliser of
$\t_e$ in $\g$. We shall assume without loss of generality that $\t_e$ is contained in $\t$ and
$\l=\Lie(L)$ is a standard Levi subalgebra of $\g$ associated with a subset $\Pi_0=\{\alpha_i\,|\,i\in I\}$ of $\Pi$ (here $L$ is the standard Levi subgroup of $G$ associated with $\Pi_0$). Let $\Phi_0$ be the root system of $\l$ with respect to $T$. Then $\Pi_0$ is the basis of simple roots of $\Phi_0$ contained in the positive system $\Phi_0^+:=\Phi_0\cap \Phi^+$ of $\Phi_0$. 
In all cases considered in the next three sections we indicate our choice of $\Pi_0$ by blackening the corresponding nodes on the Dynkin diagram of $\Pi$ and we call the resulting picture a {\it pinning} for $e$.
It should be stressed at this point that for the majority of nilpotent orbits the choice of $\Pi_0$ is not unique and quite often our computations simplify when $e$ is pinned down properly.
For that reason, our choice of pinning can differ from those used in other references on nilpotent orbits. 
The maximality of $\t_e$ implies that $\t_e=\z(\l)$, the centre of $\l$, and the $\sl_2$-triple $\{e,h,f\}$ 
is distinguished in $[\l,\l]$ in the sense of the Bala--Carter theory. Since in this paper we are interested in rigid orbits (which are never distinguished) we shall assume from now on that $\l$ is a proper Levi subalgebra of $\g$ and $\t_e=\z(\l)\ne 0$. 

Let $\g=\n_{-}\oplus\l\oplus\n_+$ be the parabolic decomposition of $\g$ such that the set of $T$-roots of $\n_+$ and $\n_-$ equals $\Phi^+\setminus \Phi_0$
and $-(\Phi^+\setminus \Phi_0)$, respectively. 
Set $\Phi^+(0):=\{\gamma\in\Phi^+\,|\,\,\gamma(h)=0\}$ and $\Phi^+(1):=\{\gamma\in\Phi^+\,|\,\,\gamma(h)=1\}$. The elements of $\Phi^+(0)$
and $\Phi^+(1)$ will be referred to as $0$-{\it roots} and $1$-{\it roots}, respectively. One of the interesting features of the theory of finite $W$-algebras is the appearance of a shift by the weight
$$\rho_e:=\textstyle{\frac{1}{2}\sum}_{\gamma\in\Phi^+(0)
\sqcup\Phi^+(1)}\,\gamma\in\t^*$$ which plays the  role of the half-sum of the positive roots $\rho=\frac{1}{2}\sum_{\gamma\in\Phi^+}\gamma$, but depends of $e$. We write $\overline{\rho}_e$ for the restriction of $\rho_e$ to $\t_e$. 

It will be crucial for us to determine
$\overline{\rho}_e$ for all rigid nilpotent elements in exceptional Lie algebras. In order to do that we list
the $0$-roots and the $1$-roots in all cases of interest. Since it is easy to miss some of these roots (especially in type ${\sf E_8}$) we always keep in mind that their total number is given by the formula
$$|\Phi^+(0)|+|\Phi^+(1)|=\textstyle{\frac{1}{2}}
(\dim\g_e- {\rm rk}\,\g).$$
Indeed, from $\sl_2$-theory we know that $\dim\g_e=\dim\g(0)+\dim\g(1)$ and the subspaces $\n_\pm\cap\g(1)$ and $\n_\pm\cap\g(-1)$ have the same dimension,  whilst the Bala--Carter theory yields $\g(1)\cap \l=0$. Combining this with the fact that $\n^-\cong
(\n^+)^*$ as $(\ad \l)$-modules yields $\frac{1}{2}\dim\g(1)=|\Phi^+(1)|$ whereas the equality $\frac{1}{2}(\dim\g(0)-{\rm rk}\,\g)=
|\Phi^+(0)|$ is obvious.

For Lie algebras of type $\sf E$ we shall identify the root system $\Phi\subset\t^*$ with its dual root system $\Phi^\vee\subset \t$ by using the 
$W$-invariant
scalar product $(\,\cdot\,|\,\cdot\,)$ on the  $\mathbb{R}$-span of $\Pi$ such that $(\gamma|\gamma)=2$ for all $\gamma\in\Phi$. This will also enable us to identify $\g$ with its Langlands dual Lie algebra $\g^\vee$. Then we may assume that the set of fundamental weights $\{\varpi_i\,|\,\,
i\not\in I\}$ forms a basis for
$\t_e$. Very often we have to express $\varpi_i$'s in terms of simple roots; for that we use [\cite{Bo}, Planches~I--IX] as a standard reference. 

Throughout the paper we use Bourbaki's numbering of simple roots in $\Pi$. In type ${\sf E}_\ell$ this simply means that the roots in $\Pi$ forming  the subgraph of type ${\sf A}_{\ell -1}$ are labelled $\alpha_1,\alpha_3,\ldots,\alpha_\ell$ (from left to right) and the root that sticks out of that subgraph is always called $\alpha_2$. 
In type ${\sf F_4}$ the roots in $\Pi$ are numbered from left to right and it is assumed that $\alpha_1$ and $\alpha_2$ are long roots whilst $\alpha_3$
and $\alpha_4$ are short. In type ${\sf G_2}$ it is assumed that $\alpha_1$ is a short root.
\subsection{}\label{2.6} In order to explain the appearance of $\overline{\rho}_e$ in this theory we have to say a few words about the category $\sf O$ for finite $W$-algebras first introduced by Brundan--Goodwin--Kleshchev in [\cite{BGK}]. Although the map 
$\Theta\colon \g_e\hookrightarrow U(\g,e)$ is generally not unique, there is a very natural way to define an embedding $\theta\colon U(\g_e(0))\hookrightarrow U(\g,e)$. It is given by an explicit formula in [\cite{P07}, Lemma~2.3]. From now on we shall assume that the restriction of $\Theta$ to $\t_e\subset\g_e(0)$ coincides with 
$\theta_{\vert\t_e}$ and we choose an embedding $\Theta_\l\colon \l_e\hookrightarrow U(\l,e)$ with the same property.

Since $e\in\l$ we that $\g_e=\g_e^-\oplus\l_e\oplus\g_e^+$ where $\g_e^\pm=\g_e\cap\n^\pm$. 
There exists a unique element $h_0\in\t_e$ for which
$\alpha(h_0)=0$ for any $\alpha\in\Pi_0$ and $\alpha(h_0)=1$ for any $\alpha\in\Pi\setminus\Pi_0$.
By construction, the eigenvalues of $\ad h_0$ on $\n_+$ are positive integers. Moreover, the adjoint action of $\theta(h_0)$ on $U(\g,e)$ gives the latter a $\Z$-graded algebra structure: $U(\g,e)=\bigoplus_{i\in\Z}\,U(\g,e)_i$. We set $U(\g,e)_{\ge 0}\,:=\,\bigoplus_{i\ge 0}\,U(\g,e)_i$ and denote by  $U(\g,e)_{\ge 0}^+$ the intersection of $U(\g,e)_{\ge 0}$ with
the left ideal of $U(\g,e)$ generated by all $\Theta(x)$ with
$x\in\g_e^+$. Then $U(\g,e)_{\ge 0}^+$ is a two-sided ideal of $U(\g,e)_{\ge 0}$ and it follows from 
[\cite{BGK}, Theorem~4.3] and [\cite{Lo10}, Remark~5.4] that 
there is an algebra isomorphism 
$\Psi\colon\,U(\g,e)_{\ge 0}/U(\g,e)^+_{\ge 0\,}\stackrel{\sim}{\longrightarrow}\, U(\l,e)$ such that 
$$(\Psi\circ\Theta)(x)=\Theta_\l(x)+(\rho-\rho_e)(x)\quad\ \ (\forall\,x\in\t_e)$$
(one should also keep in mind here that in the notation of [\cite{Lo2}, p.~4864] the weight $\delta'-\delta-\rho$ vanishes on $\t_e$).
For any finite dimensional irreducible 
$U(\l,e)$-module $M_0$ we  now can define an induced $U(\g,e)$-module  
$${\rm Ind}_{\mathcal W}(M_0):=\,U(\g,e)\otimes_{U(\g,e)_{\ge 0}}\,M_0,$$
where $U(\g,e)_{\ge 0}$ acts on $M_0$ via $\Psi$. This is an analogue of a Verma module in our more general setting; it has a nice PBW basis  and contains a unique maximal submodule which will be denoted by ${\rm Max}_{\mathcal W}(M_0)$. The $U(\g,e)$-module  
$L_{\mathcal W}(M_0)\,:=\,{\rm Ind}_{\mathcal W}(M_0)/
{\rm Max}_{\mathcal W}(M_0)$ is irreducible and contains an isomorphic copy of $M_0$. 

In [\cite{Lo2}, Corollary~5.1.2], Losev shows that $L_{\mathcal W}(M_0)$ is finite dimensional if and only if the annihilator in $U(\g)$ of the $\g$-module
$Q_e\otimes_{U(\g,e)} L_{\mathcal W}(M_0)$ lies in $\mathcal{X}_{\O(e)}$. Although, in general, this  condition is difficult to verify, Losev's criterion can be applied effectively to check 
the equality 
$\dim L_{\mathcal W}(M_0)=1$  
for a fairly large class of nilpotent orbits; see [\cite{Lo2}, 5.2]. Luckily for us, this class includes all rigid orbits of $\g$.

Let $T_e$ be the maximal torus of $C:=C(e)^\circ$ with $\Lie(T_e)=\t_e$. When $e$ is rigid, the group $C$ is semisimple.
If $\dim L_{\mathcal W}(M_0)<\infty$, the action of $\Theta(\g_e(0))$ on $L_{\mathcal W}(M_0)$ extends to a rational action of the simply connected cover of $C$.
As $C$ is semisimple, the Weyl group $W_e:=N_{C}(T_e)/T_e$ has no non-zero fixed points on $\t_e$, whilst the preceding remark implies that it acts on the set of all $\t_e$-weights of  $L_{\mathcal W}(M_0)$. Analysing this $W_e$-action, Losev proved, for $e$ rigid, that
$L_{\mathcal W}(M_0)$ is one-dimensional if and only if $\dim M_0=1$ and $\Theta(\t_e)$ acts on $M_0$
via $(\rho_e-\rho)_{\vert \t_e}$; see [\cite{Lo2}, Theorem~5.2.1]. 

Given $\lambda\in\t^*$ we write $L_0(\lambda)$ for the irreducible $\l$-module of highest weight $\lambda$ and set $I_0(\lambda):=\,{\rm Ann}_{U(\l)}L_0(\lambda)$. We shall occasionally denote by $Q_{\l,e}$ the analogue of $Q_e$ for $\l$ and use the fact that $U(\l,e)=\,({\rm End}_{\l}\,Q_{\l,e})^{\rm op}$. As a corollary of the above-mentioned result, Losev proved in [\cite{Lo2}, 5.3] that for any rigid nilpotent element $e\in\g$ a primitive ideal $I(\lambda)={\rm Ann}_{U(\g)}\,L(\lambda)$ with $\lambda\in\t^*$ has the form $I(\lambda)=I_M$ for some one-dimensional $U(\g,e)$-module $M$ if and only if $\lambda$ satisfies the following four conditions:
\begin{itemize}
\item[(A)\,] ${\rm VA}(I_0(\lambda))=\overline{\O}_0$ where $\O_0$ is the adjoint $L$-orbit of $e$;

\smallskip

\item[(B)\,] $I(\lambda)\in\mathcal{X}_{\O(e)}$ or equivalently, modulo (A), $\dim {\rm VA}(I(\lambda))\le \dim \O(e)$; 

\smallskip

\item[(C)\,] $(\lambda+\rho)_{\vert \t_e}=\overline{\rho}_e$;

\smallskip

\item[(D)\,] $I_0(\lambda)\,=\,{\rm Ann}_{U(\l)}\big(Q_{\l,e}\otimes_{U(\l,e)}M_0\big)$ for some one-dimensional $U(\l,e)$-module $M_0$.
\end{itemize}
We should mention that Condition~(D) holds automatically when $e$ has {\it standard Levi type} i.e. is regular in $\l$. In that case, the algebra $U(\g,e)$ is commutative and hence $\dim M_0=1$ for any irreducible $U(\l,e)$-module $M_0$. Furthermore, if $e$ has standard Levi type, then Condition~(A) can be rephrased by saying that $\langle\lambda+\rho,\alpha_i^\vee\rangle\not\in\Z^{>0}$ for all $i\in I$. 
The rigid orbits in exceptional Lie algebras were first classified by Borho in type ${\sf F_4}$ and by Elashvili in type ${\sf E}$. In particular, it is known that all of them except two in type ${\sf E_8}$ have standard Levi type.
For the reader's convenience all rigid orbits in exceptional Lie algebras (along with some relevant data) are listed in the table at the end of the paper. 

It is quite easy to find $\lambda\in\t^*$ satisfying two or even three of the above conditions, but getting all four of them at the same time can be very tricky and in some difficult cases we have to rely on guessing and intuition. 
\section{\bf The case of
rigid nilpotent orbits in Lie algebras of type ${\sf E_8}$.}
\subsection{Type $({\sf E_8, A_1})$}\label{3.1}
It is well known that $e$ is a special nilpotent element and
$e^\vee$ is subregular in the Langlands dual Lie algebra $\g^\vee$.
Moreover, it can be assumed that
$$h^\vee=\textstyle{{2\atop{}}{2\atop{}}{0\atop 2}
{2\atop{}}{2\atop{}}{2\atop{}}{2\atop{}}}.$$ Keeping both the shape
of $h^\vee$ and Losev's condition (A) in mind we choose the pinning
for $e$ as follows:
\[\xymatrix{*{{\circ}}\ar@{-}[r]& *{\circ} \ar@{-}[r] & *{\bullet} \ar@{-}[r]\ar@{-}[d] & *{\circ} \ar@{-}[r]
 & *{\circ} \ar@{-}[r]&*{\circ}\ar@{-}[r]&*{\circ}\\ && *{\circ}&&&}\]
Then we may assume that the optimal cocharacter for $e$ with $({\rm d}\tau)(1)=h$ equals
$$\tau=\textstyle{{0\atop{}}{(-1)\atop{}}{2\atop (-1)}
{(-1)\atop{}}{0\atop{}}{0\atop{}}{0\atop{}}}.$$ Since
$\dim\,\g_e=190$ the total number of positive $0$-roots and
$1$-roots is $(190-8)/2=91$. The roots are listed in the table
below.

\begin{table}[htb]
\label{data1}
\begin{tabular}{|c|c|}
\hline$0$-roots  & $1$-roots
\\ \hline
$\scriptstyle{{1\atop{}}{0\atop{}}{0\atop0}{0\atop{}}{0\atop{}}{0\atop{}}{0\atop{}}}$,
$\scriptstyle{{0\atop{}}{0\atop{}}{0\atop0}{0\atop{}}{1\atop{}}{0\atop{}}{0\atop{}}}$,
$\scriptstyle{{0\atop{}}{0\atop{}}{0\atop0}{0\atop{}}{0\atop{}}{1\atop{}}{0\atop{}}}$,
$\scriptstyle{{0\atop{}}{0\atop{}}{0\atop0}{0\atop{}}{0\atop{}}{0\atop{}}{1\atop{}}}$,
$\scriptstyle{{0\atop{}}{0\atop{}}{0\atop0}{0\atop{}}{1\atop{}}{1\atop{}}{0\atop{}}}$,\,
&$\scriptstyle{{0\atop{}}{1\atop{}}{1\atop0}{0\atop{}}{0\atop{}}{0\atop{}}{0\atop{}}}$,
$\scriptstyle{{0\atop{}}{0\atop{}}{1\atop0}{1\atop{}}{0\atop{}}{0\atop{}}{0\atop{}}}$,
$\scriptstyle{{0\atop{}}{0\atop{}}{1\atop 1}{0\atop{}}{0\atop{}}{0\atop{}}{0\atop{}}}$,\\
$\scriptstyle{{0\atop{}}{1\atop{}}{1\atop0}{1\atop{}}{0\atop{}}{0\atop{}}{0\atop{}}}$,
$\scriptstyle{{0\atop{}}{1\atop{}}{1\atop0}{1\atop{}}{1\atop{}}{0\atop{}}{0\atop{}}}$,
$\scriptstyle{{1\atop{}}{1\atop{}}{1\atop0}{1\atop{}}{0\atop{}}{0\atop{}}{0\atop{}}}$,
$\scriptstyle{{1\atop{}}{1\atop{}}{1\atop0}{1\atop{}}{1\atop{}}{0\atop{}}{0\atop{}}}$,
$\scriptstyle{{0\atop{}}{1\atop{}}{1\atop0}{1\atop{}}{1\atop{}}{1\atop{}}{0\atop{}}}$,&
$\scriptstyle{{1\atop{}}{1\atop{}}{1\atop0}{0\atop{}}{0\atop{}}{0\atop{}}{0\atop{}}}$,
$\scriptstyle{{0\atop{}}{0\atop{}}{1\atop0}{1\atop{}}{1\atop{}}{0\atop{}}{0\atop{}}}$,
$\scriptstyle{{0\atop{}}{0\atop{}}{1\atop0} {1\atop{}}{1\atop{}}{1\atop{}}{0\atop{}}}$,\\
$\scriptstyle{{0\atop{}}{0\atop{}}{0\atop0}{0\atop{}}{0\atop{}}{1\atop{}}{1\atop{}}}$,
$\scriptstyle{{0\atop{}}{0\atop{}}{0\atop0}{0\atop{}}{1\atop{}}{1\atop{}}{1\atop{}}}$,
$\scriptstyle{{1\atop{}}{1\atop{}}{1\atop0}{1\atop{}}{1\atop{}}{1\atop{}}{0\atop{}}}$,
$\scriptstyle{{0\atop{}}{1\atop{}}{1\atop0}{1\atop{}}{1\atop{}}{1\atop{}}{1\atop{}}}$,
$\scriptstyle{{1\atop{}}{1\atop{}}{1\atop0}{1\atop{}}{1\atop{}}{1\atop{}}{1\atop{}}}$,&
$\scriptstyle{{0\atop{}}{0\atop{}}{1\atop0}{1\atop{}}{1\atop{}}{1\atop{}}{1\atop{}}}$,
$\scriptstyle{{0\atop{}}{1\atop{}}{2\atop1}{1\atop{}}{0\atop{}}{0\atop{}}{0\atop{}}}$,
$\scriptstyle{{1\atop{}}{1\atop{}}{2\atop 1}{1\atop{}}{0\atop{}}{0\atop{}}{0\atop{}}}$,\\
$\scriptstyle{{0\atop{}}{0\atop{}}{1\atop1}{1\atop{}}{0\atop{}}{0\atop{}}{0\atop{}}}$,
$\scriptstyle{{0\atop{}}{0\atop{}}{1\atop1}{1\atop{}}{1\atop{}}{1\atop{}}{0\atop{}}}$,
$\scriptstyle{{0\atop{}}{0\atop{}}{1\atop1}{1\atop{}}{1\atop{}}{1\atop{}}{1\atop{}}}$,
$\scriptstyle{{0\atop{}}{1\atop{}}{1\atop1}{0\atop{}}{0\atop{}}{0\atop{}}{0\atop{}}}$,
$\scriptstyle{{1\atop{}}{1\atop{}}{1\atop1}{0\atop{}}{0\atop{}}{0\atop{}}{0\atop{}}}$,&
$\scriptstyle{{0\atop{}}{1\atop{}}{2\atop1}{1\atop{}}{1\atop{}}{0\atop{}}{0\atop{}}}$,
$\scriptstyle{{1\atop{}}{1\atop{}}{2\atop1}{1\atop{}}{1\atop{}}{0\atop{}}{0\atop{}}}$,
$\scriptstyle{{0\atop{}}{1\atop{}}{2\atop 1}{1\atop{}}{1\atop{}}{1\atop{}}{0\atop{}}}$,\\
$\scriptstyle{{0\atop{}}{1\atop{}}{2\atop1}{2\atop{}}{1\atop{}}{0\atop{}}{0\atop{}}}$,
$\scriptstyle{{0\atop{}}{1\atop{}}{2\atop1}{2\atop{}}{1\atop{}}{1\atop{}}{0\atop{}}}$,
$\scriptstyle{{0\atop{}}{1\atop{}}{2\atop1}{2\atop{}}{1\atop{}}{1\atop{}}{1\atop{}}}$,
$\scriptstyle{{0\atop{}}{1\atop{}}{2\atop1}{2\atop{}}{2\atop{}}{1\atop{}}{1\atop{}}}$,
$\scriptstyle{{0\atop{}}{1\atop{}}{2\atop1}{2\atop{}}{2\atop{}}{2\atop{}}{1\atop{}}}$,&
$\scriptstyle{{1\atop{}}{1\atop{}}{2\atop1}{1\atop{}}{1\atop{}}{1\atop{}}{0\atop{}}}$,
$\scriptstyle{{0\atop{}}{1\atop{}}{2\atop1}{1\atop{}}{1\atop{}}{1\atop{}}{1\atop{}}}$,
$\scriptstyle{{1\atop{}}{1\atop{}}{2\atop 1}{1\atop{}}{1\atop{}}{1\atop{}}{1\atop{}}}$,\\
$\scriptstyle{{1\atop{}}{1\atop{}}{2\atop1}{2\atop{}}{1\atop{}}{0\atop{}}{0\atop{}}}$,
$\scriptstyle{{1\atop{}}{1\atop{}}{2\atop1}{2\atop{}}{1\atop{}}{1\atop{}}{0\atop{}}}$,
$\scriptstyle{{1\atop{}}{1\atop{}}{2\atop1}{2\atop{}}{1\atop{}}{1\atop{}}{1\atop{}}}$,
$\scriptstyle{{1\atop{}}{1\atop{}}{2\atop1}{2\atop{}}{2\atop{}}{1\atop{}}{1\atop{}}}$,
$\scriptstyle{{1\atop{}}{1\atop{}}{2\atop1}{2\atop{}}{2\atop{}}{2\atop{}}{1\atop{}}}$,&
$\scriptstyle{{1\atop{}}{2\atop{}}{3\atop1}{2\atop{}}{1\atop{}}{0\atop{}}{0\atop{}}}$,
$\scriptstyle{{1\atop{}}{2\atop{}}{3\atop1}{2\atop{}}{1\atop{}}{1\atop{}}{0\atop{}}}$,
$\scriptstyle{{1\atop{}}{2\atop{}}{3\atop 1}{2\atop{}}{1\atop{}}{1\atop{}}{1\atop{}}}$,\\
$\scriptstyle{{1\atop{}}{2\atop{}}{2\atop1}{1\atop{}}{0\atop{}}{0\atop{}}{0\atop{}}}$,
$\scriptstyle{{1\atop{}}{2\atop{}}{2\atop1}{1\atop{}}{1\atop{}}{0\atop{}}{0\atop{}}}$,
$\scriptstyle{{1\atop{}}{2\atop{}}{2\atop1}{1\atop{}}{1\atop{}}{1\atop{}}{0\atop{}}}$,
$\scriptstyle{{1\atop{}}{2\atop{}}{2\atop1}{1\atop{}}{1\atop{}}{1\atop{}}{1\atop{}}}$,
$\scriptstyle{{0\atop{}}{1\atop{}}{2\atop1}{2\atop{}}{2\atop{}}{1\atop{}}{0\atop{}}}$,&
$\scriptstyle{{1\atop{}}{2\atop{}}{3\atop1}{2\atop{}}{2\atop{}}{1\atop{}}{0\atop{}}}$,
$\scriptstyle{{1\atop{}}{2\atop{}}{3\atop1}{2\atop{}}{2\atop{}}{2\atop{}}{1\atop{}}}$,
$\scriptstyle{{1\atop{}}{2\atop{}}{4\atop 2}{3\atop{}}{2\atop{}}{1\atop{}}{0\atop{}}}$,\\
$\scriptstyle{{1\atop{}}{1\atop{}}{2\atop1}{2\atop{}}{2\atop{}}{1\atop{}}{0\atop{}}}$,
$\scriptstyle{{1\atop{}}{2\atop{}}{3\atop2}{2\atop{}}{1\atop{}}{0\atop{}}{0\atop{}}}$,
$\scriptstyle{{1\atop{}}{2\atop{}}{3\atop2}{2\atop{}}{1\atop{}}{1\atop{}}{0\atop{}}}$,
$\scriptstyle{{1\atop{}}{2\atop{}}{3\atop2}{2\atop{}}{1\atop{}}{1\atop{}}{1\atop{}}}$,
$\scriptstyle{{1\atop{}}{2\atop{}}{3\atop2}{2\atop{}}{2\atop{}}{1\atop{}}{0\atop{}}}$,&
$\scriptstyle{{1\atop{}}{2\atop{}}{4\atop2}{3\atop{}}{2\atop{}}{1\atop{}}{1\atop{}}}$,
$\scriptstyle{{1\atop{}}{2\atop{}}{4\atop2}{3\atop{}}{2\atop{}}{2\atop{}}{1\atop{}}}$,
$\scriptstyle{{1\atop{}}{2\atop{}}{4\atop 2}{3\atop{}}{3\atop{}}{2\atop{}}{1\atop{}}}$,\\
$\scriptstyle{{1\atop{}}{2\atop{}}{3\atop2}{2\atop{}}{2\atop{}}{1\atop{}}{1\atop{}}}$,
$\scriptstyle{{1\atop{}}{2\atop{}}{3\atop2}{2\atop{}}{2\atop{}}{2\atop{}}{1\atop{}}}$,
$\scriptstyle{{1\atop{}}{2\atop{}}{3\atop1}{3\atop{}}{2\atop{}}{1\atop{}}{0\atop{}}}$,
$\scriptstyle{{1\atop{}}{2\atop{}}{3\atop1}{3\atop{}}{2\atop{}}{1\atop{}}{1\atop{}}}$,
$\scriptstyle{{0\atop{}}{0\atop{}}{1\atop1}{1\atop{}}{1\atop{}}{0\atop{}}{0\atop{}}}$&
$\scriptstyle{{1\atop{}}{3\atop{}}{5\atop2}{4\atop{}}{3\atop{}}{2\atop{}}{1\atop{}}}$,
$\scriptstyle{{2\atop{}}{3\atop{}}{5\atop2}{4\atop{}}{3\atop{}}{2\atop{}}{1\atop{}}}$,
$\scriptstyle{{2\atop{}}{4\atop{}}{6\atop 3}{4\atop{}}{3\atop{}}{2\atop{}}{1\atop{}}}$,\\
$\scriptstyle{{1\atop{}}{2\atop{}}{3\atop1}{3\atop{}}{2\atop{}}{2\atop{}}{1\atop{}}}$,
$\scriptstyle{{1\atop{}}{2\atop{}}{3\atop1}{3\atop{}}{3\atop{}}{2\atop{}}{1\atop{}}}$,
$\scriptstyle{{1\atop{}}{2\atop{}}{4\atop2}{4\atop{}}{3\atop{}}{2\atop{}}{1\atop{}}}$,
$\scriptstyle{{1\atop{}}{3\atop{}}{4\atop2}{3\atop{}}{2\atop{}}{1\atop{}}{0\atop{}}}$,
$\scriptstyle{{2\atop{}}{3\atop{}}{4\atop2}{3\atop{}}{2\atop{}}{1\atop{}}{0\atop{}}}$,&
$\scriptstyle{{1\atop{}}{2\atop{}}{3\atop 1}{2\atop{}}{2\atop{}}{1\atop{}}{1\atop{}}}$.\\
$\scriptstyle{{1\atop{}}{3\atop{}}{4\atop2}{3\atop{}}{2\atop{}}{1\atop{}}{1\atop{}}}$,
$\scriptstyle{{2\atop{}}{3\atop{}}{4\atop2}{3\atop{}}{2\atop{}}{1\atop{}}{1\atop{}}}$,
$\scriptstyle{{1\atop{}}{3\atop{}}{4\atop2}{3\atop{}}{2\atop{}}{2\atop{}}{1\atop{}}}$,
$\scriptstyle{{2\atop{}}{3\atop{}}{4\atop2}{3\atop{}}{2\atop{}}{2\atop{}}{1\atop{}}}$,
$\scriptstyle{{1\atop{}}{3\atop{}}{4\atop2}{3\atop{}}{3\atop{}}{2\atop{}}{1\atop{}}}$,&\\
$\scriptstyle{{2\atop{}}{3\atop{}}{4\atop2}{3\atop{}}{3\atop{}}{2\atop{}}{1\atop{}}}$,
$\scriptstyle{{1\atop{}}{3\atop{}}{5\atop3}{4\atop{}}{3\atop{}}{2\atop{}}{1\atop{}}}$,
$\scriptstyle{{2\atop{}}{3\atop{}}{5\atop3}{4\atop{}}{3\atop{}}{2\atop{}}{1\atop{}}}$,
$\scriptstyle{{2\atop{}}{4\atop{}}{5\atop2}{4\atop{}}{3\atop{}}{2\atop{}}{1\atop{}}}$,
$\scriptstyle{{2\atop{}}{4\atop{}}{6\atop3}{5\atop{}}{3\atop{}}{2\atop{}}{1\atop{}}}$,&\\
$\scriptstyle{{2\atop{}}{4\atop{}}{6\atop3}{5\atop{}}{4\atop{}}{2\atop{}}{1\atop{}}}$,
$\scriptstyle{{2\atop{}}{4\atop{}}{6\atop3}{5\atop{}}{4\atop{}}{3\atop{}}{1\atop{}}}$,
$\scriptstyle{{2\atop{}}{4\atop{}}{6\atop3}{5\atop{}}{4\atop{}}{3\atop{}}{2\atop{}}}$.
&\\
\hline
\end{tabular}
\end{table}

By a routine computation we see that the
$(1,2,3,5,6,7,8)$-contributions of the $0$-roots and $1$-roots are
$(52,76,102,124,96,66,34)$ and $(20,30,40,48,36,24,12)$,
respectively. So the total contribution of all roots is
$(36,53,71,86,66,45,23)$. In view of [\cite{Bo}, Planche~VII]
Losev's condition~(C) for
$\lambda+\rho=
\textstyle{\sum}_{i=1}^8\,a_i\varpi_i$ reads
\begin{eqnarray*}
4a_1+5a_2+7a_3+10a_4+8a_5+6a_6+4a_7+2a_8&=&36\\
5a_1+8a_2+10a_3+15a_4+12a_5+9a_6+6a_7+3a_8&=&53\\
7a_1+10a_2+14a_3+20a_4+16a_5+12a_6+8a_7+4a_8&=&71\\
8a_1+12a_2+16a_3+24a_4+20a_5+15a_6+10a_7+5a_8&=&86\\
6a_1+9a_2+12a_3+18a_4+15a_5+12a_6+8a_7+4a_8&=&66\\
4a_1+6a_2+8a_3+12a_4+10a_5+8a_6+6a_7+3a_8&=&45\\
2a_1+3a_2+4a_3+6a_4+5a_5+4a_6+3a_7+2a_8&=&23.
\end{eqnarray*}
Setting $a_4=0$ and $a_i=1$ for $i\ne 4$ we obtain
a partucularly nice solution to this system of linear equations, which leads to
$$\lambda+\rho=\rho-\varpi_4=\,\textstyle{\frac{1}{2}}h^\vee.$$ As
$\langle\lambda,\alpha_4^\vee\rangle=0$,
Condition (A) holds for $\lambda+\rho$. Since $e$ is special and
$\lambda+\rho=\frac{1}{2}h^\vee$ applying [\cite{BV},
Proposition~5.10] shows that this weight also satisfies Condition~(B). Condition~(D) is vacuous in the present case as $e$ has
standard Levi type. Applying [\cite{Lo2}, 5.3] we conclude that
$I(\lambda)=I(-\varpi_4)$ is the only multiplicity-free primitive
ideal in $\mathcal{X}_{\O(e)}$ (one should keep in mind that in the
present case $[\g_e,\g_e]=\g_e$). Since $\O(e)=\O_{\rm min}$, we
thus recover the well-known Joseph ideal and determine its Duflo
realisation by applying methods of the theory of finite $W$-algebras.

\subsection{Type $({\sf E_8, 2A_1})$}\label{3.2} In the present case
$e$ is a special nilpotent element, $e^\vee\in\g^\vee $ has Dynkin
label ${\sf E_8(a_2)}$, and
$$
h^\vee=\textstyle{{2\atop{}}{2\atop{}}{0\atop 2}
{2\atop{}}{0\atop{}}{2\atop{}}{2\atop{}}}.
$$
Keeping in mind Condition~(A) we choose our pinning for $e$
as follows:
\[\xymatrix{*{\circ}\ar@{-}[r]& *{\circ} \ar@{-}[r] & *{\bullet} \ar@{-}[r]\ar@{-}[d] & *{\circ} \ar@{-}[r]
 & *{\bullet} \ar@{-}[r]&*{\circ}\ar@{-}[r]&*{\circ}\\ && *{\circ}&&&}\]
Then we may assume that the optimal cocharacter for $e$ with $({\rm d}\tau)(1)=h$ has the
form
$$\tau=\textstyle{{0\atop{}}{(-1)\atop{}}{2\atop (-1)}
{(-2)\atop{}}{2\atop{}}{(-1)\atop{}}{0\atop{}}}.$$ Since
$\dim\,\g_e=156$ the total number of positive $0$-roots and
$1$-roots is $(156-8)/2=74$. The roots are listed in the table below.

\begin{table}[htb]
\label{data2}
\begin{tabular}{|c|c|}
\hline$0$-roots  & $1$-roots
\\ \hline
$\scriptstyle{{1\atop{}}{0\atop{}}{0\atop0}{0\atop{}}{0\atop{}}{0\atop{}}{0\atop{}}}$,
$\scriptstyle{{0\atop{}}{0\atop{}}{0\atop0}{0\atop{}}{0\atop{}}{0\atop{}}{1\atop{}}}$,
$\scriptstyle{{0\atop{}}{1\atop{}}{1\atop1}{0\atop{}}{0\atop{}}{0\atop{}}{0\atop{}}}$,
$\scriptstyle{{0\atop{}}{0\atop{}}{1\atop0}{1\atop{}}{0\atop{}}{0\atop{}}{0\atop{}}}$,\,
&$\scriptstyle{{1\atop{}}{1\atop{}}{1\atop0}{0\atop{}}{0\atop{}}{0\atop{}}{0\atop{}}}$,
$\scriptstyle{{0\atop{}}{1\atop{}}{1\atop0}{0\atop{}}{0\atop{}}{0\atop{}}{0\atop{}}}$,
$\scriptstyle{{0\atop{}}{0\atop{}}{1\atop1}{0\atop{}}{0\atop{}}{0\atop{}}{0\atop{}}}$,
$\scriptstyle{{0\atop{}}{1\atop{}}{1\atop0}{1\atop{}}{1\atop{}}{0\atop{}}{0\atop{}}}$,\\
$\scriptstyle{{1\atop{}}{1\atop{}}{1\atop0}{1\atop{}}{1\atop{}}{1\atop{}}{0\atop{}}}$,
$\scriptstyle{{0\atop{}}{1\atop{}}{2\atop1}{1\atop{}}{0\atop{}}{0\atop{}}{0\atop{}}}$,
$\scriptstyle{{1\atop{}}{1\atop{}}{2\atop1}{1\atop{}}{0\atop{}}{0\atop{}}{0\atop{}}}$,
$\scriptstyle{{0\atop{}}{1\atop{}}{1\atop0}{1\atop{}}{1\atop{}}{1\atop{}}{0\atop{}}}$,
&$\scriptstyle{{1\atop{}}{1\atop{}}{1\atop0}{1\atop{}}{1\atop{}}{0\atop{}}{0\atop{}}}$,
$\scriptstyle{{1\atop{}}{2\atop{}}{2\atop1}{1\atop{}}{1\atop{}}{0\atop{}}{0\atop{}}}$,
$\scriptstyle{{0\atop{}}{0\atop{}}{0\atop0}{0\atop{}}{1\atop{}}{1\atop{}}{0\atop{}}}$,
$\scriptstyle{{0\atop{}}{0\atop{}}{0\atop0}{0\atop{}}{1\atop{}}{1\atop{}}{1\atop{}}}$,\\
$\scriptstyle{{0\atop{}}{1\atop{}}{1\atop0}{1\atop{}}{1\atop{}}{1\atop{}}{1\atop{}}}$,
$\scriptstyle{{1\atop{}}{1\atop{}}{1\atop0}{1\atop{}}{1\atop{}}{1\atop{}}{1\atop{}}}$,
$\scriptstyle{{1\atop{}}{2\atop{}}{2\atop1}{1\atop{}}{1\atop{}}{1\atop{}}{0\atop{}}}$,
$\scriptstyle{{1\atop{}}{2\atop{}}{2\atop1}{1\atop{}}{1\atop{}}{1\atop{}}{1\atop{}}}$,
&$\scriptstyle{{0\atop{}}{0\atop{}}{1\atop0}{1\atop{}}{1\atop{}}{1\atop{}}{0\atop{}}}$,
$\scriptstyle{{0\atop{}}{0\atop{}}{1\atop0}{1\atop{}}{1\atop{}}{1\atop{}}{1\atop{}}}$,
$\scriptstyle{{0\atop{}}{1\atop{}}{2\atop1}{1\atop{}}{1\atop{}}{1\atop{}}{0\atop{}}}$,
$\scriptstyle{{0\atop{}}{1\atop{}}{2\atop1}{1\atop{}}{1\atop{}}{1\atop{}}{1\atop{}}}$,\\
$\scriptstyle{{1\atop{}}{1\atop{}}{1\atop1}{0\atop{}}{0\atop{}}{0\atop{}}{0\atop{}}}$,
$\scriptstyle{{0\atop{}}{0\atop{}}{0\atop0}{1\atop{}}{1\atop{}}{0\atop{}}{0\atop{}}}$,
$\scriptstyle{{0\atop{}}{1\atop{}}{1\atop1}{1\atop{}}{1\atop{}}{0\atop{}}{0\atop{}}}$,
$\scriptstyle{{1\atop{}}{1\atop{}}{1\atop1}{1\atop{}}{1\atop{}}{0\atop{}}{0\atop{}}}$,
&$\scriptstyle{{1\atop{}}{1\atop{}}{2\atop1}{1\atop{}}{1\atop{}}{1\atop{}}{0\atop{}}}$,
$\scriptstyle{{1\atop{}}{1\atop{}}{2\atop1}{1\atop{}}{1\atop{}}{1\atop{}}{1\atop{}}}$,
$\scriptstyle{{0\atop{}}{0\atop{}}{1\atop1}{1\atop{}}{1\atop{}}{0\atop{}}{0\atop{}}}$,
$\scriptstyle{{0\atop{}}{1\atop{}}{2\atop1}{2\atop{}}{2\atop{}}{1\atop{}}{0\atop{}}}$,\\
$\scriptstyle{{0\atop{}}{0\atop{}}{1\atop1}{1\atop{}}{1\atop{}}{1\atop{}}{0\atop{}}}$,
$\scriptstyle{{1\atop{}}{2\atop{}}{3\atop1}{2\atop{}}{1\atop{}}{1\atop{}}{0\atop{}}}$,
$\scriptstyle{{0\atop{}}{0\atop{}}{1\atop1}{1\atop{}}{1\atop{}}{1\atop{}}{1\atop{}}}$,
$\scriptstyle{{1\atop{}}{2\atop{}}{3\atop1}{2\atop{}}{1\atop{}}{1\atop{}}{1\atop{}}}$,
&$\scriptstyle{{1\atop{}}{2\atop{}}{3\atop2}{2\atop{}}{2\atop{}}{1\atop{}}{1\atop{}}}$,
$\scriptstyle{{1\atop{}}{2\atop{}}{3\atop1}{2\atop{}}{2\atop{}}{2\atop{}}{1\atop{}}}$,
$\scriptstyle{{1\atop{}}{2\atop{}}{3\atop1}{3\atop{}}{3\atop{}}{2\atop{}}{1\atop{}}}$,
$\scriptstyle{{1\atop{}}{3\atop{}}{4\atop2}{3\atop{}}{3\atop{}}{2\atop{}}{1\atop{}}}$,\\
$\scriptstyle{{0\atop{}}{1\atop{}}{2\atop1}{2\atop{}}{1\atop{}}{0\atop{}}{0\atop{}}}$,
$\scriptstyle{{1\atop{}}{1\atop{}}{2\atop1}{2\atop{}}{1\atop{}}{0\atop{}}{0\atop{}}}$,
$\scriptstyle{{1\atop{}}{2\atop{}}{3\atop2}{2\atop{}}{1\atop{}}{0\atop{}}{0\atop{}}}$,
$\scriptstyle{{1\atop{}}{2\atop{}}{2\atop1}{2\atop{}}{2\atop{}}{1\atop{}}{0\atop{}}}$,
&$\scriptstyle{{2\atop{}}{3\atop{}}{4\atop2}{3\atop{}}{3\atop{}}{2\atop{}}{1\atop{}}}$,
$\scriptstyle{{1\atop{}}{2\atop{}}{4\atop2}{3\atop{}}{2\atop{}}{1\atop{}}{0\atop{}}}$,
$\scriptstyle{{1\atop{}}{2\atop{}}{4\atop2}{3\atop{}}{2\atop{}}{1\atop{}}{1\atop{}}}$,
$\scriptstyle{{1\atop{}}{3\atop{}}{5\atop2}{4\atop{}}{3\atop{}}{2\atop{}}{1\atop{}}}$,\\
$\scriptstyle{{1\atop{}}{2\atop{}}{2\atop1}{2\atop{}}{2\atop{}}{1\atop{}}{1\atop{}}}$,
$\scriptstyle{{0\atop{}}{1\atop{}}{2\atop1}{2\atop{}}{2\atop{}}{2\atop{}}{1\atop{}}}$,
$\scriptstyle{{1\atop{}}{1\atop{}}{2\atop1}{2\atop{}}{2\atop{}}{2\atop{}}{1\atop{}}}$,
$\scriptstyle{{1\atop{}}{2\atop{}}{3\atop2}{2\atop{}}{2\atop{}}{2\atop{}}{1\atop{}}}$,
&$\scriptstyle{{2\atop{}}{3\atop{}}{5\atop2}{4\atop{}}{3\atop{}}{2\atop{}}{1\atop{}}}$,
$\scriptstyle{{2\atop{}}{4\atop{}}{6\atop3}{4\atop{}}{3\atop{}}{2\atop{}}{1\atop{}}}$,
$\scriptstyle{{2\atop{}}{4\atop{}}{6\atop3}{5\atop{}}{4\atop{}}{2\atop{}}{1\atop{}}}$,
$\scriptstyle{{1\atop{}}{2\atop{}}{3\atop1}{2\atop{}}{1\atop{}}{0\atop{}}{0\atop{}}}$,\\
$\scriptstyle{{1\atop{}}{2\atop{}}{3\atop1}{3\atop{}}{2\atop{}}{1\atop{}}{0\atop{}}}$,
$\scriptstyle{{1\atop{}}{2\atop{}}{3\atop1}{3\atop{}}{2\atop{}}{1\atop{}}{1\atop{}}}$,
$\scriptstyle{{1\atop{}}{3\atop{}}{4\atop2}{3\atop{}}{2\atop{}}{1\atop{}}{0\atop{}}}$,
$\scriptstyle{{1\atop{}}{3\atop{}}{4\atop2}{3\atop{}}{2\atop{}}{1\atop{}}{1\atop{}}}$,&
$\scriptstyle{{0\atop{}}{1\atop{}}{2\atop1}{2\atop{}}{2\atop{}}{1\atop{}}{1\atop{}}}$,
$\scriptstyle{{1\atop{}}{1\atop{}}{2\atop1}{2\atop{}}{2\atop{}}{1\atop{}}{0\atop{}}}$,
$\scriptstyle{{1\atop{}}{2\atop{}}{3\atop2}{2\atop{}}{2\atop{}}{1\atop{}}{0\atop{}}}$,
$\scriptstyle{{1\atop{}}{1\atop{}}{2\atop1}{2\atop{}}{2\atop{}}{1\atop{}}{1\atop{}}}$.\\
$\scriptstyle{{2\atop{}}{3\atop{}}{4\atop2}{3\atop{}}{2\atop{}}{1\atop{}}{0\atop{}}}$,
$\scriptstyle{{2\atop{}}{3\atop{}}{4\atop2}{3\atop{}}{2\atop{}}{1\atop{}}{1\atop{}}}$,
$\scriptstyle{{2\atop{}}{4\atop{}}{5\atop2}{4\atop{}}{3\atop{}}{2\atop{}}{1\atop{}}}$,
$\scriptstyle{{1\atop{}}{2\atop{}}{4\atop2}{4\atop{}}{3\atop{}}{2\atop{}}{1\atop{}}}$,&\\
$\scriptstyle{{1\atop{}}{3\atop{}}{5\atop3}{4\atop{}}{3\atop{}}{2\atop{}}{1\atop{}}}$,
$\scriptstyle{{2\atop{}}{3\atop{}}{5\atop3}{4\atop{}}{3\atop{}}{2\atop{}}{1\atop{}}}$,
$\scriptstyle{{2\atop{}}{4\atop{}}{6\atop3}{5\atop{}}{4\atop{}}{3\atop{}}{1\atop{}}}$,
$\scriptstyle{{2\atop{}}{4\atop{}}{6\atop3}{5\atop{}}{4\atop{}}{3\atop{}}{2\atop{}}}$,&\\
$\scriptstyle{{1\atop{}}{2\atop{}}{4\atop2}{3\atop{}}{2\atop{}}{2\atop{}}{1\atop{}}}$,
$\scriptstyle{{1\atop{}}{2\atop{}}{3\atop2}{3\atop{}}{3\atop{}}{2\atop{}}{1\atop{}}}$.&\\
\hline
\end{tabular}
\end{table}
Direct computation shows that the $(1,2,3,5,7,8)$-contributions of
the $0$-roots and $1$-roots are $(36,52,70,84,42,22)$ and
$(24,36,48,58,32,16)$, respectively. So the total contribution of
all roots is $(30,44,59,71,37,19)$. In view of [\cite{Bo},
Planche~VII] Condition~(C) for 
$\lambda+\rho=\sum_{i=1}^8a_i\varpi_i$
reads
\begin{eqnarray*}
4a_1+5a_2+7a_3+10a_4+8a_5+6a_6+4a_7+2a_8&=&30\\
5a_1+8a_2+10a_3+15a_4+12a_5+9a_6+6a_7+3a_8&=&44\\
7a_1+10a_2+14a_3+20a_4+16a_5+12a_6+8a_7+4a_8&=&59\\
8a_1+12a_2+16a_3+24a_4+20a_5+15a_6+10a_7+5a_8&=&71\\
4a_1+6a_2+8a_3+12a_4+10a_5+8a_6+6a_7+3a_8&=&37\\
2a_1+3a_2+4a_3+6a_4+5a_5+4a_6+3a_7+2a_8&=&19.
\end{eqnarray*} Setting $a_i=0$ for $i=4,6$ and $a_i=1$ for $i\ne 4,6$ we obtain a nice solution
to this system of linear equations which leads to
the weight $$\lambda+\rho=\,\rho-\varpi_4-
\varpi_6.$$
As
$\langle\lambda,\alpha_4^\vee\rangle=\langle\lambda,\alpha_6^\vee\rangle=0$,
it also satisfies Condition (A). Since in the present case
$e$ is special and $e^\vee\in\g^\vee $ has Dynkin label ${\sf
E_8(a_2)}$, we see that $\lambda+\rho=\frac{1}{2}h^\vee$. So
[\cite{BV}, Proposition~5.10] shows that this weight also satisfies
Condition (B). Condition (D) is obvious because $e$ has standard
Levi type. Applying [\cite{Lo2}, 5.3] and [\cite{PT},
Proposition~11] we conclude that $I(\lambda)=I(-\varpi_4-\varpi_6)$
is the only multiplicity-free primitive ideal in
$\mathcal{X}_{\O(e)}$ (one should also keep in mind here that
$[\g_e,\g_e]=\g_e$ by [\cite{dG}]).
\subsection{Type $({\sf E_8, 3A_1})$}\label{3.3}
In this case our pinning for $e$ is
\[\xymatrix{*{\bullet}\ar@{-}[r]& *{\circ} \ar@{-}[r] & *{\bullet} \ar@{-}[r]\ar@{-}[d] & *{\circ} \ar@{-}[r]
 & *{\bullet} \ar@{-}[r]&*{\circ}\ar@{-}[r]&*{\circ}\\ && *{\circ}&&&}\]
and the optimal cocharacter for $e$ is given by
$$\tau=\textstyle{{2\atop{}}{(-2)\atop{}}{2\atop (-1)}
{(-2)\atop{}}{2\atop{}}{(-1)\atop{}}{0\atop{}}}.$$ Since
$\dim\,\g_e=136$, the total number of positive $0$-roots and
$1$-roots is $(136-8)/2=64$. The roots are given below. 

\begin{table}[htb]
\label{data2}
\begin{tabular}{|c|c|}
\hline$0$-roots  & $1$-roots
\\ \hline
$\scriptstyle{{1\atop{}}{1\atop{}}{0\atop0}{0\atop{}}{0\atop{}}{0\atop{}}{0\atop{}}}$,
$\scriptstyle{{0\atop{}}{1\atop{}}{1\atop0}{0\atop{}}{0\atop{}}{0\atop{}}{0\atop{}}}$,
$\scriptstyle{{0\atop{}}{0\atop{}}{1\atop0}{1\atop{}}{0\atop{}}{0\atop{}}{0\atop{}}}$,
$\scriptstyle{{0\atop{}}{0\atop{}}{0\atop0}{1\atop{}}{1\atop{}}{0\atop{}}{0\atop{}}}$,\,
&$\scriptstyle{{0\atop{}}{0\atop{}}{1\atop1}{0\atop{}}{0\atop{}}{0\atop{}}{0\atop{}}}$,
$\scriptstyle{{0\atop{}}{0\atop{}}{0\atop0}{0\atop{}}{1\atop{}}{1\atop{}}{0\atop{}}}$,
$\scriptstyle{{0\atop{}}{0\atop{}}{0\atop0}{0\atop{}}{1\atop{}}{1\atop{}}{1\atop{}}}$,
$\scriptstyle{{1\atop{}}{1\atop{}}{1\atop1}{0\atop{}}{0\atop{}}{0\atop{}}{0\atop{}}}$,\\
$\scriptstyle{{0\atop{}}{0\atop{}}{0\atop0}{0\atop{}}{0\atop{}}{0\atop{}}{1\atop{}}}$,
$\scriptstyle{{1\atop{}}{1\atop{}}{1\atop0}{1\atop{}}{0\atop{}}{0\atop{}}{0\atop{}}}$,
$\scriptstyle{{0\atop{}}{1\atop{}}{1\atop0}{1\atop{}}{1\atop{}}{0\atop{}}{0\atop{}}}$,
$\scriptstyle{{0\atop{}}{0\atop{}}{1\atop1}{1\atop{}}{1\atop{}}{1\atop{}}{0\atop{}}}$,
&$\scriptstyle{{0\atop{}}{0\atop{}}{1\atop1}{1\atop{}}{1\atop{}}{0\atop{}}{0\atop{}}}$,
$\scriptstyle{{0\atop{}}{0\atop{}}{1\atop0}{1\atop{}}{1\atop{}}{1\atop{}}{0\atop{}}}$,
$\scriptstyle{{0\atop{}}{0\atop{}}{1\atop0}{1\atop{}}{1\atop{}}{1\atop{}}{1\atop{}}}$,
$\scriptstyle{{1\atop{}}{1\atop{}}{1\atop1}{1\atop{}}{1\atop{}}{0\atop{}}{0\atop{}}}$,\\
$\scriptstyle{{0\atop{}}{0\atop{}}{1\atop1}{1\atop{}}{1\atop{}}{1\atop{}}{1\atop{}}}$,
$\scriptstyle{{1\atop{}}{1\atop{}}{1\atop1}{1\atop{}}{1\atop{}}{1\atop{}}{0\atop{}}}$,
$\scriptstyle{{1\atop{}}{1\atop{}}{1\atop1}{1\atop{}}{1\atop{}}{1\atop{}}{1\atop{}}}$,
$\scriptstyle{{1\atop{}}{2\atop{}}{2\atop1}{1\atop{}}{1\atop{}}{1\atop{}}{0\atop{}}}$,
&$\scriptstyle{{1\atop{}}{1\atop{}}{1\atop0}{1\atop{}}{1\atop{}}{1\atop{}}{0\atop{}}}$,
$\scriptstyle{{1\atop{}}{1\atop{}}{1\atop0}{1\atop{}}{1\atop{}}{1\atop{}}{1\atop{}}}$,
$\scriptstyle{{1\atop{}}{1\atop{}}{2\atop1}{1\atop{}}{0\atop{}}{0\atop{}}{0\atop{}}}$,
$\scriptstyle{{0\atop{}}{1\atop{}}{2\atop1}{1\atop{}}{1\atop{}}{0\atop{}}{0\atop{}}}$,\\
$\scriptstyle{{1\atop{}}{2\atop{}}{2\atop1}{1\atop{}}{1\atop{}}{1\atop{}}{1\atop{}}}$,
$\scriptstyle{{1\atop{}}{1\atop{}}{2\atop1}{2\atop{}}{1\atop{}}{1\atop{}}{0\atop{}}}$,
$\scriptstyle{{1\atop{}}{1\atop{}}{2\atop1}{2\atop{}}{1\atop{}}{1\atop{}}{1\atop{}}}$,
$\scriptstyle{{0\atop{}}{1\atop{}}{2\atop1}{2\atop{}}{2\atop{}}{1\atop{}}{0\atop{}}}$,
&$\scriptstyle{{1\atop{}}{2\atop{}}{2\atop1}{1\atop{}}{1\atop{}}{0\atop{}}{0\atop{}}}$,
$\scriptstyle{{1\atop{}}{1\atop{}}{2\atop1}{2\atop{}}{1\atop{}}{0\atop{}}{0\atop{}}}$,
$\scriptstyle{{1\atop{}}{1\atop{}}{2\atop1}{2\atop{}}{2\atop{}}{2\atop{}}{1\atop{}}}$,
$\scriptstyle{{1\atop{}}{2\atop{}}{3\atop1}{2\atop{}}{1\atop{}}{0\atop{}}{0\atop{}}}$,\\
$\scriptstyle{{0\atop{}}{1\atop{}}{2\atop1}{2\atop{}}{2\atop{}}{1\atop{}}{1\atop{}}}$,
$\scriptstyle{{1\atop{}}{2\atop{}}{2\atop1}{2\atop{}}{2\atop{}}{1\atop{}}{0\atop{}}}$,
$\scriptstyle{{1\atop{}}{2\atop{}}{2\atop1}{2\atop{}}{2\atop{}}{1\atop{}}{1\atop{}}}$,
$\scriptstyle{{1\atop{}}{2\atop{}}{3\atop1}{3\atop{}}{2\atop{}}{1\atop{}}{0\atop{}}}$,
&$\scriptstyle{{1\atop{}}{2\atop{}}{3\atop1}{2\atop{}}{2\atop{}}{2\atop{}}{1\atop{}}}$,
$\scriptstyle{{1\atop{}}{2\atop{}}{3\atop2}{2\atop{}}{2\atop{}}{1\atop{}}{1\atop{}}}$,
$\scriptstyle{{1\atop{}}{2\atop{}}{3\atop2}{2\atop{}}{2\atop{}}{1\atop{}}{0\atop{}}}$,
$\scriptstyle{{1\atop{}}{2\atop{}}{3\atop1}{3\atop{}}{3\atop{}}{2\atop{}}{1\atop{}}}$,\\
$\scriptstyle{{1\atop{}}{2\atop{}}{3\atop1}{3\atop{}}{2\atop{}}{1\atop{}}{1\atop{}}}$,
$\scriptstyle{{1\atop{}}{2\atop{}}{3\atop2}{2\atop{}}{1\atop{}}{0\atop{}}{0\atop{}}}$,
$\scriptstyle{{1\atop{}}{2\atop{}}{3\atop2}{3\atop{}}{3\atop{}}{2\atop{}}{1\atop{}}}$,
$\scriptstyle{{1\atop{}}{2\atop{}}{4\atop2}{3\atop{}}{2\atop{}}{2\atop{}}{1\atop{}}}$,
&$\scriptstyle{{1\atop{}}{2\atop{}}{4\atop2}{3\atop{}}{2\atop{}}{1\atop{}}{0\atop{}}}$,
$\scriptstyle{{1\atop{}}{2\atop{}}{4\atop2}{3\atop{}}{2\atop{}}{1\atop{}}{1\atop{}}}$,
$\scriptstyle{{2\atop{}}{3\atop{}}{4\atop2}{3\atop{}}{2\atop{}}{1\atop{}}{0\atop{}}}$,
$\scriptstyle{{2\atop{}}{3\atop{}}{4\atop2}{3\atop{}}{2\atop{}}{1\atop{}}{1\atop{}}}$,\\
$\scriptstyle{{1\atop{}}{3\atop{}}{4\atop2}{3\atop{}}{3\atop{}}{2\atop{}}{1\atop{}}}$,
$\scriptstyle{{2\atop{}}{3\atop{}}{4\atop2}{3\atop{}}{2\atop{}}{2\atop{}}{1\atop{}}}$,
$\scriptstyle{{2\atop{}}{3\atop{}}{4\atop2}{4\atop{}}{3\atop{}}{2\atop{}}{1\atop{}}}$,
$\scriptstyle{{1\atop{}}{3\atop{}}{5\atop2}{4\atop{}}{3\atop{}}{2\atop{}}{1\atop{}}}$,
&$\scriptstyle{{2\atop{}}{3\atop{}}{5\atop3}{4\atop{}}{3\atop{}}{2\atop{}}{1\atop{}}}$,
$\scriptstyle{{2\atop{}}{4\atop{}}{6\atop3}{4\atop{}}{3\atop{}}{2\atop{}}{1\atop{}}}$,
$\scriptstyle{{2\atop{}}{4\atop{}}{6\atop3}{5\atop{}}{4\atop{}}{2\atop{}}{1\atop{}}}$.\\
$\scriptstyle{{1\atop{}}{2\atop{}}{3\atop2}{2\atop{}}{2\atop{}}{2\atop{}}{1\atop{}}}$,
$\scriptstyle{{2\atop{}}{4\atop{}}{5\atop2}{4\atop{}}{3\atop{}}{2\atop{}}{1\atop{}}}$,
$\scriptstyle{{2\atop{}}{4\atop{}}{6\atop3}{5\atop{}}{4\atop{}}{3\atop{}}{1\atop{}}}$,
$\scriptstyle{{2\atop{}}{4\atop{}}{6\atop3}{5\atop{}}{4\atop{}}{3\atop{}}{2\atop{}}}$,&
\\
$\scriptstyle{{0\atop{}}{1\atop{}}{2\atop1}{1\atop{}}{1\atop{}}{1\atop{}}{0\atop{}}}$,
$\scriptstyle{{0\atop{}}{1\atop{}}{2\atop1}{1\atop{}}{1\atop{}}{1\atop{}}{1\atop{}}}$,
$\scriptstyle{{1\atop{}}{2\atop{}}{3\atop1}{2\atop{}}{1\atop{}}{1\atop{}}{0\atop{}}}$,
$\scriptstyle{{1\atop{}}{2\atop{}}{3\atop1}{2\atop{}}{1\atop{}}{1\atop{}}{1\atop{}}}$,&\\
$\scriptstyle{{1\atop{}}{2\atop{}}{4\atop2}{4\atop{}}{3\atop{}}{2\atop{}}{1\atop{}}}$.&\\
\hline
\end{tabular}
\end{table}
It is straightforward to see that the $(2,3,5,7,8)$-contributions of
the $0$-roots and $1$-roots are $(44,62,76,42,22)$ and
$(33,41,49,24,12)$, respectively. So the total contribution of all
roots is
$\big(\frac{77}{2},\frac{103}{2},\frac{125}{2},\frac{66}{2},\frac{34}{2}\big)$.
Since in the present case the orbit $\O(e)$ is non-special, it would
be impossible for us to find $\lambda+\rho=\sum_{i=1}^8\,a_i\varpi_i$ with
$a_i\in\Z$ satisfying Conditions (A), (B), (C), (D). All things
considered, it seems reasonable to seek $\lambda+\rho=\sum_{i=1}^8a_i\varpi_i$ with 
$a_i\in\frac{1}{2}\Z$. (We have already mentioned at the end of Section~2
that guessing plays an important role in this investigation.)
Setting $\widetilde{a}_i:=2a_i$ we rewrite Condition~(C) for
$\lambda+\rho$ as follows:
\begin{eqnarray*}
5\widetilde{a}_1+8\widetilde{a}_2+10\widetilde{a}_3+15\widetilde{a}_4+12\widetilde{a}_5+
9\widetilde{a}_6+6\widetilde{a}_7+3\widetilde{a}_8&=&77\\
7\widetilde{a}_1+10\widetilde{a}_2+14\widetilde{a}_3+20\widetilde{a}_4+
16\widetilde{a}_5+12\widetilde{a}_6+8\widetilde{a}_7+4\widetilde{a}_8&=&103\\
8\widetilde{a}_1+12\widetilde{a}_2+16\widetilde{a}_3+24\widetilde{a}_4+
20\widetilde{a}_5+15\widetilde{a}_6+10\widetilde{a}_7+5\widetilde{a}_8&=&125\\
4\widetilde{a}_1+6\widetilde{a}_2+8\widetilde{a}_3+12\widetilde{a}_4+
10\widetilde{a}_5+8\widetilde{a}_6+6\widetilde{a}_7+3\widetilde{a}_8&=&66\\
2\widetilde{a}_1+3\widetilde{a}_2+4\widetilde{a}_3+6\widetilde{a}_4+
5\widetilde{a}_5+4\widetilde{a}_6+3\widetilde{a}_7+2\widetilde{a}_8&=&34.
\end{eqnarray*}
A fairly nice solution to this system of linear
equations is given by setting $\widetilde{a}_i=1$ for $1\le i\le 6$ and
$\widetilde{a}_i=2$ for $i=7,8$. Now we need to check (with fingers
crossed) that it satisfies the rest of
Losev's conditions.

First we observe that the integral root system
$\Phi_{\lambda+\rho}=\Phi_\lambda$ of our weight
$$\lambda+\rho=\textstyle{\frac{1}{2}}(\rho+\varpi_7+\varpi_8)$$ contains the positive roots
$$
\xymatrix{{\beta_1}\ar@{-}[r]& {\beta_3} \ar@{-}[r] & {\beta_4}
\ar@{-}[r]\ar@{-}[d] & {\beta_5} \ar@{-}[r]
 & {\beta_6}\ar@{-}[r] &{\beta_7} &{\beta_8}\\ && {\beta_2}&&&}
$$
where
\begin{eqnarray*}
\beta_1\,=\,{\scriptstyle{0\atop{}}{0\atop{}}{0\atop0}{0\atop{}}{0\atop{}}{0\atop{}}{1\atop{}}},\
\
\beta_2\,=\,{\scriptstyle{0\atop{}}{1\atop{}}{1\atop0}{0\atop{}}{0\atop{}}{0\atop{}}{0\atop{}}},\
\
\beta_3\,=\,{\scriptstyle{0\atop{}}{0\atop{}}{0\atop0}{0\atop{}}{0\atop{}}{1\atop{}}{0\atop{}}},\
\
\beta_4\,=\,{\scriptstyle{0\atop{}}{0\atop{}}{0\atop0}{1\atop{}}{1\atop{}}{0\atop{}}{0\atop{}}},\\
\beta_5\,=\,{\scriptstyle{0\atop{}}{0\atop{}}{1\atop1}{0\atop{}}{0\atop{}}{0\atop{}}{0\atop{}}},\
\
\beta_6\,=\,{\scriptstyle{1\atop{}}{1\atop{}}{0\atop0}{0\atop{}}{0\atop{}}{0\atop{}}{0\atop{}}},\
\
\beta_7\,=\,{\scriptstyle{0\atop{}}{0\atop{}}{1\atop0}{1\atop{}}{0\atop{}}{0\atop{}}{0\atop{}}},\
\
\beta_8\,=\,{\scriptstyle{0\atop{}}{1\atop{}}{1\atop1}{1\atop{}}{0\atop{}}{0\atop{}}{0\atop{}}},
\end{eqnarray*}
and hence has type ${\sf E_7+A_1}$ (by the maximality of that root
subsystem). Moreover, the roots $\{\beta_i\,|\,\,1\le i\le 8\}$ form
the basis of $\Phi_{\lambda}$ contained in $\Phi^+$. It if
straightforward to see that $\lambda+\rho$ is strongly dominant (and
integral) on $\Phi_\lambda^+=\Phi^+\cap\Phi_\lambda$. Since
$$|\Phi^+|-|\Phi_\lambda^+|=120-64=56=\textstyle{\frac{1}{2}}(248-136)=\frac{1}{2}\dim\,\O(e)$$
we can apply [\cite{Jo1}, Corollary~3.5] to deduce that $\dim\,{\rm
VA}(I(\lambda))=\dim\,\O(e)$. Since
$\langle\lambda+\rho,\alpha_i^\vee\rangle=\frac{1}{2}$ for
$i\in\{1,4,6\}$, our choice of pinning shows that $\lambda+\rho$
satisfies Condition~(A). Then $e\in{\rm VA}(I(\lambda))$ and the
above yields ${\rm VA}(I(\lambda))=\overline{\O(e)}$. Therefore,
$\lambda+\rho$ satisfies Condition~(B) as well. Condition~(D) is
vacuous as $e$ has standard Levi type. Since in the present case
$\g_e=[\g_e,\g_e]$ by [\cite{dG}] we can apply [\cite{Lo2}, 5.3] and
[\cite{PT}, Proposition~11] to conclude that
$I(-\frac{1}{2}(\varpi_1+\varpi_2+\varpi_3+\varpi_4+\varpi_5+\varpi_6))$
is the only multiplicity-free primitive ideal in
$\mathcal{X}_{\O(e)}$.

\subsection{Type $({\sf E_8, 4A_1})$}\label{3.4}
In this case our pinning for $e$ is
\[\xymatrix{*{\circ}\ar@{-}[r]& *{\bullet} \ar@{-}[r] & *{\circ} \ar@{-}[r]\ar@{-}[d] & *{\bullet} \ar@{-}[r]
 & *{\circ} \ar@{-}[r]&*{\bullet}\ar@{-}[r]&*{\circ}\\ && *{\bullet}&&&}\]
Then we can choose the optimal cocharacter for $e$ as follows:
$$\tau=\textstyle{{(-1)\atop{}}{2\atop{}}{(-3)\atop 2}
{2\atop{}}{(-2)\atop{}}{2\atop{}}{(-1)\atop{}}}.$$ As in the present
case $\dim\,\g_e=120$, the total number of positive $0$-roots and
$1$-roots is $(120-8)/2=56$. The roots are listed below.

\begin{table}[htb]
\label{data2}
\begin{tabular}{|c|c|}
\hline$0$-roots  & $1$-roots
\\ \hline
$\scriptstyle{{1\atop{}}{1\atop{}}{1\atop1}{0\atop{}}{0\atop{}}{0\atop{}}{0\atop{}}}$,
$\scriptstyle{{1\atop{}}{1\atop{}}{1\atop0}{1\atop{}}{0\atop{}}{0\atop{}}{0\atop{}}}$,
$\scriptstyle{{0\atop{}}{0\atop{}}{0\atop0}{1\atop{}}{1\atop{}}{0\atop{}}{0\atop{}}}$,
$\scriptstyle{{0\atop{}}{0\atop{}}{0\atop0}{0\atop{}}{1\atop{}}{1\atop{}}{0\atop{}}}$,\,
&$\scriptstyle{{1\atop{}}{1\atop{}}{0\atop0}{0\atop{}}{0\atop{}}{0\atop{}}{0\atop{}}}$,
$\scriptstyle{{0\atop{}}{1\atop{}}{1\atop1}{0\atop{}}{0\atop{}}{0\atop{}}{0\atop{}}}$,
$\scriptstyle{{0\atop{}}{0\atop{}}{1\atop1}{1\atop{}}{0\atop{}}{0\atop{}}{0\atop{}}}$,
$\scriptstyle{{0\atop{}}{0\atop{}}{0\atop0}{0\atop{}}{0\atop{}}{1\atop{}}{1\atop{}}}$,\\
$\scriptstyle{{0\atop{}}{1\atop{}}{1\atop0}{1\atop{}}{1\atop{}}{1\atop{}}{1\atop{}}}$,
$\scriptstyle{{0\atop{}}{0\atop{}}{1\atop1}{1\atop{}}{1\atop{}}{1\atop{}}{1\atop{}}}$,
$\scriptstyle{{1\atop{}}{1\atop{}}{1\atop1}{1\atop{}}{1\atop{}}{0\atop{}}{0\atop{}}}$,
$\scriptstyle{{1\atop{}}{1\atop{}}{1\atop0}{1\atop{}}{1\atop{}}{1\atop{}}{0\atop{}}}$,
&$\scriptstyle{{0\atop{}}{0\atop{}}{0\atop0}{1\atop{}}{1\atop{}}{1\atop{}}{1\atop{}}}$,
$\scriptstyle{{0\atop{}}{1\atop{}}{1\atop1}{1\atop{}}{1\atop{}}{0\atop{}}{0\atop{}}}$,
$\scriptstyle{{0\atop{}}{1\atop{}}{1\atop0}{1\atop{}}{1\atop{}}{1\atop{}}{0\atop{}}}$,
$\scriptstyle{{1\atop{}}{1\atop{}}{1\atop1}{1\atop{}}{1\atop{}}{1\atop{}}{1\atop{}}}$,\\
$\scriptstyle{{0\atop{}}{1\atop{}}{2\atop1}{1\atop{}}{1\atop{}}{1\atop{}}{0\atop{}}}$,
$\scriptstyle{{1\atop{}}{1\atop{}}{2\atop1}{2\atop{}}{1\atop{}}{0\atop{}}{0\atop{}}}$,
$\scriptstyle{{1\atop{}}{1\atop{}}{2\atop1}{2\atop{}}{1\atop{}}{1\atop{}}{1\atop{}}}$,
$\scriptstyle{{1\atop{}}{2\atop{}}{2\atop1}{1\atop{}}{1\atop{}}{1\atop{}}{1\atop{}}}$,
&$\scriptstyle{{1\atop{}}{2\atop{}}{2\atop1}{2\atop{}}{1\atop{}}{0\atop{}}{0\atop{}}}$,
$\scriptstyle{{1\atop{}}{1\atop{}}{2\atop1}{2\atop{}}{1\atop{}}{1\atop{}}{0\atop{}}}$,
$\scriptstyle{{0\atop{}}{1\atop{}}{2\atop1}{2\atop{}}{1\atop{}}{1\atop{}}{1\atop{}}}$,
$\scriptstyle{{1\atop{}}{2\atop{}}{2\atop1}{2\atop{}}{2\atop{}}{1\atop{}}{0\atop{}}}$,\\
$\scriptstyle{{1\atop{}}{2\atop{}}{3\atop2}{2\atop{}}{1\atop{}}{0\atop{}}{0\atop{}}}$,
$\scriptstyle{{1\atop{}}{2\atop{}}{3\atop1}{2\atop{}}{1\atop{}}{1\atop{}}{0\atop{}}}$,
$\scriptstyle{{1\atop{}}{2\atop{}}{3\atop2}{2\atop{}}{2\atop{}}{1\atop{}}{0\atop{}}}$,
$\scriptstyle{{1\atop{}}{2\atop{}}{3\atop1}{3\atop{}}{2\atop{}}{1\atop{}}{0\atop{}}}$,
&$\scriptstyle{{0\atop{}}{1\atop{}}{2\atop1}{2\atop{}}{2\atop{}}{2\atop{}}{1\atop{}}}$,
$\scriptstyle{{1\atop{}}{2\atop{}}{3\atop2}{2\atop{}}{2\atop{}}{2\atop{}}{1\atop{}}}$,
$\scriptstyle{{1\atop{}}{2\atop{}}{3\atop1}{3\atop{}}{2\atop{}}{2\atop{}}{1\atop{}}}$,
$\scriptstyle{{1\atop{}}{2\atop{}}{3\atop2}{3\atop{}}{2\atop{}}{1\atop{}}{1\atop{}}}$,\\
$\scriptstyle{{1\atop{}}{3\atop{}}{4\atop2}{3\atop{}}{2\atop{}}{1\atop{}}{1\atop{}}}$,
$\scriptstyle{{1\atop{}}{2\atop{}}{4\atop2}{3\atop{}}{2\atop{}}{2\atop{}}{1\atop{}}}$,
$\scriptstyle{{1\atop{}}{3\atop{}}{4\atop2}{3\atop{}}{3\atop{}}{2\atop{}}{1\atop{}}}$,
$\scriptstyle{{1\atop{}}{2\atop{}}{4\atop2}{4\atop{}}{3\atop{}}{2\atop{}}{1\atop{}}}$,
&$\scriptstyle{{1\atop{}}{2\atop{}}{3\atop2}{3\atop{}}{3\atop{}}{2\atop{}}{1\atop{}}}$,
$\scriptstyle{{1\atop{}}{3\atop{}}{5\atop3}{4\atop{}}{3\atop{}}{2\atop{}}{1\atop{}}}$,
$\scriptstyle{{1\atop{}}{3\atop{}}{4\atop2}{3\atop{}}{2\atop{}}{1\atop{}}{0\atop{}}}$,
$\scriptstyle{{0\atop{}}{1\atop{}}{1\atop0}{1\atop{}}{0\atop{}}{0\atop{}}{0\atop{}}}$,\\
$\scriptstyle{{2\atop{}}{3\atop{}}{5\atop3}{4\atop{}}{3\atop{}}{2\atop{}}{1\atop{}}}$,
$\scriptstyle{{2\atop{}}{4\atop{}}{5\atop2}{4\atop{}}{3\atop{}}{2\atop{}}{1\atop{}}}$,
$\scriptstyle{{0\atop{}}{1\atop{}}{2\atop1}{2\atop{}}{2\atop{}}{1\atop{}}{0\atop{}}}$,
$\scriptstyle{{2\atop{}}{3\atop{}}{4\atop2}{3\atop{}}{2\atop{}}{1\atop{}}{0\atop{}}}$,
&$\scriptstyle{{2\atop{}}{3\atop{}}{4\atop2}{3\atop{}}{2\atop{}}{2\atop{}}{1\atop{}}}$,
$\scriptstyle{{2\atop{}}{3\atop{}}{4\atop2}{4\atop{}}{3\atop{}}{2\atop{}}{1\atop{}}}$,
$\scriptstyle{{0\atop{}}{0\atop{}}{1\atop1}{1\atop{}}{1\atop{}}{1\atop{}}{0\atop{}}}$,
$\scriptstyle{{1\atop{}}{2\atop{}}{2\atop1}{1\atop{}}{1\atop{}}{1\atop{}}{0\atop{}}}$,\\
$\scriptstyle{{2\atop{}}{4\atop{}}{6\atop3}{5\atop{}}{4\atop{}}{3\atop{}}{2\atop{}}}$,
$\scriptstyle{{1\atop{}}{1\atop{}}{2\atop1}{2\atop{}}{2\atop{}}{2\atop{}}{1\atop{}}}$,
$\scriptstyle{{1\atop{}}{2\atop{}}{2\atop1}{2\atop{}}{2\atop{}}{1\atop{}}{1\atop{}}}$,
$\scriptstyle{{0\atop{}}{1\atop{}}{2\atop1}{1\atop{}}{0\atop{}}{0\atop{}}{0\atop{}}}$.
&$\scriptstyle{{2\atop{}}{4\atop{}}{6\atop3}{5\atop{}}{3\atop{}}{2\atop{}}{1\atop{}}}$,
$\scriptstyle{{2\atop{}}{4\atop{}}{6\atop3}{5\atop{}}{4\atop{}}{3\atop{}}{1\atop{}}}$,
$\scriptstyle{{1\atop{}}{2\atop{}}{2\atop1}{1\atop{}}{0\atop{}}{0\atop{}}{0\atop{}}}$,
$\scriptstyle{{1\atop{}}{2\atop{}}{3\atop2}{2\atop{}}{1\atop{}}{1\atop{}}{1\atop{}}}$.
\\
\hline
\end{tabular}
\end{table}
Computations show that the $(1,4,6,8)$-contributions of the
$0$-roots and $1$-roots are $(24,70,44,14)$ and $(22,65,40,15)$,
respectively, and the total contribution of all roots is
$\big(\frac{46}{2},\frac{135}{2},\frac{84}{2},\frac{29}{2}\big)$.
Since in the present case the orbit $\O(e)$ is non-special, it seems
reasonable to seek $\lambda+\rho=\sum_{i=1}^8a_i\varpi_i$ with all $a_i$ half-integers (we
have to guess again at this point). Setting $\widetilde{a}_i:=2a_i$
we rewrite Condition~(C) for $\lambda+\rho$ as follows:
\begin{eqnarray*}
4\widetilde{a}_1+5\widetilde{a}_2+7\widetilde{a}_3+10\widetilde{a}_4+8\widetilde{a}_5+
6\widetilde{a}_6+4\widetilde{a}_7+2\widetilde{a}_8&=&46\\
10\widetilde{a}_1+15\widetilde{a}_2+20\widetilde{a}_3+30\widetilde{a}_4+
24\widetilde{a}_5+18\widetilde{a}_6+12\widetilde{a}_7+6\widetilde{a}_8&=&135\\
6\widetilde{a}_1+9\widetilde{a}_2+12\widetilde{a}_3+18\widetilde{a}_4+
15\widetilde{a}_5+12\widetilde{a}_6+8\widetilde{a}_7+4\widetilde{a}_8&=&84\\
2\widetilde{a}_1+3\widetilde{a}_2+4\widetilde{a}_3+6\widetilde{a}_4+
5\widetilde{a}_5+4\widetilde{a}_6+3\widetilde{a}_7+2\widetilde{a}_8&=&29.
\end{eqnarray*}
A really nice solution to this system of linear
equations is given by setting $\widetilde{a}_i=1$ for $1\le i\le
8$. Of course, we still need to check that it
satisfies Conditions (A), (B) and (D).

We note that the integral root system
$\Phi_{\lambda+\rho}=\Phi_\lambda$ of our weight
$\lambda+\rho=\textstyle{\frac{1}{2}}\rho$ contains the positive
roots
$$
\xymatrix{{\beta_1}\ar@{-}[r]& {\beta_2} \ar@{-}[r] & {\beta_3}
\ar@{-}[r] & {\beta_4} \ar@{-}[r]
 & {\beta_5} \ar@{-}[r]&{\beta_6}\ar@{-}[r]\ar@{-}[d] &{\beta_7}\\ &&&&& {\beta_8}}
$$
where
\begin{eqnarray*}
\beta_1\,=\,{\scriptstyle{0\atop{}}{1\atop{}}{1\atop1}{1\atop{}}{0\atop{}}{0\atop{}}{0\atop{}}},\
\
\beta_2\,=\,{\scriptstyle{0\atop{}}{0\atop{}}{0\atop0}{0\atop{}}{1\atop{}}{1\atop{}}{0\atop{}}},\
\
\beta_3\,=\,{\scriptstyle{0\atop{}}{0\atop{}}{1\atop0}{1\atop{}}{0\atop{}}{0\atop{}}{0\atop{}}},\
\
\beta_4\,=\,{\scriptstyle{1\atop{}}{1\atop{}}{0\atop0}{0\atop{}}{0\atop{}}{0\atop{}}{0\atop{}}},\\
\beta_5\,=\,{\scriptstyle{0\atop{}}{0\atop{}}{1\atop1}{0\atop{}}{0\atop{}}{0\atop{}}{0\atop{}}},\
\
\beta_6\,=\,{\scriptstyle{0\atop{}}{0\atop{}}{0\atop0}{1\atop{}}{1\atop{}}{0\atop{}}{0\atop{}}},\
\
\beta_7\,=\,{\scriptstyle{0\atop{}}{0\atop{}}{0\atop0}{0\atop{}}{0\atop{}}{1\atop{}}{1\atop{}}},\
\
\beta_8\,=\,{\scriptstyle{0\atop{}}{1\atop{}}{1\atop0}{0\atop{}}{0\atop{}}{0\atop{}}{0\atop{}}},
\end{eqnarray*}
and hence has type ${\sf D_8}$ (by the maximality of that root
subsystem). Moreover, the roots $\{\beta_i\,|\,\,1\le i\le 8\}$ form
the basis of $\Phi_{\lambda}$ contained in $\Phi^+$. It if
straightforward to see that $\lambda+\rho$ is strongly dominant on
$\Phi^+\cap\Phi_\lambda$ and does not take values in $\Z^{>0}$ on
$\{\alpha_2,\alpha_3,\alpha_5,\alpha_7\}$. Since
$$|\Phi^+|-|\Phi_\lambda^+|=120-56=64=\textstyle{\frac{1}{2}}(248-120)=\frac{1}{2}\dim\,\O(e)$$
we can now argue as in the previous case to deduce that
$\lambda+\rho$ satisfies Conditions (A), (B), (C), (D). Since in the
present case $\g_e=[\g_e,\g_e]$ by [\cite{dG}] we conclude that
$I(-\frac{1}{2}\rho)$ is the only multiplicity-free primitive ideal
in $\mathcal{X}_{\O(e)}$.
\subsection{Type $({\sf E_8, A_2+A_1})$}\label{3.5}
Here $e$ is a special nilpotent element, $e^\vee$ has type ${\sf
E_8(a_4)}$, and
$$h^\vee=\textstyle{{2\atop{}}{0\atop{}}{2\atop 0}
{0\atop{}}{2\atop{}}{0\atop{}}{2\atop{}}}.$$ Numerical experiments indicate that $e$ is quite immune
to the choice of pinning and the following one is probably the best
possible:
\[\xymatrix{*{\circ}\ar@{-}[r]& *{\circ} \ar@{-}[r] & *{\bullet} \ar@{-}[r]\ar@{-}[d] & *{\circ} \ar@{-}[r]
 & *{\bullet} \ar@{-}[r]&*{\circ}\ar@{-}[r]&*{\circ}\\ && *{\bullet}&&&}\]
Here the optimal cocharacter has the form
$$\tau=\textstyle{{0\atop{}}{(-2)\atop{}}{2\atop 2}
{(-3)\atop{}}{2\atop{}}{(-1)\atop{}}{0\atop{}}}.$$ Since
$\dim\,\g_e=112$ the total number of positive $0$-roots and
$1$-roots is $(112-8)/2=52$. The roots are listed in the table
below.
\begin{table}[htb]
\label{data2}
\begin{tabular}{|c|c|}
\hline$0$-roots  & $1$-roots
\\ \hline
$\scriptstyle{{1\atop{}}{0\atop{}}{0\atop0}{0\atop{}}{0\atop{}}{0\atop{}}{0\atop{}}}$,
$\scriptstyle{{1\atop{}}{1\atop{}}{1\atop0}{0\atop{}}{0\atop{}}{0\atop{}}{0\atop{}}}$,
$\scriptstyle{{0\atop{}}{0\atop{}}{0\atop0}{0\atop{}}{0\atop{}}{0\atop{}}{1\atop{}}}$,
$\scriptstyle{{0\atop{}}{1\atop{}}{1\atop0}{0\atop{}}{0\atop{}}{0\atop{}}{0\atop{}}}$,\,
&$\scriptstyle{{0\atop{}}{0\atop{}}{0\atop0}{0\atop{}}{1\atop{}}{1\atop{}}{0\atop{}}}$,
$\scriptstyle{{0\atop{}}{0\atop{}}{0\atop0}{0\atop{}}{1\atop{}}{1\atop{}}{1\atop{}}}$,
$\scriptstyle{{0\atop{}}{0\atop{}}{1\atop1}{1\atop{}}{0\atop{}}{0\atop{}}{0\atop{}}}$,
$\scriptstyle{{0\atop{}}{1\atop{}}{1\atop1}{1\atop{}}{1\atop{}}{0\atop{}}{0\atop{}}}$,\\
$\scriptstyle{{0\atop{}}{0\atop{}}{1\atop0}{1\atop{}}{1\atop{}}{1\atop{}}{0\atop{}}}$,
$\scriptstyle{{0\atop{}}{0\atop{}}{1\atop0}{1\atop{}}{1\atop{}}{1\atop{}}{1\atop{}}}$,
$\scriptstyle{{0\atop{}}{1\atop{}}{1\atop1}{1\atop{}}{1\atop{}}{1\atop{}}{0\atop{}}}$,
$\scriptstyle{{1\atop{}}{1\atop{}}{1\atop1}{1\atop{}}{1\atop{}}{1\atop{}}{0\atop{}}}$,
&$\scriptstyle{{1\atop{}}{1\atop{}}{1\atop1}{1\atop{}}{1\atop{}}{0\atop{}}{0\atop{}}}$,
$\scriptstyle{{0\atop{}}{0\atop{}}{1\atop0}{1\atop{}}{1\atop{}}{0\atop{}}{0\atop{}}}$,
$\scriptstyle{{0\atop{}}{1\atop{}}{2\atop1}{1\atop{}}{0\atop{}}{0\atop{}}{0\atop{}}}$,
$\scriptstyle{{1\atop{}}{1\atop{}}{2\atop1}{1\atop{}}{0\atop{}}{0\atop{}}{0\atop{}}}$,\\
$\scriptstyle{{0\atop{}}{1\atop{}}{1\atop1}{1\atop{}}{1\atop{}}{1\atop{}}{1\atop{}}}$,
$\scriptstyle{{1\atop{}}{1\atop{}}{1\atop1}{1\atop{}}{1\atop{}}{1\atop{}}{1\atop{}}}$,
$\scriptstyle{{0\atop{}}{1\atop{}}{2\atop1}{2\atop{}}{1\atop{}}{0\atop{}}{0\atop{}}}$,
$\scriptstyle{{1\atop{}}{1\atop{}}{2\atop1}{2\atop{}}{1\atop{}}{0\atop{}}{0\atop{}}}$,
&$\scriptstyle{{1\atop{}}{2\atop{}}{2\atop1}{1\atop{}}{1\atop{}}{0\atop{}}{0\atop{}}}$,
$\scriptstyle{{0\atop{}}{1\atop{}}{2\atop1}{2\atop{}}{2\atop{}}{1\atop{}}{0\atop{}}}$,
$\scriptstyle{{1\atop{}}{1\atop{}}{2\atop1}{2\atop{}}{2\atop{}}{1\atop{}}{0\atop{}}}$,
$\scriptstyle{{0\atop{}}{1\atop{}}{2\atop1}{2\atop{}}{2\atop{}}{1\atop{}}{1\atop{}}}$,\\
$\scriptstyle{{1\atop{}}{2\atop{}}{3\atop1}{2\atop{}}{1\atop{}}{0\atop{}}{0\atop{}}}$,
$\scriptstyle{{1\atop{}}{2\atop{}}{2\atop1}{1\atop{}}{1\atop{}}{1\atop{}}{0\atop{}}}$,
$\scriptstyle{{1\atop{}}{2\atop{}}{2\atop1}{1\atop{}}{1\atop{}}{1\atop{}}{1\atop{}}}$,
$\scriptstyle{{0\atop{}}{1\atop{}}{2\atop1}{2\atop{}}{2\atop{}}{2\atop{}}{1\atop{}}}$,
&$\scriptstyle{{1\atop{}}{1\atop{}}{2\atop1}{2\atop{}}{2\atop{}}{1\atop{}}{1\atop{}}}$,
$\scriptstyle{{1\atop{}}{2\atop{}}{3\atop1}{2\atop{}}{2\atop{}}{1\atop{}}{1\atop{}}}$,
$\scriptstyle{{1\atop{}}{2\atop{}}{3\atop1}{2\atop{}}{1\atop{}}{1\atop{}}{0\atop{}}}$,
$\scriptstyle{{1\atop{}}{2\atop{}}{3\atop2}{2\atop{}}{1\atop{}}{1\atop{}}{0\atop{}}}$,\\
$\scriptstyle{{1\atop{}}{1\atop{}}{2\atop1}{2\atop{}}{2\atop{}}{2\atop{}}{1\atop{}}}$,
$\scriptstyle{{1\atop{}}{2\atop{}}{3\atop1}{2\atop{}}{2\atop{}}{2\atop{}}{1\atop{}}}$,
$\scriptstyle{{1\atop{}}{2\atop{}}{3\atop2}{3\atop{}}{2\atop{}}{1\atop{}}{0\atop{}}}$,
$\scriptstyle{{1\atop{}}{2\atop{}}{3\atop2}{2\atop{}}{2\atop{}}{1\atop{}}{1\atop{}}}$,
&$\scriptstyle{{1\atop{}}{2\atop{}}{3\atop2}{2\atop{}}{1\atop{}}{1\atop{}}{1\atop{}}}$,
$\scriptstyle{{1\atop{}}{2\atop{}}{3\atop2}{3\atop{}}{3\atop{}}{2\atop{}}{1\atop{}}}$,
$\scriptstyle{{1\atop{}}{3\atop{}}{4\atop2}{3\atop{}}{3\atop{}}{2\atop{}}{1\atop{}}}$,
$\scriptstyle{{2\atop{}}{3\atop{}}{4\atop2}{3\atop{}}{3\atop{}}{2\atop{}}{1\atop{}}}$,\\
$\scriptstyle{{1\atop{}}{3\atop{}}{4\atop1}{3\atop{}}{2\atop{}}{1\atop{}}{0\atop{}}}$,
$\scriptstyle{{1\atop{}}{3\atop{}}{4\atop2}{3\atop{}}{2\atop{}}{1\atop{}}{1\atop{}}}$,
$\scriptstyle{{1\atop{}}{3\atop{}}{5\atop2}{4\atop{}}{3\atop{}}{2\atop{}}{1\atop{}}}$,
$\scriptstyle{{2\atop{}}{3\atop{}}{5\atop2}{4\atop{}}{3\atop{}}{2\atop{}}{1\atop{}}}$,
&$\scriptstyle{{2\atop{}}{4\atop{}}{6\atop3}{5\atop{}}{4\atop{}}{2\atop{}}{1\atop{}}}$,
$\scriptstyle{{1\atop{}}{2\atop{}}{4\atop2}{3\atop{}}{2\atop{}}{2\atop{}}{1\atop{}}}$.
\\
$\scriptstyle{{2\atop{}}{3\atop{}}{4\atop2}{3\atop{}}{2\atop{}}{1\atop{}}{0\atop{}}}$,
$\scriptstyle{{2\atop{}}{3\atop{}}{4\atop2}{3\atop{}}{2\atop{}}{1\atop{}}{1\atop{}}}$,
$\scriptstyle{{2\atop{}}{4\atop{}}{5\atop3}{4\atop{}}{3\atop{}}{2\atop{}}{1\atop{}}}$,
$\scriptstyle{{1\atop{}}{2\atop{}}{4\atop2}{4\atop{}}{3\atop{}}{2\atop{}}{1\atop{}}}$,
&\\
$\scriptstyle{{2\atop{}}{4\atop{}}{6\atop3}{5\atop{}}{4\atop{}}{3\atop{}}{1\atop{}}}$,
$\scriptstyle{{2\atop{}}{4\atop{}}{6\atop3}{5\atop{}}{4\atop{}}{3\atop{}}{2\atop{}}}$.
&\\
\hline
\end{tabular}
\end{table}
It is straightforward to check that the $(1,3,5,7,8)$-contributions
of the $0$-roots and $1$-roots are $(28,54,64,34,18)$ and
$(16,32,40,20,10)$, respectively. The total contribution of all
roots is $(22,43,52,27,14)$. In view of [\cite{Bo}, Planche~VII]
Losev's condition~(C) for $\lambda+\rho=\sum_{i=1}^8a_i\varpi_i$ reads
\begin{eqnarray*}
4a_1+5a_2+7a_3+10a_4+8a_5+6a_6+4a_7+2a_8&=&22\\
7a_1+10a_2+14a_3+20a_4+16a_5+12a_6+8a_7+4a_8&=&43\\
8a_1+12a_2+16a_3+24a_4+20a_5+15a_6+10a_7+5a_8&=&52\\
4a_1+6a_2+8a_3+12a_4+10a_5+8a_6+6a_7+3a_8&=&27\\
2a_1+3a_2+4a_3+6a_4+5a_5+4a_6+3a_7+2a_8&=&14.
\end{eqnarray*}
A reasonably nice solution to this system of linear equations is
provided by
$$\lambda+\rho=\textstyle{{1\atop{}}{1\atop{}}{2\atop (-1)}
{(-1)\atop{}}{1\atop{}}{1\atop{}}{1\atop{}}}=s_6s_2s_4\big(\textstyle{{1\atop{}}{0\atop{}}{1\atop
0}
{0\atop{}}{1\atop{}}{0\atop{}}{1\atop{}}}\big)=s_6s_2s_4({\scriptstyle\frac{1}{2}}h^\vee).$$
Due to our choice of pinning $\lambda +\rho$ satisfies Conditions (A)
and (C), but it is not at all clear that it satisfies Condition (B).
In order to justify this we first note that ${\rm
VA}\big(I(\frac{1}{2}h^\vee)\big)=\overline{\O(e)}$ thanks to
[\cite{BV}, Proposition~5.10]. So
Condition (C) would certainly hold for $\lambda+\rho=s_6s_2s_4(\frac{1}{2}h^\vee)$
if we manage to show that the primitive ideals $I(\frac{1}{2}h^\vee-\rho)$
and $I(s_6s_2s_4\centerdot(\frac{1}{2}h^\vee-\rho))$ coincide (as usual, $w\centerdot\mu=w(\mu+\rho)-\rho$ indicates the dot action.)

Here and in what follows we are going to apply [\cite{Ja},
Corollar~10.10]; so let us adopt the notation used there. We write
$P^{+}(\Phi)$ for the set of of all $\mu\in\t^*$ such that
$\langle\mu+\rho,\alpha^\vee\rangle$ is a non-negative integer for
all $\alpha\in\Pi$. Given $\mu\in\t^*$ we write $\Pi_\mu^0$ for the
set of all $\alpha\in\Pi$  such that
$\langle\mu+\rho,\alpha^\vee\rangle=0$. For a primitive ideal $I$ of
$U(\g)$ we denote by $d(U(\g)/I)$ the Gelfand--Kirillov dimension of
the primitive quotient $U(\g)/I$.

Let $W=\langle
s_\alpha\,|\,\alpha\in \Phi\rangle$ be the Weyl group of $\g$ and
set $w:=s_6s_2s_4s_2s_3s_5s_7$ (we always use Bourbaki's numbering).
Given $x\in W$ we write $\tau(x)$ for the $\tau$-invariant of $x$,
the set of all $\alpha\in\Pi$ for which $x(\alpha)\in -\Phi^+$. Let
$\mathbf{a}\colon\,W\rightarrow \Z^{\geqslant 0}$ be Lusztig's
$\mathbf{a}$-function; see [\cite{Chen}, p.~808] for more detail. It plays an important role in the representation theory of Iwahori--Hecke
algebras, but is, in general, quite difficult to compute. In [\cite{Ge}],
Meinolf Geck describes an algorithm, named {\sf PyCox}, which enables one
among other things to determine some values of the
$\mathbf{a}$-function. As he has kindly informed the author, running the
algorithm on $w$ produced the output $\mathbf{a}(w)=4$.

One can also obtain this result by analysing Table~0 below (we refer to Subsection~\ref{3.12} for the unexplained notation related to Kazhdan--Lusztig cells).
The first and the last columns are straightforward to fill in, whilst the first line in the middle column is true by the definition of the relation ``$\,-\,$''.
Then the second line in the middle column is true because $s_2s_4s_6s_2s_3s_5s_7$,  $s_4s_6s_2s_3s_5s_7$ are in $\mathcal{D}_L(s_3,s_4)$ and  $s_3s_2s_4s_6s_2s_3s_5s_7={^*}(s_2s_4s_6s_2s_3s_5s_7)$, $s_6s_2s_3s_5s_7={^*}(s_4s_6s_2s_3s_5s_7)$.
Then the third line in the middle column is true because $s_3s_2s_4s_6s_2s_3s_5s_7$,  $s_6s_2s_3s_3s_5s_7$ are in $\mathcal{D}_L(s_5,s_6)$ and
$s_5s_3s_2s_4s_6s_2s_3s_5s_7={^*}(s_3s_2s_4s_6s_2s_3s_5s_7)$,
$s_2s_3s_5s_7={^*}(s_6s_2s_3s_5s_7)$.

\smallskip

\begin{center}
\begin{tabular}{p{2.5 truecm} | p{4truecm} @{--\!\!--} p{4truecm} | p{2.5truecm}}
\noalign {\hrule height1.2pt}
\hfil $\mathcal{L}(x)$ & \hfil $x$ & \hfil $y$ & \hfil $\mathcal{L}(y)$ \\ \noalign {\hrule height1.2pt}
\hfil $\{s_2,s_4,s_6 \}$ & \hfil $s_2s_4s_6s_2s_3s_5s_7$ & \hfil $s_4s_6s_2s_3s_5s_7$ & \hfil $\{ s_4,s_6 \}$ \\ \hline
\hfil $\{ s_2,s_3,s_6 \}$ & \hfil $s_3s_2s_4s_6s_2s_3s_5s_7$ & \hfil $s_6s_2s_3s_5s_7$ & \hfil $\{ s_2,s_3,s_6 \}$ \\ \hline
\hfil $\{ s_2,s_3,s_5 \}$ & \hfil $s_5s_3s_2s_4s_6s_2s_3s_5s_7$ & \hfil $s_2s_3s_5s_7$ & \hfil $\{ s_2,s_3,s_5,s_7 \}$ \\
\noalign {\hrule height1.2pt}
\end{tabular}
\end{center}
\centerline{\small Table 0}

\smallskip

\noindent
Combining this with the information given in the first and the last columns  we get
\begin{eqnarray*}
s_2s_3s_5s_7& \leqslant_L & s_5s_3s_2s_4s_6s_2s_3s_5s_7
\leqslant_L s_3s_2s_4s_6s_2s_3s_5s_7\leqslant_L s_2s_4s_6s_2s_3s_5s_7
\\
&\leqslant_L& s_4s_6s_2s_3s_5s_7
\leqslant_L
s_6s_2s_3s_5s_7\leqslant_L s_2s_3s_5s_7.
\end{eqnarray*}
This yields $w\sim_L s_2s_3s_5s_7$. In particular, $\mathbf{a}(w)=4$.

The element $w$ enters the picture because
$$\lambda+\rho=\textstyle{{1\atop{}}{1\atop{}}{2\atop (-1)}
{(-1)\atop{}}{1\atop{}}{1\atop{}}{1\atop{}}}=s_6s_2s_4\big(\textstyle{{1\atop{}}{0\atop{}}{1\atop
0}
{0\atop{}}{1\atop{}}{0\atop{}}{1\atop{}}}\big)=s_6s_2s_4s_2s_3s_5s_7\big(\textstyle{{1\atop{}}{0\atop{}}{1\atop
0} {0\atop{}}{1\atop{}}{0\atop{}}{1\atop{}}}
\big)=w({\scriptstyle\frac{1}{2}}h^\vee)$$ and the roots
$w(\alpha_2)$, $w(\alpha_3)$, $w(\alpha_5)$, $w(\alpha_7)$ are
negative with respect to $\Pi$. Since
$\Pi_{-\rho+h^\vee/2}^0=\{\alpha_2,\alpha_3,\alpha_5,\alpha_7\}$ we
have the inclusion $\Pi_{-\rho+h^\vee/2}^0\subseteq \tau(w)$. In view
of [\cite{Ja}, Corollar~10.10(a)] this means that
$$d\big(U(\g)/I(s_6s_2s_4\centerdot(\textstyle{\frac{1}{2}}h^\vee-\rho))\big)\,=\,
d\big(U(\g)/I(w\centerdot(\textstyle{\frac{1}{2}}h^\vee-\rho))\big)\,=
\,d\big(U(\g)/I(w\centerdot\mu)\big)$$
for any regular $\mu\in P^{+}(\Phi)$.

According to a result of Yu Chen, the set $W^{(4)}=\{x\in
W\,|\,\,\mathbf{a}(x)=4\}$ is a single double cell of $W$ and, moreover, its
dual double cell $w_0W^{(4)}$ is associated with the nilpotent orbit
labelled ${\sf A_2+A_1}$; see [\cite{Chen}, 6.1]. As $\mathbf{a}(w)=4$ we
deduce that
$$d(U(\g)/I(w\centerdot \mu))=\dim\,\overline{\O(e)}$$ (the duality
between double cells in $W$ and the related data for the
corresponding nilpotent orbits in $\g$ can be found in [\cite{Chen},
Table~I]). But then $$\dim{\rm VA}(I(\lambda))=\dim{\rm
VA}(I(\textstyle{\frac{1}{2}}h^\vee-\rho))=\dim\overline{\O(e)}$$ thanks to [\cite{BV},
Proposition~5.10]. As $e\in {\rm VA}(I(\lambda))$ by our earlier
remarks, we conclude that ${\rm VA}(I(\lambda))=\overline{\O(e)}$.
So Condition~(B) holds for $\lambda+\rho$.

Since Condition~(D) is vacuous in the present case, applying
[\cite{Lo2}, 5.3] and [\cite{PT}, Proposition~11] we are now able to
deduce that $I(\lambda)$ is the only multiplicity-free primitive
ideal in $\mathcal{X}_{\O(e)}$ (one should also keep in mind here
that $[\g_e,\g_e]=\g_e$ by [\cite{dG}]). Since
${\scriptstyle\frac{1}{2}}h^\vee$ is a dominant weight,
$I(\frac{1}{2}h^\vee-\rho)$ is the unique maximal element in the partially
ordered set $\{I(x\centerdot(\frac{1}{2}h^\vee-\rho))\,|\,\,x\in W\}$. As a
consequence, $I(\lambda)\subseteq I(\frac{1}{2}h^\vee-\rho)$. Since both
primitive ideals have the same associated variety, the equality must
hold by [\cite{BK}, Corollar~3.6], i.e.
$I(\lambda)=I(\frac{1}{2}h^\vee-\rho)$.
\subsection{Type $({\sf E_8, A_2+2A_1})$}\label{3.6}
In this case $e$ is a special nilpotent element and $e^\vee$ has
type ${\sf E_8(b_4)}$, so that
$$h^\vee=\textstyle{{2\atop{}}{0\atop{}}{2\atop 0}
{0\atop{}}{0\atop{}}{2\atop{}}{2\atop{}}}.$$ Keeping the shape of
$h^\vee$ in mind we choose the pinning for $e$ as follows:
\[\xymatrix{*{\circ}\ar@{-}[r]& *{\bullet} \ar@{-}[r] & *{\circ} \ar@{-}[r]\ar@{-}[d] & *{\bullet} \ar@{-}[r]
 & *{\bullet} \ar@{-}[r]&*{\circ}\ar@{-}[r]&*{\circ}\\ && *{\bullet}&&&}\]
Then we may assume that the optimal cocharacter for $e$ has the form
$$\tau=\textstyle{{(-1)\atop{}}{2\atop{}}{(-4)\atop 2}
{2\atop{}}{2\atop{}}{(-2)\atop{}}{0\atop{}}}.$$ Since
$\dim\,\g_e=102$ the total number of $0$-roots and $1$-roots is
$(102-8)/2=47$. The roots are given in the table below.
\begin{table}[htb]
\label{data2}
\begin{tabular}{|c|c|}
\hline$0$-roots  & $1$-roots
\\ \hline
$\scriptstyle{{0\atop{}}{0\atop{}}{0\atop0}{0\atop{}}{0\atop{}}{0\atop{}}{1\atop{}}}$,
$\scriptstyle{{0\atop{}}{1\atop{}}{1\atop1}{0\atop{}}{0\atop{}}{0\atop{}}{0\atop{}}}$,
$\scriptstyle{{0\atop{}}{1\atop{}}{1\atop0}{1\atop{}}{0\atop{}}{0\atop{}}{0\atop{}}}$,
$\scriptstyle{{0\atop{}}{0\atop{}}{1\atop1}{1\atop{}}{0\atop{}}{0\atop{}}{0\atop{}}}$,\,
&$\scriptstyle{{1\atop{}}{1\atop{}}{0\atop0}{0\atop{}}{0\atop{}}{0\atop{}}{0\atop{}}}$,
$\scriptstyle{{1\atop{}}{1\atop{}}{1\atop1}{1\atop{}}{0\atop{}}{0\atop{}}{0\atop{}}}$,
$\scriptstyle{{1\atop{}}{1\atop{}}{1\atop0}{1\atop{}}{1\atop{}}{0\atop{}}{0\atop{}}}$,
$\scriptstyle{{1\atop{}}{1\atop{}}{1\atop1}{1\atop{}}{1\atop{}}{1\atop{}}{0\atop{}}}$,\\
$\scriptstyle{{0\atop{}}{0\atop{}}{1\atop0}{1\atop{}}{1\atop{}}{0\atop{}}{0\atop{}}}$,
$\scriptstyle{{0\atop{}}{0\atop{}}{0\atop0}{0\atop{}}{1\atop{}}{1\atop{}}{0\atop{}}}$,
$\scriptstyle{{0\atop{}}{0\atop{}}{0\atop0}{0\atop{}}{1\atop{}}{1\atop{}}{1\atop{}}}$,
$\scriptstyle{{0\atop{}}{0\atop{}}{1\atop1}{1\atop{}}{1\atop{}}{1\atop{}}{0\atop{}}}$,
&$\scriptstyle{{1\atop{}}{1\atop{}}{1\atop1}{1\atop{}}{1\atop{}}{1\atop{}}{1\atop{}}}$,
$\scriptstyle{{1\atop{}}{1\atop{}}{2\atop1}{2\atop{}}{1\atop{}}{0\atop{}}{0\atop{}}}$,
$\scriptstyle{{1\atop{}}{2\atop{}}{2\atop1}{1\atop{}}{1\atop{}}{0\atop{}}{0\atop{}}}$,
$\scriptstyle{{1\atop{}}{2\atop{}}{2\atop1}{2\atop{}}{1\atop{}}{1\atop{}}{0\atop{}}}$,\\
$\scriptstyle{{0\atop{}}{0\atop{}}{1\atop1}{1\atop{}}{1\atop{}}{1\atop{}}{1\atop{}}}$,
$\scriptstyle{{0\atop{}}{1\atop{}}{1\atop0}{1\atop{}}{1\atop{}}{1\atop{}}{0\atop{}}}$,
$\scriptstyle{{0\atop{}}{1\atop{}}{1\atop0}{1\atop{}}{1\atop{}}{1\atop{}}{1\atop{}}}$,
$\scriptstyle{{0\atop{}}{1\atop{}}{2\atop1}{1\atop{}}{1\atop{}}{0\atop{}}{0\atop{}}}$,
&$\scriptstyle{{1\atop{}}{2\atop{}}{2\atop1}{2\atop{}}{1\atop{}}{1\atop{}}{1\atop{}}}$,
$\scriptstyle{{1\atop{}}{1\atop{}}{2\atop1}{2\atop{}}{2\atop{}}{1\atop{}}{0\atop{}}}$,
$\scriptstyle{{1\atop{}}{1\atop{}}{2\atop1}{2\atop{}}{2\atop{}}{1\atop{}}{1\atop{}}}$,
$\scriptstyle{{1\atop{}}{2\atop{}}{2\atop1}{2\atop{}}{2\atop{}}{2\atop{}}{1\atop{}}}$,\\
$\scriptstyle{{0\atop{}}{1\atop{}}{2\atop1}{2\atop{}}{1\atop{}}{1\atop{}}{0\atop{}}}$,
$\scriptstyle{{0\atop{}}{1\atop{}}{2\atop1}{2\atop{}}{1\atop{}}{1\atop{}}{1\atop{}}}$,
$\scriptstyle{{0\atop{}}{1\atop{}}{2\atop1}{2\atop{}}{2\atop{}}{2\atop{}}{1\atop{}}}$,
$\scriptstyle{{2\atop{}}{3\atop{}}{4\atop2}{3\atop{}}{2\atop{}}{1\atop{}}{0\atop{}}}$,
&$\scriptstyle{{1\atop{}}{2\atop{}}{3\atop2}{2\atop{}}{1\atop{}}{0\atop{}}{0\atop{}}}$,
$\scriptstyle{{1\atop{}}{2\atop{}}{3\atop2}{2\atop{}}{2\atop{}}{1\atop{}}{0\atop{}}}$,
$\scriptstyle{{1\atop{}}{2\atop{}}{3\atop2}{2\atop{}}{2\atop{}}{1\atop{}}{1\atop{}}}$,
$\scriptstyle{{1\atop{}}{2\atop{}}{3\atop1}{3\atop{}}{2\atop{}}{1\atop{}}{1\atop{}}}$,\\
$\scriptstyle{{2\atop{}}{3\atop{}}{4\atop2}{3\atop{}}{2\atop{}}{1\atop{}}{1\atop{}}}$,
$\scriptstyle{{2\atop{}}{3\atop{}}{4\atop2}{3\atop{}}{3\atop{}}{2\atop{}}{1\atop{}}}$,
$\scriptstyle{{2\atop{}}{3\atop{}}{5\atop3}{4\atop{}}{3\atop{}}{2\atop{}}{1\atop{}}}$,
$\scriptstyle{{2\atop{}}{4\atop{}}{5\atop2}{4\atop{}}{3\atop{}}{2\atop{}}{1\atop{}}}$,
&$\scriptstyle{{1\atop{}}{2\atop{}}{3\atop1}{3\atop{}}{3\atop{}}{2\atop{}}{1\atop{}}}$,
$\scriptstyle{{1\atop{}}{2\atop{}}{3\atop1}{3\atop{}}{2\atop{}}{1\atop{}}{0\atop{}}}$,
$\scriptstyle{{1\atop{}}{2\atop{}}{4\atop2}{4\atop{}}{3\atop{}}{2\atop{}}{1\atop{}}}$,
$\scriptstyle{{1\atop{}}{3\atop{}}{4\atop2}{3\atop{}}{2\atop{}}{1\atop{}}{0\atop{}}}$,\\
$\scriptstyle{{2\atop{}}{4\atop{}}{6\atop3}{5\atop{}}{3\atop{}}{2\atop{}}{1\atop{}}}$,
$\scriptstyle{{2\atop{}}{4\atop{}}{6\atop3}{5\atop{}}{4\atop{}}{3\atop{}}{1\atop{}}}$,
$\scriptstyle{{2\atop{}}{4\atop{}}{6\atop3}{5\atop{}}{4\atop{}}{3\atop{}}{2\atop{}}}$.
&$\scriptstyle{{1\atop{}}{3\atop{}}{4\atop2}{3\atop{}}{2\atop{}}{1\atop{}}{1\atop{}}}$,
$\scriptstyle{{1\atop{}}{3\atop{}}{4\atop2}{3\atop{}}{3\atop{}}{2\atop{}}{1\atop{}}}$,
$\scriptstyle{{1\atop{}}{2\atop{}}{3\atop2}{3\atop{}}{2\atop{}}{2\atop{}}{1\atop{}}}$,
$\scriptstyle{{1\atop{}}{3\atop{}}{5\atop3}{4\atop{}}{3\atop{}}{2\atop{}}{1\atop{}}}$.\\
\hline
\end{tabular}
\end{table}

Direct verification shows that the  $(1,4,7,8)$-contributions of the
$0$-roots and $1$-roots are $(16,56,26,14)$ and $(24,60,24,12)$,
respectively. So the total contribution of all roots is
$(20,58,25,13)$. In view of [\cite{Bo}, Planche~VII] Condition~(C)
for $\lambda+\rho=\sum_{i=1}^8a_i\varpi_i$ reads
\begin{eqnarray*}
4a_1+5a_2+7a_3+10a_4+8a_5+6a_6+4a_7+2a_8&=&20\\
10a_1+15a_2+20a_3+30a_4+24a_5+18a_6+12a_7+6a_8&=&58\\
4a_1+6a_2+8a_3+12a_4+10a_5+8a_6+6a_7+3a_8&=&25\\
2a_1+3a_2+4a_3+6a_4+5a_5+4a_6+3a_7+2a_8&=&13.
\end{eqnarray*}
A fairly nice solution to this system of linear equations is given
by
$$\lambda+\rho=\textstyle{{1\atop{}}{0\atop{}}{1\atop 0}
{0\atop{}}{0\atop{}}{1\atop{}}{1\atop{}}}={\scriptstyle\frac{1}{2}}h^\vee.$$
Due to our choice of pinning it satisfies Losev's conditions (A) and
(C). Condition~(D) is vacuous as $e$ has standard Levi type and
Condition~(B) holds thanks to [\cite{BV}, Proposition~5.10]. In view
of [\cite{Lo2}, 5.3], [\cite{PT}, Proposition~11] and [\cite{dG}] we
now may conclude that $I(\lambda)$ is the unique multiplicity-free
primitive ideal in $\mathcal{X}_{\O(e)}$.
\subsection{Type $({\sf E_8, A_2+3A_1})$}\label{3.7}
Here our pinning for $e$ is
\[\xymatrix{*{\bullet}\ar@{-}[r]& *{\bullet} \ar@{-}[r] & *{\circ} \ar@{-}[r]\ar@{-}[d] & *{\bullet} \ar@{-}[r]
 & *{\circ} \ar@{-}[r]&*{\bullet}\ar@{-}[r]&*{\circ}\\ && *{\bullet}&&&}\]
and the optimal cocharacter is as follows:
$$\tau=\textstyle{{2\atop{}}{2\atop{}}{(-4)\atop 2}
{2\atop{}}{(-2)\atop{}}{2\atop{}}{(-1)\atop{}}}.$$ Since
$\dim\,\g_e=94$, the total number of positive $0$-roots and
$1$-roots is $(94-8)/2=43$. The roots are given in the table below.
\begin{table}[htb]
\label{data2}
\begin{tabular}{|c|c|}
\hline$0$-roots  & $1$-roots
\\ \hline
$\scriptstyle{{0\atop{}}{0\atop{}}{0\atop0}{1\atop{}}{1\atop{}}{0\atop{}}{0\atop{}}}$,
$\scriptstyle{{0\atop{}}{0\atop{}}{0\atop0}{0\atop{}}{1\atop{}}{1\atop{}}{0\atop{}}}$,
$\scriptstyle{{1\atop{}}{1\atop{}}{1\atop0}{0\atop{}}{0\atop{}}{0\atop{}}{0\atop{}}}$,
$\scriptstyle{{0\atop{}}{1\atop{}}{1\atop0}{1\atop{}}{0\atop{}}{0\atop{}}{0\atop{}}}$,\,
&$\scriptstyle{{0\atop{}}{0\atop{}}{0\atop0}{0\atop{}}{0\atop{}}{1\atop{}}{1\atop{}}}$,
$\scriptstyle{{0\atop{}}{0\atop{}}{0\atop0}{1\atop{}}{1\atop{}}{1\atop{}}{1\atop{}}}$,
$\scriptstyle{{1\atop{}}{1\atop{}}{1\atop0}{1\atop{}}{1\atop{}}{1\atop{}}{1\atop{}}}$,
$\scriptstyle{{0\atop{}}{1\atop{}}{1\atop1}{1\atop{}}{1\atop{}}{1\atop{}}{1\atop{}}}$,\\
$\scriptstyle{{0\atop{}}{1\atop{}}{1\atop1}{0\atop{}}{0\atop{}}{0\atop{}}{0\atop{}}}$,
$\scriptstyle{{0\atop{}}{0\atop{}}{1\atop1}{1\atop{}}{0\atop{}}{0\atop{}}{0\atop{}}}$,
$\scriptstyle{{0\atop{}}{1\atop{}}{1\atop0}{1\atop{}}{1\atop{}}{1\atop{}}{0\atop{}}}$,
$\scriptstyle{{1\atop{}}{1\atop{}}{1\atop0}{1\atop{}}{1\atop{}}{0\atop{}}{0\atop{}}}$,
&$\scriptstyle{{1\atop{}}{1\atop{}}{2\atop1}{2\atop{}}{1\atop{}}{1\atop{}}{1\atop{}}}$,
$\scriptstyle{{1\atop{}}{2\atop{}}{2\atop1}{1\atop{}}{1\atop{}}{1\atop{}}{1\atop{}}}$,
$\scriptstyle{{1\atop{}}{2\atop{}}{2\atop1}{2\atop{}}{2\atop{}}{1\atop{}}{1\atop{}}}$,
$\scriptstyle{{1\atop{}}{1\atop{}}{2\atop1}{2\atop{}}{2\atop{}}{2\atop{}}{1\atop{}}}$,\\
$\scriptstyle{{0\atop{}}{1\atop{}}{1\atop1}{1\atop{}}{1\atop{}}{0\atop{}}{0\atop{}}}$,
$\scriptstyle{{0\atop{}}{0\atop{}}{1\atop1}{1\atop{}}{1\atop{}}{1\atop{}}{0\atop{}}}$,
$\scriptstyle{{1\atop{}}{1\atop{}}{2\atop1}{1\atop{}}{0\atop{}}{0\atop{}}{0\atop{}}}$,
$\scriptstyle{{1\atop{}}{2\atop{}}{2\atop1}{1\atop{}}{1\atop{}}{0\atop{}}{0\atop{}}}$,
&$\scriptstyle{{1\atop{}}{2\atop{}}{3\atop2}{2\atop{}}{1\atop{}}{1\atop{}}{1\atop{}}}$,
$\scriptstyle{{1\atop{}}{2\atop{}}{3\atop2}{2\atop{}}{2\atop{}}{2\atop{}}{1\atop{}}}$,
$\scriptstyle{{1\atop{}}{2\atop{}}{3\atop1}{3\atop{}}{2\atop{}}{2\atop{}}{1\atop{}}}$,
$\scriptstyle{{1\atop{}}{2\atop{}}{3\atop2}{3\atop{}}{2\atop{}}{1\atop{}}{1\atop{}}}$,\\
$\scriptstyle{{1\atop{}}{1\atop{}}{2\atop1}{2\atop{}}{1\atop{}}{0\atop{}}{0\atop{}}}$,
$\scriptstyle{{0\atop{}}{1\atop{}}{2\atop1}{2\atop{}}{1\atop{}}{1\atop{}}{0\atop{}}}$,
$\scriptstyle{{1\atop{}}{1\atop{}}{2\atop1}{2\atop{}}{2\atop{}}{1\atop{}}{0\atop{}}}$,
$\scriptstyle{{1\atop{}}{2\atop{}}{3\atop2}{2\atop{}}{1\atop{}}{0\atop{}}{0\atop{}}}$,
&$\scriptstyle{{1\atop{}}{2\atop{}}{3\atop2}{3\atop{}}{3\atop{}}{2\atop{}}{1\atop{}}}$,
$\scriptstyle{{2\atop{}}{3\atop{}}{4\atop2}{3\atop{}}{2\atop{}}{1\atop{}}{1\atop{}}}$,
$\scriptstyle{{1\atop{}}{3\atop{}}{4\atop2}{3\atop{}}{2\atop{}}{2\atop{}}{1\atop{}}}$,
$\scriptstyle{{2\atop{}}{4\atop{}}{5\atop2}{4\atop{}}{3\atop{}}{2\atop{}}{1\atop{}}}$,\\
$\scriptstyle{{1\atop{}}{2\atop{}}{3\atop1}{2\atop{}}{1\atop{}}{1\atop{}}{0\atop{}}}$,
$\scriptstyle{{1\atop{}}{2\atop{}}{3\atop2}{2\atop{}}{2\atop{}}{1\atop{}}{0\atop{}}}$,
$\scriptstyle{{1\atop{}}{2\atop{}}{3\atop1}{3\atop{}}{2\atop{}}{1\atop{}}{0\atop{}}}$,
$\scriptstyle{{1\atop{}}{3\atop{}}{4\atop2}{3\atop{}}{2\atop{}}{1\atop{}}{0\atop{}}}$,
&$\scriptstyle{{2\atop{}}{3\atop{}}{5\atop3}{4\atop{}}{3\atop{}}{2\atop{}}{1\atop{}}}$,
$\scriptstyle{{1\atop{}}{3\atop{}}{4\atop2}{4\atop{}}{3\atop{}}{2\atop{}}{1\atop{}}}$,
$\scriptstyle{{2\atop{}}{3\atop{}}{4\atop2}{3\atop{}}{3\atop{}}{2\atop{}}{1\atop{}}}$,
$\scriptstyle{{2\atop{}}{4\atop{}}{6\atop3}{5\atop{}}{4\atop{}}{3\atop{}}{1\atop{}}}$,\\
$\scriptstyle{{2\atop{}}{4\atop{}}{6\atop3}{5\atop{}}{4\atop{}}{3\atop{}}{2\atop{}}}$,
$\scriptstyle{{1\atop{}}{1\atop{}}{2\atop1}{1\atop{}}{1\atop{}}{1\atop{}}{0\atop{}}}$.
&$\scriptstyle{{2\atop{}}{4\atop{}}{6\atop3}{5\atop{}}{3\atop{}}{2\atop{}}{1\atop{}}}$.\\
\hline
\end{tabular}
\end{table}

The $(4,6,8)$-contributions of the $0$-roots and $1$-roots are
$(42,24,2)$ and $(63,42,21)$, respectively, and the total
contribution of all roots is
$\big(\frac{105}{2},\frac{66}{2},\frac{23}{2}\big)$. In the present
case the orbit $\O(e)$ is non-special, and computations with orbit
sizes (which we omit) indicate that one should seek $\lambda+\rho$
such that all $a_i$ are half-integers. Setting
$\widetilde{a}_i:=2a_i$ we can rewrite Condition~(C) as follows:
\begin{eqnarray*}
10\widetilde{a}_1+15\widetilde{a}_2+20\widetilde{a}_3+30\widetilde{a}_4+
24\widetilde{a}_5+18\widetilde{a}_6+12\widetilde{a}_7+6\widetilde{a}_8&=&105\\
6\widetilde{a}_1+9\widetilde{a}_2+12\widetilde{a}_3+18\widetilde{a}_4+
15\widetilde{a}_5+12\widetilde{a}_6+8\widetilde{a}_7+4\widetilde{a}_8&=&66\\
2\widetilde{a}_1+3\widetilde{a}_2+4\widetilde{a}_3+6\widetilde{a}_4+
5\widetilde{a}_5+4\widetilde{a}_6+3\widetilde{a}_7+2\widetilde{a}_8&=&23.
\end{eqnarray*}
A fairly nice solution to this system of linear equations is given
by setting $\widetilde{a_1}=-2$ and $\widetilde{a}_i=1$ for $2\le
i\le 8$. This leads to the weight
$\lambda+\rho=\frac{1}{2}\rho-\frac{3}{2}\varpi_1$ (one can easily
spot even nicer solutions to that system, but unfortunately they do
not lead to weights satisfying Losev's conditions (A) and (B)).

Note that our choice of $\lambda+\rho$ does satisfy Conditions~(A)
whilst Condition~(D) is vacuous as $e$ once again has standard Levi
type. In order to check that $\lambda+\rho$ satisfies Condition~(B)
we first observe that its integral root system
$\Phi_\lambda=\Phi_{\lambda+\rho}$ contains the positive roots
$$
\xymatrix{{\beta_1}\ar@{-}[r]& {\beta_2} \ar@{-}[r] & {\beta_3}
\ar@{-}[r] & {\beta_4} \ar@{-}[r]
 & {\beta_5} \ar@{-}[r]&{\beta_6}\ar@{-}[r]\ar@{-}[d] &{\beta_7}\\ &&&&& {\beta_8}}
$$
where
\begin{eqnarray*}
\beta_1\,=\,{\scriptstyle{0\atop{}}{0\atop{}}{1\atop0}{1\atop{}}{0\atop{}}{0\atop{}}{0\atop{}}},\
\
\beta_2\,=\,{\scriptstyle{0\atop{}}{0\atop{}}{0\atop0}{0\atop{}}{1\atop{}}{1\atop{}}{0\atop{}}},\
\
\beta_3\,=\,{\scriptstyle{0\atop{}}{1\atop{}}{1\atop1}{1\atop{}}{0\atop{}}{0\atop{}}{0\atop{}}},\
\
\beta_4\,=\,{\scriptstyle{1\atop{}}{0\atop{}}{0\atop0}{0\atop{}}{0\atop{}}{0\atop{}}{0\atop{}}},\\
\beta_5\,=\,{\scriptstyle{0\atop{}}{1\atop{}}{1\atop0}{0\atop{}}{0\atop{}}{0\atop{}}{0\atop{}}},\
\
\beta_6\,=\,{\scriptstyle{0\atop{}}{0\atop{}}{0\atop0}{1\atop{}}{1\atop{}}{0\atop{}}{0\atop{}}},\
\
\beta_7\,=\,{\scriptstyle{0\atop{}}{0\atop{}}{0\atop0}{0\atop{}}{0\atop{}}{1\atop{}}{1\atop{}}},\
\
\beta_8\,=\,{\scriptstyle{0\atop{}}{1\atop{}}{1\atop0}{0\atop{}}{0\atop{}}{0\atop{}}{0\atop{}}},
\end{eqnarray*}
and hence has type ${\sf D_8}$ by the maximality of that root
subsystem. Furthermore, $\Pi_\lambda=\{\beta_i\,|\,\,1\le i\le 8\}$
is the basis of the root system $\Phi_{\lambda}$ contained in
$\Phi^+$. Let $\{\varpi'_i\,|\,\,1\le i\le 8\}\subset
P(\Phi)_{\mathbb Q}:=P(\Phi)\otimes_\Z{\mathbb Q}$ denote the
corresponding system of fundamental weights, so that
$\langle\varpi_i',\beta_j^\vee\rangle=\delta_{ij}$ for all $1\le i,j\le
8$. Then
$$\lambda+\rho\,=-\varpi'_4+2\varpi'_3+\textstyle{\sum}_{i\ne,3,4}\,\varpi'_i\,=\,
s_{\beta_4}s_{\beta_5}\big(\textstyle{\sum}_{i\ne
5}\,\varpi'_i\big).$$

Following [\cite{Ja}, Kapitel~2] we put $\Lambda:=\lambda+P(\Phi)$,
a subset of $P(\Phi)_{\mathbb Q}$, and denote by $\Lambda^+$ the set
of all $\nu\in\Lambda$ such that $\langle
\nu+\rho,\beta^\vee\rangle\ge 0$ for all $\beta\in\Pi_\lambda$.
Given a weight $\nu\in \Lambda$ we write $\Pi_\nu^0$ for the set of
all $\beta\in\Pi_\lambda$ such that
$\langle\nu+\rho,\beta^\vee\rangle=0$. For an element $x$ in the
integral Weyl group $W_\lambda=W(\Phi_\lambda)$ of $\lambda$ we let
$\tau_\Lambda(x)$ be the set of all $\beta\in\Pi_\lambda$ for which
$x(\beta)\in -\Phi^+$. In the present case we have that
$\lambda=s_{\beta_4}s_{\beta_5}\centerdot\mu$ where
$\mu=s_{\beta_5}s_{\beta_4}(\lambda+\rho)-\rho\in\Lambda^+$.
Besides, $\Pi_{\mu}^0=\{\beta_5\}\subset
\tau_\Lambda(s_{\beta_4}s_{\beta_5})$. In this situation [\cite{Ja},
Corollar~10.10(a)] yields that
$$d\big(U(\g)/I(\lambda)\big)=d\big(U(\g)/I(s_{\beta_4}s_{\beta_5}\centerdot\mu)\big)=
d\big(U(\g)/I(s_{\beta_4}s_{\beta_5}\centerdot\nu)\big)$$ for any
regular $\nu\in\Lambda^+$. Since all roots in $\Phi_\lambda$ have
the same length it follows from [\cite{Ja}, Corollar~10.10(c)] that
$$d\big(U(\g)/I(s_{\beta_4}s_{\beta_5}\centerdot\nu)\big)=
d\big(U(\g)/I(s_{\beta_4}\centerdot\nu)\big)=d\big(U(\g)/I(s_{\beta}\centerdot\nu)\big)$$
for any $\beta\in\Pi_\lambda$.

We let $\g(\lambda)$ be the Lie subalgebra of maximal rank in $\g$
with root system $\Phi_\lambda$ (since all roots in $\Phi$ have the
same length we may identify $\g$ with its Langlands dual Lie algebra
$\g^\vee$). Applying [\cite{Lo2}, Proposition~5.3.2] we now deduce
that
$$d\big(U(\g)/I(\lambda)\big)=d\big(U(\g)/I(s_\beta\centerdot\nu)\big)=
\dim\g-\dim\g(\lambda)+\dim\O_\lambda(w_0s_\beta)$$ where $w_0$ is
the longest element of $W_\lambda$ and $\O_\lambda(w_0s_\beta)$ is
the special nilpotent orbit in $\g(\lambda)$ attached to the double
cell of $W_\lambda$ containing $w_0s_\beta$. Since the letter is the
minimal nonzero nilpotent orbit of $\g(\lambda)$ and $\g(\lambda)$
has type ${\sf D_8}$ we finally obtain $$\dim {\rm
VA}(I(\lambda))=248-120+26=154=248-94=\dim \O(e).$$ As
$\lambda+\rho$ satisfies Condition~A we have that $e\in {\rm
VA}(I(\lambda))$. Then ${\rm VA}(I(\lambda))=\overline{\O(e)}$ and
we see that $\lambda+\rho$ satisfies all four Losev's conditions.
Since in the present case $\g_e=[\g_e,\g_e]$ by [\cite{dG}] we
conclude that $I(-\frac{1}{2}\rho-\frac{3}{2}\varpi_1)$ is the only
multiplicity-free primitive ideal in $\mathcal{X}_{\O(e)}$.
\subsection{Type $({\sf E_8, 2A_2+A_1})$}\label{3.8}
In this case we choose the following pinning for $e$:
\[\xymatrix{*{\bullet}\ar@{-}[r]& *{\bullet} \ar@{-}[r] & *{\circ} \ar@{-}[r]\ar@{-}[d] & *{\bullet} \ar@{-}[r]
 & *{\bullet} \ar@{-}[r]&*{\circ}\ar@{-}[r]&*{\circ}\\ && *{\bullet}&&&}\]
Then we can take
$$\tau=\textstyle{{2\atop{}}{2\atop{}}{(-5)\atop 2}
{2\atop{}}{2\atop{}}{(-2)\atop{}}{0\atop{}}}$$ as an optimal
cocharacter. Since $\dim\,\g_e=86$, the total number of positive
$0$-roots and $1$-roots is $(86-8)/2=39$. The roots are displayed below.

\begin{table}[htb]
\label{data2}
\begin{tabular}{|c|c|}
\hline$0$-roots  & $1$-roots
\\ \hline
$\scriptstyle{{0\atop{}}{0\atop{}}{0\atop0}{0\atop{}}{1\atop{}}{1\atop{}}{0\atop{}}}$,
$\scriptstyle{{0\atop{}}{0\atop{}}{0\atop0}{0\atop{}}{1\atop{}}{1\atop{}}{1\atop{}}}$,
$\scriptstyle{{0\atop{}}{0\atop{}}{0\atop0}{0\atop{}}{0\atop{}}{0\atop{}}{1\atop{}}}$,
$\scriptstyle{{1\atop{}}{1\atop{}}{2\atop1}{1\atop{}}{1\atop{}}{0\atop{}}{0\atop{}}}$,\,
&$\scriptstyle{{1\atop{}}{1\atop{}}{1\atop1}{0\atop{}}{0\atop{}}{0\atop{}}{0\atop{}}}$,
$\scriptstyle{{0\atop{}}{1\atop{}}{1\atop1}{1\atop{}}{0\atop{}}{0\atop{}}{0\atop{}}}$,
$\scriptstyle{{0\atop{}}{0\atop{}}{1\atop1}{1\atop{}}{1\atop{}}{0\atop{}}{0\atop{}}}$,
$\scriptstyle{{1\atop{}}{1\atop{}}{1\atop0}{1\atop{}}{0\atop{}}{0\atop{}}{0\atop{}}}$,\\
$\scriptstyle{{1\atop{}}{2\atop{}}{2\atop1}{1\atop{}}{0\atop{}}{0\atop{}}{0\atop{}}}$,
$\scriptstyle{{0\atop{}}{1\atop{}}{2\atop1}{2\atop{}}{1\atop{}}{0\atop{}}{0\atop{}}}$,
$\scriptstyle{{1\atop{}}{1\atop{}}{2\atop1}{2\atop{}}{1\atop{}}{1\atop{}}{0\atop{}}}$,
$\scriptstyle{{1\atop{}}{1\atop{}}{2\atop1}{2\atop{}}{1\atop{}}{1\atop{}}{1\atop{}}}$,
&$\scriptstyle{{0\atop{}}{1\atop{}}{1\atop0}{1\atop{}}{1\atop{}}{0\atop{}}{0\atop{}}}$,
$\scriptstyle{{1\atop{}}{1\atop{}}{1\atop0}{1\atop{}}{1\atop{}}{1\atop{}}{0\atop{}}}$,
$\scriptstyle{{1\atop{}}{1\atop{}}{1\atop0}{1\atop{}}{1\atop{}}{1\atop{}}{1\atop{}}}$,
$\scriptstyle{{0\atop{}}{1\atop{}}{1\atop1}{1\atop{}}{1\atop{}}{1\atop{}}{0\atop{}}}$,\\
$\scriptstyle{{1\atop{}}{2\atop{}}{2\atop1}{1\atop{}}{1\atop{}}{1\atop{}}{0\atop{}}}$,
$\scriptstyle{{1\atop{}}{2\atop{}}{2\atop1}{1\atop{}}{1\atop{}}{1\atop{}}{1\atop{}}}$,
$\scriptstyle{{0\atop{}}{1\atop{}}{2\atop1}{2\atop{}}{2\atop{}}{1\atop{}}{0\atop{}}}$,
$\scriptstyle{{0\atop{}}{1\atop{}}{2\atop1}{2\atop{}}{2\atop{}}{1\atop{}}{1\atop{}}}$,
&$\scriptstyle{{0\atop{}}{1\atop{}}{1\atop1}{1\atop{}}{1\atop{}}{1\atop{}}{1\atop{}}}$,
$\scriptstyle{{1\atop{}}{2\atop{}}{3\atop2}{2\atop{}}{2\atop{}}{1\atop{}}{0\atop{}}}$,
$\scriptstyle{{1\atop{}}{2\atop{}}{3\atop2}{2\atop{}}{2\atop{}}{1\atop{}}{1\atop{}}}$,
$\scriptstyle{{1\atop{}}{2\atop{}}{3\atop1}{3\atop{}}{2\atop{}}{1\atop{}}{0\atop{}}}$,\\
$\scriptstyle{{1\atop{}}{1\atop{}}{2\atop1}{2\atop{}}{2\atop{}}{2\atop{}}{1\atop{}}}$,
$\scriptstyle{{1\atop{}}{3\atop{}}{4\atop2}{3\atop{}}{2\atop{}}{1\atop{}}{0\atop{}}}$,
$\scriptstyle{{1\atop{}}{3\atop{}}{4\atop2}{3\atop{}}{2\atop{}}{1\atop{}}{1\atop{}}}$,
$\scriptstyle{{2\atop{}}{3\atop{}}{4\atop2}{3\atop{}}{2\atop{}}{2\atop{}}{1\atop{}}}$,
&$\scriptstyle{{1\atop{}}{2\atop{}}{3\atop1}{3\atop{}}{2\atop{}}{1\atop{}}{1\atop{}}}$,
$\scriptstyle{{1\atop{}}{2\atop{}}{3\atop2}{3\atop{}}{2\atop{}}{2\atop{}}{1\atop{}}}$,
$\scriptstyle{{1\atop{}}{2\atop{}}{3\atop1}{3\atop{}}{3\atop{}}{2\atop{}}{1\atop{}}}$,
$\scriptstyle{{2\atop{}}{3\atop{}}{5\atop3}{4\atop{}}{3\atop{}}{2\atop{}}{1\atop{}}}$,\\
$\scriptstyle{{1\atop{}}{3\atop{}}{4\atop2}{3\atop{}}{3\atop{}}{2\atop{}}{1\atop{}}}$,
$\scriptstyle{{2\atop{}}{4\atop{}}{6\atop3}{4\atop{}}{3\atop{}}{2\atop{}}{1\atop{}}}$,
$\scriptstyle{{2\atop{}}{4\atop{}}{6\atop3}{5\atop{}}{4\atop{}}{3\atop{}}{1\atop{}}}$,
$\scriptstyle{{2\atop{}}{4\atop{}}{6\atop3}{5\atop{}}{4\atop{}}{3\atop{}}{2\atop{}}}$,
&$\scriptstyle{{2\atop{}}{4\atop{}}{5\atop2}{4\atop{}}{3\atop{}}{2\atop{}}{1\atop{}}}$,
$\scriptstyle{{1\atop{}}{2\atop{}}{3\atop2}{2\atop{}}{1\atop{}}{0\atop{}}{0\atop{}}}$.\\
$\scriptstyle{{1\atop{}}{2\atop{}}{4\atop2}{4\atop{}}{3\atop{}}{2\atop{}}{1\atop{}}}$.
&\\
\hline
\end{tabular}
\end{table}
The $(4,7,8)$-contributions of the $0$-roots and $1$-roots are
$(58,26,14)$ and $(40,16,8)$, respectively, and the total
contribution of all roots is $(49,21,11)$. In the present case the
orbit $\O(e)$ is non-special and comparing the size of $\O(e)$ with
the size of a root subsystem of type ${\sf E_6+A_2}$ in $\Phi$
indicates that one should seek
$\lambda+\rho=\sum_{i=1}^8a_i\varpi_i$ such that $a_i\in
\frac{1}{3}\Z$ for all $1\le i\le 8$. Setting
$\widetilde{a}_i:=3a_i$ we can rewrite Condition~(C) for
$\lambda+\rho$ as follows:
\begin{eqnarray*}
10\widetilde{a}_1+15\widetilde{a}_2+20\widetilde{a}_3+30\widetilde{a}_4+
24\widetilde{a}_5+18\widetilde{a}_6+12\widetilde{a}_7+6\widetilde{a}_8&=&147\\
4\widetilde{a}_1+6\widetilde{a}_2+8\widetilde{a}_3+12\widetilde{a}_4+
10\widetilde{a}_5+8\widetilde{a}_6+6\widetilde{a}_7+3\widetilde{a}_8&=&63\\
2\widetilde{a}_1+3\widetilde{a}_2+4\widetilde{a}_3+6\widetilde{a}_4+
5\widetilde{a}_5+4\widetilde{a}_6+3\widetilde{a}_7+2\widetilde{a}_8&=&33.
\end{eqnarray*}
A very nice solution to this system of linear
equations is given by setting $\widetilde{a}_i=1$ for $1\le i\le 7$ and
$\widetilde{a}_8=3$. Due to our choice of pinning this solution
satisfies Condition~(A). Since Condition~(D) is vacuous in the
present case we just need to check that it satisfies Condition~(B).

In order to do so we observe that the integral root system
$\Phi_{\lambda+\rho}$ of our weight
$\lambda+\rho=\textstyle{\frac{1}{3}}(\rho+2\varpi_8)$ contains the
positive roots
$$
\xymatrix{{\beta_1}\ar@{-}[r]& {\beta_3} \ar@{-}[r] & {\beta_4}
\ar@{-}[r]\ar@{-}[d] & {\beta_5} \ar@{-}[r]
 & {\beta_6} &{\beta_7}\ar@{-}[r] &{\beta_8}\\ && {\beta_2}&&&}
$$
where
\begin{eqnarray*}
\beta_1\,=\,{\scriptstyle{0\atop{}}{0\atop{}}{1\atop1}{1\atop{}}{0\atop{}}{0\atop{}}{0\atop{}}},\
\
\beta_2\,=\,{\scriptstyle{0\atop{}}{0\atop{}}{0\atop0}{0\atop{}}{0\atop{}}{0\atop{}}{1\atop{}}},\
\
\beta_3\,=\,{\scriptstyle{1\atop{}}{1\atop{}}{1\atop0}{0\atop{}}{0\atop{}}{0\atop{}}{0\atop{}}},\
\
\beta_4\,=\,{\scriptstyle{0\atop{}}{0\atop{}}{0\atop0}{1\atop{}}{1\atop{}}{1\atop{}}{0\atop{}}},\\
\beta_5\,=\,{\scriptstyle{0\atop{}}{1\atop{}}{1\atop1}{0\atop{}}{0\atop{}}{0\atop{}}{0\atop{}}},\
\
\beta_6\,=\,{\scriptstyle{0\atop{}}{0\atop{}}{1\atop0}{1\atop{}}{1\atop{}}{0\atop{}}{0\atop{}}},\
\
\beta_7\,=\,{\scriptstyle{0\atop{}}{0\atop{}}{1\atop0}{1\atop{}}{1\atop{}}{0\atop{}}{0\atop{}}},\
\
\beta_8\,=\,{\scriptstyle{1\atop{}}{1\atop{}}{1\atop1}{1\atop{}}{1\atop{}}{0\atop{}}{0\atop{}}},
\end{eqnarray*}
and hence has type ${\sf E_6+A_2}$ (by the maximality of that root
subsystem). Furthermore, the roots $\{\beta_i\,|\,\,1\le i\le 8\}$
form the basis of $\Phi_{\lambda}$ contained in $\Phi^+$ and it is
straightforward to see that $\lambda+\rho$ is strongly dominant on
$\Phi^+\cap\Phi_\lambda$. Since
$$|\Phi^+|-|\Phi_\lambda^+|=120-36-3=81=\textstyle{\frac{1}{2}}(248-86)=\frac{1}{2}\dim\,\O(e),$$
applying [\cite{Jo1}, Corollary~3.5] yields $\dim\,{\rm
VA}(I(\lambda))=\dim\,\O(e)$. Since $\lambda+\rho$ satisfies
Condition~(A) we now deduce that $e\in{\rm VA}(I(\lambda))$.
Consequently, ${\rm VA}(I(\lambda))=\overline{\O(e)}$ i.e.
Condition~(B) holds for $\lambda+\rho$.  Since in the present case
$\g_e=[\g_e,\g_e]$ [\cite{dG}, we conclude, thanks to [\cite{Lo2},
5.3] and [\cite{PT}, Proposition~11], that
$I(-\frac{2}{3}\rho+\frac{2}{3}\varpi_8)$ is the unique
multiplicity-free primitive ideal in $\mathcal{X}_{\O(e)}$.
\subsection{Type $({\sf E_8, A_3+A_1})$}\label{3.9}
Here $e$ is non-special and our pinning is
\[\xymatrix{*{\bullet}\ar@{-}[r]& *{\bullet} \ar@{-}[r] & *{\bullet} \ar@{-}[r]\ar@{-}[d] & *{\circ} \ar@{-}[r]
 & *{\bullet} \ar@{-}[r]&*{\circ}\ar@{-}[r]&*{\circ}\\ && *{\circ}&&&}\]
Then the optimal cocharacter for $e$ can be chosen as follows:
$$\tau=\textstyle{{2\atop{}}{2\atop{}}{2\atop (-3)}
{(-4)\atop{}}{2\atop{}}{(-1)\atop{}}{0\atop{}}}.$$

Serious complications that we have encountered in this case are mainly due to the
fact that $\g_e={\mathbb C}e\oplus [\g_e,\g_e]$; see [\cite{dG}].
In this situation one may expect $U(\g,e)$ to possess at least $2$
one-dimensional representation. In fact, [\cite{GRU}] says that this
is indeed the case and there are exactly two of them.
Our task in this subsection will be to determine the corresponding primitive ideals of $U(\g)$
in their Duflo realisations.
Since $\dim\,\g_e=84$, the total number of positive $0$-roots and
$1$-roots is $(84-8)/2=38$. The roots are given below.
\begin{table}[htb]
\label{data2}
\begin{tabular}{|c|c|}
\hline$0$-roots  & $1$-roots
\\ \hline
$\scriptstyle{{0\atop{}}{1\atop{}}{1\atop0}{1\atop{}}{0\atop{}}{0\atop{}}{0\atop{}}}$,
$\scriptstyle{{0\atop{}}{0\atop{}}{1\atop0}{1\atop{}}{1\atop{}}{0\atop{}}{0\atop{}}}$,
$\scriptstyle{{0\atop{}}{0\atop{}}{0\atop0}{0\atop{}}{0\atop{}}{0\atop{}}{1\atop{}}}$,
$\scriptstyle{{1\atop{}}{1\atop{}}{1\atop1}{1\atop{}}{1\atop{}}{1\atop{}}{0\atop{}}}$,\,
&$\scriptstyle{{0\atop{}}{1\atop{}}{1\atop1}{0\atop{}}{0\atop{}}{0\atop{}}{0\atop{}}}$,
$\scriptstyle{{0\atop{}}{0\atop{}}{0\atop0}{0\atop{}}{1\atop{}}{1\atop{}}{0\atop{}}}$,
$\scriptstyle{{0\atop{}}{0\atop{}}{0\atop0}{0\atop{}}{1\atop{}}{1\atop{}}{1\atop{}}}$,
$\scriptstyle{{1\atop{}}{1\atop{}}{1\atop1}{1\atop{}}{1\atop{}}{0\atop{}}{0\atop{}}}$,\\
$\scriptstyle{{1\atop{}}{1\atop{}}{1\atop1}{1\atop{}}{1\atop{}}{1\atop{}}{1\atop{}}}$,
$\scriptstyle{{0\atop{}}{1\atop{}}{2\atop1}{1\atop{}}{1\atop{}}{1\atop{}}{0\atop{}}}$,
$\scriptstyle{{0\atop{}}{1\atop{}}{2\atop1}{1\atop{}}{1\atop{}}{1\atop{}}{1\atop{}}}$,
$\scriptstyle{{1\atop{}}{2\atop{}}{2\atop1}{2\atop{}}{1\atop{}}{1\atop{}}{1\atop{}}}$,
&$\scriptstyle{{0\atop{}}{1\atop{}}{2\atop1}{1\atop{}}{1\atop{}}{0\atop{}}{0\atop{}}}$,
$\scriptstyle{{1\atop{}}{1\atop{}}{2\atop1}{1\atop{}}{0\atop{}}{0\atop{}}{0\atop{}}}$,
$\scriptstyle{{0\atop{}}{1\atop{}}{1\atop0}{1\atop{}}{1\atop{}}{1\atop{}}{0\atop{}}}$,
$\scriptstyle{{0\atop{}}{1\atop{}}{1\atop0}{1\atop{}}{1\atop{}}{1\atop{}}{1\atop{}}}$,\\
$\scriptstyle{{1\atop{}}{2\atop{}}{2\atop1}{2\atop{}}{1\atop{}}{1\atop{}}{0\atop{}}}$,
$\scriptstyle{{1\atop{}}{1\atop{}}{2\atop1}{2\atop{}}{2\atop{}}{1\atop{}}{0\atop{}}}$,
$\scriptstyle{{1\atop{}}{1\atop{}}{2\atop1}{2\atop{}}{2\atop{}}{1\atop{}}{1\atop{}}}$,
$\scriptstyle{{1\atop{}}{2\atop{}}{3\atop2}{2\atop{}}{1\atop{}}{0\atop{}}{0\atop{}}}$,
&$\scriptstyle{{1\atop{}}{2\atop{}}{2\atop1}{2\atop{}}{1\atop{}}{0\atop{}}{0\atop{}}}$,
$\scriptstyle{{1\atop{}}{2\atop{}}{2\atop1}{2\atop{}}{2\atop{}}{2\atop{}}{1\atop{}}}$,
$\scriptstyle{{1\atop{}}{2\atop{}}{3\atop2}{2\atop{}}{2\atop{}}{1\atop{}}{0\atop{}}}$,
$\scriptstyle{{1\atop{}}{2\atop{}}{3\atop2}{3\atop{}}{2\atop{}}{1\atop{}}{1\atop{}}}$,\\
$\scriptstyle{{1\atop{}}{2\atop{}}{3\atop2}{2\atop{}}{2\atop{}}{2\atop{}}{1\atop{}}}$,
$\scriptstyle{{1\atop{}}{2\atop{}}{3\atop1}{3\atop{}}{2\atop{}}{1\atop{}}{0\atop{}}}$,
$\scriptstyle{{1\atop{}}{2\atop{}}{3\atop1}{3\atop{}}{2\atop{}}{1\atop{}}{1\atop{}}}$,
$\scriptstyle{{1\atop{}}{3\atop{}}{4\atop2}{3\atop{}}{2\atop{}}{2\atop{}}{1\atop{}}}$,
&$\scriptstyle{{1\atop{}}{2\atop{}}{3\atop1}{3\atop{}}{3\atop{}}{2\atop{}}{1\atop{}}}$,
$\scriptstyle{{1\atop{}}{3\atop{}}{4\atop2}{3\atop{}}{2\atop{}}{1\atop{}}{0\atop{}}}$,
$\scriptstyle{{1\atop{}}{3\atop{}}{4\atop2}{3\atop{}}{2\atop{}}{1\atop{}}{1\atop{}}}$,
$\scriptstyle{{2\atop{}}{4\atop{}}{5\atop3}{4\atop{}}{3\atop{}}{2\atop{}}{1\atop{}}}$,\\
$\scriptstyle{{1\atop{}}{2\atop{}}{4\atop2}{3\atop{}}{3\atop{}}{2\atop{}}{1\atop{}}}$,
$\scriptstyle{{1\atop{}}{3\atop{}}{5\atop2}{4\atop{}}{3\atop{}}{2\atop{}}{1\atop{}}}$,
$\scriptstyle{{2\atop{}}{4\atop{}}{6\atop3}{5\atop{}}{4\atop{}}{3\atop{}}{1\atop{}}}$,
$\scriptstyle{{2\atop{}}{4\atop{}}{6\atop3}{5\atop{}}{4\atop{}}{3\atop{}}{2\atop{}}}$,
&$\scriptstyle{{2\atop{}}{4\atop{}}{6\atop3}{5\atop{}}{4\atop{}}{2\atop{}}{1\atop{}}}$.\\
$\scriptstyle{{2\atop{}}{3\atop{}}{4\atop2}{4\atop{}}{3\atop{}}{2\atop{}}{1\atop{}}}$.&
\\
\hline
\end{tabular}
\end{table}
The $(2,5,7,8)$-contributions of the $0$-roots and $1$-roots are
$(28,48,26,14)$ and $(21,31,16,6)$, respectively, and the total
contribution of all roots is
$\big(\frac{49}{2},\frac{79}{2},\frac{42}{2}, \frac{22}{2}\big)$. It
seems reasonable to seek $\lambda+\rho=\sum_{i=1}^8a_i\varpi_i$ for which all $a_i$ are half-integers. Numerology of orbit sizes indicates that the integral root system of $\lambda$
should have type ${\sf E_7+A_1}$ and the nilpotent orbit in the Lie
algebra $\g(\lambda)$ (as defined in the previous subsection) should
have type ${\sf 2A_1+0}$. Indeed, the nilpotent orbit of type ${\sf 2A_1}$
in a Lie algebra of type ${\sf E_7}$ is special of dimension $52$
and
$$\dim\O(e)=248-84=164=112+52+0=(248-133-3)+52+0,
$$ which is clearly in line with [\cite{Lo2}, Proposition~5.3.2].
Since all other options a priori available to us turned out to be
dead ends, this is actually our only hope.

Note that due to our choice of pinning Condition~(A) can be restated
by saying that
$\langle\lambda+\rho,\gamma^\vee\rangle\not\in\Z^{>0}$ for any
$\gamma\in\Phi^+$ which is a linear combination of $\alpha_i$ with
$i\in\{1,3,4,6\}$.

Setting $\widetilde{a}_i:=2a_i$ we can rewrite Condition~(C) as
follows:
\begin{eqnarray*}
5\widetilde{a}_1+8\widetilde{a}_2+10\widetilde{a}_3+15\widetilde{a}_4+
12\widetilde{a}_5+9\widetilde{a}_6+6\widetilde{a}_7+3\widetilde{a}_8&=&49\\
8\widetilde{a}_1+12\widetilde{a}_2+16\widetilde{a}_3+24\widetilde{a}_4+
20\widetilde{a}_5+15\widetilde{a}_6+10\widetilde{a}_7+5\widetilde{a}_8&=&79\\
4\widetilde{a}_1+6\widetilde{a}_2+8\widetilde{a}_3+12\widetilde{a}_4+
10\widetilde{a}_5+8\widetilde{a}_6+6\widetilde{a}_7+3\widetilde{a}_8&=&42\\
2\widetilde{a}_1+3\widetilde{a}_2+4\widetilde{a}_3+6\widetilde{a}_4+
5\widetilde{a}_5+4\widetilde{a}_6+3\widetilde{a}_7+2\widetilde{a}_8&=&22.
\end{eqnarray*}
A not entirely random solution to this system of linear equations is
given by putting $\widetilde{a_i}=-1$ for $i\in \{1,4\}$,
$\widetilde{a}_i=1$ for $i\in\{\,6,7\}$, $\widetilde{a}_i=2$ for
$i\in\{5,8\}$, $\widetilde{a_3}=0$ and $\widetilde{a}_2=3$. This
leads to the weight
$$\lambda+\rho=\textstyle{\frac{1}{2}}(-\varpi_1+3\varpi_2-\varpi_4+
2\varpi_5+\varpi_6+\varpi_7+2\varpi_8)$$ which satisfies
Conditions~(A), whilst Condition~(D) is again vacuous. It is
straightforward to see that $\Phi_\lambda$ contains the positive
roots
$$
\xymatrix{{\beta_1}\ar@{-}[r]& {\beta_3} \ar@{-}[r] & {\beta_4}
\ar@{-}[r]\ar@{-}[d] & {\beta_5} \ar@{-}[r]
 & {\beta_6}\ar@{-}[r] &{\beta_7} &{\beta_8}\\ && {\beta_2}&&&}
$$
where
\begin{eqnarray*}
\beta_1\,=\,{\scriptstyle{0\atop{}}{0\atop{}}{0\atop0}{0\atop{}}{0\atop{}}{0\atop{}}{1\atop{}}},\
\
\beta_2\,=\,{\scriptstyle{1\atop{}}{1\atop{}}{1\atop0}{0\atop{}}{0\atop{}}{0\atop{}}{0\atop{}}},\
\
\beta_3\,=\,{\scriptstyle{0\atop{}}{0\atop{}}{0\atop0}{0\atop{}}{1\atop{}}{1\atop{}}{0\atop{}}},\
\
\beta_4\,=\,{\scriptstyle{0\atop{}}{0\atop{}}{0\atop0}{1\atop{}}{0\atop{}}{0\atop{}}{0\atop{}}},\\
\beta_5\,=\,{\scriptstyle{0\atop{}}{0\atop{}}{1\atop1}{0\atop{}}{0\atop{}}{0\atop{}}{0\atop{}}},\
\
\beta_6\,=\,{\scriptstyle{0\atop{}}{1\atop{}}{0\atop0}{0\atop{}}{0\atop{}}{0\atop{}}{0\atop{}}},\
\
\beta_7\,=\,{\scriptstyle{0\atop{}}{0\atop{}}{1\atop0}{1\atop{}}{1\atop{}}{0\atop{}}{0\atop{}}},\
\
\beta_8\,=\,{\scriptstyle{1\atop{}}{1\atop{}}{1\atop1}{1\atop{}}{1\atop{}}{0\atop{}}{0\atop{}}},
\end{eqnarray*}
and hence has type ${\sf E_7+A_1}$ (by the maximality of that root
subsystem). Moreover, the roots $\{\beta_i\,|\,\,1\le i\le 8\}$ form
the basis of $\Phi_{\lambda}$ contained in $\Phi^+$. Let
$\{\varpi'_i\,|\,\,1\le i\le 8\}$ denote the corresponding system of
fundamental weights, so that
$\langle\varpi_i',\beta_j^\vee\rangle=\delta_{ij}$ for all $1\le i,j\le
8$. Then
$$\lambda+\rho\,=-\varpi'_2+2\varpi'_8+\textstyle{\sum}_{i\ne,2,6,8}\,\varpi'_i\,=\,
s_{\beta_2}s_{\beta_4}s_{\beta_6}\big(2\varpi_8+\textstyle{\sum}_{i\ne
4,6,8}\,\varpi'_i\big).$$ So
$\lambda=s_{\beta_2}s_{\beta_4}s_{\beta_6}\centerdot\mu$ where
$\mu=s_{\beta_6}s_{\beta_4}s_{\beta_2}(\lambda+\rho)-\rho\in\Lambda^+$.
Also, $$\Pi_{\mu}^0=\{\beta_4,\beta_6\}\subset
\tau_\Lambda(s_{\beta_2}s_{\beta_4}s_{\beta_6}).$$ Applying
[\cite{Ja}, Corollar~10.10(a)] we now get
$$d\big(U(\g)/I(\lambda)\big)=d\big(U(\g)/I(s_{\beta_2}s_{\beta_4}s_{\beta_6}\centerdot\mu)\big)=
d\big(U(\g)/I(s_{\beta_2}s_{\beta_4}s_{\beta_6}\centerdot\nu)\big)$$
for any regular $\nu\in\Lambda^+$, whilst [\cite{Ja},
Corollar~10.10(c)] yields
$$d\big(U(\g)/I(s_{\beta_2}s_{\beta_4}s_{\beta_6}\centerdot\nu)\big)=
d\big(U(\g)/I(s_{\beta_6}s_{\beta_4}s_{\beta_2}\centerdot\nu)\big)=
d\big(U(\g)/I(s_{\beta_6}s_{\beta_4}\centerdot\nu)\big).$$ Since $\g(\lambda)$ is a Lie algebra of type
of type ${\sf E_7+A_1}$, applying [\cite{Lo2},
Proposition~5.3.2] now entails that
$$d\big(U(\g)/I(\lambda)\big)=d\big(U(\g)/I(s_{\beta_6}s_{\beta_4}\centerdot\nu)\big)=
112+\dim\O_\lambda(w_0s_{\beta_6}s_{\beta_4})$$
where $w_0$ is the longest element of $W_\lambda$ and
$\O_\lambda(w_0s_{\beta_6}s_{\beta_4})$ is the special nilpotent
orbit in $\g(\lambda)$ attached to the double cell of $W_\lambda$
containing $w_0s_{\beta_6}s_{\beta_4}$.

At this point we can invoke the description of double cells in the
Weyl group $W({\sf E_7})$ obtained by Yu Chen and Jian-Yi Shi in
[\cite{CS}]. According to Table~4.2.1 in {\it loc.\,cit.} the set
$W^{(2)}=\{w\in W\,|\,\,\mathbf{a}(w)=2\}$ is a single double cell of $W$
whose dual double cell $w_0W^{(2)}$ is associated with the nilpotent
orbit of type ${\sf 2A_1}$ (here $w_0$ stands for the longest
element of $W({\sf E_7}$). Since the roots $\beta_4$ and $\beta_6$
are orthogonal to each other, we have that
$\mathbf{a}(s_{\beta_6}s_{\beta_4})=2$. From this it is immediate that the
orbit $\O_\lambda(w_0s_{\beta_6}s_{\beta_4})$ has type ${\sf
2A_1+0}$ in $\g(\lambda)$. In conjunction with our remarks earlier
in this subsection this shows that $\dim {\rm VA}(I(\lambda))=\dim
\O(e).$ Since $\lambda+\rho$ satisfies Condition~A we have that
$e\in {\rm VA}(I(\lambda))$. Hence ${\rm
VA}(I(\lambda))=\overline{\O(e)}$ and we finally deduce that
$\lambda+\rho$ satisfies all four Losev's conditions. As a result,
$I(-\frac{3}{2}\varpi_1+\frac{1}{2}\varpi_2-\varpi_3-\frac{3}{2}\varpi_4-\frac{1}{2}\varpi_6-\frac{1}{2}\varpi_7)$
is a multiplicity-free primitive ideal in $\mathcal{X}_{\O(e)}$.

The second weight, $\lambda'+\rho$, leading to a 
one-dimensional
representations of $U(\g,e)$ was eventually found (after many
unsuccessful attempts) when we imposed that
\begin{itemize}
\item[(a)\ ]
$\Phi_{\lambda'}=\Phi_\lambda$;

\smallskip

\item[(b)\ ]
$\langle\lambda'+\rho,\beta_8^\vee\rangle=1$.
\end{itemize}
There was no ``scientific'' reason to impose these conditions, but
the central characters of one-dimensional representations of finite
$W$-algebras do tend to be rather small and all root subsystems of
type ${\sf E_7+A_1}$ in $\Phi$ are conjugate under the action of the
Weyl group. Condition~(C) together with (b) leads to the following system of linear equations:
\begin{eqnarray*}
\widetilde{b}_1+\widetilde{b}_2+\widetilde{b}_3+\widetilde{b}_4+
\widetilde{b}_5+\widetilde{b}_6&=&2\\
5\widetilde{b}_1+8\widetilde{b}_2+10\widetilde{b}_3+15\widetilde{b}_4+
12\widetilde{b}_5+9\widetilde{b}_6+6\widetilde{b}_7+3\widetilde{b}_8&=&49\\
8\widetilde{b}_1+12\widetilde{b}_2+16\widetilde{b}_3+24\widetilde{b}_4+
20\widetilde{b}_5+15\widetilde{b}_6+10\widetilde{b}_7+5\widetilde{b}_8&=&79\\
4\widetilde{b}_1+6\widetilde{b}_2+8\widetilde{b}_3+12\widetilde{b}_4+
10\widetilde{b}_5+8\widetilde{b}_6+6\widetilde{b}_7+3\widetilde{b}_8&=&42\\
2\widetilde{b}_1+3\widetilde{b}_2+4\widetilde{b}_3+6\widetilde{b}_4+
5\widetilde{b}_5+4\widetilde{b}_6+3\widetilde{b}_7+2\widetilde{b}_8&=&22.
\end{eqnarray*}
Here $\lambda'+\rho=\sum_{i=1}^8 b_i\varpi_i$, where all $b_i$ are
half-integers, and $\widetilde{b}_i=2b_i$ for $1\le i\le 8$. Setting
$\widetilde{b_i}=1$ for $i\in \{4,6,7\}$, $\widetilde{b}_i=2$ for
$i\in\{\,5,8\}$, $\widetilde{b}_1=-3$, $\widetilde{b_2}=3$ and
$\widetilde{b}_3=-2$ provides us with a rather ordinary looking
solution which leads to the weight
$$\lambda'+\rho=\textstyle{\frac{1}{2}}(-3\varpi_1+3\varpi_2-2\varpi_3+\varpi_4+
2\varpi_5+\varpi_6+\varpi_7+2\varpi_8).$$ This weight does satisfy
Conditions~(A) and computations show that
$$\lambda'+\rho\,=-2\varpi'_2-\varpi'_6+2\varpi'_7+\textstyle{\sum}_{i\ne,2,6,7}\,\varpi'_i\,=\,
s_{\beta_6}s_{\beta_2}s_{\beta_4}s_{\beta_3}s_{\beta_5}\big(\textstyle{\sum}_{i\ne
3,5}\,\varpi'_i\big).$$ Then
$\lambda'=s_{\beta_6}s_{\beta_2}s_{\beta_4}s_{\beta_3}s_{\beta_5}\centerdot\mu'$
where $\mu'=
s_{\beta_5}s_{\beta_3}s_{\beta_4}s_{\beta_2}s_{\beta_6}(\lambda'+\rho)-\rho\in\Lambda^+$,
and
$$\Pi_{\mu'}^0=\{\beta_3,\beta_5\}\subseteq
\tau_\Lambda(s_{\beta_6}s_{\beta_2}s_{\beta_4}s_{\beta_3}s_{\beta_5}).$$
Applying [\cite{Ja}, Corollar~10.10(a)] once again we get
$$d\big(U(\g)/I(\lambda')\big)=d\big(U(\g)/I(s_{\beta_6}s_{\beta_2}s_{\beta_4}s_{\beta_3}s_{\beta_5}\centerdot\mu')\big)=
d\big(U(\g)/I(s_{\beta_6}s_{\beta_2}s_{\beta_4}s_{\beta_3}s_{\beta_5}\centerdot\nu)\big)$$
for any regular $\nu\in\Lambda^+$. Then [\cite{Ja},
Corollar~10.10(c)] yields
\begin{eqnarray*}
d\big(U(\g)/I(\lambda')\big)&=&
d\big(U(\g)/I(s_{\beta_2}s_{\beta_4}s_{\beta_3}\underline{s_{\beta_6}s_{\beta_5}}\centerdot\nu)\big)=\,
d\big(U(\g)/I(s_{\beta_2}s_{\beta_4}s_{\beta_3}s_{\beta_6}\centerdot\nu)\big)\\
&=&d\big(U(\g)/I(s_{\beta_6}s_{\beta_2}\underline{s_{\beta_4}s_{\beta_3}}\centerdot\nu)\big)=\,
d\big(U(\g)/I(s_{\beta_6}\underline{s_{\beta_2}s_{\beta_4}}\centerdot\nu)\big)\\
&=&d\big(U(\g)/I(s_{\beta_6}s_{\beta_2}\centerdot\nu)\big)=\,\dim
\O(e)
\end{eqnarray*}
because the equality
$\mathbf{a}(s_{\beta_6}s_{\beta_2})=2$ holds in the Weyl
group $W(\Phi_\lambda)$ (here we again rely on [\cite{CS},
Table~4.2.1]). Arguing as in the previous case we are now able to
conclude that
$I(-\frac{5}{2}\varpi_1+\frac{1}{2}\varpi_2-2\varpi_3-\frac{1}{2}\varpi_4-\frac{1}{2}\varpi_6-\frac{1}{2}\varpi_7)$
is a multiplicity-free primitive ideal in $\mathcal{X}_{\O(e)}$.

If $I(\lambda)=I(\lambda')$ then $\lambda'+\rho=w(\lambda+\rho)$ for
some $w\in W$. Since $\Phi_\lambda=\Phi_{\lambda'}$ the element $w$
must preserve $\Phi_\lambda$. But then $w\in W(\Phi_\lambda)$ as the
root system of type ${\sf E_7+A_1}$ has no outer automorphisms. The
latter, however, is impossible since $\lambda+\rho$ and
$\lambda'+\rho$ take different positive values at $\beta_8^\vee$ (which generates
the component of type ${\sf A_1}$ in $\Phi_\lambda$). Therefore, the
primitive ideals $I(\lambda)$ and $I(\lambda')$ are distinct and
hence so are the respective one-dimensional representations of
$U(\g,e)$.
\subsection{Type $({\sf E_8, 2A_2+2A_1})$}\label{3.10}
Here our pinning for $e$ is
\[\xymatrix{*{\bullet}\ar@{-}[r]& *{\bullet} \ar@{-}[r] & *{\circ} \ar@{-}[r]\ar@{-}[d] & *{\bullet} \ar@{-}[r]
 & *{\bullet} \ar@{-}[r]&*{\circ}\ar@{-}[r]&*{\bullet}\\ && *{\bullet}&&&}\]
and we can take
$$\tau=\textstyle{{2\atop{}}{2\atop{}}{(-5)\atop 2}
{2\atop{}}{2\atop{}}{(-3)\atop{}}{2\atop{}}}$$ as an optimal
cocharacter. Since $\dim\,\g_e=80$, the total number of positive
$0$-roots and $1$-roots is $(80-8)/2=36$. The roots are given below.
\begin{table}[htb]
\label{data2}
\begin{tabular}{|c|c|}
\hline$0$-roots  & $1$-roots
\\ \hline
$\scriptstyle{{1\atop{}}{1\atop{}}{1\atop0}{1\atop{}}{1\atop{}}{1\atop{}}{0\atop{}}}$,
$\scriptstyle{{0\atop{}}{1\atop{}}{1\atop1}{1\atop{}}{1\atop{}}{1\atop{}}{0\atop{}}}$,
$\scriptstyle{{0\atop{}}{0\atop{}}{1\atop1}{1\atop{}}{1\atop{}}{1\atop{}}{1\atop{}}}$,
$\scriptstyle{{1\atop{}}{2\atop{}}{2\atop1}{1\atop{}}{0\atop{}}{0\atop{}}{0\atop{}}}$,\,
&$\scriptstyle{{1\atop{}}{1\atop{}}{1\atop1}{0\atop{}}{0\atop{}}{0\atop{}}{0\atop{}}}$,
$\scriptstyle{{1\atop{}}{1\atop{}}{1\atop0}{1\atop{}}{0\atop{}}{0\atop{}}{0\atop{}}}$,
$\scriptstyle{{0\atop{}}{1\atop{}}{1\atop1}{1\atop{}}{0\atop{}}{0\atop{}}{0\atop{}}}$,
$\scriptstyle{{1\atop{}}{1\atop{}}{1\atop0}{1\atop{}}{0\atop{}}{0\atop{}}{0\atop{}}}$,\\
$\scriptstyle{{0\atop{}}{1\atop{}}{2\atop1}{2\atop{}}{1\atop{}}{0\atop{}}{0\atop{}}}$,
$\scriptstyle{{1\atop{}}{1\atop{}}{2\atop1}{1\atop{}}{1\atop{}}{0\atop{}}{0\atop{}}}$,
$\scriptstyle{{1\atop{}}{1\atop{}}{2\atop1}{2\atop{}}{2\atop{}}{2\atop{}}{1\atop{}}}$,
$\scriptstyle{{1\atop{}}{2\atop{}}{3\atop2}{2\atop{}}{1\atop{}}{1\atop{}}{1\atop{}}}$,
&$\scriptstyle{{0\atop{}}{0\atop{}}{0\atop0}{0\atop{}}{1\atop{}}{1\atop{}}{1\atop{}}}$,
$\scriptstyle{{0\atop{}}{0\atop{}}{1\atop1}{1\atop{}}{1\atop{}}{0\atop{}}{0\atop{}}}$,
$\scriptstyle{{0\atop{}}{1\atop{}}{1\atop0}{1\atop{}}{1\atop{}}{0\atop{}}{0\atop{}}}$,
$\scriptstyle{{1\atop{}}{1\atop{}}{2\atop1}{2\atop{}}{1\atop{}}{1\atop{}}{1\atop{}}}$,\\
$\scriptstyle{{1\atop{}}{2\atop{}}{3\atop1}{2\atop{}}{2\atop{}}{1\atop{}}{1\atop{}}}$,
$\scriptstyle{{1\atop{}}{2\atop{}}{3\atop2}{2\atop{}}{2\atop{}}{1\atop{}}{0\atop{}}}$,
$\scriptstyle{{1\atop{}}{2\atop{}}{3\atop1}{3\atop{}}{2\atop{}}{1\atop{}}{0\atop{}}}$,
$\scriptstyle{{1\atop{}}{2\atop{}}{4\atop2}{4\atop{}}{3\atop{}}{2\atop{}}{1\atop{}}}$,
&$\scriptstyle{{1\atop{}}{2\atop{}}{2\atop1}{1\atop{}}{1\atop{}}{1\atop{}}{1\atop{}}}$,
$\scriptstyle{{1\atop{}}{2\atop{}}{2\atop1}{2\atop{}}{1\atop{}}{1\atop{}}{0\atop{}}}$,
$\scriptstyle{{0\atop{}}{1\atop{}}{2\atop1}{2\atop{}}{2\atop{}}{1\atop{}}{1\atop{}}}$,
$\scriptstyle{{1\atop{}}{1\atop{}}{2\atop1}{3\atop{}}{2\atop{}}{1\atop{}}{1\atop{}}}$,\\
$\scriptstyle{{1\atop{}}{3\atop{}}{4\atop2}{3\atop{}}{3\atop{}}{2\atop{}}{1\atop{}}}$,
$\scriptstyle{{2\atop{}}{3\atop{}}{4\atop2}{3\atop{}}{2\atop{}}{2\atop{}}{1\atop{}}}$,
$\scriptstyle{{2\atop{}}{4\atop{}}{6\atop3}{5\atop{}}{3\atop{}}{2\atop{}}{1\atop{}}}$,
$\scriptstyle{{0\atop{}}{1\atop{}}{1\atop0}{1\atop{}}{1\atop{}}{1\atop{}}{1\atop{}}}$.
&$\scriptstyle{{1\atop{}}{2\atop{}}{3\atop2}{2\atop{}}{1\atop{}}{0\atop{}}{0\atop{}}}$,
$\scriptstyle{{1\atop{}}{2\atop{}}{3\atop2}{3\atop{}}{2\atop{}}{2\atop{}}{1\atop{}}}$,
$\scriptstyle{{1\atop{}}{2\atop{}}{3\atop1}{3\atop{}}{3\atop{}}{2\atop{}}{1\atop{}}}$,
$\scriptstyle{{2\atop{}}{3\atop{}}{4\atop2}{3\atop{}}{2\atop{}}{1\atop{}}{0\atop{}}}$,\\
&$\scriptstyle{{1\atop{}}{3\atop{}}{4\atop2}{3\atop{}}{2\atop{}}{1\atop{}}{1\atop{}}}$,
$\scriptstyle{{2\atop{}}{4\atop{}}{5\atop2}{4\atop{}}{3\atop{}}{2\atop{}}{1\atop{}}}$,
$\scriptstyle{{2\atop{}}{3\atop{}}{5\atop3}{4\atop{}}{3\atop{}}{2\atop{}}{1\atop{}}}$,
$\scriptstyle{{2\atop{}}{4\atop{}}{6\atop3}{5\atop{}}{4\atop{}}{3\atop{}}{2\atop{}}}$.\\
\hline
\end{tabular}
\end{table}
The $(4,7)$-contributions of the $0$-roots and $1$-roots are
$(42,18)$ and $(48,20)$, respectively, and the total contribution of
all roots is $(45,19)$. The orbit $\O(e)$ is non-special in the
present case and comparing its size with the size of a root
subsystem of type ${\sf A_8}$ in $\Phi$ indicates that one should
again seek $\lambda+\rho=\sum_{i=1}^8a_i\varpi_i$ such that $a_i\in
\frac{1}{3}\Z$ for all $1\le i\le 8$. Setting
$\widetilde{a}_i:=3a_i$ we can rewrite Condition~(C) as follows:
\begin{eqnarray*}
10\widetilde{a}_1+15\widetilde{a}_2+20\widetilde{a}_3+30\widetilde{a}_4+
24\widetilde{a}_5+18\widetilde{a}_6+12\widetilde{a}_7+6\widetilde{a}_8&=&135\\
4\widetilde{a}_1+6\widetilde{a}_2+8\widetilde{a}_3+12\widetilde{a}_4+
10\widetilde{a}_5+8\widetilde{a}_6+6\widetilde{a}_7+3\widetilde{a}_8&=&57.
\end{eqnarray*}
Already at this point we can see that as our orbits get larger
Condition~(C) gets weaker and weaker. But as a counterweight
Condition~(A) gets more and more restrictive.

An exceptionally nice solution to our system  is given by
$\widetilde{a}_i=1$ for all $1\le i\le 8$. Due to the choice of
pinning it does not violate Condition~(A). Since Condition~(D) is
again vacuous it remains to show  that it satisfies Condition~(B).
To do that we first observe that the integral root system of our
weight $\lambda+\rho=\textstyle{\frac{1}{3}}\rho$ contains the
positive roots
$$
\xymatrix{{\beta_1}\ar@{-}[r]& {\beta_2} \ar@{-}[r] & {\beta_3}
\ar@{-}[r] & {\beta_4} \ar@{-}[r]
 & {\beta_5}\ar@{-}[r] &{\beta_6}\ar@{-}[r] &{\beta_7}\ar@{-}[r]& {\beta_8}}
$$
where
\begin{eqnarray*}
\beta_1\,=\,{\scriptstyle{0\atop{}}{1\atop{}}{1\atop0}{1\atop{}}{0\atop{}}{0\atop{}}{0\atop{}}},\
\
\beta_2\,=\,{\scriptstyle{0\atop{}}{0\atop{}}{0\atop0}{0\atop{}}{1\atop{}}{1\atop{}}{1\atop{}}},\
\
\beta_3\,=\,{\scriptstyle{0\atop{}}{0\atop{}}{1\atop1}{1\atop{}}{0\atop{}}{0\atop{}}{0\atop{}}},\
\
\beta_4\,=\,{\scriptstyle{1\atop{}}{1\atop{}}{1\atop0}{0\atop{}}{0\atop{}}{0\atop{}}{0\atop{}}},\\
\beta_5\,=\,{\scriptstyle{0\atop{}}{0\atop{}}{0\atop0}{1\atop{}}{1\atop{}}{1\atop{}}{0\atop{}}},\
\
\beta_6\,=\,{\scriptstyle{0\atop{}}{1\atop{}}{1\atop1}{0\atop{}}{0\atop{}}{0\atop{}}{0\atop{}}},\
\
\beta_7\,=\,{\scriptstyle{0\atop{}}{0\atop{}}{1\atop0}{1\atop{}}{1\atop{}}{0\atop{}}{0\atop{}}},\
\
\beta_8\,=\,{\scriptstyle{1\atop{}}{1\atop{}}{1\atop1}{1\atop{}}{1\atop{}}{0\atop{}}{0\atop{}}},
\end{eqnarray*}
and hence has type ${\sf A_8}$ (by the maximality of that root
subsystem). Furthermore, the roots $\{\beta_i\,|\,\,1\le i\le 8\}$
form the basis of $\Phi_{\lambda}$ contained in $\Phi^+$ and
$\lambda+\rho$ is strongly dominant on $\Phi^+\cap\Phi_\lambda$.
Since
$$|\Phi^+|-|\Phi_\lambda^+|=120-36=84=\textstyle{\frac{1}{2}}(248-80)=\frac{1}{2}\dim\,\O(e),$$
applying [\cite{Jo1}, Corollary~3.5] we obtain $\dim\,{\rm
VA}(I(\lambda))=\dim\,\O(e)$. Since $\lambda+\rho$ satisfies
Condition~(A) we get $e\in{\rm VA}(I(\lambda))$. So ${\rm
VA}(I(\lambda))=\overline{\O(e)}$, that is Condition~(B) holds for
$\lambda+\rho$.  Since in the present case $\g_e=[\g_e,\g_e]$ by
[\cite{dG}], combining [\cite{Lo2}, 5.3] and [\cite{PT},
Proposition~11] yields that $I(-\frac{2}{3}\rho)$ is the unique
multiplicity-free primitive ideal in $\mathcal{X}_{\O(e)}$.
\subsection{Type $({\sf E_8, A_3+2A_1})$}\label{3.11}
We choose the following pinning for $e$:
\[\xymatrix{*{\circ}\ar@{-}[r]& *{\bullet} \ar@{-}[r] & *{\circ} \ar@{-}[r]\ar@{-}[d] & *{\bullet} \ar@{-}[r]
 & *{\bullet} \ar@{-}[r]&*{\bullet}\ar@{-}[r]&*{\circ}\\ && *{\bullet}&&&}\]
and we take
$$\tau=\textstyle{{(-1)\atop{}}{2\atop{}}{(-5)\atop 2}
{2\atop{}}{2\atop{}}{2\atop{}}{(-3)\atop{}}}$$ as an optimal
cocharacter. Since $\dim\,\g_e=76$, the total number of positive
$0$-roots and $1$-roots is $(76-8)/2=34$. The roots are given below.

\begin{table}[htb]
\label{data2}
\begin{tabular}{|c|c|}
\hline$0$-roots  & $1$-roots
\\ \hline
$\scriptstyle{{1\atop{}}{1\atop{}}{1\atop1}{1\atop{}}{0\atop{}}{0\atop{}}{0\atop{}}}$,
$\scriptstyle{{1\atop{}}{1\atop{}}{1\atop0}{1\atop{}}{1\atop{}}{0\atop{}}{0\atop{}}}$,
$\scriptstyle{{0\atop{}}{1\atop{}}{1\atop0}{1\atop{}}{1\atop{}}{1\atop{}}{1\atop{}}}$,
$\scriptstyle{{0\atop{}}{0\atop{}}{1\atop1}{1\atop{}}{1\atop{}}{1\atop{}}{1\atop{}}}$,\,
&$\scriptstyle{{1\atop{}}{1\atop{}}{0\atop0}{0\atop{}}{0\atop{}}{0\atop{}}{0\atop{}}}$,
$\scriptstyle{{0\atop{}}{1\atop{}}{1\atop1}{1\atop{}}{0\atop{}}{0\atop{}}{0\atop{}}}$,
$\scriptstyle{{0\atop{}}{1\atop{}}{1\atop0}{1\atop{}}{1\atop{}}{0\atop{}}{0\atop{}}}$,
$\scriptstyle{{0\atop{}}{0\atop{}}{1\atop0}{1\atop{}}{1\atop{}}{0\atop{}}{0\atop{}}}$,\\
$\scriptstyle{{1\atop{}}{2\atop{}}{2\atop1}{2\atop{}}{1\atop{}}{1\atop{}}{1\atop{}}}$,
$\scriptstyle{{1\atop{}}{1\atop{}}{2\atop1}{2\atop{}}{2\atop{}}{1\atop{}}{1\atop{}}}$,
$\scriptstyle{{1\atop{}}{2\atop{}}{3\atop1}{2\atop{}}{2\atop{}}{1\atop{}}{0\atop{}}}$,
$\scriptstyle{{1\atop{}}{2\atop{}}{3\atop2}{2\atop{}}{1\atop{}}{1\atop{}}{0\atop{}}}$,
&$\scriptstyle{{0\atop{}}{0\atop{}}{1\atop0}{1\atop{}}{1\atop{}}{1\atop{}}{0\atop{}}}$,
$\scriptstyle{{0\atop{}}{0\atop{}}{0\atop0}{0\atop{}}{1\atop{}}{1\atop{}}{1\atop{}}}$,
$\scriptstyle{{1\atop{}}{1\atop{}}{1\atop1}{1\atop{}}{1\atop{}}{1\atop{}}{1\atop{}}}$,
$\scriptstyle{{1\atop{}}{2\atop{}}{2\atop1}{1\atop{}}{1\atop{}}{1\atop{}}{0\atop{}}}$,\\
$\scriptstyle{{2\atop{}}{3\atop{}}{4\atop2}{3\atop{}}{2\atop{}}{1\atop{}}{0\atop{}}}$,
$\scriptstyle{{2\atop{}}{3\atop{}}{5\atop3}{4\atop{}}{3\atop{}}{2\atop{}}{1\atop{}}}$,
$\scriptstyle{{2\atop{}}{4\atop{}}{5\atop2}{4\atop{}}{3\atop{}}{2\atop{}}{1\atop{}}}$,
$\scriptstyle{{1\atop{}}{2\atop{}}{4\atop2}{3\atop{}}{3\atop{}}{2\atop{}}{1\atop{}}}$,
&$\scriptstyle{{1\atop{}}{1\atop{}}{2\atop1}{2\atop{}}{1\atop{}}{1\atop{}}{0\atop{}}}$,
$\scriptstyle{{0\atop{}}{1\atop{}}{2\atop1}{2\atop{}}{2\atop{}}{1\atop{}}{1\atop{}}}$,
$\scriptstyle{{1\atop{}}{2\atop{}}{3\atop2}{3\atop{}}{2\atop{}}{1\atop{}}{1\atop{}}}$,
$\scriptstyle{{2\atop{}}{3\atop{}}{4\atop2}{3\atop{}}{3\atop{}}{2\atop{}}{1\atop{}}}$,\\
$\scriptstyle{{1\atop{}}{3\atop{}}{4\atop2}{3\atop{}}{2\atop{}}{2\atop{}}{1\atop{}}}$,
$\scriptstyle{{2\atop{}}{4\atop{}}{6\atop3}{5\atop{}}{4\atop{}}{3\atop{}}{2\atop{}}}$,
$\scriptstyle{{0\atop{}}{1\atop{}}{2\atop1}{2\atop{}}{1\atop{}}{0\atop{}}{0\atop{}}}$,
$\scriptstyle{{0\atop{}}{1\atop{}}{2\atop1}{1\atop{}}{1\atop{}}{1\atop{}}{0\atop{}}}$.
&$\scriptstyle{{1\atop{}}{3\atop{}}{5\atop3}{4\atop{}}{3\atop{}}{2\atop{}}{1\atop{}}}$,
$\scriptstyle{{1\atop{}}{3\atop{}}{4\atop2}{3\atop{}}{2\atop{}}{1\atop{}}{0\atop{}}}$,
$\scriptstyle{{2\atop{}}{4\atop{}}{6\atop3}{5\atop{}}{4\atop{}}{2\atop{}}{1\atop{}}}$,
$\scriptstyle{{1\atop{}}{2\atop{}}{2\atop1}{2\atop{}}{1\atop{}}{0\atop{}}{0\atop{}}}$,\\
&$\scriptstyle{{1\atop{}}{2\atop{}}{3\atop1}{3\atop{}}{2\atop{}}{2\atop{}}{1\atop{}}}$,
$\scriptstyle{{1\atop{}}{2\atop{}}{3\atop2}{2\atop{}}{2\atop{}}{2\atop{}}{1\atop{}}}$.
\\
\hline
\end{tabular}
\end{table}

The $(1,4,8)$-contributions of the $0$-roots and $1$-roots are $(16,
46,10)$ and $(14,41,9)$, respectively, and the total contribution of
all roots is $\big(\frac{30}{2},\frac{87}{2},\frac{19}{2}\big)$. The
orbit $\O(e)$ is non-special and looking through the options
available in the present case one comes to the conclusion that one
should seek $\lambda+\rho$ with $\Phi_\lambda$ of type ${\sf D_8}$.
In view of [\cite{Lo2}, Proposition~5.3.2] the special nilpotent
orbit in the Lie algebra $\g(\lambda)$ then must have dimension
$248-76-128= 54.$ We show below that such an orbit does exist.

Let $V$ be a $16$-dimensional vector space over $\mathbb C$ and let
$\Psi$ be a non-degenerate symmetric bilinear form on $V$. We shall
identify $\g(\lambda)$ with the Lie subalgebra of $\mathfrak{gl}(V)$
consisting of all linear transformations $A$ such
$$\Psi\big(A(v),v'\big)+\Psi\big(v,A(v')\big)=0\qquad\quad (\forall\, v,v'\in
V).$$ Then to each nilpotent element $X\in\g(\lambda)$ we can
associate a partition ${\bf p}={\bf p}(X)$ of $16$ whose parts are
the Jordan block sizes of the linear transformation $X\in
\mathfrak{gl}(V)$ arranged in the descending order. Let ${\bf
r}={\bf r}(X)=(r_1\ge\cdots\ge r_k)$ be the partition transpose
(conjugate) to ${\bf p}$. Then it is well known that
\begin{equation}\label{so}
\dim \g(\lambda)_{X}=\,\textstyle{\frac{1}{2}}\big(\dim
\mathfrak{gl}(V)_{X}-n_{\mathrm{odd}}(X)\big)=\textstyle{\frac{1}{2}}\big(\big(\textstyle\sum_{i=1}^k
r_i^2\big)-n_{\rm odd}(X)\big)
\end{equation}
where $n_{\rm odd}(X)$ is the number of odd parts of $\mathbf{p}$;
see [\cite{Ja1}, 3.1] for instance. Furthermore, it is known that
there exists a unique up to conjugacy in ${\rm SO}(\Psi)$ special
nilpotent element $X\in\g(\lambda)$ such that ${\bf
p}(X)=(2^{4},1^8)$. Then ${\bf r}(X)=(12,4)$, $n_{\rm odd}(X)=8$ and
(\ref{so}) yields $\dim \g(\lambda)_X=\frac{1}{2}(144+16-8)=76$. The
nilpotent orbit in $\g(\lambda)$ containing $X$ has dimension
$120-76=54$ which is what we need.

The above discussion indicates that we should seek
$\lambda+\rho=\sum_{i=1}^8a_i\varpi_i$ such that all $a_i$ are
half-integers. Setting $\widetilde{a}_i=2a_i$ for $1\le i\le 8$ we
can rewrite Condition~(C) as follows:
\begin{eqnarray*}
4\widetilde{a}_1+5\widetilde{a}_2+7\widetilde{a}_3+10\widetilde{a}_4+
8\widetilde{a}_5+6\widetilde{a}_6+4\widetilde{a}_7+2\widetilde{a}_8&=&30\\
10\widetilde{a}_1+15\widetilde{a}_2+20\widetilde{a}_3+30\widetilde{a}_4+
24\widetilde{a}_5+18\widetilde{a}_6+12\widetilde{a}_7+6\widetilde{a}_8&=&87\\
2\widetilde{a}_1+3\widetilde{a}_2+4\widetilde{a}_3+6\widetilde{a}_4+
5\widetilde{a}_5+4\widetilde{a}_6+3\widetilde{a}_7+2\widetilde{a}_8&=&19.
\end{eqnarray*}
Since in the present case there is no hope to find a correct
solution by a lucky guess, we shall try a more roundabout approach.
Namely, we are going to start by selecting a root subsystem of type
${\sf D_8}$ in $\Phi$ to which we shall assign the role of
$\Phi_\lambda$. Next we shall rewrite our system of
linear equations in terms of the fundamental weights of that
subsystem. Then a correct solution will eventually unveil itself.

We consider the following linearly independent positive roots in
$\Phi$:
\begin{eqnarray*}
\beta_1\,=\,{\scriptstyle{0\atop{}}{1\atop{}}{1\atop0}{0\atop{}}{0\atop{}}{0\atop{}}{0\atop{}}},\
\
\beta_2\,=\,{\scriptstyle{0\atop{}}{0\atop{}}{0\atop0}{1\atop{}}{1\atop{}}{1\atop{}}{0\atop{}}},\
\
\beta_3\,=\,{\scriptstyle{0\atop{}}{0\atop{}}{1\atop1}{0\atop{}}{0\atop{}}{0\atop{}}{0\atop{}}},\
\
\beta_4\,=\,{\scriptstyle{1\atop{}}{1\atop{}}{0\atop0}{0\atop{}}{0\atop{}}{0\atop{}}{0\atop{}}},\\
\beta_5\,=\,{\scriptstyle{0\atop{}}{0\atop{}}{1\atop0}{1\atop{}}{0\atop{}}{0\atop{}}{0\atop{}}},\
\
\beta_6\,=\,{\scriptstyle{0\atop{}}{0\atop{}}{0\atop0}{0\atop{}}{1\atop{}}{0\atop{}}{0\atop{}}},\
\
\beta_7\,=\,{\scriptstyle{0\atop{}}{1\atop{}}{1\atop1}{1\atop{}}{0\atop{}}{0\atop{}}{0\atop{}}},\
\
\beta_8\,=\,{\scriptstyle{0\atop{}}{0\atop{}}{0\atop0}{0\atop{}}{0\atop{}}{1\atop{}}{1\atop{}}}.
\end{eqnarray*}
It is straightforward to see that the standard recipe for assigning
graphs to root systems leads to the Dynkin diagram of type ${\sf
D_8}$:
$$
\xymatrix{{\beta_1}\ar@{-}[r]& {\beta_2} \ar@{-}[r] & {\beta_3}
\ar@{-}[r] & {\beta_4} \ar@{-}[r]
 & {\beta_5} \ar@{-}[r]&{\beta_6}\ar@{-}[r]\ar@{-}[d] &{\beta_7}\\ &&&&& {\beta_8}}
$$
It follows that
$\Phi_0:=\Phi\cap\big(\bigoplus_{i=1}^8\Z\beta_i\big)$ is a root
subsystem of type ${\sf D_8}$ in $\Phi$ (by the maximality of that
root subsystem) and the set $\Pi_0:=\{\beta_i\,|\,\,1\le i\le 8\}$
is the basis of simple roots in $\Phi_0$ contained in $\Phi^+$. Let
$\{\varpi'_i\,|\,\,1\le i\le 8\}\subset P(\Phi)_{\mathbb Q}$ be the
corresponding system of fundamental weights, so that $\langle
\varpi_i',\beta_i^\vee\rangle=\delta_{ij}$ for $1\le i,j\le 8$.

Looking at the explicit expressions for the $\beta_i$'s it is easy
to observe that
$$\varpi_1=\varpi'_4,\quad\ \varpi_8=\varpi'_8, \quad\
\varpi_4=\varpi_1'+\varpi_3'+\varpi_5'+\varpi_7'.$$ As $\Pi_0$ spans
$P(\Phi)_{\mathbb Q}$ we have that
$\lambda+\rho=\sum_{i=1}^8b_i\varpi_i'$ for some $b_i\in\mathbb Q$,
but since we fashion $\Phi_0$ as the integral root system of
$\lambda$ it must be that $b_i\in\Z$ for all $i$. Using [\cite{Bo},
Planche~IV] we find explicit expressions of $\varpi'_4$,
$\varpi_1'+\varpi_3'+\varpi_5'+\varpi_7'$ and $\varpi_8'$ as linear
combinations of roots in $\Pi_0$. This enables us to rewrite
Condition~(C) as follows:
\begin{eqnarray*}
b_1+2b_2+3b_3+4b_4+4b_5+4b_6+2b_7+2b_8&=&15\\
{\textstyle \frac{7}{2}}b_1+6b_2+{\textstyle\frac{17}{2}}b_3+10b_4+
{\textstyle \frac{23}{2}}b_5+12b_6+{\textstyle\frac{13}{2}}b_7+6b_8&=&{\textstyle\frac{87}{2}}\\
{\textstyle \frac{1}{2}}b_1+b_2+{\textstyle \frac{3}{2}}b_3+2b_4+
{\textstyle \frac{5}{2}}b_5+3b_6+{\textstyle
\frac{3}{2}}b_7+2b_8&=&{\textstyle \frac{19}{2}}.
\end{eqnarray*}
An almost perfect solution to this system is given
by setting $b_3=b_6=0$ and $b_i=1$ for $i\in\{1,2,4,5,7,8\}$ and
leads to the weight $\mu+\rho=\sum_{i\ne 3,6}\varpi_i'$. Although it
does not satisfy Condition~(A), this can be easily amended by
replacing it with the weight
$$\lambda+\rho=s_{\beta_2}(\mu+\rho)=\,2\varpi_1'-\varpi_2'+\varpi_3'+
\varpi_4'+\varpi_5'+\varpi_7'+\varpi_8'.$$ This provides us with
another solution because $\langle\varpi_i',\beta_2^\vee\rangle=0$
for $i\in\{1,3,4,5,7,8\}$.

Let $\g_0$ be the regular Lie subalgebra of $\g$ with root system
$\Phi_0$ and let $I_0(\lambda)$ be the annihilator in $U(\g_0)$ of
the irreducible $\g_0$-module of highest weight $\lambda$. Since
$\g_0$ has type $\sf D$ we can compute the associated variety ${\rm
VA}(I_0(\lambda))$ by using the Barbasch--Vogan algorithm; see
[\cite{BV0}]. In fact, we found it more convenient to use the
modified version of that algorithm described in [\cite{BG}, 5.3].

In Step~1 our aim is to expand $\lambda+\rho$ via the orthonormal
basis $\{\varepsilon_1,\varepsilon_2,\ldots,\varepsilon_8\}$ of
$P(\Phi_0)_{\mathbb R}$ by using the formulae for fundamental
weights from [\cite{Bo}, Planche~IV]. After doing so we get
$\lambda+\rho=
5\varepsilon_1+3\varepsilon_2+4\varepsilon_3+3\varepsilon_4+2\varepsilon_5
+\varepsilon_6+\varepsilon_7+0\varepsilon_8$.

In Step~2, we are supposed to apply the Robinson--Shensted algorithm
on the sequence of numbers
$(5,3,4,3,2,1,1,\widehat{0},0,-1,-2,-3,-4,-3,-5)$ obtained from
Step~1, where $\widehat{0}>0$ by convention. Recall that the
Robinson--Shensted algorithm inputs a tuple of real numbers, $\bf
n$, and outputs a Young tableau ${\rm  RS}({\bf n})$. The reader
familiar with the combinatorial procedure (sometimes referred to as
{\it bumping}) upon
which the algorithm is based will have no problem to
check that  the partition ${\bf q}$ associated with the
tableau ${\rm RS}(\lambda+\rho)$ equals $(2^4,1^8)$ (the $i$-th part
of ${\bf q}$ is the number of {\it rows} of  ${\rm
RS}(\lambda+\rho)$; see [\cite{BG}, 4.1]).

Next we are supposed to write the parts of ${\bf q}$ in the
ascending order, $q_1\le\cdots\le q_k$, and create the list of
numbers $(r_i)$ where $r_i=q_i-i+1$. We then have to separate the
even and odd parts of that list to obtain two sublists,
$(2s_1,\ldots,2s_l)$ and $(2t_1+1,\ldots, 2t_m+1)$, consisting of
the even and odd parts of $(r_i)$ respectively. After doing so we
obtain $(2,4,6,8,10,12)$ and $(1,3,5,7,11,13)$.

In Step~3, we are first required to form two lists, $(s_i)$ and
$(t_i)$, and concatenate them. In our case this will produce the
list $(0,1,1,2,2,3,3,4,5,5,6,6)$. Next we are to split this into two
sublists, $(s_i')$ and $(t_i')$, consisting of the terms with odd
and even indices, respectively. We then get $(s'_i)=(0,1,2,3,5,6)$
and $(t_i')=(1,2,3,4,5,6)$. Next we are required to form two new
lists, $(2s_i'+1)$ and $(2t_i')$, and concatenate them. In our case
this will produce the list $(r_i')=(1,2,3,4,5,6,7,8,10,11,12,13)$.
Finally, we put $q_i'=r'_i-i+1$ and rearrange the parts $q_i'$ in
the descending order. The resulting partition will label the open
orbit in ${\rm VA}(I_0(\lambda))$. Since in our case
$(q_i')=(1,1,1,1,1,1,1,1,2,2,2,2)$ we deduce that the open orbit of
the associated variety of  $I_0(\lambda)$ contains a nilpotent
element $X$ with ${\bf p}(X)=(2^4,1^8)$.

After expressing the $\varpi_i'$ in terms of
$\{\varpi_1,\ldots,\varpi_8\}$ one finds out that
$$\lambda+\rho={\textstyle\frac{1}{2}}\varpi_1-{\textstyle\frac{1}{2}}\varpi_2
+{\textstyle\frac{1}{2}}\varpi_3+{\textstyle\frac{3}{2}}\varpi_4-
{\textstyle\frac{1}{2}}\varpi_5-{\textstyle\frac{1}{2}}
\varpi_7+{\textstyle\frac{3}{2}}\varpi_8$$ (one can also see
directly that this weight satisfies all required linear equations).
Since $\Phi_0\subseteq\Phi_\lambda\subsetneq \Phi$, the maximality
of the root subsystem $\Phi_0$ implies that $\Phi_\lambda=\Phi_0$.
But then $\g(\lambda)=\g_0$ and combining [\cite{Lo2},
Proposition~5.3.2] with the above discussion shows that $\dim\,{\rm
VA}(I\lambda))=\dim\overline{\O(e)}$.

Due to our choice of pinning $\lambda+\rho$ satisfies Condition~(A)
which implies that ${\rm VA}(I(\lambda))=\overline{\O(e)}$. We
conclude that $\lambda+\rho$ satisfies all Losev's conditions. As
$\g_e=[\g_e,\g_e]$ by [\cite{dG}] we see that
$I({-\textstyle\frac{1}{2}}\varpi_1-{\textstyle\frac{3}{2}}\varpi_2
-{\textstyle\frac{1}{2}}\varpi_3+{\textstyle\frac{1}{2}}\varpi_4-
{\textstyle\frac{3}{2}}\varpi_5-\varpi_6-{\textstyle\frac{3}{2}}
\varpi_7+{\textstyle\frac{1}{2}}\varpi_8)$ is the only
multiplicity-free primitive ideal in ${\mathcal X}_{\O(e)}$.
\subsection{Type $({\sf E_8, D_4(a_1)+A_1})$}\label{3.12}
In this case our pinning for $e$ is
\[\xymatrix{*{\circ}\ar@{-}[r]& *{\bullet} \ar@{-}[r] & *{\bullet} \ar@{-}[r]\ar@{-}[d] & *{\bullet} \ar@{-}[r]
 & *{\circ} \ar@{-}[r]&*{\bullet}\ar@{-}[r]&*{\circ}\\ && *{\bullet}&&&}\]
and following [\cite{LT}] we assume that $e=e_0+e_{\alpha_7}$ where
$$e_0=e_{\alpha_2}+e_{\alpha_3}+e_{\alpha_5}+e_{\alpha_2+\alpha_4}+e_{\alpha_4+\alpha_5}.$$
Then, as in [\cite{LT}], we may choose
$$\tau=\textstyle{{(-4)\atop{}}{2\atop{}}{0\atop 2}
{2\atop{}}{(-5)\atop{}}{2\atop{}}{(-1)\atop{}}}$$ as an optimal
cocharacter for $e$.  Let $\l$ be the standard Levi subalgebra of
$\g$ generated by $\Lie(T)$ and simple root vectors $e_{\pm
\alpha_i}$ with $i\in\{2,3,4,5\}$. Note that the derived subalgebra
$\l'=[\l,\l]$ has type ${\sf D_4}$ and $e_0$ is a subregular
nilpotent element of $\l'$.

Since $\dim\,\g_e=72$, the total number of positive $0$-roots and
$1$-roots is $(72-8)/2=32$. The roots are given below.

\begin{table}[htb]
\label{data2}
\begin{tabular}{|c|c|}
\hline$0$-roots  & $1$-roots
\\ \hline
$\scriptstyle{{0\atop{}}{0\atop{}}{1\atop0}{0\atop{}}{0\atop{}}{0\atop{}}{0\atop{}}}$,
$\scriptstyle{{1\atop{}}{1\atop{}}{1\atop1}{0\atop{}}{0\atop{}}{0\atop{}}{0\atop{}}}$,
$\scriptstyle{{1\atop{}}{1\atop{}}{1\atop0}{1\atop{}}{0\atop{}}{0\atop{}}{0\atop{}}}$,
$\scriptstyle{{0\atop{}}{1\atop{}}{1\atop0}{1\atop{}}{1\atop{}}{1\atop{}}{1\atop{}}}$,\,
&$\scriptstyle{{0\atop{}}{0\atop{}}{0\atop0}{0\atop{}}{0\atop{}}{1\atop{}}{1\atop{}}}$,
$\scriptstyle{{0\atop{}}{1\atop{}}{1\atop0}{1\atop{}}{1\atop{}}{1\atop{}}{0\atop{}}}$,
$\scriptstyle{{0\atop{}}{0\atop{}}{1\atop1}{1\atop{}}{1\atop{}}{1\atop{}}{0\atop{}}}$,
$\scriptstyle{{1\atop{}}{2\atop{}}{2\atop1}{1\atop{}}{1\atop{}}{1\atop{}}{0\atop{}}}$,\\
$\scriptstyle{{0\atop{}}{0\atop{}}{1\atop1}{1\atop{}}{1\atop{}}{1\atop{}}{1\atop{}}}$,
$\scriptstyle{{1\atop{}}{2\atop{}}{2\atop1}{1\atop{}}{1\atop{}}{1\atop{}}{1\atop{}}}$,
$\scriptstyle{{1\atop{}}{1\atop{}}{2\atop1}{2\atop{}}{1\atop{}}{1\atop{}}{1\atop{}}}$,
$\scriptstyle{{1\atop{}}{2\atop{}}{3\atop1}{3\atop{}}{2\atop{}}{1\atop{}}{0\atop{}}}$,
&$\scriptstyle{{1\atop{}}{1\atop{}}{2\atop1}{2\atop{}}{1\atop{}}{1\atop{}}{0\atop{}}}$,
$\scriptstyle{{1\atop{}}{2\atop{}}{2\atop1}{2\atop{}}{1\atop{}}{0\atop{}}{0\atop{}}}$,
$\scriptstyle{{0\atop{}}{1\atop{}}{1\atop1}{1\atop{}}{1\atop{}}{0\atop{}}{0\atop{}}}$,
$\scriptstyle{{0\atop{}}{1\atop{}}{2\atop1}{1\atop{}}{1\atop{}}{0\atop{}}{0\atop{}}}$,\\
$\scriptstyle{{1\atop{}}{2\atop{}}{3\atop2}{2\atop{}}{2\atop{}}{1\atop{}}{0\atop{}}}$,
$\scriptstyle{{1\atop{}}{3\atop{}}{4\atop2}{3\atop{}}{3\atop{}}{2\atop{}}{1\atop{}}}$,
$\scriptstyle{{1\atop{}}{2\atop{}}{4\atop2}{4\atop{}}{3\atop{}}{2\atop{}}{1\atop{}}}$,
$\scriptstyle{{2\atop{}}{3\atop{}}{4\atop2}{3\atop{}}{2\atop{}}{1\atop{}}{0\atop{}}}$,
&$\scriptstyle{{0\atop{}}{1\atop{}}{2\atop1}{2\atop{}}{2\atop{}}{2\atop{}}{1\atop{}}}$,
$\scriptstyle{{1\atop{}}{2\atop{}}{3\atop1}{3\atop{}}{2\atop{}}{2\atop{}}{1\atop{}}}$,
$\scriptstyle{{1\atop{}}{2\atop{}}{3\atop2}{2\atop{}}{2\atop{}}{2\atop{}}{1\atop{}}}$,
$\scriptstyle{{1\atop{}}{2\atop{}}{3\atop2}{3\atop{}}{2\atop{}}{1\atop{}}{1\atop{}}}$,\\
$\scriptstyle{{2\atop{}}{4\atop{}}{5\atop2}{4\atop{}}{3\atop{}}{2\atop{}}{1\atop{}}}$,
$\scriptstyle{{2\atop{}}{3\atop{}}{5\atop3}{4\atop{}}{3\atop{}}{2\atop{}}{1\atop{}}}$,
$\scriptstyle{{2\atop{}}{4\atop{}}{6\atop3}{5\atop{}}{4\atop{}}{3\atop{}}{2\atop{}}}$,
$\scriptstyle{{0\atop{}}{1\atop{}}{2\atop1}{2\atop{}}{2\atop{}}{1\atop{}}{0\atop{}}}$.
&$\scriptstyle{{2\atop{}}{3\atop{}}{4\atop2}{3\atop{}}{2\atop{}}{2\atop{}}{1\atop{}}}$,
$\scriptstyle{{2\atop{}}{4\atop{}}{6\atop3}{5\atop{}}{4\atop{}}{3\atop{}}{1\atop{}}}$,
$\scriptstyle{{1\atop{}}{2\atop{}}{3\atop1}{2\atop{}}{1\atop{}}{0\atop{}}{0\atop{}}}$,
$\scriptstyle{{1\atop{}}{2\atop{}}{4\atop2}{3\atop{}}{2\atop{}}{1\atop{}}{1\atop{}}}$.\\
\hline
\end{tabular}
\end{table}
The $(1,6,8)$-contributions of the $0$-roots and $1$-roots are $(16,
28,10)$ and $(12,24,8)$, respectively, and the total contribution of
all roots is $(14,26,9)$. It is well known that our nilpotent
element $e$ is special and $e^\vee\in\g^\vee$ has type ${\sf
E_8(a_6)}$ and
$$h^\vee=\textstyle{{0\atop{}}{0\atop{}}{2\atop 0}
{0\atop{}}{0\atop{}}{2\atop{}}{0\atop{}}}.$$

Condition~(C) for $\lambda+\rho=\sum_{i=1}^8a_i\varpi_i$ reads
\begin{eqnarray*}
4a_1+5a_2+7a_3+10a_4+8a_5+6a_6+4a_7+2a_8&=&14\\
6a_1+9a_2+12a_3+18a_4+15a_5+12a_6+8a_7+4a_8&=&26\\
2a_1+3a_2+4a_3+6a_4+5a_5+4a_6+3a_7+2a_8&=&9
\end{eqnarray*}
and the weight ${\textstyle\frac{1}{2}}h^\vee$ does satisfy this
system of linear equations. Unfortunately, it fails to satisfy the rest of Losev's conditions.
In order to amend that we shall replace it by
$$\lambda+\rho:=(s_2s_3s_5s_4)^2s_7
\big({\textstyle\frac{1}{2}}h^\vee\big)=(s_2s_3s_5s_4)^2\big(
\textstyle{{0\atop{}}{0\atop{}}{1\atop 0}
{0\atop{}}{1\atop{}}{(-1)\atop{}}{1\atop{}}}\big)=
\textstyle{{2\atop{}}{(-1)\atop{}}{1\atop (-1)}
{(-1)\atop{}}{3\atop{}}{(-1)\atop{}}{1\atop{}}}
$$
which still satisfies Condition~(C) as
$\langle\varpi_i,\alpha_j^\vee\rangle=0$ for $i\in\{1,6,8\}$ and $j\in\{2,3,4,5,7\}$.

We now show that $\lambda+\rho$ satisfies Condition~(B). Thanks to [\cite{BV},
Proposition~5.10], [\cite{Ja}, Corollar~10.10]
and the truth of the Kazhdan--Lusztig conjecture
it suffices to check that
$(s_2s_3s_5s_4)^2 s_7s_1s_3s_1s_2s_5s_6s_5s_8$ and $s_1s_3s_1s_2s_5s_6s_5s_8$ are in the same left Kazhdan--Lusztig cell of $W$.

Ler $S=\{s_i\,|\,\,1\le i\le 8\}$.
For $w \in W$ we set $\mathcal{L}(w):=\{ s\in S\,|\,\, \ell(sw)<\ell(w) \}$ and given $s,t\in S$ with $(st)^3=1$ we define
$$\mathcal{D}_L(s,t):=\{w\in\ W\,|\,\,\mathcal{L}(w)\cap\{s,t\}\ \mbox{has exactly one element}\}.$$
By [\cite{KL}, \S~4], if $w\in\mathcal{D}_L(s,t)$ then
exactly one of $sw,tw$ lies in $\mathcal{D}_L(s,t)$. This uniquely determined element is denoted by
${^*}w$. The map
$w\mapsto {^*}w$ is an involution on $\mathcal{D}_L(s,t)$.
Given $x,y\in W$ we write $x\,-\,y$ if either $\deg P_{x,y}$ of $\deg P_{y,x}$ reaches its upper bound
$(|l(y)-l(x)|-1)/2$; see  [\cite{Chen}, 2.1] for more detail. Here $P_{u,v}$ is  the Kazhdan--Lusztig polynomial associated with
$u,v\in W$ and $|\cdot|$ stands for the absolute value of a real number. It follows from [\cite{KL}, 2.3.e] that $x\,-\,sx$ for all $s\in S$ and $x\in W$.
For $x,y\in W$ we write $x\leqslant_L y$ it there exists a sequence of elements $x=x_0,x_1,\ldots, x_n=y$ in $W$ such that $x_{i-1}\,-\,x_i$ and $\mathcal{L}(x_{i-1})\not\subseteq\mathcal{L}(x)$ for all $1\le i\le n$. If
$x\leqslant_L y$ and $y\leqslant_L x$, we write $x\sim_Ly$.

Set $w_1=(s_2s_3s_5s_4)^2 s_7$ and $w_2=s_1s_3s_1s_2s_5s_6s_5s_8$.
The proof of the following proposition was kindly communicated to the author by Yu Chen. His arguments rely on the following fact: if $x\,-\,y$ for some $x,y\in\mathcal{D}_L(s,t)$ where $s,t\in S$ and $(st)^3=1$, then  ${^*}x\,-{^*}y$. That fact is a consequence of [\cite{KL}, Theorem~4.2] (and definitions).
 \begin{prop}\label{chen} {\rm(Yu Chen.)} We have that
$w_1w_2\sim_L w_2$.
\end{prop}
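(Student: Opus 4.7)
The plan is to exhibit, in exactly the same spirit as Table~0 in Subsection~\ref{3.5}, a finite chain of elements $w_1w_2=y_0,y_1,\ldots,y_N=w_2$ such that each consecutive pair is linked by a star operation: for each $i$ there is a pair of simple reflections $(s^{(i)},t^{(i)})$ with $(s^{(i)}t^{(i)})^3=1$ (i.e.\ corresponding to adjacent nodes of the Dynkin diagram of ${\sf E_8}$) such that $y_i\in\mathcal{D}_L(s^{(i)},t^{(i)})$ and ${^*}y_i=y_{i+1}$. Since each such move preserves the left Kazhdan--Lusztig cell by [\cite{KL}, \S 4], the existence of such a chain immediately yields $w_1w_2\sim_L w_2$.

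Concretely, I would first write $w_1w_2=s_2s_3s_5s_4s_2s_3s_5s_4s_7\cdot s_1s_3s_1s_2s_5s_6s_5s_8$, verify that (after obvious Coxeter rearrangements) this is a reduced expression of length $\ell(w_1)+\ell(w_2)$, and compute its left descent set $\mathcal{L}(w_1w_2)$ by acting on the root lattice of ${\sf E_8}$. Because $w_1$ is a product of simple reflections attached to nodes $\alpha_2,\alpha_3,\alpha_4,\alpha_5,\alpha_7$ that sit inside a connected simply laced subdiagram, each letter of $w_1$ can in principle be ``peeled off'' from the left by choosing $\{s^{(i)},t^{(i)}\}$ to be an adjacent pair with exactly one element in $\mathcal{L}(y_i)$; applying ${^*}(\cdot)$ then replaces $y_i$ either by $s^{(i)}y_i$ or by $t^{(i)}y_i$, and one uses the freedom in the choice to arrange that the leftmost occurrence of a letter of $w_1$ is eliminated at each step. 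The chain should terminate after roughly $\ell(w_1)=9$ star operations at the element $w_2$, possibly after introducing a few auxiliary moves to align descent sets.

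At each step of the table the verification is a direct computation: one lists the left descent sets $\mathcal{L}(y_i)$ and $\mathcal{L}(y_{i+1})$ and checks that the membership in $\mathcal{D}_L(s^{(i)},t^{(i)})$ and the identity ${^*}y_i=y_{i+1}$ hold on the nose. The main obstacle is not conceptual but combinatorial: in a group as large as $W({\sf E_8})$ the correct trajectory of star operations is not dictated by any obvious recipe and must be found by a mixture of guesswork and experimentation. This is where Geck's {\sf PyCox} package is invaluable, both for computing left descent sets and for testing candidate chains of equivalences; indeed, it is natural to expect that Yu Chen's proof of the proposition was discovered by exactly such a computer-assisted search. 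Once the chain has been produced, writing it out in the format of Table~0 provides a completely self-contained combinatorial proof of $w_1w_2\sim_L w_2$.
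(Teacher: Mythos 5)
Your plan rests on a misreading of the Kazhdan--Lusztig star operations. Star operations relative to the set $\mathcal{D}_L(s,t)$ act by \emph{left} multiplication, and the appropriate consequence of [\cite{KL}, \S 4] is that such a move preserves \emph{right} cells, not left cells (by the symmetry $x\sim_L y\iff x^{-1}\sim_R y^{-1}$, it is the $\mathcal{D}_R$-stars, acting by right multiplication, that preserve left cells). So the sentence ``each such move preserves the left Kazhdan--Lusztig cell by [\cite{KL}, \S 4]'' is false, and a single chain of $\mathcal{D}_L$-star operations from $w_1w_2$ to $w_2$ would not by itself yield $w_1w_2\sim_L w_2$. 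What the paper actually does with $\mathcal{D}_L$-stars is different and more delicate: it uses the fact that if $x\,-\,y$ with $x,y\in\mathcal{D}_L(s,t)$ then ${}^*x\,-\,{}^*y$, i.e.\ the star operation propagates the symmetric adjacency relation ``$-$'' between \emph{pairs} of elements; the $\leqslant_L$ relations themselves are then extracted separately by examining the left descent sets $\mathcal{L}(\cdot)$ of the elements appearing in Tables~I--III and verifying the non-inclusion condition $\mathcal{L}(x_{i-1})\not\subseteq\mathcal{L}(x_i)$ step by step, closing each chain into a cycle to get $\sim_L$. The proof is also not a single chain from $w_1w_2$ to $w_2$: it first reduces the problem to the truncated words $x_1=(s_2s_3s_5s_4)^2$ and $x_2=s_1s_3s_1s_2s_5s_6s_5$, proves $x_1x_2\sim_L x_2$ through two auxiliary left-cell equivalences, and only at the end reinstates the letters $s_7$ and $s_8$.

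Independently of the mechanism, what you have written is a plan and not a proof. You describe how one might hope to find a chain of star operations (peel off letters of $w_1$, guided by computer experiment), acknowledge that the trajectory ``must be found by a mixture of guesswork and experimentation,'' and then assert that once the chain has been produced one would be done. No chain is produced; no intermediate element, simple-reflection pair, or descent set is exhibited. For a statement of this kind the explicit combinatorial data \emph{is} the proof, and without it there is nothing to verify. To repair the argument you would need both (i) the correct logical framework --- using $\mathcal{D}_L$-stars to create $-$-adjacencies and then tracking $\mathcal{L}$-sets to build $\leqslant_L$-cycles, as in the paper, or alternatively switching entirely to $\mathcal{D}_R$-stars, which do preserve left cells --- and (ii) the actual concrete chains with their descent sets, in the style of Tables~I--III.
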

\begin{proof}
Set $x_1:=(s_2s_3s_5s_4)^2$ and $x_2:=s_1s_3s_1s_2s_5s_6s_5$. Then $w_1=x_1s_7$ and $w_2=x_2s_8$.

We claim that
$s_4s_2s_3s_5s_4x_2 \sim_L s_2s_4s_2s_3s_5s_4x_2.$
To prove this we analyse Table~I. The first and the last columns are easy to fill in directly, whilst he first line in the middle column is true by our preliminary remarks.
Then the second line in the middle column is true because $s_4s_2s_3s_5s_4x_2$,  $s_2s_4s_2s_3s_5s_4x_2$ are in $\mathcal{D}_L(s_4,s_5)$ and  $s_2s_3s_5s_4x_2={^*}(s_4s_2s_3s_5s_4x_2)$ and  $s_2s_4s_2s_3s_5s_4x_2={^*}(s_2s_4s_3s_5s_4x_2)$.
Then the third line in the middle column is true because $s_2s_3s_5s_4x_2$,  $s_2s_4s_3s_5s_4x_2$ are in $\mathcal{D}_L(s_1,s_3)$ and
$s_2s_5s_4x_2={^*}(s_2s_3s_5s_4x_2)$,
$s_1s_2s_4s_2s_3s_5s_4x_2={^*}(s_2s_4s_3s_5s_4x_2)$.
Finally, the last line in the middle column is true because $s_2s_5s_4x_2$, $s_1s_2s_4s_2s_3s_5s_4x_2$
are in $\mathcal{D}_L(s_5,s_6)$ and
$s_2s_4x_2={^*}(s_2s_5s_4x_2)$, $s_6s_1s_2s_4s_2s_3s_5s_4x_2={^*}(s_1s_2s_4s_2s_3s_5s_4x_2)$.

\smallskip

\begin{center}
\begin{tabular}{p{2.5 truecm} | p{4truecm} @{--\!\!--} p{4truecm} | p{2.5truecm}}
\noalign {\hrule height1.2pt}
\hfil $\mathcal{L}(x)$ & \hfil $x$ & \hfil $y$ & \hfil $\mathcal{L}(y)$ \\ \noalign {\hrule height1.2pt}
\hfil $\{ s_4 \}$ & \hfil $s_4s_2s_3s_5s_4x_2$ & \hfil $s_2s_4s_2s_3s_5s_4x_2$ & \hfil $\{ s_2,s_4 \}$ \\ \hline
\hfil $\{ s_2,s_3,s_5 \}$ & \hfil $s_2s_3s_5s_4x_2$ & \hfil $s_2s_4s_3s_5s_4x_2$ & \hfil $\{ s_2,s_3,s_5 \}$ \\ \hline
\hfil $\{ s_1,s_2,s_5 \}$ & \hfil $s_2s_5s_4x_2$ & \hfil $s_1s_2s_4s_3s_5s_4x_2$ & \hfil $\{ s_1,s_2,s_5 \}$ \\ \hline
\hfil $\{ s_1,s_2,s_4,s_6 \}$ & \hfil $s_2s_4x_2$ & \hfil $s_6s_1s_2s_4s_3s_5s_4x_2$ & \hfil $\{ s_1,s_2,s_6 \}$ \\ \noalign {\hrule height1.2pt}
\end{tabular}
\end{center}
\centerline{\small Table I}

\smallskip

\noindent
Combining the information given in the first and the last columns with our preliminary remarks we now obtain that
\begin{eqnarray*}
s_2s_4s_2s_3s_5s_4x_2& \leqslant_L & s_4s_2s_3s_5s_4x_2
\leqslant_L s_2s_3s_5s_4x_2\leqslant_L s_2s_5s_4x_2
\leqslant_L s_2s_4x_2\\
&\leqslant_L& s_6s_1s_2s_4s_3s_5s_4x_2
\leqslant_L
s_1s_2s_4s_3s_5s_4x_2\leqslant_L s_2s_4s_3s_5s_4x_2\\
&\leqslant_L&
s_2s_4s_2s_3s_5s_4x_2.
\end{eqnarray*}
Next we claim that
$s_2s_4s_2s_3s_5s_4x_2 \sim_L s_2s_3s_4s_2s_3s_5s_4x_2$. In order to see this we analyse Thable~II. It is fairly straightforward to determine $\mathcal{L}(x)$ and $\mathcal{L}(y)$ in all cases and fill in the first and the last columns of Table~II.
The first line of the middle column is true by our introductory remarks. To check that the other lines in the middle column are correct we consider the subsets
$\mathcal{D}_L(s_4,s_5)$, $\mathcal{D}_L(s_2,s_4)$,
$\mathcal{D}_L(s_1,s_3)$, $\mathcal{D}_L(s_3,s_4)$
and $\mathcal{D}_L(s_4,s_5)$ (in that order) and each time pass from $x\,-\,y$ to $^{*}x\,-{^*}y$.

\begin{center}
\begin{tabular}{p{2.5 truecm} | p{4truecm} @{--\!\!--} p{4truecm} | p{2.5truecm}}
\noalign {\hrule height1.2pt}
\hfil $\mathcal{L}(x)$ & \hfil $x$ & \hfil $y$ & \hfil $\mathcal{L}(y)$ \\ \noalign {\hrule height1.2pt}
\hfil $\{ s_2,s_4 \}$ & \hfil $s_2s_4s_2s_3s_5s_4x_2$ & \hfil $s_2s_3s_4s_2s_3s_5s_4x_2$ & \hfil $\{ s_2,s_3,s_4 \}$  \\ \hline
\hfil $\{ s_2,s_3,s_5 \}$ & \hfil $s_2s_4s_3s_5s_4x_2$ & \hfil $s_5s_2s_3s_4s_2s_3s_5s_4x_2$ & \hfil $\{ s_2,s_3,s_5 \}$ \\ \hline
\hfil $\{ s_3,s_4,s_5 \}$ & \hfil $s_4s_3s_5s_4x_2$ & \hfil $s_5s_3s_4s_2s_3s_5s_4x_2$ & \hfil $\{ s_3,s_4,s_5 \}$ \\ \hline
\hfil $\{ s_1,s_4,s_5 \}$ & \hfil $s_1s_4s_3s_5s_4x_2$ & \hfil $s_1s_5s_3s_4s_2s_3s_5s_4x_2$ & \hfil $\{ s_1,s_4,s_5 \}$ \\ \hline
\hfil $\{ s_1,s_3,s_5 \}$ & \hfil $s_1s_3s_5s_4x_2$ & \hfil $s_3s_1s_5s_3s_4s_2s_3s_5s_4x_2$ & \hfil $\{ s_1,s_3,s_5 \}$ \\ \hline
\hfil $\{ s_1,s_3,s_4,s_6 \}$ & \hfil $s_1s_3s_4x_2$ & \hfil $s_3s_1s_3s_4s_2s_3s_5s_4x_2$ & \hfil $\{ s_1,s_3,s_4 \}$ \\
\noalign {\hrule height1.2pt}
\end{tabular}
\end{center}
\centerline{\small Table II}
As a result, we obtain that
\begin{eqnarray*}
\!s_2s_3s_4s_2s_3s_5s_4x_2\!\!&\leqslant_L &\!\! s_2s_4s_2s_3s_5s_4x_2
\leqslant_L s_2s_4s_3s_5s_4x_2\leqslant_L s_4s_3s_5s_4x_2
\leqslant_L s_1s_4s_3s_5s_4x_2\\
\!\!&\leqslant_L&\!\! s_1s_3s_5s_4x_2
\leqslant_L
s_1s_3s_4x_2\leqslant_L\! s_3s_1s_3s_4s_2s_3s_5s_4x_2\\
\!\!&\leqslant_L&\!\!
s_3s_1s_5s_3s_4s_2s_3s_5s_4x_2\leqslant_L\!
s_1s_5s_3s_4s_2s_3s_5s_4x_2\leqslant_L\! s_5s_3s_4s_2s_3s_5s_4x_2\\
\!\!&\leqslant &\!\!s_5s_2s_3s_4s_2s_3s_5s_4x_2\leqslant_L\!
s_2s_3s_4s_2s_3s_5s_4x_2.
\end{eqnarray*}

Next we wish to show that $x_1x_2\sim_L x_2$. To this end we first determine $\mathcal{L}(w)$ for all $w$ listed in the first column of Table~III and then observe that $x\sim_L y$ for all $x,y$ listed
in the first six and the last two lines of that table.

\smallskip

\begin{center}
\begin{tabular}{p{6truecm} | p{6truecm}}
\noalign {\hrule height1.0pt}
\hfil $w$ \hskip1.0truecm \ & \hfil $\mathcal{L}(w)$ \\ \noalign {\hrule height1.0pt}
\hfil $x_2$  & \hfil $\{ s_1,s_2,s_3,s_5,s_6 \}$ \\ \hline
\hfil $s_4x_2$  & \hfil $\{ s_1,s_4,s_6 \}$ \\ \hline
\hfil $s_5s_4x_2$  & \hfil $\{ s_1,s_4,s_5 \}$ \\ \hline
\hfil $s_2s_5s_4x_2$  & \hfil $\{ s_1,s_2,s_5 \}$ \\ \hline
\hfil $s_2s_3s_5s_4x_2$  & \hfil $\{ s_2,s_3,s_5 \}$ \\ \hline
\hfil $s_4s_2s_3s_5s_4x_2$  & \hfil $\{ s_4 \}$ \\ \hline
\hfil $s_2s_4s_2s_3s_5s_4x_2$  & \hfil $\{ s_2,s_4 \}$ \\ \hline
\hfil $s_2s_3s_4s_2s_3s_5s_4x_2$  & \hfil $\{ s_2,s_3,s_4 \}$ \\ \hline
\hfil $s_2s_3s_5s_4s_2s_3s_5s_4x_2$  & \hfil $\{ s_2,s_3,s_5 \}$ \\
\noalign {\hrule height1.0pt}
\end{tabular}
\end{center}
\centerline{\small Table III}

\smallskip

\noindent
 As a consequence, $x_1x_2\sim_L
s_2s_3s_4s_2s_5s_4x_2$ and $x_2\sim_L s_4s_2s_3s_5s_4x_2$. In view of the two claims above (which we have already proved) this yields
$$x_1x_2\sim_L s_2s_3s_4s_2s_5s_4x_2\sim_Ls_2s_4s_2s_5s_4x_2
\sim_L s_4s_2s_5s_4x_2\sim_L x_2.$$ But then
$w_1w_2=x_1s_7x_2s_8 \sim_L x_1x_2s_8  \sim_L x_2s_8= w_2$ as wanted.
\end{proof}

As we explained earlier, it follows from Proposition~\ref{chen} that $I(\lambda)=I(\frac{1}{2}h^\vee-\rho)$.
In particular, Condition~(B) holds for $\lambda+\rho$.

Since in the present case $e$ is no longer of standard Levi type,
verifying Conditions~(A) and (D) becomes more complicated. We
let $\lambda'$ denote the restriction of $\lambda$ to $\h:=\l'\cap
\t$ which coincides with the span of the semisimple root
elements $h'_1:=h_{\alpha_3}$, $h'_2:=h_{\alpha_4}$,
$h'_3:=h_{\alpha_5}$ and $h'_4:=h_{\alpha_2}$. We write $\alpha_i'$ for the corresponding simple roots in $\h^*$ and let
$\{\varpi'_i\,|\,\, 1\le i\le 4\}\subset \h^*$ be the associated system of
fundamental weights, so that $\varpi'_i(h_j)=\langle\varpi_i',(\alpha_j')^\vee\rangle=\delta_{ij}$
for all $1\le i,j\le 4$. Then $\lambda'+\rho'=\varpi_2'-\varpi'_1-\varpi_3-
\varpi'_4$ where
$\rho'=\sum_{i=1}^4\varpi'_i$.
Obviously, the root system of $\l'$ with respect to $\h'$ identifies with $\Phi':=\Phi\cap\big(\bigoplus_{i=2}^5\Z\alpha_i
\big)$.

We denote by $L_0(\lambda')$ be the
irreducible $\l'$-module of highest weight $\lambda'$ and write
$I_0(\lambda')$ for the primitive ideal ${\rm
Ann}_{U(\l')}\,L_0(\lambda')$ of $U(\l')$. As $\langle \lambda+\rho,\alpha_7^\vee\rangle\not\in\Z^{>0}$, in order to verify
Conditions~(A) and (D) in the present case it suffices to show that
$$I_0(\lambda')={\rm
Ann}_{U(\l')}\big(Q_{\l',\,e_0}\otimes_{U(\l',\,e_0)}{\mathbb
C}_\eta\big)$$ for some one-dimensional representation $\eta$ of
$U(\l',e_0)$ (here $Q_{\l',\,e_0}$ and $U(\l',e_0)$ stand for the
generalised Gelfand--Graev $\l'$-module and the finite $W$-algebra
associated with the pair $(\l',e_0)$, respectively).

Let $\p'$ be the standard parabolic subalgebra of $\l'$ whose
standard Levi subalgebra $\l'_0$ is spanned by $\h$ and
$e_{\pm\alpha_4}$ and let $\n'$ be the nilradical of $\p'$. By
construction, $\p'$ is an optimal parabolic subalgebra for $e_0$ and
$e_0$ is a Richardson element of $\p'$ (one should keep in mind that
the subregular nilpotent orbit is distinguished in a Lie algebra of
type ${\sf D}_4$). Let $\chi_0$ be the linear function on $\l'$ given by $\chi_0(x)=(e_0,x)$ where $(\,\cdot\,\,\,\cdot\,)$ is the Killing for of $\l'$ and write
 $\m_{-\chi}$ for the subspace of $U(\l')$ spanned by all $x+\chi_0(x)$ with $x\in\m$

Since
$\lambda'(h_2')=0$ and
$\langle\lambda+\rho,\beta^\vee\rangle\not\in\Z^{>0}$ for any root
$\beta\in (\Phi'\cap \Phi^+)\setminus \{\alpha_4\}$, a well-known criterion for irreducibility of
generalised Verma modules  implies that
\begin{equation}\label{hu}
L_0(\lambda')\,\cong\, U(\l')\otimes_{U(\p')}{\mathbb C}_{\lambda'}
\end{equation}
as $\l'$-modules; see [\cite{Hu1}, Theorem~9.12] for example (here we regard $\lambda'$ as a linear
function on $\p'$ that vanishes on $[\p',\p']$).
Let
$L_0(\lambda')^*$ be the $\l'$-module dual to $L_0(\lambda')$. In
the present case, the nilpotent subalgebra $\m$ involved in the
definition of $Q_{\l',\,e_0}$ is spanned by the root vectors
$e_{-\beta}$ with $\beta\in(\Phi'\cap\Phi^+)\setminus\{\alpha_4\}$.

It is immediate from
(\ref{hu}) that the subspace
$V_0$ of $L_0(\lambda)^*$ consisting of all linear functions vanishing on $\m_{-\chi}L_0(\lambda)$ is one-dimensional.
That subspace carries a
natural $U(\l',e_0)$-module structure and Skryabin's theorem
[\cite{Sk}] implies that the $\l'$-submodule of $L_0(\lambda')^*$
generated by $V_0$ is irreducible and isomorphic to
$Q_{\l',\,e_0}\otimes_{U(\l',\,e_0)}V_0$.

Let $\scriptstyle{\top}$ be the canonical anti-automorphism of
$U(\l')$. Since the annihilator of $L(\lambda')^*$ in $U(\l')$
coincides with $I_0(\lambda')^\top$, repeating verbatim the argument
from [\cite{P11}, Remark~4.3] one observes that
$$I_0(\lambda')^\top\,=\,{\rm
Ann}_{U(\l')}\big(Q_{\l',\,e_0}\otimes_{U(\l',\,e_0)}V_0\big).$$  Since $-1\in W({\sf D_4})$ it must be that
$I_0(\lambda')=\,I_0(\lambda')^{\scriptstyle\top}$ (see the end of Subsection~\ref{3.17} for more detail about this equality). This enables us to conclude that Conditions~(A) and (D)
hold for $\lambda+\rho$.

As a result, $\lambda+\rho$ satisfies all
four Losev's conditions.  Since
in the present case we also have that $\g_e=[\g_e,\g_e]$ (by
[\cite{dG}]), we conclude that $I(\lambda+\rho)=I(\frac{1}{2}h^\vee-\rho)$ is the unique
multiplicity-free primitive ideal in ${\mathcal X}_{\O(e)}$.
\subsection{Type $({\sf E_8, A_3+A_2+A_1})$}\label{3.13}
Here our pinning is
\[\xymatrix{*{\bullet}\ar@{-}[r]& *{\bullet} \ar@{-}[r] & *{\circ} \ar@{-}[r]\ar@{-}[d] & *{\bullet} \ar@{-}[r]
 & *{\bullet} \ar@{-}[r]&*{\bullet}\ar@{-}[r]&*{\circ}\\ && *{\bullet}&&&}\]
and we choose
$$\tau=\textstyle{{2\atop{}}{2\atop{}}{(-6)\atop 2}
{2\atop{}}{2\atop{}}{2\atop{}}{(-3)\atop{}}}$$ as an optimal
cocharacter. Since $\dim\,\g_e=66$, the total number of positive
$0$-roots and $1$-roots is $(66-8)/2=29$. The roots are given below.

\begin{table}[htb]
\label{data2}
\begin{tabular}{|c|c|}
\hline$0$-roots  & $1$-roots
\\ \hline
$\scriptstyle{{1\atop{}}{1\atop{}}{1\atop0}{1\atop{}}{0\atop{}}{0\atop{}}{0\atop{}}}$,
$\scriptstyle{{1\atop{}}{1\atop{}}{1\atop1}{0\atop{}}{0\atop{}}{0\atop{}}{0\atop{}}}$,
$\scriptstyle{{0\atop{}}{1\atop{}}{1\atop1}{1\atop{}}{0\atop{}}{0\atop{}}{0\atop{}}}$,
$\scriptstyle{{0\atop{}}{0\atop{}}{1\atop1}{1\atop{}}{1\atop{}}{0\atop{}}{0\atop{}}}$,\,
&$\scriptstyle{{1\atop{}}{1\atop{}}{1\atop0}{1\atop{}}{1\atop{}}{1\atop{}}{1\atop{}}}$,
$\scriptstyle{{0\atop{}}{1\atop{}}{1\atop1}{1\atop{}}{1\atop{}}{1\atop{}}{1\atop{}}}$,
$\scriptstyle{{1\atop{}}{2\atop{}}{2\atop1}{2\atop{}}{1\atop{}}{1\atop{}}{1\atop{}}}$,
$\scriptstyle{{1\atop{}}{1\atop{}}{2\atop1}{2\atop{}}{2\atop{}}{1\atop{}}{1\atop{}}}$,\\
$\scriptstyle{{0\atop{}}{0\atop{}}{1\atop0}{1\atop{}}{1\atop{}}{1\atop{}}{0\atop{}}}$,
$\scriptstyle{{1\atop{}}{1\atop{}}{2\atop1}{1\atop{}}{1\atop{}}{1\atop{}}{0\atop{}}}$,
$\scriptstyle{{0\atop{}}{1\atop{}}{2\atop1}{2\atop{}}{1\atop{}}{1\atop{}}{0\atop{}}}$,
$\scriptstyle{{1\atop{}}{2\atop{}}{3\atop2}{2\atop{}}{1\atop{}}{1\atop{}}{0\atop{}}}$,
&$\scriptstyle{{0\atop{}}{1\atop{}}{2\atop1}{2\atop{}}{2\atop{}}{2\atop{}}{1\atop{}}}$,
$\scriptstyle{{1\atop{}}{2\atop{}}{3\atop2}{2\atop{}}{2\atop{}}{2\atop{}}{1\atop{}}}$,
$\scriptstyle{{1\atop{}}{2\atop{}}{3\atop1}{3\atop{}}{2\atop{}}{2\atop{}}{1\atop{}}}$,
$\scriptstyle{{1\atop{}}{2\atop{}}{3\atop2}{3\atop{}}{2\atop{}}{1\atop{}}{1\atop{}}}$,\\
$\scriptstyle{{1\atop{}}{2\atop{}}{3\atop1}{2\atop{}}{2\atop{}}{1\atop{}}{0\atop{}}}$,
$\scriptstyle{{1\atop{}}{3\atop{}}{4\atop2}{3\atop{}}{2\atop{}}{1\atop{}}{0\atop{}}}$,
$\scriptstyle{{2\atop{}}{4\atop{}}{6\atop3}{5\atop{}}{4\atop{}}{3\atop{}}{2\atop{}}}$,
$\scriptstyle{{0\atop{}}{1\atop{}}{1\atop0}{1\atop{}}{1\atop{}}{0\atop{}}{0\atop{}}}$,
&$\scriptstyle{{1\atop{}}{2\atop{}}{4\atop2}{4\atop{}}{3\atop{}}{2\atop{}}{1\atop{}}}$,
$\scriptstyle{{1\atop{}}{3\atop{}}{4\atop2}{3\atop{}}{3\atop{}}{2\atop{}}{1\atop{}}}$,
$\scriptstyle{{2\atop{}}{3\atop{}}{5\atop3}{4\atop{}}{3\atop{}}{2\atop{}}{1\atop{}}}$,
$\scriptstyle{{2\atop{}}{4\atop{}}{5\atop2}{4\atop{}}{3\atop{}}{2\atop{}}{1\atop{}}}$,\\
$\scriptstyle{{1\atop{}}{2\atop{}}{2\atop1}{1\atop{}}{1\atop{}}{0\atop{}}{0\atop{}}}$,
$\scriptstyle{{1\atop{}}{1\atop{}}{2\atop1}{2\atop{}}{1\atop{}}{0\atop{}}{0\atop{}}}$.
&$\scriptstyle{{2\atop{}}{4\atop{}}{6\atop3}{5\atop{}}{4\atop{}}{2\atop{}}{1\atop{}}}$,
$\scriptstyle{{2\atop{}}{3\atop{}}{4\atop2}{3\atop{}}{2\atop{}}{2\atop{}}{1\atop{}}}$,
$\scriptstyle{{0\atop{}}{0\atop{}}{0\atop0}{0\atop{}}{1\atop{}}{1\atop{}}{1\atop{}}}$.
\\
\hline
\end{tabular}
\end{table}
The $(4,8)$-contributions of the $0$-roots and $1$-roots are
$(30,2)$ and $(45,15)$, respectively, and the total contribution of
all roots is $\big(\frac{75}{2},\frac{17}{2}\big)$.

Since the orbit $\O(e)$ is non-special, Proposition~5.3.2 from
[\cite{Lo2}] suggests that one should seek a proper regular Lie
subalgebra $\g(\lambda)$ of $\g\cong\g^\vee$ and a special nilpotent
element $X\in\g(\lambda)$ such that $\dim
\g(\lambda)_X=\dim\g_e=66$. A priori we have two possibilities:
either $\g(\lambda)$ has type ${\sf A_7+A_1}$ and $X=0$ or
$\g(\lambda)$ has type ${\sf D}_8$ and ${\bf p}(X)=(2^6,1^4)$.
Indeed, in the first case $\dim\g(\lambda)=63+3=66$ whilst in the
second case $\dim\g(\lambda)_X=\frac{1}{2}(100+36-4)=66$ by
(\ref{so}) (one should keep in mind here that ${\bf r}(X)=(10,6)$
and $n_{\rm odd}(X)=4$).

The present case turned out to be
counter-intuitive. Although the
first possibility somehow looks more appealing, a
careful analysis shows that any weight $\lambda+\rho$ having
$\Phi_\lambda$ of type ${\sf A_7+A_1}$ and verifying Conditions~(B)
and (C) always violates Condition~(A) which then becomes too restrictive (we
omit the details). Thus we must ignore the more appealing option and
seek $\lambda+\rho=\sum_{i=1}^8a_i\varpi_i$ such that all $a_i$ are
half-integers. Setting $\widetilde{a}_i=2a_i$ for $1\le i\le 8$ we
can rewrite Condition~(C) as follows:
\begin{eqnarray*}
10\widetilde{a}_1+15\widetilde{a}_2+20\widetilde{a}_3+30\widetilde{a}_4+
24\widetilde{a}_5+18\widetilde{a}_6+12\widetilde{a}_7+6\widetilde{a}_8&=&75\\
2\widetilde{a}_1+3\widetilde{a}_2+4\widetilde{a}_3+6\widetilde{a}_4+
5\widetilde{a}_5+4\widetilde{a}_6+3\widetilde{a}_7+2\widetilde{a}_8&=&17.
\end{eqnarray*}
Since it turned out to be impossible to find a suitable $\lambda+\rho$ by
a lucky guess, we shall once again invoke the method of
Subsection~\ref{3.11}. In particular, we shall recycle the root
subsystem $\Phi_0$ of type ${\sf D_8}$ introduced there. Recall that
$\{\varpi'_i\,|\,\,1\le i\le 8\}$ is the system of fundamental
weights associated with $\Pi_0=\{\beta_i\,|\,\,1\le i\le 8\}$ and we
have that $\varpi_4=\varpi_1'+\varpi_3'+\varpi_5'+\varpi_7'$ and
$\varpi'_8=\varpi_8$.

Setting $\lambda+\rho=\sum_{i=1}^8b_i\varpi_i'$ with $b_i\in\Z$ and
arguing as in Subsection~\ref{3.11} we rewrite the above system of
linear equations as follows:
\begin{eqnarray*}
{\textstyle\frac{7}{2}}b_1+6b_2+{\textstyle\frac{17}{2}}b_3+10b_4+
{\textstyle \frac{23}{2}}b_5+12b_6+{\textstyle\frac{13}{2}}b_7+6b_8&=&{\textstyle\frac{75}{2}}\\
{\textstyle \frac{1}{2}}b_1+b_2+{\textstyle \frac{3}{2}}b_3+2b_4+
{\textstyle \frac{5}{2}}b_5+3b_6+{\textstyle
\frac{3}{2}}b_7+2b_8&=&{\textstyle \frac{17}{2}}.
\end{eqnarray*}
A fairly decent solution to this system is given by setting
$b_1=b_3=b_5=b_8=2$, $b_2=b_4=b_6=-1$ and $b_7=1$,  which leads to
the weight
$$\lambda+\rho=\,2\varpi_1'-\varpi_2'+2\varpi_3'-
\varpi_4'+2\varpi_5'-\varpi'_6+\varpi_7'+2\varpi_8'.$$

As in Subsection~\ref{3.11} we let $\g_0$ be the regular Lie
subalgebra of $\g$ with root system $\Phi_0$ and write
$I_0(\lambda)$ for the annihilator in $U(\g_0)$ of the irreducible
$\g_0$-module of highest weight $\lambda$. In order to determine the
associated variety ${\rm VA}(I_0(\lambda))$ we shall once again
invoke the algorithm described in [\cite{BG}]
(we refer to Subsecton~\ref{3.11} for the unexplained notation used below).

In view of [\cite{Bo}, Planche~IV] we have that $$\lambda+\rho=
{\textstyle\frac{9}{2}}\varepsilon_1+{\textstyle\frac{5}{2}}\varepsilon_2+
{\textstyle\frac{7}{2}}\varepsilon_3+
{\textstyle\frac{3}{2}}\varepsilon_4+{\textstyle\frac{5}{2}}\varepsilon_5
+{\textstyle\frac{1}{2}}\varepsilon_6+{\textstyle\frac{3}{2}}\varepsilon_7+
{\textstyle\frac{1}{2}}\varepsilon_8.$$ Applying the
Robinson--Shensted algorithm on the sequence of numbers
$$\big({\textstyle\frac{9}{2}},\,{\textstyle\frac{5}{2}},\,{\textstyle\frac{7}{2}},\,
{\textstyle\frac{3}{2}},\,{\textstyle\frac{5}{2}},\,{\textstyle\frac{1}{2}},\,
{\textstyle\frac{3}{2}},\,{\textstyle\frac{1}{2}},\,{\textstyle
-\frac{1}{2}},{\textstyle -\frac{3}{2}},{\textstyle
-\frac{1}{2}},{\textstyle-\frac{5}{2}},{\textstyle
-\frac{3}{2}},{\textstyle-\frac{7}{2}},{\textstyle-\frac{5}{2}},
{\textstyle-\frac{9}{2}}\big)
$$ we arrive at the partition  ${\bf
q}=(2^6,1^4)$ associated with the tableau ${\rm RS}(\lambda+\rho)$
(we leave the details to the interested reader). Then $$(r_i)=(1,2,3,4,6,7,8,9,10,11)$$
which yields $(2s_i)=(2,4,6,8,10)$ and $(2t_i+1)=(1,3,7,9,11)$.

Concatenating the lists $(s_i)$ and $(t_i)$ we obtain the list
$(0,1,1,2,3,3,4,4,5,5)$. This implies that $(s'_i)=(0,1,2,3,4,5)$
and $(t'_i)=(1,2,3,4,5)$. Concatenating the lists $(2s_i'+1)$ and
$(2t_i')$ we get the list $(r_i')=(1,2,3,4,6,7,8,9,10,11)$. Since
$q_i'=r'_i-i+1$ we get $$(q'_i)=(1,1,1,1,2,2,2,2,2,2).$$ This enables
us to conclude that the open orbit of of ${\rm VA}(I_0(\lambda))$
contains a nilpotent element $X$ with ${\bf p}(X)=(2^6,1^4)$.

After expressing the $\varpi_i'$ in terms of
$\{\varpi_1,\ldots,\varpi_8\}$ we obtain that
$$\lambda+\rho={-\textstyle\frac{1}{2}}\varpi_1-{\textstyle\frac{1}{2}}\varpi_2
-{\textstyle\frac{1}{2}}\varpi_3+{\textstyle\frac{5}{2}}\varpi_4-
{\textstyle\frac{1}{2}}\varpi_5-\varpi_6+{\textstyle\frac{1}{2}}
\varpi_7+{\textstyle\frac{3}{2}}\varpi_8.$$ Of course, at this point
one can check directly that this weight satisfies all our
requirements, but we had to find it first! Since
$\Phi_0\subseteq\Phi_\lambda\subsetneq\Phi$, the maximality of
$\Phi_0$ yields $\Phi_\lambda=\Phi_0$. But then $\g(\lambda)=\g_0$
and [\cite{Lo2}, Proposition~5.3.2] in conjunction with the earlier
remarks implies that $\dim\,{\rm
VA}(I\lambda))=\dim\overline{\O(e)}$.

Since $\langle\lambda+\rho,\gamma^\vee\rangle\not\in\Z^{>0}$ for any
$\gamma\in\Phi^+$ which can be expressed as a linear combination of
$\alpha_i$ with $i\in \{1,2,3,5,6,7\}$, our choice of pinning
implies that $\lambda+\rho$ satisfies Condition~(A). Therefore,
${\rm VA}(I(\lambda))=\overline{\O(e)}$. Since $e$ has standard Levi
type, we conclude that $\lambda+\rho$ satisfies all Losev's
conditions. As in the present case $\g_e=[\g_e,\g_e]$ by [\cite{dG}]
we now deduce that
$I({-\textstyle\frac{3}{2}}\varpi_1-{\textstyle\frac{3}{2}}\varpi_2
-{\textstyle\frac{3}{2}}\varpi_3+{\textstyle\frac{3}{2}}\varpi_4-
{\textstyle\frac{3}{2}}\varpi_5-2\varpi_6-{\textstyle\frac{1}{2}}
\varpi_7+{\textstyle\frac{1}{2}}\varpi_8)$ is the only
multiplicity-free primitive ideal in ${\mathcal X}_{\O(e)}$.
\subsection{Type $({\sf E_8, 2A_3})$}\label{3.14}
In this case our pinning for $e$ is
\[\xymatrix{*{\bullet}\ar@{-}[r]& *{\bullet} \ar@{-}[r] & *{\bullet} \ar@{-}[r]\ar@{-}[d] & *{\circ} \ar@{-}[r]
 & *{\bullet} \ar@{-}[r]&*{\bullet}\ar@{-}[r]&*{\bullet}\\ && *{\circ}&&&}\]
and we take
$$\tau=\textstyle{{2\atop{}}{2\atop{}}{2\atop (-3)}
{(-6)\atop{}}{2\atop{}}{2\atop{}}{2\atop{}}}$$ as an optimal
cocharacter; see [\cite{LT}]. Since $\dim\,\g_e=60$, the total
number of positive $0$-roots and $1$-roots is $(60-8)/2=26$. The
roots are given below.
\begin{table}[htb]
\label{data2}
\begin{tabular}{|c|c|}
\hline$0$-roots  & $1$-roots
\\ \hline
$\scriptstyle{{1\atop{}}{1\atop{}}{1\atop0}{1\atop{}}{0\atop{}}{0\atop{}}{0\atop{}}}$,
$\scriptstyle{{0\atop{}}{1\atop{}}{1\atop0}{1\atop{}}{1\atop{}}{0\atop{}}{0\atop{}}}$,
$\scriptstyle{{0\atop{}}{0\atop{}}{1\atop0}{1\atop{}}{1\atop{}}{1\atop{}}{0\atop{}}}$,
$\scriptstyle{{0\atop{}}{0\atop{}}{0\atop0}{1\atop{}}{1\atop{}}{1\atop{}}{1\atop{}}}$,\,
&$\scriptstyle{{0\atop{}}{1\atop{}}{1\atop1}{0\atop{}}{0\atop{}}{0\atop{}}{0\atop{}}}$,
$\scriptstyle{{0\atop{}}{1\atop{}}{1\atop1}{1\atop{}}{1\atop{}}{1\atop{}}{1\atop{}}}$,
$\scriptstyle{{1\atop{}}{1\atop{}}{1\atop1}{1\atop{}}{1\atop{}}{1\atop{}}{0\atop{}}}$,
$\scriptstyle{{1\atop{}}{1\atop{}}{2\atop1}{1\atop{}}{1\atop{}}{0\atop{}}{0\atop{}}}$,\\
$\scriptstyle{{1\atop{}}{2\atop{}}{3\atop2}{2\atop{}}{2\atop{}}{1\atop{}}{0\atop{}}}$,
$\scriptstyle{{1\atop{}}{2\atop{}}{3\atop2}{2\atop{}}{1\atop{}}{1\atop{}}{1\atop{}}}$,
$\scriptstyle{{1\atop{}}{2\atop{}}{4\atop2}{3\atop{}}{2\atop{}}{2\atop{}}{1\atop{}}}$,
$\scriptstyle{{1\atop{}}{2\atop{}}{3\atop2}{3\atop{}}{3\atop{}}{2\atop{}}{1\atop{}}}$,
&$\scriptstyle{{0\atop{}}{1\atop{}}{2\atop1}{1\atop{}}{1\atop{}}{1\atop{}}{0\atop{}}}$,
$\scriptstyle{{1\atop{}}{2\atop{}}{2\atop1}{1\atop{}}{0\atop{}}{0\atop{}}{0\atop{}}}$,
$\scriptstyle{{1\atop{}}{1\atop{}}{2\atop1}{2\atop{}}{2\atop{}}{1\atop{}}{1\atop{}}}$,
$\scriptstyle{{1\atop{}}{2\atop{}}{2\atop1}{2\atop{}}{2\atop{}}{1\atop{}}{0\atop{}}}$,\\
$\scriptstyle{{1\atop{}}{3\atop{}}{4\atop2}{3\atop{}}{2\atop{}}{1\atop{}}{1\atop{}}}$,
$\scriptstyle{{1\atop{}}{3\atop{}}{5\atop2}{4\atop{}}{3\atop{}}{2\atop{}}{1\atop{}}}$,
$\scriptstyle{{2\atop{}}{3\atop{}}{4\atop2}{3\atop{}}{2\atop{}}{1\atop{}}{0\atop{}}}$,
$\scriptstyle{{2\atop{}}{3\atop{}}{4\atop2}{4\atop{}}{3\atop{}}{2\atop{}}{1\atop{}}}$.
&$\scriptstyle{{1\atop{}}{2\atop{}}{2\atop1}{2\atop{}}{1\atop{}}{1\atop{}}{1\atop{}}}$,
$\scriptstyle{{0\atop{}}{1\atop{}}{2\atop1}{2\atop{}}{2\atop{}}{2\atop{}}{1\atop{}}}$,
$\scriptstyle{{1\atop{}}{2\atop{}}{3\atop1}{2\atop{}}{1\atop{}}{1\atop{}}{0\atop{}}}$,
$\scriptstyle{{1\atop{}}{2\atop{}}{3\atop1}{3\atop{}}{2\atop{}}{1\atop{}}{1\atop{}}}$,\\
&$\scriptstyle{{2\atop{}}{4\atop{}}{5\atop3}{4\atop{}}{3\atop{}}{2\atop{}}{1\atop{}}}$,
$\scriptstyle{{2\atop{}}{4\atop{}}{6\atop3}{5\atop{}}{4\atop{}}{3\atop{}}{1\atop{}}}$.
\\
\hline
\end{tabular}
\end{table}

The $(2,5)$-contributions of the $0$-roots and $1$-roots are
$(16,28)$ and $(18,27)$, respectively, and the total contribution of
all roots is $\big(\frac{34}{2},\frac{55}{2}\big)$. In the present
case the orbit $\O(e)$ is non-special and comparing the size of
$\O(e)$ with the size of a root subsystem of type ${\sf D_5+A_3}$ in
$\Phi$ indicates that one should seek
$\lambda+\rho=\sum_{i=1}^8a_i\varpi_i$ such that $a_i\in
\frac{1}{4}\Z$ for all $1\le i\le 8$. Setting
$\widetilde{a}_i:=4a_i$ we can rewrite Condition~(C) for
$\lambda+\rho$ as follows:
\begin{eqnarray*}
5\widetilde{a}_1+8\widetilde{a}_2+10\widetilde{a}_3+15\widetilde{a}_4+
12\widetilde{a}_5+9\widetilde{a}_6+6\widetilde{a}_7+3\widetilde{a}_8&=&68\\
8\widetilde{a}_1+12\widetilde{a}_2+16\widetilde{a}_3+24\widetilde{a}_4+
20\widetilde{a}_5+15\widetilde{a}_6+10\widetilde{a}_7+5\widetilde{a}_8&=&110.
\end{eqnarray*}
A perfect solution to this system of linear
equations is given by $\widetilde{a}_i=1$ for all $1\le i\le 8$.
This leads to the weight $\lambda+\rho=\frac{1}{4}\rho$. Due to our
choice of pinning we have that
$\langle\lambda+\rho,\gamma^\vee\rangle\not\in\Z^{>0}$ for any
$\gamma\in\Phi^+$ which can be expressed as a linear combination of
$\alpha_i$ with $i\in \{1,3,4,6,7,8\}$. In other words,
$\lambda+\rho$ satisfies Condition~(A). Since Condition~(D) is
vacuous in the present case it remains to check that $\lambda+\rho$
satisfies Condition~(B).

First wet note that $\Phi_{\lambda}$ contains the positive roots
$$
\xymatrix{{\beta_1}\ar@{-}[r]& {\beta_2} \ar@{-}[r] & {\beta_3}
\ar@{-}[r]\ar@{-}[d] & {\beta_4}
 & {\beta_6} \ar@{-}[r]&{\beta_7}\ar@{-}[r] &{\beta_8}\\ && {\beta_5}&&&}
$$
where
\begin{eqnarray*}
\beta_1\,=\,{\scriptstyle{0\atop{}}{0\atop{}}{0\atop0}{1\atop{}}{1\atop{}}{1\atop{}}{1\atop{}}},\
\
\beta_2\,=\,{\scriptstyle{1\atop{}}{1\atop{}}{1\atop1}{0\atop{}}{0\atop{}}{0\atop{}}{0\atop{}}},\
\
\beta_3\,=\,{\scriptstyle{0\atop{}}{0\atop{}}{1\atop0}{1\atop{}}{1\atop{}}{1\atop{}}{0\atop{}}},\
\
\beta_4\,=\,{\scriptstyle{0\atop{}}{1\atop{}}{1\atop1}{1\atop{}}{0\atop{}}{0\atop{}}{0\atop{}}},\\
\
\beta_5\,=\,{\scriptstyle{1\atop{}}{2\atop{}}{2\atop1}{1\atop{}}{0\atop{}}{0\atop{}}{0\atop{}}},\
\
\beta_6\,=\,{\scriptstyle{1\atop{}}{1\atop{}}{1\atop0}{1\atop{}}{0\atop{}}{0\atop{}}{0\atop{}}},\
\
\beta_7\,=\,{\scriptstyle{0\atop{}}{0\atop{}}{1\atop1}{1\atop{}}{1\atop{}}{0\atop{}}{0\atop{}}},\
\
\beta_8\,=\,{\scriptstyle{1\atop{}}{2\atop{}}{3\atop1}{2\atop{}}{1\atop{}}{1\atop{}}{1\atop{}}},
\end{eqnarray*}
and hence contains a root subsystem $\Phi_0$ of type ${\sf
D_5+A_3}$. On the other hand, the group ${\rm Aut}\,\g$ contains an
automorphism $\sigma$ of order $4$ such that $\sigma(t)=t$ for all
$t\in\Lie(T)$ and $\Phi_\lambda$ is the root system of the Lie
subalgebra $\g^\sigma$ with respect to $\Lie(T)$. If $\Phi_0$ is a
proper root subsystem of $\Phi_\lambda$, then using the Borel--de
Siebenthal algorithm it is straightforward to see that
$\widetilde{\Phi}_0$ must have type ${\sf D_8}$. But this would
imply that $\sigma^2=1$, a contradiction. We thus deduce that
$\Phi_0=\Phi_\lambda$ and the roots $\{\beta_i\,|\,\,1\le i\le 8\}$
form the basis of $\Phi_{\lambda}$ contained in $\Phi^+$. Since
$\lambda+\rho$ is strongly dominant on $\Phi^+\cap\Phi_\lambda$ and
$$|\Phi^+|-|\Phi_\lambda^+|=120-20-6=94=\textstyle{\frac{1}{2}}(248-60)=\frac{1}{2}\dim\,\O(e),$$
applying [\cite{Jo1}, Corollary~3.5] gives $\dim\,{\rm
VA}(I(\lambda))=\dim\,\O(e)$. But $e\in{\rm VA}(I(\lambda))$ because
we already know that $\lambda+\rho$ satisfies Condition~(A). Hence
${\rm VA}(I(\lambda))=\overline{\O(e)}$ i.e. Condition~(B) holds for
$\lambda+\rho$. Since in the present case $\g_e=[\g_e,\g_e]$ by
[\cite{dG}], we combine [\cite{Lo2}, 5.3] and [\cite{PT},
Proposition~11] to conclude $I(-\frac{3}{4}\rho)$ is the only
multiplicity-free primitive ideal in $\mathcal{X}_{\O(e)}$.
\subsection{Type $({\sf E_8, A_4+A_3})$}\label{3.15} From
this case onwards we are in a partially uncharted territory: the existence of
one-dimensional representations for  the remaining three rigid
orbits in type ${\sf E_8}$ was left open in [\cite{GRU}] and in
Ubly's subsequent work [\cite{U}]. Apparently, expressing commutators in $U(\g,e)$ in terms
of a PBW basis and storing related data on a computer required too much memory for the remaining orbits and the computer would never stop running. In our approach, however, the present case is rather straightforward especially when compared with the two rigid orbits of dimension $202$ (see Sebsections~\ref{3.16} and \ref{3.17}).

First we note that $e$ has a unique
pinning, namely
\[\xymatrix{*{\bullet}\ar@{-}[r]& *{\bullet} \ar@{-}[r] & *{\bullet} \ar@{-}[r]\ar@{-}[d] & *{\circ} \ar@{-}[r]
 & *{\bullet} \ar@{-}[r]&*{\bullet}\ar@{-}[r]&*{\bullet}\\ && *{\bullet}&&&}\]
and we can take
$$\tau=\textstyle{{2\atop{}}{2\atop{}}{2\atop 2}
{(-9)\atop{}}{2\atop{}}{2\atop{}}{2\atop{}}}$$ as an optimal
cocharacter; see [\cite{LT}]. Since $\dim\,\g_e=48$, the total number of positive
$0$-roots and $1$-roots is $(48-8)/2=20$. The roots are listed below.
\begin{table}[htb]
\label{data2}
\begin{tabular}{|c|c|}
\hline$0$-roots  & $1$-roots
\\ \hline
$\scriptstyle{{0\atop{}}{1\atop{}}{2\atop1}{2\atop{}}{2\atop{}}{2\atop{}}{1\atop{}}}$,
$\scriptstyle{{1\atop{}}{1\atop{}}{2\atop1}{2\atop{}}{2\atop{}}{1\atop{}}{1\atop{}}}$,
$\scriptstyle{{1\atop{}}{2\atop{}}{2\atop1}{2\atop{}}{1\atop{}}{1\atop{}}{1\atop{}}}$,
$\scriptstyle{{1\atop{}}{2\atop{}}{2\atop1}{2\atop{}}{2\atop{}}{1\atop{}}{0\atop{}}}$,\,
&$\scriptstyle{{0\atop{}}{0\atop{}}{1\atop1}{1\atop{}}{1\atop{}}{1\atop{}}{1\atop{}}}$,
$\scriptstyle{{0\atop{}}{1\atop{}}{1\atop0}{1\atop{}}{1\atop{}}{1\atop{}}{1\atop{}}}$,
$\scriptstyle{{0\atop{}}{1\atop{}}{1\atop1}{1\atop{}}{1\atop{}}{1\atop{}}{0\atop{}}}$,
$\scriptstyle{{1\atop{}}{1\atop{}}{1\atop0}{1\atop{}}{1\atop{}}{1\atop{}}{0\atop{}}}$,\\
$\scriptstyle{{1\atop{}}{2\atop{}}{3\atop1}{2\atop{}}{1\atop{}}{1\atop{}}{0\atop{}}}$,
$\scriptstyle{{1\atop{}}{2\atop{}}{3\atop2}{2\atop{}}{1\atop{}}{0\atop{}}{0\atop{}}}$,
$\scriptstyle{{1\atop{}}{3\atop{}}{5\atop3}{4\atop{}}{3\atop{}}{2\atop{}}{1\atop{}}}$,
$\scriptstyle{{2\atop{}}{3\atop{}}{5\atop2}{4\atop{}}{3\atop{}}{2\atop{}}{1\atop{}}}$.
&$\scriptstyle{{1\atop{}}{1\atop{}}{1\atop1}{1\atop{}}{1\atop{}}{0\atop{}}{0\atop{}}}$,
$\scriptstyle{{0\atop{}}{1\atop{}}{2\atop1}{1\atop{}}{1\atop{}}{0\atop{}}{0\atop{}}}$,
$\scriptstyle{{1\atop{}}{1\atop{}}{2\atop1}{1\atop{}}{0\atop{}}{0\atop{}}{0\atop{}}}$,
$\scriptstyle{{1\atop{}}{2\atop{}}{3\atop2}{3\atop{}}{3\atop{}}{2\atop{}}{1\atop{}}}$,\\
&$\scriptstyle{{1\atop{}}{2\atop{}}{4\atop2}{3\atop{}}{2\atop{}}{2\atop{}}{1\atop{}}}$,
$\scriptstyle{{2\atop{}}{3\atop{}}{4\atop2}{3\atop{}}{2\atop{}}{1\atop{}}{0\atop{}}}$,
$\scriptstyle{{1\atop{}}{3\atop{}}{4\atop2}{3\atop{}}{2\atop{}}{1\atop{}}{1\atop{}}}$,
$\scriptstyle{{2\atop{}}{4\atop{}}{6\atop3}{5\atop{}}{4\atop{}}{3\atop{}}{1\atop{}}}$.
\\
\hline
\end{tabular}
\end{table}

The $5$-contributions of the $0$-roots and $1$-roots are $20$ and
$24$, respectively, and the total contribution of all roots is $22$.
In the present case the orbit $\O(e)$ is non-special and comparing
$\dim \O(e)$ with the size of a root subsystem of type ${\sf
A_4+A_4}$ in $\Phi$ strongly suggests that one should seek
$\lambda+\rho=\sum_{i=1}^8a_i\varpi_i$ such that $a_i\in
\frac{1}{5}\Z$ for all $1\le i\le 8$. Setting
$\widetilde{a}_i:=5a_i$ we can rewrite Condition (C) for
$\lambda+\rho$ as follows:
$$
8\widetilde{a}_1+12\widetilde{a}_2+16\widetilde{a}_3+24\widetilde{a}_4+
20\widetilde{a}_5+15\widetilde{a}_6+10\widetilde{a}_7+5\widetilde{a}_8=110.
$$
A perfect solution to this linear equation is given by setting
$\widetilde{a}_i=1$ for all $1\le i\le 8$, which leads to
$\lambda+\rho=\frac{1}{5}\rho$. Since
$\langle\lambda+\rho,\gamma^\vee\rangle\not\in\Z^{>0}$ for any
$\gamma\in\Phi^+$ which can be expressed as a linear combination of
$\alpha_i$ with $i\in \{1,2,3,4,6,7,8\}$, this weight does satisfy
Condition~(A). Therefore, $e\in{\rm VA}(I(\lambda))$.

Next we observe that the integral root system of our weight
$\lambda+\rho=\textstyle{\frac{1}{5}}\rho$ contains the positive
roots
$$
\xymatrix{{\beta_1}\ar@{-}[r]& {\beta_2} \ar@{-}[r] & {\beta_3}
\ar@{-}[r] & {\beta_4}
 & {\beta_5}\ar@{-}[r] &{\beta_6}\ar@{-}[r] &{\beta_7}\ar@{-}[r]& {\beta_8}}
$$
where
\begin{eqnarray*}
\beta_1\,=\,{\scriptstyle{0\atop{}}{1\atop{}}{1\atop1}{1\atop{}}{1\atop{}}{0\atop{}}{0\atop{}}},\
\
\beta_2\,=\,{\scriptstyle{0\atop{}}{0\atop{}}{1\atop0}{1\atop{}}{1\atop{}}{1\atop{}}{1\atop{}}},\
\
\beta_3\,=\,{\scriptstyle{1\atop{}}{1\atop{}}{1\atop1}{1\atop{}}{0\atop{}}{0\atop{}}{0\atop{}}},\
\
\beta_4\,=\,{\scriptstyle{0\atop{}}{1\atop{}}{1\atop0}{1\atop{}}{1\atop{}}{1\atop{}}{0\atop{}}},\\
\beta_5\,=\,{\scriptstyle{0\atop{}}{1\atop{}}{2\atop1}{1\atop{}}{0\atop{}}{0\atop{}}{0\atop{}}},\
\
\beta_6\,=\,{\scriptstyle{1\atop{}}{1\atop{}}{1\atop0}{1\atop{}}{1\atop{}}{0\atop{}}{0\atop{}}},\
\
\beta_7\,=\,{\scriptstyle{0\atop{}}{0\atop{}}{1\atop1}{1\atop{}}{1\atop{}}{1\atop{}}{0\atop{}}},\
\
\beta_8\,=\,{\scriptstyle{1\atop{}}{2\atop{}}{2\atop1}{1\atop{}}{1\atop{}}{1\atop{}}{1\atop{}}},
\end{eqnarray*}
and hence has type ${\sf A_4+A_4}$ (by the maximality of that root
subsystem). The roots $\{\beta_i\,|\,\,1\le i\le 8\}$ form the basis
of $\Phi_{\lambda}$ contained in $\Phi^+$ and $\lambda+\rho$ is
strongly dominant on $\Phi^+\cap\Phi_\lambda$. Since
$$|\Phi^+|-|\Phi_\lambda^+|=120-10-10=100=\textstyle{\frac{1}{2}}(248-48)=\frac{1}{2}\dim\,\O(e),$$
applying [\cite{Jo1}, Corollary~3.5] we get $\dim\,{\rm
VA}(I(\lambda))=\dim\,\O(e)$. Since $e\in{\rm VA}(I(\lambda))$ we
have that ${\rm VA}(I(\lambda))=\overline{\O(e)}$, i.e.
Condition~(B) holds for $\lambda+\rho$.  Since $\g_e=[\g_e,\g_e]$ by
[\cite{dG}], we argue as before to conclude that
$I(-\frac{4}{5}\rho)$ is the only multiplicity-free primitive ideal
in $\mathcal{X}_{\O(e)}$.
\subsection{Type $({\sf E_8, A_5+A_1})$}\label{3.16} Here $\g_e=\mathbb{C}e\oplus[\g_e,\g_e]$ by [\cite{dG}], so we may expect
$U(\g,e)$ to afford at least $2$ one-dimensional
representation. One such representation was found by Losev in [\cite{Lo2}] and in this subsection we are going to construct another one.

We choose the following pinning for $e$:
\[\xymatrix{*{\bullet}\ar@{-}[r]& *{\circ} \ar@{-}[r] & *{\bullet} \ar@{-}[r]\ar@{-}[d] & *{\bullet} \ar@{-}[r]
 & *{\bullet} \ar@{-}[r]&*{\bullet}\ar@{-}[r]&*{\bullet}\\ && *{\circ}&&&}\]
and take
$$\tau=\textstyle{{2\atop{}}{(-6)\atop{}}{2\atop (-5)}
{2\atop{}}{2\atop{}}{2\atop{}}{2\atop{}}}$$ as an optimal
cocharacter. Since $\dim\,\g_e=46$, the total number of positive
$0$-roots and $1$-roots is $(46-8)/2=19$. These roots are given in the table below.
\begin{table}[htb]
\label{data2}
\begin{tabular}{|c|c|}
\hline$0$-roots  & $1$-roots
\\ \hline
$\scriptstyle{1\atop{}}{1\atop{}}{1\atop0}{1\atop{}}{0\atop{}}{0\atop{}}{0\atop{{}}}$,
$\scriptstyle{{0\atop{}}{1\atop{}}{1\atop0}{1\atop{}}{1\atop{}}{0\atop{}}{0\atop{}}}$,
$\scriptstyle{{1\atop{}}{2\atop{}}{3\atop2}{2\atop{}}{2\atop{}}{2\atop{}}{1\atop{}}}$,
$\scriptstyle{{1\atop{}}{2\atop{}}{3\atop2}{3\atop{}}{2\atop{}}{1\atop{}}{1\atop{}}}$,\,
&$\scriptstyle{{0\atop{}}{0\atop{}}{1\atop1}{1\atop{}}{1\atop{}}{0\atop{}}{0\atop{}}}$,
$\scriptstyle{{1\atop{}}{1\atop{}}{1\atop1}{1\atop{}}{1\atop{}}{1\atop{}}{1\atop{}}}$,
$\scriptstyle{{1\atop{}}{1\atop{}}{2\atop1}{1\atop{}}{1\atop{}}{1\atop{}}{0\atop{}}}$,
$\scriptstyle{{0\atop{}}{1\atop{}}{2\atop1}{1\atop{}}{1\atop{}}{1\atop{}}{1\atop{}}}$,\\
$\scriptstyle{{1\atop{}}{2\atop{}}{4\atop2}{3\atop{}}{2\atop{}}{1\atop{}}{0\atop{}}}$,
$\scriptstyle{{1\atop{}}{3\atop{}}{4\atop2}{3\atop{}}{3\atop{}}{2\atop{}}{1\atop{}}}$,
$\scriptstyle{{2\atop{}}{3\atop{}}{4\atop2}{3\atop{}}{2\atop{}}{2\atop{}}{1\atop{}}}$,
$\scriptstyle{{2\atop{}}{4\atop{}}{5\atop2}{4\atop{}}{3\atop{}}{2\atop{}}{1\atop{}}}$.
&$\scriptstyle{{0\atop{}}{1\atop{}}{2\atop1}{2\atop{}}{1\atop{}}{1\atop{}}{0\atop{}}}$,
$\scriptstyle{{1\atop{}}{1\atop{}}{2\atop1}{2\atop{}}{1\atop{}}{0\atop{}}{0\atop{}}}$,
$\scriptstyle{{1\atop{}}{2\atop{}}{2\atop1}{2\atop{}}{2\atop{}}{1\atop{}}{1\atop{}}}$,
$\scriptstyle{{1\atop{}}{2\atop{}}{3\atop1}{2\atop{}}{2\atop{}}{1\atop{}}{0\atop{}}}$,\\
&$\scriptstyle{{1\atop{}}{2\atop{}}{3\atop1}{2\atop{}}{1\atop{}}{1\atop{}}{1\atop{}}}$,
$\scriptstyle{{2\atop{}}{3\atop{}}{5\atop3}{4\atop{}}{3\atop{}}{2\atop{}}{1\atop{}}}$,
$\scriptstyle{{2\atop{}}{4\atop{}}{6\atop3}{5\atop{}}{4\atop{}}{2\atop{}}{1\atop{}}}$.
\\
\hline
\end{tabular}
\end{table}

The $(2,3)$-contributions of the $0$-roots and $1$-roots are $(12,18)$ and
$(15,18)$, respectively, and the total contribution of all roots is $\big(\frac{27}{2}, 18\big)$.
Note that $|\Phi^+|-19=101=\dim\O(e))/2.$
Since the orbit $\O(e)$ is non-special and $\Phi$
contains a root subsystem of type ${\sf
A_5+A_2+A_1}$ whose set of positive roots has size $(30+6+2)/2=19$, we should seek
$\lambda+\rho=\sum_{i=1}^8a_i\varpi_i$ such that $a_i\in
\frac{1}{6}\Z$ for all $1\le i\le 8$. Setting
$\widetilde{a}_i:=6a_i$ we can rewrite Condition (C) for
$\lambda+\rho$ as follows:
\begin{eqnarray*}
5\widetilde{a}_1+8\widetilde{a}_2+10\widetilde{a}_3+
15\widetilde{a}_4+
12\widetilde{a}_5+9\widetilde{a}_6+6\widetilde{a}_7+
3\widetilde{a}_8&=&81\\
7\widetilde{a}_1+10\widetilde{a}_2+14\widetilde{a}_3+
20\widetilde{a}_4+
16\widetilde{a}_5+12\widetilde{a}_6+8\widetilde{a}_7+
4\widetilde{a}_8&=&108.
\end{eqnarray*}
A very nice solution to this system of linear equations is given by setting $\widetilde{a}_1=\widetilde{a}_2=2$ and
$\widetilde{a}_i=1$ for all $3\le i\le 8$, which leads to
$\lambda+\rho=\frac{1}{6}(\rho+\varpi_1+\varpi_2)$. Since
$\langle\lambda+\rho,\gamma^\vee\rangle\not\in\Z^{>0}$ for any
$\gamma\in\Phi^+$ which can be expressed as a linear combination of
$\alpha_i$ with $i\in \{1,4,5,6,7,8\}$, this weight does satisfy
Condition~(A). Therefore, $e\in{\rm VA}(I(\lambda))$.

Next we observe that the integral root system
$\lambda+\rho$ contains the linearly independent positive
roots
$$
\xymatrix{{\beta_1}\ar@{-}[r]& {\beta_2} \ar@{-}[r] & {\beta_3}
\ar@{-}[r] & {\beta_4}\ar@{-}[r]
 & {\beta_5}&{\beta_6}\ar@{-}[r] &{\beta_7}& {\beta_8}}
$$
where
\begin{eqnarray*}
\beta_1\,=\,{\scriptstyle{1\atop{}}{1\atop{}}{1\atop1}{0\atop{}}{0\atop{}}{0\atop{}}{0\atop{}}},\
\
\beta_2\,=\,{\scriptstyle{0\atop{}}{1\atop{}}{1\atop0}{1\atop{}}{1\atop{}}{1\atop{}}{1\atop{}}},\
\
\beta_3\,=\,{\scriptstyle{0\atop{}}{0\atop{}}{1\atop1}{1\atop{}}{1\atop{}}{1\atop{}}{0\atop{}}},\
\
\beta_4\,=\,{\scriptstyle{1\atop{}}{1\atop{}}{1\atop0}{1\atop{}}{1\atop{}}{0\atop{}}{0\atop{}}},\\
\beta_5\,=\,{\scriptstyle{0\atop{}}{1\atop{}}{2\atop1}{1\atop{}}{0\atop{}}{0\atop{}}{0\atop{}}},\
\
\beta_6\,=\,{\scriptstyle{0\atop{}}{1\atop{}}{1\atop1}{1\atop{}}{1\atop{}}{0\atop{}}{0\atop{}}},\
\
\beta_7\,=\,{\scriptstyle{1\atop{}}{1\atop{}}{2\atop1}{2\atop{}}{1\atop{}}{1\atop{}}{1\atop{}}},\
\
\beta_8\,=\,{\scriptstyle{1\atop{}}{2\atop{}}{2\atop1}{2\atop{}}{1\atop{}}{1\atop{}}{0\atop{}}},
\end{eqnarray*}
and hence contains a root subsystem of type ${\sf A_5+A_2+A_1}$. Note that $\langle\lambda+\rho,\beta^\vee\rangle>0$ for all $\beta\in\Phi^+$. Keeping this in mind it is straightforward to see that $\{\beta_i\,|\,1\le i\le 6\}$ coincides with the set of all
$\beta\in\Phi^+$ for which $\langle\lambda+\rho,\beta^\vee\rangle=1\}$. Since $\langle\lambda+\rho,\beta_i^\vee\rangle=2$ for $i=7,8$, it follows that
the set $\{\beta_i\,|\,1\le i\le 8\}$ consists of  {\it indecomposable} roots of $\Phi_\lambda^+$
and hence forms a basis of $\Phi_\lambda$. In particular, $\Phi_\lambda$ has type ${\sf A_5+A_2+A_1}$.

Since $\lambda+\rho$ is
strongly dominant on $\Phi_\lambda^+$ and
$|\Phi^+|-|\Phi_\lambda^+|=120-19=101=\frac{1}{2}\dim\,\O(e),$
applying [\cite{Jo1}, Corollary~3.5] yields $\dim\,{\rm
VA}(I(\lambda))=\dim\,\O(e)$. Since $e\in{\rm VA}(I(\lambda))$ we
have the equality ${\rm VA}(I(\lambda))=\overline{\O(e)}$. Thus
Condition~(B) holds for $\lambda+\rho$ and we conclude that
$I(\frac{1}{6}\varpi_1+\frac{1}{6}\varpi_2-\frac{5}{6}\rho)$ is a 
multiplicity-free primitive ideal
in $\mathcal{X}_{\O(e)}$ (one should keep in mind here that in the present case the component group $\Gamma=G_e/G_e^\circ$ is trivial). It will become clear later that the corresponding representation of $U(\g,e)$ coincides with the one constructed in [\cite{Lo2}].

Our next goal is to produce a
highest weight, $\lambda'$, leading to
another one-dimensional representation of $U(\g,e)$. Since it turned out to be impossible to find $\lambda'$ by a lucky guess, we are going to adopt a more scientific approach used in Subsection~\ref{3.11}.
Namely, we impose that $\Phi_{\lambda'}=\Phi_\lambda$ and let $\varpi'_1,\ldots,\varpi'_8\in P(\Phi)_{\mathbb Q}$ be such that
$\langle\varpi_i,\beta_j^\vee\rangle=\delta_{ij}$
for all $1\le i,j\le 8$.
It is well known (and easily seen) that
\begin{eqnarray*}
\varpi'_1&=&\textstyle{\frac{1}{6}}(5\beta_1+4\beta_2+3\beta_3+2\beta_4+\beta_5),\qquad
\ \varpi'_5\ =\ \textstyle{\frac{1}{6}}(\beta_1+2\beta_2+3\beta_3+4\beta_4+5\beta_5),
\\
\varpi'_2&=&\textstyle{\frac{1}{6}}(4\beta_1+8\beta_2+6\beta_3+4\beta_4+2\beta_5),\qquad
\varpi'_6\ =\ \textstyle{\frac{1}{3}}(2\beta_6+\beta_7),
\\
\varpi'_3&=&\textstyle{\frac{1}{6}}(3\beta_1+6\beta_2+9\beta_3+6\beta_4+3\beta_5),\qquad
\varpi'_7\ =\ \textstyle{\frac{1}{3}}(\beta_6+2\beta_7),\\
\varpi'_4&=&\textstyle{\frac{1}{6}}(2\beta_1+4\beta_2+6\beta_3+8\beta_4+4\beta_5), \qquad
\varpi'_8\ =\ \textstyle{\frac{1}{2}}\beta_8.
\end{eqnarray*}
Since $\Phi_\lambda=\Phi_{\lambda'}$ and $\lambda'+\rho=\sum_{i=1}^8a'_i\varpi_i$ satisfies Losev's conditions, it can be expressed as
$\lambda'+\rho=\sum_{i=1}^8b_i\varpi'$ for some $b_i\in\Z^{>0}$. When we evaluate $\langle\lambda'+\rho,\alpha_i^\vee\rangle$ for $1\le i\le 8$ by using the explicit form of $\beta_i$'s
and the above expressions for  $\varpi'_i$'s we obtain the following formulae:
\begin{eqnarray*}
a_1'&=&\textstyle{\frac{1}{3}}(b_1-b_2+b_4-b_5-b_6+b_7),\\
a_2'&=&\textstyle{\frac{1}{3}}(b_1-b_2-2b_4-b_5+2b_6+b_7),\\
a_3'&=&\textstyle{\frac{1}{6}}(b_1+2b_2-3b_3-2b_4-b_5+2b_6-2b_7+3b_8),\\
a_4'&=&\textstyle{\frac{1}{6}}(b_1+2b_2+3b_3+4b_4+
5b_5-4b_6-2b_7-3b_8),\\
a_5'&=&\textstyle{\frac{1}{6}}(-5b_1-4b_2-3b_3-2b_4-
b_5+2b_6+4b_7+3b_8),\\
a_6'&=&\textstyle{\frac{1}{6}}(b_1+2b_2+3b_3+4b_4-
b_5+2b_6-2b_7-3b_8),\\
a_7'&=&\textstyle{\frac{1}{6}}(b_1+2b_2+3b_3-2b_4-
b_5-4b_6-2b_7+3b_8),\\
a_8'&=&\textstyle{\frac{1}{6}}(b_1+2b_2-3b_3-2b_4-
b_5+2b_6+4b_7-3b_8).
\end{eqnarray*}
Rewriting Condition~(C) in terms of $b_i$'s we then obtain
\begin{eqnarray*}
9b_1+12b_2+15b_3+12b_4+9b_5+6b_6+6b_7+3b_8&=&81\\
12b_1+18b_2+18b_3+18b_4+12b_5+6b_6+6b_7+6b_8&=&108.
\end{eqnarray*}
One surprising thing about this system of linear equations is that all its coefficients are positive integers and all its solutions $(b_1,\ldots, b_8)$ with
$b_i\in\Z^{>0}$ are easy to determine.
Setting $b_i=1$ for $i\in\{1,2,3,4,5,7\}$ and $b_6=b_8=2$
we recover our weight $\lambda+\rho$, whilst setting
$b_i=1$ for $1\le i\le 6$ and $b_7=b_8=2$ gives us the same representation of $U(\g,e)$.
However, we have yet another promising option here, namely,
$b_1=2$ and $b_i=1$ for $2\le i\le 8$
(we can also set $b_i=1$ for $i\ne 5$ and $b_5=2$, but this would lead to the same representation of $U(\g,e)$). Then $\lambda'+\rho=2\varpi'_1+\sum_{i=2}^8\varpi'_i$. Using our explicit expressions for $\varpi'_i$ and $\beta_i$ we can now determine $\langle\lambda'+\rho,\alpha_i^\vee\rangle$ for all $1\le i\le 8$ and hence rewrite $\lambda'+\rho$ as a linear combination of the $\varpi_i$'s. After some computations we obtain that
$$
\lambda'+\rho\,=\,\textstyle{\frac{1}{3}}\varpi_1+
\textstyle{\frac{1}{3}}\varpi_2+\textstyle{\frac{1}{6}}\varpi_3+\textstyle{\frac{7}{6}}\varpi_4
-\textstyle{\frac{11}{6}}\varpi_5+\textstyle{\frac{7}{6}}\varpi_6+\textstyle{\frac{1}{6}}\varpi_7+
\textstyle{\frac{1}{6}}\varpi_8.
$$
Since $\lambda+\rho$ and $\lambda'+\rho$ have the same image in $\frac{1}{6}P(\Phi)/P(\Phi)$, it is straightforward to see that Condition~(A) holds for $\lambda'+\rho$ and $\Phi_\lambda=\Phi_{\lambda'}$
(a priori we only have the inclusion $\Phi_\lambda\subseteq \Phi_{\lambda'}$). But then Condition~(C) holds for $\lambda'+\rho$ as well. Since Condition~(D) is vacuous in the present case,
we deduce that $I(\lambda')$ is another multiplicity free primitive ideal in $\mathcal{X}_{\O(e)}$.

Finally, the primitive ideals $I(\lambda)$ and $I(\lambda')$ are distinct. Indeed, otherwise
$w(\lambda+\rho)=\lambda'+\rho$ for some $w\in W$. But since $\Phi_\lambda=\Phi_{\lambda'}$ this would imply that $w\in N_W(\Phi_\lambda)$ forcing $w^{-1}(\beta_8)=\pm\beta_8$. However, this contradicts
the fact that $1=\langle\lambda'+\rho,\beta_8^\vee\rangle\neq\pm2$. Therefore, $I(\lambda)\ne I(\lambda')$. We shall see later that these two are the only multiplicity free primitive ideals in $\mathcal{X}_{\O(e)}$.
\subsection{Type $({\sf E_8, D_5(a_1)+A_2})$}\label{3.17} This case is hard and no completely prime primitive ideals in $\mathcal{X}_{\O(e)}$ have been recorded in the literature until now. The orbit $\O(e)$ is non-special and we again have that $\g_e=\mathbb{C}e\oplus[\g_e,\g_e]$ by [\cite{dG}].
The pinning for $e$ is unique, namely,
\[\xymatrix{*{\bullet}\ar@{-}[r]& *{\bullet} \ar@{-}[r] & *{\bullet} \ar@{-}[r]\ar@{-}[d] & *{\bullet} \ar@{-}[r]
 & *{\circ} \ar@{-}[r]&*{\bullet}\ar@{-}[r]&*{\bullet}\\ && *{\bullet}&&&}\]
and we may (and will) assume that
$e=e_0+e_{\alpha_7}+e_{\alpha_8}$ where
$$e_0=e_{\alpha_1}+e_{\alpha_2}+e_{\alpha_3}+
e_{\alpha_5}+e_{\alpha_2+\alpha_4}+
e_{\alpha_4+\alpha_5}.$$ Then we can take
$$\tau=\textstyle{{2\atop{}}{2\atop{}}{0\atop 2}
{2\atop{}}{(-9)\atop{}}{2\atop{}}{2\atop{}}}$$ as an optimal
cocharacter for $e$; see [\cite{LT}]. Since $\dim\,\g_e=46$, the total number of positive
$0$-roots and $1$-roots is $(46-8)/2=19$. These roots are given in the table below.
\begin{table}[htb]
\label{data2}
\begin{tabular}{|c|c|}
\hline$0$-roots  & $1$-roots
\\ \hline
$\scriptstyle{0\atop{}}{0\atop{}}{1\atop0}{0\atop{}}{0\atop{}}{0\atop{}}{0\atop{{}}}$,
$\scriptstyle{{1\atop{}}{2\atop{}}{2\atop1}{2\atop{}}{2\atop{}}{2\atop{}}{1\atop{}}}$,
$\scriptstyle{{1\atop{}}{2\atop{}}{3\atop1}{2\atop{}}{2\atop{}}{2\atop{}}{1\atop{}}}$,
$\scriptstyle{{1\atop{}}{2\atop{}}{3\atop2}{2\atop{}}{2\atop{}}{1\atop{}}{1\atop{}}}$,\,
&$\scriptstyle{{1\atop{}}{1\atop{}}{1\atop0}{1\atop{}}{1\atop{}}{1\atop{}}{1\atop{}}}$,
$\scriptstyle{{1\atop{}}{1\atop{}}{1\atop1}{1\atop{}}{1\atop{}}{1\atop{}}{0\atop{}}}$,
$\scriptstyle{{0\atop{}}{1\atop{}}{1\atop1}{1\atop{}}{1\atop{}}{1\atop{}}{1\atop{}}}$,
$\scriptstyle{{0\atop{}}{1\atop{}}{2\atop1}{1\atop{}}{1\atop{}}{1\atop{}}{1\atop{}}}$,\\
$\scriptstyle{{1\atop{}}{2\atop{}}{3\atop2}{3\atop{}}{2\atop{}}{1\atop{}}{0\atop{}}}$,
$\scriptstyle{{1\atop{}}{2\atop{}}{3\atop1}{3\atop{}}{2\atop{}}{1\atop{}}{1\atop{}}}$,
$\scriptstyle{{1\atop{}}{2\atop{}}{4\atop2}{3\atop{}}{2\atop{}}{1\atop{}}{0\atop{}}}$,
$\scriptstyle{{2\atop{}}{4\atop{}}{6\atop3}{5\atop{}}{4\atop{}}{3\atop{}}{1\atop{}}}$.
&$\scriptstyle{{1\atop{}}{1\atop{}}{2\atop1}{1\atop{}}{1\atop{}}{1\atop{}}{0\atop{}}}$,
$\scriptstyle{{1\atop{}}{2\atop{}}{2\atop1}{1\atop{}}{1\atop{}}{0\atop{}}{0\atop{}}}$,
$\scriptstyle{{1\atop{}}{1\atop{}}{2\atop1}{2\atop{}}{1\atop{}}{0\atop{}}{0\atop{}}}$,
$\scriptstyle{{0\atop{}}{1\atop{}}{2\atop1}{2\atop{}}{1\atop{}}{1\atop{}}{0\atop{}}}$,\\
&$\scriptstyle{{1\atop{}}{3\atop{}}{5\atop3}{4\atop{}}{3\atop{}}{2\atop{}}{1\atop{}}}$,
$\scriptstyle{{2\atop{}}{3\atop{}}{4\atop2}{4\atop{}}{3\atop{}}{2\atop{}}{1\atop{}}}$,
$\scriptstyle{{2\atop{}}{3\atop{}}{5\atop2}{4\atop{}}{3\atop{}}{2\atop{}}{1\atop{}}}$.
\\
\hline
\end{tabular}
\end{table}
The $6$-contributions of the $0$-roots and $1$-roots are $16$ and
$17$, respectively, and the total contribution of all roots is $\frac{33}{2}$. The  minimal nilpotent orbit in a Lie algebra of type ${\sf D_5}$ is special and has dimension $14$. Since
$$120-(20+6)+(7+0)=\textstyle{\frac{1}{2}}\dim\O(e),$$ Proposition~5.3.2 from [\cite{Lo2}] suggests that we should look for a highest weight $\lambda$ with $\Phi_\lambda$ of type ${\sf D_5+A_3}$. The semisimple Lie algebra $\g(\lambda)=\g_1(\lambda)\oplus\g_2(\lambda)$ then has two minimal nonzero nilpotent orbits and we should be interested in the one contained in the simple ideal $\g_1(\lambda)$ of type ${\sf D_5}$ (we let $\g_2(\lambda)$ stand for the simple ideal of type ${\sf A_3}$).
In particular, this means that we should seek
$\lambda+\rho=\sum_{i=1}^8a_i\varpi_i$ such that $a_i\in
\frac{1}{4}\Z$ for all $1\le i\le 8$. Setting
$\widetilde{a}_i:=4a_i$ we can rewrite Condition~(C) for
$\lambda+\rho$ as follows:
$$
6\widetilde{a}_1+9\widetilde{a}_2+12\widetilde{a}_3+
18\widetilde{a}_4+
15\widetilde{a}_5+12\widetilde{a}_6+8\widetilde{a}_7+
4\widetilde{a}_8=66.
$$
This linear equation alone does not tell us very much, but it is supplemented by extremely strong restrictions coming from Conditions~(A), (B) and (D).
Firstly, it must be that $\langle\lambda+\rho,\beta^\vee\rangle \not\in \Z^{>0}$ for all $\beta\in\{\alpha_7,\alpha_8,\alpha_7+\alpha_8\}$.
Secondly, if we denote by $\l$ the standard Levi subalgebra of $\g$ generated by $\t$ and root vectors
$e_{\pm \alpha_i}$ with $1\le i\le 5$, then
the annihilator in $U(\l)$ of the irreducible highest weight $\l$-module
$L_\l(\lambda)$  should have the form
${\rm Ann}_{U(\l)}\big(Q_{\l,e_0}\otimes_{U(\l,e_0)}
\mathbb{C}_{\eta}\big)$ for some one-dimensional representation
$\eta$ of the finite $W$-algebra $U(\l,e_0)$ (here we regard
$e_0$ as a subregular nilpotent element of $\l$).
Thirdly, the primitive ideal $I_0(\lambda)$ of $U(\g(\lambda))$ must be generated by  $\g_2(\lambda)$ and a primitive ideal of $U(\g_1(\lambda))$ whose associated variety coincides with the Zariski closure of the minimal nilpotent orbit
of $\g_1(\lambda)$.

After some tedious numerical experiments (omitted) and a lucky guess (included) one comes up with a plausible candidate, namely,
$$\lambda+\rho=\varpi_4+\varpi_6-\textstyle{\frac{1}{4}}(\varpi_1+
\varpi_2+\varpi_3+\varpi_5+\varpi_7+\varpi_8),$$
which is obtained by setting
$\widetilde{a}_i=-1$ for $i\in\{1,2,3,5,7,8\}$ and
$\widetilde{a}_i=4$ for $i=4,6$.
It is straightforward to see that $\Phi_{\lambda}$ contains the positive roots
$$
\xymatrix{{\beta_1}\ar@{-}[r]& {\beta_2} \ar@{-}[r] & {\beta_3}
\ar@{-}[r]\ar@{-}[d] & {\beta_4}
 & {\beta_6} \ar@{-}[r]&{\beta_7}\ar@{-}[r] &{\beta_8}\\ && {\beta_5}&&&}
$$
where
\begin{eqnarray*}
\beta_1\,=\,{\scriptstyle{0\atop{}}{1\atop{}}{1\atop1}{1\atop{}}{1\atop{}}{1\atop{}}{0\atop{}}},\
\
\beta_2\,=\,{\scriptstyle{0\atop{}}{0\atop{}}{1\atop0}{0\atop{}}{0\atop{}}{0\atop{}}{0\atop{}}},\
\
\beta_3\,=\,{\scriptstyle{1\atop{}}{1\atop{}}{1\atop1}{1\atop{}}{0\atop{}}{0\atop{}}{0\atop{}}},\
\
\beta_4\,=\,{\scriptstyle{0\atop{}}{1\atop{}}{1\atop0}{1\atop{}}{1\atop{}}{1\atop{}}{1\atop{}}},\\
\
\beta_5\,=\,{\scriptstyle{0\atop{}}{0\atop{}}{0\atop0}{0\atop{}}{1\atop{}}{0\atop{}}{0\atop{}}},\
\
\beta_6\,=\,{\scriptstyle{0\atop{}}{0\atop{}}{1\atop1}{1\atop{}}{1\atop{}}{1\atop{}}{1\atop{}}},\
\
\beta_7\,=\,{\scriptstyle{1\atop{}}{1\atop{}}{1\atop0}{1\atop{}}{1\atop{}}{1\atop{}}{0\atop{}}},\
\
\beta_8\,=\,{\scriptstyle{0\atop{}}{1\atop{}}{2\atop1}{2\atop{}}{1\atop{}}{0\atop{}}{0\atop{}}}.
\end{eqnarray*}
Moreover, repeating verbatim the argument used in Subsection~\ref{3.14} one find out that
the roots $\{\beta_i\,|\,\,1\le i\le 8\}$
form the basis of $\Phi_{\lambda}$ contained in $\Phi^+$. In particular, $\Phi_\lambda$ has type ${\sf D_5+A_3}$.

In order to simplify our further exposition we postpone
verifying Losev's conditions for $\lambda+\rho$ and
concentrate on finding another weight, $\lambda'+\rho=\sum_{i=1}^8a'_i\varpi_i$, leading to
a one-dimensional representation of $U(\g,e)$. We proceed as in the previous subsection, that is,
make the assumption $\Phi_{\lambda'}=\Phi_\lambda$ and let $\varpi'_1,\ldots,\varpi'_8\in P(\Phi)_{\mathbb Q}$ be such that
$\langle\varpi_i,\beta_j^\vee\rangle=\delta_{ij}$
for all $1\le i,j\le 8$.
Using Bourbaki' tables [\cite{Bo}] we then get
\begin{eqnarray*}
\varpi'_1&=&\textstyle{\frac{1}{2}}(2\beta_1+2\beta_2+2\beta_3+\beta_4+\beta_5),\qquad
\ \ \,\varpi'_5\ =\ \textstyle{\frac{1}{4}}(2\beta_1+4\beta_2+6\beta_3+3\beta_4+5\beta_5),
\\
\varpi'_2&=&\beta_1+2\beta_2+2\beta_3+\beta_4+\beta_5,\qquad\qquad\ \
\varpi'_6\ =\ \textstyle{\frac{1}{4}}(3\beta_6+2\beta_6+\beta_7),
\\
\varpi'_3&=&\textstyle{\frac{1}{2}}(2\beta_1+4\beta_2+6\beta_3+3\beta_4+3\beta_5),\qquad
\varpi'_7\ =\ \textstyle{\frac{1}{4}}(2\beta_6+4\beta_6+2\beta_7),\\
\varpi'_4&=&\textstyle{\frac{1}{4}}(2\beta_1+4\beta_2+6\beta_3+5\beta_4+3\beta_5), \qquad
\varpi'_8\ =\ \textstyle{\frac{1}{4}}(\beta_6+2\beta_7+3\beta_8).
\end{eqnarray*}
Since $\Phi_\lambda=\Phi_{\lambda'}$, it must be that
$\lambda'+\rho=\sum_{i=1}^8b_i\varpi'_i$ for some $b_i\in\Z$. Using the explicit form of $\beta_i$'s
and the above expressions for  $\varpi'_i$'s one finds out that
\begin{eqnarray*}
a_1'&=&\textstyle{\frac{1}{4}}(-2b_1+2b_3-b_4+b_5+b_6+2b_7-b_8),\\
a_2'&=&\textstyle{\frac{1}{4}}(2b_1+2b_3-b_4+b_5+b_6-2b_7-b_8),\\
a_3'&=&\textstyle{\frac{1}{4}}(2b_1+2b_3+3b_4+b_5-3b_6-2b_7-b_8),\\
a_4'&=&b_2,\\
a_5'&=&\textstyle{\frac{1}{4}}(-2b_1-4b_2-2b_3-b_4-
3b_5+b_6+2b_7+3b_8),\\
a_6'&=&b_5,\\
a_7'&=&\textstyle{\frac{1}{4}}(2b_1-2b_3-b_4-
3b_5+b_6+2b_7-b_8)\\
a_8'&=&\textstyle{\frac{1}{4}}(-2b_1+2b_3+3b_4+b_5+
b_6-2b_7-b_8).
\end{eqnarray*}
A straightforward computation then shows that Condition~(C) can be rewritten in terms of $b_i$'s as follows:
$$
8b_1+12b_2+16b_3+10b_4+10b_5+6b_6+8b_7+6b_8=66.
$$
Conditions~(A), (B) and (D) impose severe restrictions on the solutions $(b_1,\ldots, b_8)\in\Z^8$. In particular, we must have
$b_i\in\Z^{>0}$ for $i\in\{6,7,8\}$
and the associated variety of the irreducible $\g_1(\lambda)$-module of highest weight $\sum_{i=1}^5(b_i-1)\varpi'_i$ should coincide with the closure of the minimal nilpotent orbit in $\g_1(\lambda)$.

Setting $b_i=1$ for $i\in\{1,2,4,5,6,7\}$, $b_3=0$, and $b_8=2$
we recover our weight $\lambda+\rho$, and it can be shown that setting
$b_i=1$ for $i\in\{1,2,3,5,7,8\}$, $b_3=0$, and $b_6=2$ leads to the same representation of $U(\g,e)$.
Quite surprisingly, careful inspection reveals that we have yet another promising option here, namely,
$b_3=-1$, $b_2=b_5=2$, and $b_i=1$ for $i\in\{1,4,6,7,8\}$
(we could also set $b_3=-1$, $b_2=b_4=2$ and $b_i=1$ for $i\in\{1,5,6,7,8\}$, but this would not lead to new
one-dimensional representations of $U(\g,e)$). Using the above expressions for $\varpi'_i$ and $\beta_i$ we  quickly determine $\langle\lambda'+\rho,\alpha_i^\vee\rangle$ for all $1\le i\le 8$ and obtain that
$$
\lambda'+\rho\,=\,-\textstyle{\frac{1}{4}}\varpi_1-
\textstyle{\frac{1}{4}}\varpi_2-\textstyle{\frac{1}{4}}\varpi_3+2\varpi_4
-\textstyle{\frac{9}{4}}\varpi_5+2\varpi_6-\textstyle{\frac{1}{4}}\varpi_7-
\textstyle{\frac{1}{4}}\varpi_8.
$$
Note that $\Phi_\lambda=\Phi_{\lambda'}$ because $\lambda'+\rho$ and $\lambda+\rho$ have the same image in $\frac{1}{4}P(\Phi)/P(\Phi)$.

Set $\g_0:=\g_1(\lambda)$, a simple Lie algebra with root system $\Phi_0=\Phi\cap\big(\bigoplus_{i=1}^5\Z\beta_i\big)$ of type ${\sf D_5}$, and identify $\{\beta_i\,|\,\,1\le i\le 5\}$ and $\{\varpi'_i\,|\,\,1\le i\le 5\}$ with a set of simple roots in $\Phi_0$  and the corresponding set of fundamental weights in $P(\Phi_0)$. Set $\rho_0:=\sum_{i=1}^5\varpi_i'$, $\lambda_0:=-\varpi'_3$ and $\lambda_0':=\varpi_2'-2\varpi_3'+\varpi'_5$.
Then $\lambda_0$ and $\lambda_0'$ coincide with the restrictions of $\lambda-\rho_0$ and $\lambda'-\rho_0$ to $\t\cap\g_0$. Let $I(\lambda_0)$ and $I(\lambda_0')$ be the annihilators in $U(\g_0)$ of the highest weight $\g_0$-modules $L(\lambda_0)$ and $L(\lambda_0')$.

In view of our earlier remarks in this subsection we have to check that
${\rm VA}(I(\lambda_0))={\rm VA}(I(\lambda_0'))$ coincides with the closure of the minimal nilpotent orbit in $\g_0$. Thanks to [\cite{BV}, Proposition~5.10] this is clear for
${\rm VA}(I(\lambda_0))$, because there exists an
$\frak{sl}_2$-triple $(e_1^\vee,h_1^\vee,f_1^\vee)$ in the Langlands dual Lie algebra $\g_0^\vee$ such that $e_1^\vee$ is subregular nilpotent and $\lambda_0+\rho_0=\frac{1}{2}h_1^\vee$.

Since $$\lambda'_0+\rho_0=\varpi_1'+2\varpi'_2
-\varpi_3'+\varpi'_4+2\varpi'_5=
s_{\beta_3}(\varpi_1'+\varpi_2'+\varpi_3'+\varpi_5'),$$
we have that $\lambda_0'=s_{\beta_3}s_{\beta_4}\centerdot\mu'$ where $\mu'=-\varpi'_4$ (of course, here $w\centerdot \mu'$ stands for $w(\mu'+\rho_0)-\rho_0$).
Note that $\langle\mu'+\rho_0,\beta_i^\vee\rangle\in \Z^{\geqslant 0}$ for all $1\le i\le 5$ and $\Pi_{\mu'}^0=\{\beta_4\}\subset
\tau(s_{\beta_3}s_{\beta_4})$.
So [\cite{Ja},
Corollar~10.10(a)] yields
$$d\big(U(\g_0)/I(\mu')\big)=
d\big(U(\g_0)/I(s_{\beta_3}s_{\beta_4}\centerdot\mu')\big)=
d\big(U(\g_0)/I(s_{\beta_3}s_{\beta_4}\centerdot\nu')\big)$$ for any
regular dominant $\nu'\in P(\Phi_0)$. Since all roots in $\Phi_0$ have
the same length, [\cite{Ja}, Corollar~10.10(c)] entails that
$$d\big(U(\g_0)/I(s_{\beta_3}s_{\beta_4}\centerdot\nu')
\big)=
d\big(U(\g_0)/I(s_{\beta_3}\centerdot\nu')\big)=
d\big(U(\g_0)/I(s_{\beta_i}\centerdot\nu')\big)$$
for all $1\le i\le 5$.
Applying [\cite{Lo2}, Proposition~5.3.2], for example, we now deduce that
that
$d\big(U(\g_0)/I(\lambda_0')\big)=
\dim{\O}_0(w_0's_{\beta_i})$ where $w_0'$ is
the longest element of the Weyl group $W(\Phi_0)$ and $\O_0(w_0's_\beta)$ is
the special nilpotent orbit in $\g_0$ attached to the double
cell of $W(\Phi_0)$ containing $w_0's_{\beta_i}$. The letter is nothing else but the
minimal nilpotent orbit of $\g_0$.
As a result of these deliberations we conclude that $$\dim {\rm
VA}(I(\lambda))=\dim {\rm VA}(I(\lambda'))=248-(40+12+8)+14+0=202=\dim \O(e).$$
Note that the primitive ideals $I(\lambda)$ and $I(\lambda')$ have distinct central characters
because $\Phi_\lambda=\Phi_{\lambda'}$ but $\lambda_0+\rho_0$ and $\lambda_0'+\rho_0$ are not conjugate under the automorphism group of the root system $\Phi_0$.

We still face the difficult task of verifying Conditions~(A) and (D) for $\lambda+\rho$ and $\lambda'+\rho$.
Put $\l':=[\l,\l]$, a simple Lie algebra of type ${\sf D_5}$, and
let $\lambda_1$ and $\lambda_1'$ denote the restrictions of $\lambda$ and $\lambda'$ to $\h:=\l'\cap
\t$ which, of course, coincides with the span of the semisimple root
elements $h'_1:=h_{\alpha_1}$, $h'_2:=h_{\alpha_3}$,
$h'_3:=h_{\alpha_4}$ and $h'_4:=h_{\alpha_4}$ and $h_5:=h_{\alpha_2}$. Here we adjust our numbering to match that of [\cite{Bo}, Planche~IV] and accordingly we set $\alpha'_2:=\alpha_3$,
$\alpha'_3:=\alpha_4$, $\alpha'_5:=\alpha_2$, and $\alpha'_i:=\alpha_i$ for $i=1,4$.
The root system of $\l'$ with respect to $\h$ identifies with $\Phi_1:=\Phi\cap\big(\bigoplus_{i=1}^5\Z\alpha_i\big)$.
Let
$\{\varpi''_i\,|\,\, 1\le i\le 5\}$ be the corresponding system of
fundamental weights in $\h^*$, so that $\varpi'_i(h_j)=\delta_{ij}$
for all $1\le i,j\le 5$. Then
$\lambda_1+\rho_1=\varpi''_3-\frac{1}{4}(\varpi''_1+\varpi''_2+\varpi''_4+\varpi''_5)$
and
$\lambda_1'+\rho_1=2\varpi''_3-\frac{1}{4}(\varpi''_1+\varpi''_2+\varpi_5'')-\frac{9}{4}\varpi''_4$
where
$\rho_1=\sum_{i=1}^5\varpi''_i$. It is straightforward to see that $\lambda_1$ and $\lambda_1'$ have the same integral root system which we call $\Psi_1$. The basis of simple roots of $\Psi_1$ contained in $\Phi_1^+:=\Phi^+\cap \Phi_1$ equals
$\{\alpha_3',\gamma'\}$, where $\gamma'=\alpha_1'+\alpha_2'+\alpha_3'+\alpha_4'+
\alpha_5'$, and we have that
\begin{eqnarray*}
\langle\lambda_1+\rho_1,(\alpha_3')^\vee\rangle&=&1,\quad\langle\lambda_1+\rho_1,(\gamma')^\vee\rangle\,=\,0,\\
\langle\lambda'_1+\rho_1,(\alpha_3')^\vee\rangle&=&2,\quad\langle\lambda'_1+\rho_1,(\gamma')^\vee\rangle\,=\,-1.
\end{eqnarray*}
So $\lambda_1=s_{\gamma'}(\mu_1+\rho_1)-\rho_1$ and
$\lambda_1'=s_{\gamma'}(\mu_1'+\rho_1)-\rho_1$
where $\mu_1+\rho_1=\lambda_1+\rho_1$ and $$\mu_1'+\rho_1=\lambda_1'+\rho_1+\gamma'\,=\,\textstyle{\frac{3}{4}}\varpi_1''-
\textstyle{\frac{1}{4}}\varpi_2''+\varpi_3''-
\textstyle{\frac{5}{4}}\varpi_4''+\textstyle{\frac{3}{4}}\varpi_5''.$$ Both $\mu_1+\rho_1$ and $\mu'_1+\rho_1$ take values in $\Z^{\geqslant 0}$ on
$\{(\alpha_3')^\vee,(\gamma')^\vee\}$  and $\Psi_1$ has type ${\sf A_2}$.

Let $L_0(\nu)$ be the
irreducible $\l'$-module of highest weight $\nu\in\h^*$ and write
$I_0(\nu)$ for the primitive ideal ${\rm
Ann}_{U(\l')}\,L_0(\nu)$ of $U(\l')$. Since $|\Psi_1\cap\Phi_1^+|=3$ and the minimal nilpotent orbit in a Lie algebra of type ${\sf A_2}$ has dimension $8-4=4$, combining [\cite{Ja}, Corollar~10.10(a)] and [\cite{Lo2},
Proposition~5.3.2] with the preceding remark we obtain that
$$\dim{\rm VA}(I_0(\lambda_1))=\dim{\rm VA}(I_0(\lambda_1'))\,=\,|\Phi_1^+|-3+2=19.$$
From this it is immediate that $I_0(\lambda_1)$ and $I_0(\lambda_1')$ have the same associated variety, namely, the Zariski closure of the subregular nilpotent orbit $\O_0\subset\l'$.

In order to verify
Condition~(D) we need to show that
$$I_0(\lambda_1)={\rm
Ann}_{U(\l')}\big(Q_{\l',\,e_0}\otimes_{U(\l',\,e_0)}{\mathbb
C}_\eta\big)\quad\mbox{and}\quad
I_0(\lambda_1')={\rm
Ann}_{U(\l')}\big(Q_{\l',\,e_0}\otimes_{U(\l',\,e_0)}{\mathbb C}_{\eta'}\big)
$$ for some one-dimensional representations $\eta$
and $\eta'$
of the finite $W$-algebra
$U(\l',e_0)$ where we now regard $e_0$ as a subregular nilpotent element of $\l'$.

Let $\p'$ be the standard parabolic subalgebra of $\l'$ whose
standard Levi subalgebra is spanned by $\h$ and
$e_{\pm\alpha_3'}$ and write $\n'$ for the nilradical of $\p'$.
By
construction, $\p'$ is an optimal parabolic subalgebra for $e_0$ and, in particular,
$e_0$ is a Richardson element of $\p'$ (here we use the well known fact that the subregular nilpotent orbit in a Lie algebra of
type ${\sf D}_5$ is distinguished).

Given $\nu\in\h^*$ with $\nu(h_3')\in\Z^{\geqslant 0}$ we denote by $M_{\p'}(\nu)$ the generalised
Verma module of highest weight $\nu$ for $\l'$ and write $I_{\p'}(\nu)$ for the annihilator of $M_{\p'}(\nu)$ in $U(\l')$. Since $\lambda_1(h_3')=\lambda_1'(h_3')=0$ both $M_{\p'}(\mu_1)$
and $M_{\p'}(\mu_1')$ are generalised Verma modules of scalar type in the terminology of [\cite{Hu}].
As the centre of $U(\l')$ acts on $M_{\p'}(\nu)$ by scalar operators, it follows from Conze's Embedding Theorem and the Dixmier--M{\oe}glin equivalence that the annihilator in $U(\l')$ of any generalised Verma module of scalar type is a primitive ideal of $U(\l')$. By a result of Borho--Brylinski, the associated variety of any such primitive ideal coincides with the Zariski closure of the Richardson orbit that intersects densely with $\n'$; see [\cite{Ja}, 17.17(3)] for example.
In our case this is the subregular orbit  $\O_0$.

Next we observe that the only root $\beta\in\Psi_1$ for which $\langle\lambda_1,\beta^\vee\rangle\in\Z^{>0}$ equals $\alpha_3'+\gamma'$ and we have that $s_\beta(\gamma')=-\alpha_3'$. In this situation Jantzen's criterion for irreducibility of generalised Verma modules implies that the
$\l'$-module $M_{\p'}(\mu_1)$ is simple; see [\cite{Hu}, Corollary~9.13(c)], for example.
Unfortunately, this is no longer true for the $\l'$-module $M_{\p'}(\mu_1')$ and a more subtle approach is required here.

We write $M(\nu)$ for the ordinary Verma $\l'$-module of highest weight $\nu\in\h^*$.
If $\nu(h_3')\in\Z^{\geqslant 0}$ then we denote by $\varphi$ the canonical $\l'$-module epimorphism $M(\nu)\twoheadrightarrow M_{\p'}(\nu)$ which is unique up to a nonzero scalar multiple.
Since $\lambda_1'$ is strongly linked with $\mu_1'$ relative to the integral Weyl group
$W(\Psi_1)$, the Bernstein--Gelfand--Gelfand theorem says that $M(\lambda_1')$ embeds into $M(\mu_1')$.
Recall that $\lambda_1'(h_3')=2$.
Because $\lambda_1'=s_{\gamma'}\centerdot\mu'_1$ and
the root $\gamma'$ is indecomposable in $\Psi_1\cap\Phi_1^+$,
it follows from [\cite{Hu}, Theorem~9.9(c)] that
$\varphi$ sends
the image of $M(\lambda_1')$ in $M(\mu_1')$ onto a {\it nonzero} submodule of $M_{\p'}(\mu_1')$. As a result, $L_0(\lambda_1')$ is a composition factor of $M_{\p'}(\mu_1')$. This, in turn, yields $I_{\p'}(\mu_1')\subseteq I_0(\lambda_1')$. As both ideals are primitive and ${\rm VA}(I_0(\lambda'_1))={\rm VA}(I_{\p'}(\mu'_1))$ by our earlier remarks, applying [\cite{BK}, Corollar~3.6] entails that $I_0(\lambda_1)=I_{\p'}(\mu_1)$ and $I_0(\lambda_1')=I_{\p'}(\mu_1')$.

Let $w_0''$ be the longest element of the Weyl group $W(\Phi_1)$ and put $\mu_2:=-w_0''(\mu_1)$ and $\mu_2':=-w_0'(\mu'_1)$. Since $-w_0''(\alpha_3')=\alpha_3'$ and
the orbit $\O_0$ is stable under all automorphisms of $\l'$, the above discussion also shows that the ideals $I_{\p'}(\mu_2)$ and $I_{\p'}(\mu_2')$ are primitive with
${\rm VA}(I_{\p'}(\mu_2))={\rm VA}(I_{\p'}(\mu_2'))=\overline{\O_0}$. Furthermore, we have that
$I_0(-w_0''(\lambda_1))=I_{\p'}(\mu_2)
)$ and $I_0(-w_0''(\lambda_1'))=I_{\p'}(\mu_2')$.

It is immediate from Duflo's theorem [\cite{Du1}] that ${^t\!}I=I$ for any primitive ideal $I$ of $U(\l')$,
where $u\mapsto {^t\!}u$ is the anti-involution
of $U(\l')$ acting identically on $\h$ and sending
$e_\beta$ to $e_{-\beta}$ for any $\beta\in\Phi_1^+$ (here we assume that $\{e_\beta\,|\,\,\beta\in\Phi_1\}$ is a Chevalley system of $\l'$).
Let $\scriptstyle{\top}$ denote the canonical
anti-involution of $U(\l')$ which acts on $\l'$ as $-{\rm Id}$. As $^t\!(x^{\scriptstyle{\top}})=
(^t\!x)^{\scriptstyle{\top}}$ for all $x\in\l'$, composing these two anti-involutions one obtains a Cartan involution of $\l'$; we call it $c$. It is well known that $c=\sigma\circ s$ where $s$ is an {\it inner} automorphism of $\g$ and $\sigma$ is the Dynkin automorphism of $\g$ acting on $\h$ as $-w_0''$.
From this it follows that $$I^{\scriptstyle{\top}}=\,
(^t\!I)^{\scriptstyle{\top}}=I^c=\,(I^s)^\sigma=
I^\sigma$$
for any primitive ideal $I$ of $U(\l')$. As an immediate consequence, we deduce that  $I_{\p'}(\mu_2)^{\scriptstyle{\top}}=
I_{\p'}(-w_0''(\mu_1))^\sigma=I_{\p'}(\mu_1)=I_0(\lambda_1)$ and likewise
$I_{\p'}(\mu_2')^{\scriptstyle{\top}}
=I_0(\lambda_2')$.

Now let $\nu\in\{\mu_2,\mu_2'\}$ and denote by $\chi_0$  the linear function on $\l'$ given by
$\chi_0(x)=(e_0,x)$ for all $x\in\l'$, where $(\,\cdot\,,\cdot\,)$ is the Killing form of $\l'$.
In
the present case, the nilpotent subalgebra $\m$ involved in the
definition of of $Q_{\l',\,e_0}$ and $U(\l',e_0)$ is spanned by the root vectors
$e_{-\beta}$ with $\beta\in\Phi^+_1\setminus\{\alpha_3'\}$. By our choice of $e_0$, the linear function $\chi_0$ vanishes on
$[\m,\m]$. Given $a\in\mathbb{C}$ we put $\m_{a\chi_0}=\{x-a\chi_0(x)\,|\,\,x\in\m\}$, a subspace of $U(\l')$. Since $M_\nu:=M_{\p'}(\nu)$ is a free $U(\m)$-module of rank $1$,
the subspace $\m_{-\chi_0} M_\nu$
has codimension $1$ in $M_\nu$.
But then the subspace
$${\rm Wh}_{\chi_0}\big(M^*_\nu):=\{f\in M_\nu^*\,|\,\,x\cdot f=\chi_0(x)f\ \,\mbox{for all}\ \, x\in\m\}$$
of the full dual $\l'$-module $M^*_\nu$ is one-dimensional
and carries a natural module structure over the finite $W$-algebra
$U(\l',e_0)\,\cong\, (U(\l')/U(\l')\m_{\chi_0})^{\ad\,\m}.$
By Skryabin's theorem [\cite{Sk}]
this implies that the $\l'$-submodule of  $M^*_\nu$
generated by ${\rm Wh}_{\chi_0}\big(M^*_\nu)$ is irreducible and isomorphic to
$Q_{\l',\,e_0}\otimes_{U(\l',\,e_0)}{\rm Wh}_{\chi_0}\big(M^*_\nu\big)$  (recall that $Q_{\l',\,e_0}$ is tautologically a right $U(\l',e_0)$-module because $U(\l',e_0) = ({\rm End}_{\l'}\,Q_{\l',\,e_0})^{\rm op}$). Moreover, repeating verbatim
the argument
from [\cite{P11}, Remark~4.3]  one observes that
$$I_{\p'}(-w_0''(\nu))\,=\,I_{\p'}(\nu)^{\scriptstyle{\top}}=\,{\rm Ann}_{U(\l')}(M^*_\nu)\,=\,{\rm
Ann}_{U(\l')}\big(Q_{\l',\,e_0}\otimes_{U(\l',\,e_0)}{\rm Wh}_{\chi_0}\big(M^*_\nu\big)\big).$$
In view of our earlier remarks in this subsection,
this enables us to conclude that Conditions~(A) and (D)
hold for both $\lambda+\rho$ and $\lambda'+\rho$. As a result, $I(\lambda)$ and $I(\lambda')$
are distinct multiplicity-free primitive ideals in $\mathcal{X}_{\O(e)}$.

\section{\bf The case of
rigid nilpotent orbits in Lie algebras of type ${\sf E_7}$ and ${\sf E_6}$.}
\subsection{Type $({\sf E_7, A_1})$}\label{4.1}
As in the ${\sf E_8}$-case, $e$ is a special nilpotent element and
$e^\vee\in\g^\vee$ is subregular.
So it can be assumed that
$$h^\vee=\textstyle{{2\atop{}}{2\atop{}}{0\atop 2}
{2\atop{}}{2\atop{}}{2\atop{}}}.$$ Keeping in mind Losev's condition~(A)  we choose the following pinning
for $e$:
\[\xymatrix{*{\circ}\ar@{-}[r]& *{\circ} \ar@{-}[r] & *{\bullet} \ar@{-}[r]\ar@{-}[d] & *{\circ} \ar@{-}[r]
&*{\circ}\ar@{-}[r]&*{\circ}\\ && *{\circ}&&&}\]
and take
$$\tau=\textstyle{{0\atop{}}{(-1)\atop{}}{2\atop (-1)}
{(-1)\atop{}}{0\atop{}}{0\atop{}}}$$ as an optimal cocharacter. Since
$\dim\,\g_e=99$ the total number of positive $0$-roots and
$1$-roots is $(99-7)/2=46$. The roots are listed in the table
below.
\begin{table}[htb]
\label{data1}
\begin{tabular}{|c|c|}
\hline$0$-roots  & $1$-roots
\\ \hline
$\scriptstyle{{1\atop{}}{0\atop{}}{0\atop0}{0\atop{}}{0\atop{}}{0\atop{}}}$,
$\scriptstyle{{0\atop{}}{0\atop{}}{0\atop0}{0\atop{}}{1\atop{}}{0\atop{}}}$,
$\scriptstyle{{0\atop{}}{0\atop{}}{0\atop0}{0\atop{}}{0\atop{}}{1\atop{}}}$,
$\scriptstyle{{0\atop{}}{0\atop{}}{0\atop0}{0\atop{}}{1\atop{}}{1\atop{}}}$,
$\scriptstyle{{0\atop{}}{1\atop{}}{1\atop1}{0\atop{}}{0\atop{}}{0\atop{}}}$,&
$\scriptstyle{{0\atop{}}{1\atop{}}{1\atop0}{0\atop{}}{0\atop{}}{0\atop{}}}$,
$\scriptstyle{{0\atop{}}{0\atop{}}{1\atop 1}{0\atop{}}{0\atop{}}{0\atop{}}}$,
$\scriptstyle{{0\atop{}}{0\atop{}}{1\atop0}{1\atop{}}{0\atop{}}{0\atop{}}}$,
$\scriptstyle{{1\atop{}}{1\atop{}}{1\atop0}{0\atop{}}{0\atop{}}{0\atop{}}}$,\\
$\scriptstyle{{0\atop{}}{1\atop{}}{1\atop0}{1\atop{}}{0\atop{}}{0\atop{}}}$,
$\scriptstyle{{0\atop{}}{0\atop{}}{1\atop1}{1\atop{}}{0\atop{}}{0\atop{}}}$,
$\scriptstyle{{1\atop{}}{1\atop{}}{1\atop1}{0\atop{}}{0\atop{}}{0\atop{}}}$,
$\scriptstyle{{1\atop{}}{1\atop{}}{1\atop0}{1\atop{}}{0\atop{}}{0\atop{}}}$,
$\scriptstyle{{0\atop{}}{1\atop{}}{1\atop0}{1\atop{}}{1\atop{}}{0\atop{}}}$,&
$\scriptstyle{{0\atop{}}{0\atop{}}{1\atop0} {1\atop{}}{1\atop{}}{0\atop{}}}$,
$\scriptstyle{{0\atop{}}{0\atop{}}{1\atop0}{1\atop{}}{1\atop{}}{1\atop{}}}$,
$\scriptstyle{{0\atop{}}{1\atop{}}{2\atop1}{1\atop{}}{0\atop{}}{0\atop{}}}$,
$\scriptstyle{{1\atop{}}{1\atop{}}{2\atop1}{1\atop{}}{0\atop{}}{0\atop{}}}$,\\
$\scriptstyle{{0\atop{}}{0\atop{}}{1\atop1}{1\atop{}}{1\atop{}}{0\atop{}}}$,
$\scriptstyle{{1\atop{}}{1\atop{}}{1\atop0}{1\atop{}}{1\atop{}}{0\atop{}}}$,
$\scriptstyle{{0\atop{}}{1\atop{}}{1\atop0}{1\atop{}}{1\atop{}}{1\atop{}}}$,
$\scriptstyle{{0\atop{}}{0\atop{}}{1\atop1}{1\atop{}}{1\atop{}}{1\atop{}}}$,
$\scriptstyle{{1\atop{}}{1\atop{}}{1\atop 0}{1\atop{}}{1\atop{}}{1\atop{}}}$,&
$\scriptstyle{{0\atop{}}{1\atop{}}{2\atop1}{1\atop{}}{1\atop{}}{0\atop{}}}$,
$\scriptstyle{{0\atop{}}{1\atop{}}{2\atop1}{1\atop{}}{1\atop{}}{1\atop{}}}$,
$\scriptstyle{{1\atop{}}{1\atop{}}{2\atop1}{1\atop{}}{1\atop{}}{0\atop{}}}$,
$\scriptstyle{{1\atop{}}{1\atop{}}{2\atop1}{1\atop{}}{1\atop{}}{1\atop{}}}$,\\
$\scriptstyle{{1\atop{}}{2\atop{}}{2\atop1}{1\atop{}}{1\atop{}}{0\atop{}}}$,
$\scriptstyle{{1\atop{}}{1\atop{}}{2\atop1}{2\atop{}}{1\atop{}}{0\atop{}}}$,
$\scriptstyle{{1\atop{}}{2\atop{}}{2\atop1}{1\atop{}}{1\atop{}}{1\atop{}}}$,
$\scriptstyle{{1\atop{}}{1\atop{}}{2\atop 1}{2\atop{}}{1\atop{}}{1\atop{}}}$,
$\scriptstyle{{1\atop{}}{1\atop{}}{2\atop1}{2\atop{}}{2\atop{}}{1\atop{}}}$,&
$\scriptstyle{{1\atop{}}{2\atop{}}{3\atop1}{2\atop{}}{1\atop{}}{0\atop{}}}$,
$\scriptstyle{{1\atop{}}{2\atop{}}{3\atop1}{2\atop{}}{1\atop{}}{1\atop{}}}$,
$\scriptstyle{{1\atop{}}{2\atop{}}{3\atop1}{2\atop{}}{2\atop{}}{1\atop{}}}$,
$\scriptstyle{{1\atop{}}{2\atop{}}{4\atop2}{3\atop{}}{2\atop{}}{1\atop{}}}$.\\
$\scriptstyle{{0\atop{}}{1\atop{}}{2\atop1}{2\atop{}}{1\atop{}}{0\atop{}}}$,
$\scriptstyle{{0\atop{}}{1\atop{}}{2\atop1}{2\atop{}}{1\atop{}}{1\atop{}}}$,
$\scriptstyle{{0\atop{}}{1\atop{}}{2\atop 1}{2\atop{}}{2\atop{}}{1\atop{}}}$,
$\scriptstyle{{1\atop{}}{2\atop{}}{3\atop2}{2\atop{}}{1\atop{}}{0\atop{}}}$,
$\scriptstyle{{1\atop{}}{2\atop{}}{3\atop2}{2\atop{}}{1\atop{}}{1\atop{}}}$,&\\
$\scriptstyle{{1\atop{}}{2\atop{}}{3\atop2}{2\atop{}}{2\atop{}}{1\atop{}}}$,
$\scriptstyle{{1\atop{}}{2\atop{}}{3\atop1}{3\atop{}}{2\atop{}}{1\atop{}}}$,
$\scriptstyle{{1\atop{}}{3\atop{}}{4\atop2}{3\atop{}}{2\atop{}}{1\atop{}}}$,
$\scriptstyle{{2\atop{}}{3\atop{}}{4\atop2}{3\atop{}}{2\atop{}}{1\atop{}}}$,
$\scriptstyle{{1\atop{}}{2\atop{}}{2\atop1}{1\atop{}}{1\atop{}}{0\atop{}}}$.&\\
\hline
\end{tabular}
\end{table}

By a routine computation we see that the
$(1,2,3,5,6,7)$-contributions of the $0$-roots and $1$-roots are
$(18,25,34,39,28,15)$ and $(8,12,16,18,12,6)$,
respectively. So the total contribution of all roots is
$(13,\frac{37}{2},25,\frac{57}{2},20,\frac{21}{2})$. In view of [\cite{Bo}, Planche~VI]
Condition~(C) for
$\lambda+\rho$
 reads
\begin{eqnarray*}
2a_1+2a_2+3a_3+4a_4+3a_5+2a_6+a_7&=&13\\
4a_1+7a_2+8a_3+12a_4+9a_5+6a_6+3a_7&=&37\\
3a_1+4a_2+6a_3+8a_4+6a_5+4a_6+2a_7&=&25\\
6a_1+9a_2+12a_3+18a_4+15a_5+10a_6+5a_7&=&57\\
2a_1+3a_2+4a_3+6a_4+5a_5+4a_6+2a_7&=&20\\
2a_1+3a_2+4a_3+6a_4+5a_5+4a_6+3a_7&=&21.
\end{eqnarray*}
Note that
$$\lambda+\rho=\textstyle{{1\atop{}}{1\atop{}}{0\atop 1}
{1\atop{}}{1\atop{}}{1\atop{}}}=\rho-\varpi_4$$ satisfies this system of linear equations. As
$\langle\lambda,\alpha_4^\vee\rangle=0$,
Condition~(A) holds for $\lambda+\rho$. Since $e$ is special and
$\lambda+\rho=\frac{1}{2}h^\vee$ applying [\cite{BV},
Proposition~5.10] we see that this weight also satisfies Condition
(B). Condition (D) is vacuous in the present case as $e$ has
standard Levi type. Since the centraliser of $e$ is a perfect Lie algebra in the present case, applying [\cite{Lo2}, 5.3] we conclude that
$I(\lambda)=I(-\varpi_4)$ is the only multiplicity-free primitive
ideal in $\mathcal{X}_{\O(e)}$. Since $\O(e)=\O_{\rm min}$ in the present case, we thus recover the Joseph ideal by using some tools from the theory of finite $W$-algebras.
\subsection{Type $({\sf E_7, 2A_1})$}\label{4.2}
This is a special orbit and
$e^\vee\in\g^\vee$ has type ${\sf E_7(a_2)}$.
So we may assume that
$h^\vee=\textstyle{{2\atop{}}{2\atop{}}{0\atop 2}
{2\atop{}}{0\atop{}}{2\atop{}}}$. In view of Condition~(A) we choose the following pinning
for $e$:
\[\xymatrix{*{\circ}\ar@{-}[r]& *{\circ} \ar@{-}[r] & *{\bullet} \ar@{-}[r]\ar@{-}[d] & *{\circ} \ar@{-}[r]
&*{\bullet}\ar@{-}[r]&*{\circ}\\ && *{\circ}&&&}\]
and we choose
$$\tau=\textstyle{{0\atop{}}{(-1)\atop{}}{2\atop (-1)}
{(-2)\atop{}}{2\atop{}}{(-1)\atop{}}}$$ to be our optimal cocharacter. As
$\dim\,\g_e=81$, the total number of positive $0$-roots and
$1$-roots is $(81-7)/2=37$. These roots are given
below.
\begin{table}[htb]
\label{data1}
\begin{tabular}{|c|c|}
\hline$0$-roots  & $1$-roots
\\ \hline
$\scriptstyle{{1\atop{}}{0\atop{}}{0\atop0}{0\atop{}}{0\atop{}}{0\atop{}}}$,
$\scriptstyle{{0\atop{}}{1\atop{}}{1\atop1}{0\atop{}}{0\atop{}}{0\atop{}}}$,
$\scriptstyle{{0\atop{}}{0\atop{}}{1\atop0}{1\atop{}}{0\atop{}}{0\atop{}}}$,
$\scriptstyle{{0\atop{}}{0\atop{}}{0\atop0}{1\atop{}}{1\atop{}}{0\atop{}}}$,
$\scriptstyle{{1\atop{}}{1\atop{}}{1\atop1}{0\atop{}}{0\atop{}}{0\atop{}}}$,&
$\scriptstyle{{0\atop{}}{1\atop{}}{1\atop0}{0\atop{}}{0\atop{}}{0\atop{}}}$,
$\scriptstyle{{0\atop{}}{0\atop{}}{1\atop 1}{0\atop{}}{0\atop{}}{0\atop{}}}$,
$\scriptstyle{{0\atop{}}{0\atop{}}{0\atop0}{0\atop{}}{1\atop{}}{1\atop{}}}$,
$\scriptstyle{{1\atop{}}{1\atop{}}{1\atop0}{0\atop{}}{0\atop{}}{0\atop{}}}$,\\
$\scriptstyle{{0\atop{}}{1\atop{}}{1\atop1}{1\atop{}}{1\atop{}}{0\atop{}}}$,
$\scriptstyle{{1\atop{}}{1\atop{}}{1\atop1}{1\atop{}}{1\atop{}}{0\atop{}}}$,
$\scriptstyle{{0\atop{}}{1\atop{}}{1\atop0}{1\atop{}}{1\atop{}}{1\atop{}}}$,
$\scriptstyle{{1\atop{}}{1\atop{}}{1\atop0}{1\atop{}}{1\atop{}}{1\atop{}}}$,
$\scriptstyle{{0\atop{}}{0\atop{}}{1\atop1}{1\atop{}}{1\atop{}}{1\atop{}}}$,&
$\scriptstyle{{0\atop{}}{1\atop{}}{1\atop0} {1\atop{}}{1\atop{}}{0\atop{}}}$,
$\scriptstyle{{0\atop{}}{0\atop{}}{1\atop1}{1\atop{}}{1\atop{}}{0\atop{}}}$,
$\scriptstyle{{0\atop{}}{0\atop{}}{1\atop0}{1\atop{}}{1\atop{}}{1\atop{}}}$,
$\scriptstyle{{1\atop{}}{1\atop{}}{1\atop0}{1\atop{}}{1\atop{}}{0\atop{}}}$,\\
$\scriptstyle{{0\atop{}}{1\atop{}}{2\atop1}{1\atop{}}{0\atop{}}{0\atop{}}}$,
$\scriptstyle{{1\atop{}}{1\atop{}}{2\atop1}{1\atop{}}{0\atop{}}{0\atop{}}}$,
$\scriptstyle{{0\atop{}}{1\atop{}}{2\atop1}{2\atop{}}{1\atop{}}{0\atop{}}}$,
$\scriptstyle{{1\atop{}}{1\atop{}}{2\atop1}{2\atop{}}{1\atop{}}{0\atop{}}}$,
$\scriptstyle{{1\atop{}}{2\atop{}}{2\atop1}{1\atop{}}{1\atop{}}{1\atop{}}}$,&
$\scriptstyle{{0\atop{}}{1\atop{}}{2\atop1}{1\atop{}}{1\atop{}}{1\atop{}}}$,
$\scriptstyle{{1\atop{}}{1\atop{}}{2\atop1}{1\atop{}}{1\atop{}}{1\atop{}}}$,
$\scriptstyle{{0\atop{}}{1\atop{}}{2\atop1}{2\atop{}}{2\atop{}}{1\atop{}}}$,
$\scriptstyle{{1\atop{}}{1\atop{}}{2\atop1}{2\atop{}}{2\atop{}}{1\atop{}}}$,\\
$\scriptstyle{{1\atop{}}{2\atop{}}{2\atop1}{2\atop{}}{2\atop{}}{1\atop{}}}$,
$\scriptstyle{{1\atop{}}{2\atop{}}{3\atop2}{2\atop{}}{1\atop{}}{0\atop{}}}$,
$\scriptstyle{{1\atop{}}{2\atop{}}{3\atop1}{2\atop{}}{1\atop{}}{1\atop{}}}$,
$\scriptstyle{{1\atop{}}{2\atop{}}{3\atop 1}{3\atop{}}{2\atop{}}{1\atop{}}}$,
$\scriptstyle{{1\atop{}}{3\atop{}}{4\atop2}{3\atop{}}{2\atop{}}{1\atop{}}}$,&
$\scriptstyle{{1\atop{}}{2\atop{}}{2\atop1}{1\atop{}}{1\atop{}}{0\atop{}}}$,
$\scriptstyle{{1\atop{}}{2\atop{}}{3\atop1}{2\atop{}}{1\atop{}}{0\atop{}}}$,
$\scriptstyle{{1\atop{}}{2\atop{}}{3\atop2}{2\atop{}}{2\atop{}}{1\atop{}}}$,
$\scriptstyle{{1\atop{}}{2\atop{}}{4\atop2}{3\atop{}}{2\atop{}}{1\atop{}}}$.\\
$\scriptstyle{{2\atop{}}{3\atop{}}{4\atop2}{3\atop{}}{2\atop{}}{1\atop{}}}$.
&\\
\hline
\end{tabular}
\end{table}

The
$(1,2,3,5,7)$-contributions of the $0$-roots and $1$-roots are
$(14,19,26,29,9)$ and $(8,12,16,18,8)$,
respectively, and the total contribution of all roots is
$(11,\frac{31}{2},21,\frac{47}{2},\frac{17}{2})$. So Condition~(C) for
$\lambda+\rho$
reads
\begin{eqnarray*}
2a_1+2a_2+3a_3+4a_4+3a_5+2a_6+a_7&=&11\\
4a_1+7a_2+8a_3+12a_4+9a_5+6a_6+3a_7&=&31\\
3a_1+4a_2+6a_3+8a_4+6a_5+4a_6+2a_7&=&21\\
6a_1+9a_2+12a_3+18a_4+15a_5+10a_6+5a_7&=&47\\
2a_1+3a_2+4a_3+6a_4+5a_5+4a_6+3a_7&=&17.
\end{eqnarray*}
A very nice solution to this system of linear equations is given by setting $a_i=0$ for $i=4,6$ and $a_i=1$ for
$i=1,2,3,5,7$.
It follows that $\lambda+\rho=\frac{1}{2}h^\vee$ satisfies Condition (C). As
$\langle\lambda,\alpha_i^\vee\rangle=0$ for $i=4,6$,
Condition~(A) also holds for $\lambda+\rho$. Since $e$ is special and
$\lambda+\rho=\frac{1}{2}h^\vee$, Condition~(B) follows from [\cite{BV},
Proposition~5.10], whilst Condition (D) is again  vacuous as $e$ has
standard Levi type. Since $\g_e=[\g_e,\g_e]$ by [\cite{dG}], we conclude that
$I(\lambda)=I(-\varpi_4-\varpi_6)$ is the only multiplicity-free primitive
ideal in $\mathcal{X}_{\O(e)}$.
\subsection{Type $({\sf E_7, (3A_1})')$}\label{4.3}
Here our pinning for $e$ is as follows:
\[\xymatrix{*{\bullet}\ar@{-}[r]& *{\circ} \ar@{-}[r] & *{\bullet} \ar@{-}[r]\ar@{-}[d] & *{\circ} \ar@{-}[r]
&*{\bullet}\ar@{-}[r]&*{\circ}\\ && *{\circ}&&&}\]
and we choose
$$\tau=\textstyle{{2\atop{}}{(-2)\atop{}}{2\atop (-1)}
{(-2)\atop{}}{2\atop{}}{(-1)\atop{}}}$$ to be our optimal cocharacter.
Also, $\dim\,\g_e=69$ and the total number of positive $0$-roots and
$1$-roots is $(69-7)/2=31$. These roots are given in the table below.
\begin{table}[htb]
\label{data1}
\begin{tabular}{|c|c|}
\hline$0$-roots  & $1$-roots
\\ \hline
$\scriptstyle{{1\atop{}}{1\atop{}}{0\atop0}{0\atop{}}{0\atop{}}{0\atop{}}}$,
$\scriptstyle{{0\atop{}}{1\atop{}}{1\atop0}{0\atop{}}{0\atop{}}{0\atop{}}}$,
$\scriptstyle{{0\atop{}}{0\atop{}}{1\atop0}{1\atop{}}{0\atop{}}{0\atop{}}}$,
$\scriptstyle{{0\atop{}}{0\atop{}}{0\atop0}{1\atop{}}{1\atop{}}{0\atop{}}}$,&
$\scriptstyle{{0\atop{}}{0\atop{}}{1\atop1}{0\atop{}}{0\atop{}}{0\atop{}}}$,
$\scriptstyle{{0\atop{}}{0\atop{}}{0\atop 0}{0\atop{}}{1\atop{}}{1\atop{}}}$,
$\scriptstyle{{1\atop{}}{1\atop{}}{1\atop1}{0\atop{}}{0\atop{}}{0\atop{}}}$,
$\scriptstyle{{0\atop{}}{0\atop{}}{1\atop0}{1\atop{}}{1\atop{}}{1\atop{}}}$,\\
$\scriptstyle{{1\atop{}}{1\atop{}}{1\atop0}{1\atop{}}{0\atop{}}{0\atop{}}}$,
$\scriptstyle{{0\atop{}}{1\atop{}}{1\atop0}{1\atop{}}{1\atop{}}{0\atop{}}}$,
$\scriptstyle{{0\atop{}}{0\atop{}}{1\atop1}{1\atop{}}{1\atop{}}{1\atop{}}}$,
$\scriptstyle{{1\atop{}}{1\atop{}}{1\atop1}{1\atop{}}{1\atop{}}{1\atop{}}}$,&
$\scriptstyle{{0\atop{}}{0\atop{}}{1\atop1} {1\atop{}}{1\atop{}}{0\atop{}}}$,
$\scriptstyle{{1\atop{}}{1\atop{}}{1\atop1}{1\atop{}}{1\atop{}}{0\atop{}}}$,
$\scriptstyle{{1\atop{}}{1\atop{}}{1\atop0}{1\atop{}}{1\atop{}}{1\atop{}}}$,
$\scriptstyle{{1\atop{}}{1\atop{}}{2\atop1}{1\atop{}}{0\atop{}}{0\atop{}}}$,\\
$\scriptstyle{{0\atop{}}{1\atop{}}{2\atop1}{1\atop{}}{1\atop{}}{1\atop{}}}$,
$\scriptstyle{{1\atop{}}{2\atop{}}{2\atop1}{1\atop{}}{1\atop{}}{1\atop{}}}$,
$\scriptstyle{{0\atop{}}{1\atop{}}{2\atop1}{2\atop{}}{2\atop{}}{1\atop{}}}$,
$\scriptstyle{{1\atop{}}{2\atop{}}{2\atop1}{2\atop{}}{2\atop{}}{1\atop{}}}$,&
$\scriptstyle{{0\atop{}}{1\atop{}}{2\atop1}{1\atop{}}{1\atop{}}{0\atop{}}}$,
$\scriptstyle{{1\atop{}}{1\atop{}}{2\atop1}{2\atop{}}{1\atop{}}{0\atop{}}}$,
$\scriptstyle{{1\atop{}}{2\atop{}}{2\atop1}{1\atop{}}{1\atop{}}{0\atop{}}}$,
$\scriptstyle{{1\atop{}}{2\atop{}}{3\atop1}{2\atop{}}{1\atop{}}{0\atop{}}}$,\\
$\scriptstyle{{1\atop{}}{2\atop{}}{3\atop1}{3\atop{}}{2\atop{}}{1\atop{}}}$,
$\scriptstyle{{1\atop{}}{2\atop{}}{3\atop1}{2\atop{}}{1\atop{}}{1\atop{}}}$,
$\scriptstyle{{1\atop{}}{2\atop{}}{3\atop2}{2\atop{}}{1\atop{}}{0\atop{}}}$,
$\scriptstyle{{1\atop{}}{1\atop{}}{2\atop 1}{2\atop{}}{1\atop{}}{1\atop{}}}$.&
$\scriptstyle{{1\atop{}}{2\atop{}}{4\atop2}{3\atop{}}{2\atop{}}{1\atop{}}}$,
$\scriptstyle{{2\atop{}}{3\atop{}}{4\atop2}{3\atop{}}{2\atop{}}{1\atop{}}}$,
$\scriptstyle{{1\atop{}}{2\atop{}}{3\atop2}{2\atop{}}{2\atop{}}{1\atop{}}}$.
\\
\hline
\end{tabular}
\end{table}

The orbit $\O(e)$ is non-special and $$\textstyle{\frac{1}{2}}\dim\O(e)=(133-69)/2=32
=63-31=|\Phi^+|-(30+1).$$ Therefore, it seems reasonable to seek a highest weight $\lambda+\rho=\sum_{i=1}^7a_i\varpi_i$ with $\Phi_\lambda$ of type ${\sf D_6+A_1}$. This means that we want all $a_i$ to be half-integers.
The
$(2,3,5,7)$-contributions of the $0$-roots and $1$-roots are
$(11,18,21,9)$ and $(15,17,19,6)$,
respectively, and the total contribution of all roots is
$(13,\frac{35}{2},20,\frac{15}{2})$.

Setting $\widetilde{a}_i:=2a_i$ for $1\le i\le 7$ we can express Condition~(C) for
$\lambda+\rho$ as follows:
\begin{eqnarray*}
4\widetilde{a}_1+7\widetilde{a}_2+8\widetilde{a}_3+
12\widetilde{a}_4+9\widetilde{a}_5+6\widetilde{a}_6+
3\widetilde{a}_7&=&52\\
3\widetilde{a}_1+4\widetilde{a}_2+6\widetilde{a}_3+
8\widetilde{a}_4+6\widetilde{a}_5+4\widetilde{a}_6+
2\widetilde{a}_7&=&35\\
6\widetilde{a}_1+9\widetilde{a}_2+12\widetilde{a}_3+
18\widetilde{a}_4+15\widetilde{a}_5+10\widetilde{a}_6+
5\widetilde{a}_7&=&80\\
2\widetilde{a}_1+3\widetilde{a}_2+4\widetilde{a}_3+
6\widetilde{a}_4+5\widetilde{a}_5+4\widetilde{a}_6+
3\widetilde{a}_7&=&30.
\end{eqnarray*}
A very nice solution to this system of linear equations is obtained by setting $\widetilde{a}_i=1$
for $1\le i\le 6$ and $\widetilde{a}_7=2$, which leads to $$\lambda+\rho=\varpi_7+\textstyle{\frac{1}{2}}
\textstyle{
\sum}_{i=1}^6\varpi_i=\textstyle{\frac{1}{2}}\rho+\textstyle{\frac{1}{2}}\varpi_7.$$
Note that $\Phi_\lambda$ contains the positive roots
$$
\xymatrix{{\gamma_1}\ar@{-}[r]&{\gamma_2}\ar@{-}[r]& {\gamma_3} \ar@{-}[r] & {\gamma_4}
\ar@{-}[r]\ar@{-}[d] & {\gamma_5}
 & {\gamma_7} \\ &&& {\gamma_6}&&&}
$$
where
\begin{eqnarray*}
&&\gamma_1\,=\,{\scriptstyle{0\atop{}}{0\atop{}}{1\atop0}{1\atop{}}{0\atop{}}{0\atop{}}},\
\
\gamma_2\,=\,{\scriptstyle{1\atop{}}{1\atop{}}{0\atop0}{0\atop{}}{0\atop{}}{0\atop{}}},\
\
\gamma_3\,=\,{\scriptstyle{0\atop{}}{0\atop{}}{1\atop0}{1\atop{}}{0\atop{}}{0\atop{}}},\
\
\gamma_4\,=\,{\scriptstyle{0\atop{}}{0\atop{}}{0\atop0}{1\atop{}}{1\atop{}}{0\atop{}}},\\
\
&&\gamma_5\,=\,{\scriptstyle{0\atop{}}{0\atop{}}{0\atop0}{0\atop{}}{0\atop{}}{1\atop{}}},\
\
\gamma_6\,=\,{\scriptstyle{0\atop{}}{1\atop{}}{1\atop0}{0\atop{}}{0\atop{}}{0\atop{}}},\
\
\gamma_7\,=\,{\scriptstyle{0\atop{}}{1\atop{}}{1\atop1}{1\atop{}}{0\atop{}}{0\atop{}}}.
\end{eqnarray*}
Since it follows from the Borel--de
Siebehthal algorithm that all root subsystems of type
${\sf
D_6+A_1}$ are maximal in $\Phi$, the roots $\gamma_1,\ldots,\gamma_7$ form the basis of $\Phi_\lambda$ contained in $\Phi^+$.  Since
$\langle\lambda+\rho,\gamma_i^\vee\rangle\in\Z^{>0}$ for $1\le i\le 7$
and
$|\Phi^+|-|\Phi_\lambda^+|=\frac{1}{2}\dim\,\O(e)$ by our earlier remark,
applying [\cite{Jo1}, Corollary~3.5] gives $\dim\,{\rm
VA}(I(\lambda))=\dim\,\O(e)$.
Since $\langle\lambda+\rho,\alpha_i^\vee\rangle\not\in\Z^{>0}$ for $i=1,4,6$ by our choice of pinning, we also see that
$\lambda+\rho$ satisfies Condition~(A). As a consequence, $e\in{\rm VA}(I(\lambda))$. But then
${\rm VA}(I(\lambda))=\overline{\O(e)}$ i.e. Condition~(B) holds for
$\lambda+\rho$. Since in the present case Condition~(D) is vacuous and $\g_e=[\g_e,\g_e]$ by
[\cite{dG}], we combine [\cite{Lo2}, 5.3] and [\cite{PT},
Proposition~11] to conclude $I(\frac{1}2{}\varpi_7-\frac{1}{2}\rho)$ is the only
multiplicity-free primitive ideal in $\mathcal{X}_{\O(e)}$.
\subsection{Type $({\sf E_7, 4A_1})$}\label{4.4}
In this case we choose the following pinning for $e$:
\[\xymatrix{*{\circ}\ar@{-}[r]& *{\bullet} \ar@{-}[r] & *{\circ} \ar@{-}[r]\ar@{-}[d] & *{\bullet} \ar@{-}[r]
&*{\circ}\ar@{-}[r]&*{\bullet}\\ && *{\bullet}&&&}\]
and we take
$$\tau=\textstyle{{(-1)\atop{}}{2\atop{}}{(-3)\atop 2}
{2\atop{}}{(-2)\atop{}}{2\atop{}}}$$ as an optimal cocharacter.
Since $\dim\,\g_e=63$, the total number of positive $0$-roots and
$1$-roots is $(63-7)/2=28$. These roots are given  below.
\begin{table}[htb]
\label{data1}
\begin{tabular}{|c|c|}
\hline$0$-roots  & $1$-roots
\\ \hline
$\scriptstyle{{0\atop{}}{0\atop{}}{0\atop0}{0\atop{}}{1\atop{}}{1\atop{}}}$,
$\scriptstyle{{1\atop{}}{1\atop{}}{1\atop1}{0\atop{}}{0\atop{}}{0\atop{}}}$,
$\scriptstyle{{1\atop{}}{1\atop{}}{1\atop0}{1\atop{}}{0\atop{}}{0\atop{}}}$,
$\scriptstyle{{1\atop{}}{1\atop{}}{1\atop0}{1\atop{}}{1\atop{}}{1\atop{}}}$,&
$\scriptstyle{{1\atop{}}{1\atop{}}{0\atop0}{0\atop{}}{0\atop{}}{0\atop{}}}$,
$\scriptstyle{{0\atop{}}{1\atop{}}{1\atop 0}{1\atop{}}{0\atop{}}{0\atop{}}}$,
$\scriptstyle{{0\atop{}}{1\atop{}}{1\atop0}{1\atop{}}{1\atop{}}{1\atop{}}}$,
$\scriptstyle{{0\atop{}}{0\atop{}}{1\atop1}{1\atop{}}{0\atop{}}{0\atop{}}}$,\\
$\scriptstyle{{0\atop{}}{0\atop{}}{0\atop0}{1\atop{}}{1\atop{}}{0\atop{}}}$,
$\scriptstyle{{1\atop{}}{1\atop{}}{1\atop1}{1\atop{}}{1\atop{}}{0\atop{}}}$,
$\scriptstyle{{0\atop{}}{1\atop{}}{2\atop1}{1\atop{}}{0\atop{}}{0\atop{}}}$,
$\scriptstyle{{0\atop{}}{1\atop{}}{2\atop1}{1\atop{}}{1\atop{}}{1\atop{}}}$,&
$\scriptstyle{{0\atop{}}{0\atop{}}{1\atop1} {1\atop{}}{1\atop{}}{1\atop{}}}$,
$\scriptstyle{{0\atop{}}{1\atop{}}{1\atop1}{0\atop{}}{0\atop{}}{0\atop{}}}$,
$\scriptstyle{{0\atop{}}{1\atop{}}{1\atop1}{1\atop{}}{1\atop{}}{0\atop{}}}$,
$\scriptstyle{{1\atop{}}{2\atop{}}{2\atop1}{1\atop{}}{0\atop{}}{0\atop{}}}$,\\
$\scriptstyle{{0\atop{}}{1\atop{}}{2\atop1}{2\atop{}}{1\atop{}}{0\atop{}}}$,
$\scriptstyle{{0\atop{}}{1\atop{}}{2\atop1}{2\atop{}}{2\atop{}}{1\atop{}}}$,
$\scriptstyle{{1\atop{}}{2\atop{}}{3\atop2}{2\atop{}}{1\atop{}}{0\atop{}}}$,
$\scriptstyle{{1\atop{}}{2\atop{}}{3\atop2}{2\atop{}}{2\atop{}}{1\atop{}}}$,&
$\scriptstyle{{1\atop{}}{2\atop{}}{2\atop1}{1\atop{}}{1\atop{}}{1\atop{}}}$,
$\scriptstyle{{1\atop{}}{2\atop{}}{2\atop1}{2\atop{}}{1\atop{}}{0\atop{}}}$,
$\scriptstyle{{1\atop{}}{2\atop{}}{2\atop1}{2\atop{}}{2\atop{}}{1\atop{}}}$,
$\scriptstyle{{1\atop{}}{3\atop{}}{4\atop2}{3\atop{}}{2\atop{}}{1\atop{}}}$,\\
$\scriptstyle{{2\atop{}}{3\atop{}}{4\atop2}{3\atop{}}{2\atop{}}{1\atop{}}}$,
$\scriptstyle{{1\atop{}}{2\atop{}}{3\atop1}{2\atop{}}{1\atop{}}{1\atop{}}}$,
$\scriptstyle{{1\atop{}}{2\atop{}}{3\atop1}{3\atop{}}{2\atop{}}{1\atop{}}}$.&
$\scriptstyle{{1\atop{}}{1\atop{}}{2\atop1}{2\atop{}}{1\atop{}}{1\atop{}}}$.
\\
\hline
\end{tabular}
\end{table}
The
$(1,4,6)$-contributions of the $0$-roots and $1$-roots are
$(10,28,16)$ and $(7,20,10)$,
respectively, and the total contribution of all roots is
$(\frac{17}{2},24,13)$.

The orbit $\O(e)$ is non-special and $\textstyle{\frac{1}{2}}\dim\O(e)=(133-63)/2=35=
|\Phi^+|-28,$ hence we seek $\lambda+\rho=\sum_{i=1}^7a_i\varpi_i$ for which $\Phi_\lambda$ has type ${\sf A_7}$. To that end we assume that the $a_i$'s are half-integers.
Let $\widetilde{a}_i=2a_i$, whetre $1\le i\le 7$. Then  Condition~(C) for
$\lambda+\rho$ reads
\begin{eqnarray*}
2\widetilde{a}_1+2\widetilde{a}_2+3\widetilde{a}_3+
4\widetilde{a}_4+3\widetilde{a}_5+2\widetilde{a}_6+
\widetilde{a}_7&=&17\\
4\widetilde{a}_1+6\widetilde{a}_2+8\widetilde{a}_3+
12\widetilde{a}_4+9\widetilde{a}_5+6\widetilde{a}_6+
3\widetilde{a}_7&=&48\\
2\widetilde{a}_1+3\widetilde{a}_2+4\widetilde{a}_3+
6\widetilde{a}_4+5\widetilde{a}_5+4\widetilde{a}_6+
2\widetilde{a}_7&=&26.
\end{eqnarray*}
A perfect solution to this system of linear equations is given by setting $\widetilde{a}_i=1$
for all $1\le i\le 7$, leading to $$\lambda+\rho=\textstyle{\frac{1}{2}}
\textstyle{
\sum}_{i=1}^7\varpi_i=\textstyle{\frac{1}{2}}\rho.$$
It is straightforward to see that $\Phi_\lambda$ contains the positive roots
$$
\xymatrix{{\gamma_1}\ar@{-}[r]&{\gamma_2}\ar@{-}[r]& {\gamma_3} \ar@{-}[r] & {\gamma_4}
\ar@{-}[r]&{\gamma_5}\ar@{-}[r]&{\gamma_6}\ar@{-}[r]& {\gamma_7}}
$$
where
\begin{eqnarray*}
&&\gamma_1\,=\,{\scriptstyle{0\atop{}}{1\atop{}}{1\atop0}{0\atop{}}{0\atop{}}{0\atop{}}},\
\
\gamma_2\,=\,{\scriptstyle{0\atop{}}{0\atop{}}{0\atop0}{1\atop{}}{1\atop{}}{0\atop{}}},\
\
\gamma_3\,=\,{\scriptstyle{0\atop{}}{0\atop{}}{1\atop1}{0\atop{}}{0\atop{}}{0\atop{}}},\
\
\gamma_4\,=\,{\scriptstyle{1\atop{}}{1\atop{}}{0\atop0}{0\atop{}}{0\atop{}}{0\atop{}}},\\
\
&&\gamma_5\,=\,{\scriptstyle{0\atop{}}{0\atop{}}{1\atop0}{1\atop{}}{0\atop{}}{0\atop{}}},\
\
\gamma_6\,=\,{\scriptstyle{0\atop{}}{0\atop{}}{0\atop0}{0\atop{}}{1\atop{}}{1\atop{}}},\
\
\gamma_7\,=\,{\scriptstyle{0\atop{}}{1\atop{}}{1\atop1}{1\atop{}}{0\atop{}}{0\atop{}}}.
\end{eqnarray*}
Repeating verbatim the argument used in the previous case we deduce that $\gamma_1,\ldots,\gamma_7$ form the basis of $\Phi_\lambda$ contained in $\Phi^+$. Since $\langle\lambda+\rho,\gamma_i^\vee\rangle\in\Z^{>0}$ for all $i$, it follows that $$\dim\,{\rm
VA}(I(\lambda))=|\Phi|-|\Phi_\lambda|=\dim\,\O(e).$$
Since $\langle\lambda+\rho,\alpha_i^\vee\rangle\not\in\Z^{>0}$ for $i=2,3,5,7$ by our choice of pinning,
$\lambda+\rho$ satisfies Condition~(A) yielding $e\in{\rm VA}(I(\lambda))$. But then
${\rm VA}(I(\lambda))=\overline{\O(e)}$ i.e. Condition~(B) holds for
$\lambda+\rho$. Since in the present case Condition~(D)is again vacuous and $\g_e=[\g_e,\g_e]$ by
[\cite{dG}], we conclude that $I(-\frac{1}{2}\rho)$ is the only
multiplicity-free primitive ideal in $\mathcal{X}_{\O(e)}$.
\subsection{Type $({\sf E_7, A_2+2A_1})$}\label{4.5}
This is a special orbit and
$e^\vee\in\g^\vee$ has type ${\sf E_7(a_4)}$.
So we may assume that
$h^\vee=\textstyle{{2\atop{}}{0\atop{}}{2\atop 0}
{0\atop{}}{0\atop{}}{2\atop{}}}$. In view of Condition~(A) we use the following pinning
for $e$:
\[\xymatrix{*{\circ}\ar@{-}[r]& *{\bullet} \ar@{-}[r] & *{\circ} \ar@{-}[r]\ar@{-}[d] & *{\bullet} \ar@{-}[r]
&*{\bullet}\ar@{-}[r]&*{\circ}\\ && *{\bullet}&&&}\]
and we choose
$$\tau=\textstyle{{(-1)\atop{}}{2\atop{}}{(-4)\atop 2}
{2\atop{}}{2\atop{}}{(-2)\atop{}}}$$ to be our optimal cocharacter. As
$\dim\,\g_e=51$, the total number of positive $0$-roots and
$1$-roots is $(51-7)/2=22$. These roots are given
in the table below.
\begin{table}[htb]
\label{data1}
\begin{tabular}{|c|c|}
\hline$0$-roots  & $1$-roots
\\ \hline
$\scriptstyle{{0\atop{}}{1\atop{}}{1\atop1}{0\atop{}}{0\atop{}}{0\atop{}}}$,
$\scriptstyle{{0\atop{}}{1\atop{}}{1\atop0}{1\atop{}}{0\atop{}}{0\atop{}}}$,
$\scriptstyle{{0\atop{}}{0\atop{}}{0\atop0}{0\atop{}}{1\atop{}}{1\atop{}}}$,
$\scriptstyle{{0\atop{}}{0\atop{}}{1\atop1}{1\atop{}}{0\atop{}}{0\atop{}}}$,&
$\scriptstyle{{1\atop{}}{1\atop{}}{0\atop0}{0\atop{}}{0\atop{}}{0\atop{}}}$,
$\scriptstyle{{1\atop{}}{1\atop{}}{1\atop 0}{1\atop{}}{0\atop{}}{0\atop{}}}$,
$\scriptstyle{{1\atop{}}{1\atop{}}{1\atop0}{1\atop{}}{1\atop{}}{0\atop{}}}$,
$\scriptstyle{{1\atop{}}{1\atop{}}{1\atop1}{1\atop{}}{1\atop{}}{1\atop{}}}$,\\
$\scriptstyle{{0\atop{}}{0\atop{}}{1\atop1}{1\atop{}}{1\atop{}}{1\atop{}}}$,
$\scriptstyle{{0\atop{}}{1\atop{}}{1\atop0}{1\atop{}}{1\atop{}}{1\atop{}}}$,
$\scriptstyle{{0\atop{}}{0\atop{}}{1\atop0}{1\atop{}}{1\atop{}}{0\atop{}}}$,
$\scriptstyle{{0\atop{}}{1\atop{}}{2\atop1}{1\atop{}}{1\atop{}}{0\atop{}}}$,&
$\scriptstyle{{1\atop{}}{2\atop{}}{2\atop1} {1\atop{}}{1\atop{}}{0\atop{}}}$,
$\scriptstyle{{1\atop{}}{1\atop{}}{2\atop1}{1\atop{}}{1\atop{}}{0\atop{}}}$,
$\scriptstyle{{1\atop{}}{1\atop{}}{2\atop1}{2\atop{}}{2\atop{}}{1\atop{}}}$,
$\scriptstyle{{1\atop{}}{2\atop{}}{2\atop1}{2\atop{}}{1\atop{}}{1\atop{}}}$,\\
$\scriptstyle{{0\atop{}}{1\atop{}}{2\atop1}{2\atop{}}{1\atop{}}{1\atop{}}}$,
$\scriptstyle{{2\atop{}}{3\atop{}}{4\atop2}{3\atop{}}{2\atop{}}{1\atop{}}}$.&
$\scriptstyle{{1\atop{}}{2\atop{}}{3\atop1}{3\atop{}}{2\atop{}}{1\atop{}}}$,
$\scriptstyle{{1\atop{}}{2\atop{}}{3\atop2}{2\atop{}}{1\atop{}}{0\atop{}}}$,
$\scriptstyle{{1\atop{}}{2\atop{}}{3\atop2}{2\atop{}}{2\atop{}}{1\atop{}}}$,
$\scriptstyle{{1\atop{}}{3\atop{}}{4\atop2}{3\atop{}}{2\atop{}}{1\atop{}}}$.
\\
\hline
\end{tabular}
\end{table}
The
$(1,4,7)$-contributions of the $0$-roots and $1$-roots are
$(2,14,5)$ and $(12,24,6)$,
respectively, and the total contribution of all roots is
$(7,19,\frac{11}{2})$.
 So Condition~(C) for
$\lambda+\rho$
reads
\begin{eqnarray*}
2a_1+2a_2+3a_3+4a_4+3a_5+2a_6+a_7&=&7\\
4a_1+7a_2+8a_3+12a_4+9a_5+6a_6+3a_7&=&19\\
2a_1+3a_2+4a_3+6a_4+5a_5+4a_6+3a_7&=&11.
\end{eqnarray*}
A nice solution to this system of linear equations is given by setting $a_i=0$ for $i=2,3,5,6$ and $a_i=1$ for
$i=1,4,7$.
It follows that $\lambda+\rho=\frac{1}{2}h^\vee$ satisfies Losev's condition~(C). As
$\langle\lambda,\alpha_i^\vee\rangle=0$ for $i=3,4,5,6$, our choice of pinning shows that
Condition~(A) also holds for $\lambda+\rho$. Since $e$ is special and
$\lambda+\rho=\frac{1}{2}h^\vee$, Condition~(B) follows from [\cite{BV},
Proposition~5.10]. As Condition (D) is again vacuous and $\g_e=[\g_e,\g_e]$ by [\cite{dG}], we conclude that
$I(\lambda)=I(-\varpi_2-\varpi_3-\varpi_5-\varpi_6)$ is the only multiplicity-free primitive
ideal in $\mathcal{X}_{\O(e)}$.
\subsection{Type $({\sf E_7, 2A_2+A_1})$}\label{4.6}
Here our pinning for $e$ is
\[\xymatrix{*{\bullet}\ar@{-}[r]& *{\bullet} \ar@{-}[r] & *{\circ} \ar@{-}[r]\ar@{-}[d] & *{\bullet} \ar@{-}[r]
&*{\bullet}\ar@{-}[r]&*{\circ}\\ && *{\bullet}&&&}\]
and we take
$$\tau=\textstyle{{2\atop{}}{2\atop{}}{(-5)\atop 2}
{2\atop{}}{2\atop{}}{(-2)\atop{}}}$$ as an optimal cocharacter.
Since $\dim\,\g_e=43$, the total number of positive $0$-roots and
$1$-roots is $(43-7)/2=18$. These roots are given  below.
\begin{table}[htb]
\label{data1}
\begin{tabular}{|c|c|}
\hline$0$-roots  & $1$-roots
\\ \hline
$\scriptstyle{{0\atop{}}{0\atop{}}{0\atop0}{0\atop{}}{1\atop{}}{1\atop{}}}$,
$\scriptstyle{{1\atop{}}{1\atop{}}{2\atop1}{1\atop{}}{1\atop{}}{0\atop{}}}$,
$\scriptstyle{{1\atop{}}{2\atop{}}{2\atop1}{1\atop{}}{0\atop{}}{0\atop{}}}$,
$\scriptstyle{{1\atop{}}{2\atop{}}{2\atop1}{1\atop{}}{1\atop{}}{1\atop{}}}$,&
$\scriptstyle{{1\atop{}}{1\atop{}}{1\atop1}{0\atop{}}{0\atop{}}{0\atop{}}}$,
$\scriptstyle{{0\atop{}}{1\atop{}}{1\atop 1}{1\atop{}}{0\atop{}}{0\atop{}}}$,
$\scriptstyle{{1\atop{}}{1\atop{}}{1\atop0}{1\atop{}}{0\atop{}}{0\atop{}}}$,
$\scriptstyle{{0\atop{}}{1\atop{}}{1\atop0}{1\atop{}}{1\atop{}}{0\atop{}}}$,\\
$\scriptstyle{{0\atop{}}{1\atop{}}{2\atop1}{2\atop{}}{1\atop{}}{0\atop{}}}$,
$\scriptstyle{{0\atop{}}{1\atop{}}{2\atop1}{2\atop{}}{2\atop{}}{1\atop{}}}$,
$\scriptstyle{{1\atop{}}{3\atop{}}{4\atop2}{3\atop{}}{2\atop{}}{1\atop{}}}$,
$\scriptstyle{{1\atop{}}{1\atop{}}{2\atop1}{2\atop{}}{1\atop{}}{1\atop{}}}$.&
$\scriptstyle{{0\atop{}}{0\atop{}}{1\atop1} {1\atop{}}{1\atop{}}{0\atop{}}}$,
$\scriptstyle{{1\atop{}}{1\atop{}}{1\atop0}{1\atop{}}{1\atop{}}{1\atop{}}}$,
$\scriptstyle{{0\atop{}}{1\atop{}}{1\atop1}{1\atop{}}{1\atop{}}{1\atop{}}}$,
$\scriptstyle{{1\atop{}}{2\atop{}}{3\atop2}{2\atop{}}{1\atop{}}{0\atop{}}}$,\\
&$\scriptstyle{{1\atop{}}{2\atop{}}{3\atop2}{2\atop{}}{2\atop{}}{1\atop{}}}$,
$\scriptstyle{{1\atop{}}{2\atop{}}{3\atop1}{3\atop{}}{2\atop{}}{1\atop{}}}$.
\\
\hline
\end{tabular}
\end{table}
The
$(4,7)$-contributions of the $0$-roots and $1$-roots are
$(16,5)$ and $(16,4)$,
respectively, and the total contribution of all roots is
$(16,\frac{9}{2})$.

The orbit $\O(e)$ is non-special and $\textstyle{\frac{1}{2}}\dim\O(e)=(133-43)/2=45=
|\Phi^+|-(15+3),$ hence we seek $\lambda+\rho=\sum_{i=1}^7a_i\varpi_i$ for which $\Phi_\lambda$ has type ${\sf A_5+A_2}$. So it is reasonable to assume that the $a_i\in\frac{1}{3}\Z$ for all $i$.
Setting $\widetilde{a}_i:=3a_i$ for $1\le i\le 7$ we can express Condition~(C) for
$\lambda+\rho$ as follows:
\begin{eqnarray*}
4\widetilde{a}_1+6\widetilde{a}_2+8\widetilde{a}_3+
12\widetilde{a}_4+9\widetilde{a}_5+6\widetilde{a}_6+
3\widetilde{a}_7&=&48\\
2\widetilde{a}_1+3\widetilde{a}_2+4\widetilde{a}_3+
6\widetilde{a}_4+5\widetilde{a}_5+4\widetilde{a}_6+
3\widetilde{a}_7&=&27.
\end{eqnarray*}
A perfect solution to this system of linear equations is given by setting $\widetilde{a}_i=1$
for all $i$, leading to $$\lambda+\rho=\textstyle{\frac{1}{3}}
\textstyle{
\sum}_{i=1}^7\varpi_i=\textstyle{\frac{1}{3}}\rho.$$
Note that $\Phi_\lambda$ contains the positive roots
$$
\xymatrix{{\gamma_1}\ar@{-}[r]&{\gamma_2}\ar@{-}[r]& {\gamma_3} \ar@{-}[r] & {\gamma_4}
\ar@{-}[r]&{\gamma_5}&
{\gamma_6}\ar@{-}[r]& {\gamma_7}}
$$
where
\begin{eqnarray*}
&&\gamma_1\,=\,{\scriptstyle{0\atop{}}{0\atop{}}{1\atop0}{1\atop{}}{1\atop{}}{0\atop{}}},\
\
\gamma_2\,=\,{\scriptstyle{0\atop{}}{1\atop{}}{1\atop1}{0\atop{}}{0\atop{}}{0\atop{}}},\
\
\gamma_3\,=\,{\scriptstyle{0\atop{}}{0\atop{}}{0\atop0}{1\atop{}}{1\atop{}}{1\atop{}}},\
\
\gamma_4\,=\,{\scriptstyle{1\atop{}}{1\atop{}}{1\atop0}{0\atop{}}{0\atop{}}{0\atop{}}},\\
\
&&\gamma_5\,=\,{\scriptstyle{0\atop{}}{0\atop{}}{1\atop1}{1\atop{}}{0\atop{}}{0\atop{}}},\
\
\gamma_6\,=\,{\scriptstyle{0\atop{}}{1\atop{}}{1\atop0}{1\atop{}}{0\atop{}}{0\atop{}}},\
\
\gamma_7\,=\,{\scriptstyle{1\atop{}}{1\atop{}}{1\atop1}{1\atop{}}{1\atop{}}{0\atop{}}}.
\end{eqnarray*}
Since the subsystems of type ${\sf A_5+A_2}$ are maximal in $\Phi$, the roots $\gamma_1,\ldots,\gamma_7$ form the basis of $\Phi_\lambda$ contained in $\Phi^+$. Since $\langle\lambda+\rho,\gamma_i^\vee\rangle\in\Z^{>0}$ for all $i$, this yields $$\dim\,{\rm
VA}(I(\lambda))=|\Phi|-|\Phi_\lambda|=\dim\,\O(e).$$
Since $\langle\lambda+\rho,\beta^\vee\rangle\not\in\Z^{>0}$ for any root $\beta\in\Phi^+$ which can be expressed as a linear combination $\alpha_i$ with $i\in\{1,2,3,5,6\}$,
we see that
$\lambda+\rho$ satisfies Condition~(A). But then $e\in{\rm VA}(I(\lambda))$, forcing
${\rm VA}(I(\lambda))=\overline{\O(e)}$. So  Condition~(B) holds for
$\lambda+\rho$ as well. Since Condition~(D) is still vacuous in the present case and $\g_e=[\g_e,\g_e]$ by
[\cite{dG}], we conclude that $I(-\frac{2}{3}\rho)$ is the only
multiplicity-free primitive ideal in $\mathcal{X}_{\O(e)}$.
\subsection{Type $({\sf E_7, (A_3+A_1})')$}\label{4.7}
In this case $\g_e=\mathbb{C}e\oplus[\g_e,\g_e]$ by [\cite{dG}] and so we may expect that $U(\g,e)$ affords at least $2$ one-dimensional representations.
In fact, there are exactly two of them, by [\cite{GRU}], and our goal is to find Duflo realisations of the corresponding primitive ideals of $U(\g)$.
We choose the following pinning for $e$:
\[\xymatrix{*{\bullet}\ar@{-}[r]& *{\bullet} \ar@{-}[r] & *{\bullet} \ar@{-}[r]\ar@{-}[d] & *{\circ} \ar@{-}[r]
&*{\bullet}\ar@{-}[r]&*{\circ}\\ && *{\circ}&&&}\]
and we take
$$\tau=\textstyle{{2\atop{}}{2\atop{}}{2\atop (-3)}
{(-4)\atop{}}{2\atop{}}{(-1)\atop{}}}$$ as our optimal cocharacter.
Since $\dim\,\g_e=41$, the total number of positive $0$-roots and
$1$-roots is $(41-7)/2=17$. These roots are given  below.
\begin{table}[htb]
\label{data1}
\begin{tabular}{|c|c|}
\hline$0$-roots  & $1$-roots
\\ \hline
$\scriptstyle{{0\atop{}}{1\atop{}}{1\atop0}{1\atop{}}{0\atop{}}{0\atop{}}}$,
$\scriptstyle{{0\atop{}}{0\atop{}}{1\atop0}{1\atop{}}{1\atop{}}{0\atop{}}}$,
$\scriptstyle{{1\atop{}}{1\atop{}}{1\atop1}{1\atop{}}{1\atop{}}{1\atop{}}}$,
$\scriptstyle{{0\atop{}}{1\atop{}}{2\atop1}{1\atop{}}{1\atop{}}{1\atop{}}}$,&
$\scriptstyle{{0\atop{}}{0\atop{}}{0\atop0}{0\atop{}}{1\atop{}}{1\atop{}}}$,
$\scriptstyle{{0\atop{}}{1\atop{}}{1\atop 1}{0\atop{}}{0\atop{}}{0\atop{}}}$,
$\scriptstyle{{0\atop{}}{1\atop{}}{1\atop0}{1\atop{}}{1\atop{}}{1\atop{}}}$,
$\scriptstyle{{1\atop{}}{1\atop{}}{1\atop1}{1\atop{}}{1\atop{}}{0\atop{}}}$,\\
$\scriptstyle{{1\atop{}}{2\atop{}}{2\atop1}{2\atop{}}{1\atop{}}{1\atop{}}}$,
$\scriptstyle{{1\atop{}}{1\atop{}}{2\atop1}{2\atop{}}{2\atop{}}{1\atop{}}}$,
$\scriptstyle{{1\atop{}}{2\atop{}}{3\atop1}{3\atop{}}{2\atop{}}{1\atop{}}}$,
$\scriptstyle{{1\atop{}}{2\atop{}}{3\atop2}{2\atop{}}{1\atop{}}{0\atop{}}}$.&
$\scriptstyle{{0\atop{}}{1\atop{}}{2\atop1} {1\atop{}}{1\atop{}}{0\atop{}}}$,
$\scriptstyle{{1\atop{}}{2\atop{}}{2\atop1}{2\atop{}}{1\atop{}}{0\atop{}}}$,
$\scriptstyle{{1\atop{}}{2\atop{}}{3\atop2}{2\atop{}}{2\atop{}}{1\atop{}}}$,
$\scriptstyle{{1\atop{}}{3\atop{}}{4\atop2}{3\atop{}}{2\atop{}}{1\atop{}}}$,\\
&$\scriptstyle{{1\atop{}}{1\atop{}}{2\atop1}{1\atop{}}{0\atop{}}{0\atop{}}}$.
\\
\hline
\end{tabular}
\end{table}
The
$(2,5,7)$-contributions of the $0$-roots and $1$-roots are
$(7,13,5)$ and $(9,11,4)$,
respectively, and the total contribution of all roots is
$(8,12,\frac{9}{2})$.
The
orbit $\O(e)$ is non-special and analysing all options available in the present case
one eventually comes to the conclusion that
$\lambda+\rho$ must have an integral root system of type ${\sf D_6+A_1}$.
Furthermore, $\lambda+\rho$ must be strongly dominant
on the positive root coming from the ${\sf A_1}$-component of $\Phi_\lambda$. Let $\g_1(\lambda)$ denote the simple ideal of type ${\sf D_6}$ in the Lie algebra $\g(\lambda)$.
In view of [\cite{Lo2}, Proposition~5.3.2] we should
look for a special nilpotent
orbit of dimension
$(133-41)-(133-66-3)=28$ in $\g_1(\lambda)$.

Let $V$ be a $12$-dimensional vector space over $\mathbb C$ and let
$\Psi$ be a non-degenerate symmetric bilinear form on $V$. We
identify $\g_1(\lambda)$ with the stabiliser of $\Psi$  in $\mathfrak{gl}(V)$ and adopt the notation introduced in Subsection~\ref{3.11}.
Let $X$ be a nilpotent element of $\g_1(\lambda)$ with ${\bf p}(X)=(2^4,1^4)$ (in the present case
${\bf p}(X)$ is a partition of $12$). Then
${\bf r}(X)=(8,4)$ and $n_{\rm odd}(X)=4$ implying
$$
\dim \g_1(\lambda)_{X}=\,\textstyle{{\frac{1}{2}}}\big(\big({\textstyle\sum}_{i=1}^k
r_i^2\big)-n_{\rm odd}(X)\big)=\textstyle{\frac{1}{2}}(64+16-4)=38.
$$
So the orbit of $X$
is special (by [\cite{C}, p.~437], for example) of dimension $66-38=28$.

The above discussion also indicates that we should seek
$\lambda+\rho=\sum_{i=1}^7a_i\varpi_i$ for which all $a_i$ are
half-integers. Setting $\widetilde{a}_i=2a_i$ for $1\le i\le 7$ we
can express Condition~(C) as follows:
\begin{eqnarray*}
4\widetilde{a}_1+7\widetilde{a}_2+8\widetilde{a}_3+
12\widetilde{a}_4+9\widetilde{a}_5+6\widetilde{a}_6+
3\widetilde{a}_7&=&32\\
6\widetilde{a}_1+9\widetilde{a}_2+12\widetilde{a}_3+
18\widetilde{a}_4+15\widetilde{a}_5+10\widetilde{a}_6+
5\widetilde{a}_7&=&48,\\
2\widetilde{a}_1+3\widetilde{a}_2+4\widetilde{a}_3+
6\widetilde{a}_4+5\widetilde{a}_5+4\widetilde{a}_6+
3\widetilde{a}_7&=&18.
\end{eqnarray*}
A rather nice solution to this system of linear equations is given by setting
$\widetilde{a_i}=1$ for $i=1,2,4,6,7$ and $\widetilde{a}_i=0$ for $i=,3,5$. Although the corresponding highest weight $\mu+\rho=\frac{1}{2}(\varpi_1+\varpi_2+\varpi_4+\varpi_6+\varpi_7)$ does not satisfy Condition~(A), this can be easily amended by replacing it with
$$\lambda+\rho=s_3s_1s_4(\mu+\rho)=\textstyle{\frac{1}{2}}(\varpi_1+\varpi_4+\varpi_5+\varpi_6+\varpi_7)+
\varpi_2-\varpi_3,$$ which still satisfies Condition~(C). Since $\langle\lambda+\rho,\gamma^\vee\rangle\not\in\Z^{>0}$ for any root $\gamma\in\Phi^+$ that can be expressed as a linear combination of $\alpha_1,\alpha_3,\alpha_4,\alpha_6$, our choice of pinning shows that Condition~(A) also holds for $\lambda+\rho$.

The integral root system of $\lambda+\rho$ contains the positive roots
$$
\xymatrix{{\beta_1}\ar@{-}[r]&{\beta_2}\ar@{-}[r]& {\beta_3} \ar@{-}[r] & {\beta_4}
\ar@{-}[r]\ar@{-}[d] & {\beta_5}
 & {\beta_7} \\ &&& {\beta_6}&&&}
$$
where
\begin{eqnarray*}
&&\beta_1\,=\,{\scriptstyle{0\atop{}}{0\atop{}}{0\atop0}{1\atop{}}{1\atop{}}{0\atop{}}},\
\
\beta_2\,=\,{\scriptstyle{1\atop{}}{1\atop{}}{1\atop0}{0\atop{}}{0\atop{}}{0\atop{}}},\
\
\beta_3\,=\,{\scriptstyle{0\atop{}}{0\atop{}}{0\atop1}{0\atop{}}{0\atop{}}{0\atop{}}},\
\
\beta_4\,=\,{\scriptstyle{0\atop{}}{0\atop{}}{1\atop0}{1\atop{}}{0\atop{}}{0\atop{}}},\\
\
&&\beta_5\,=\,{\scriptstyle{0\atop{}}{0\atop{}}{0\atop0}{0\atop{}}{1\atop{}}{1\atop{}}},\
\
\beta_6\,=\,{\scriptstyle{0\atop{}}{1\atop{}}{0\atop0}{0\atop{}}{0\atop{}}{0\atop{}}},\
\
\beta_7\,=\,{\scriptstyle{0\atop{}}{1\atop{}}{2\atop1}{1\atop{}}{1\atop{}}{0\atop{}}}.
\end{eqnarray*}
From the maximality of the root subsystems of type ${\sf D_6+A_1}$ in $\Phi$ it follows that $\beta_1,\ldots,\beta_7$ form a basis of $\Phi_\lambda$ contained in $\Phi^+$.
Let $\varpi'_1,\ldots,\varpi'_7\in P(\Phi)_{\mathbb Q}$ be the corresponding fundamental weights, so  that $\langle\varpi'_i,\beta_j^\vee\rangle=\delta_{ij}$ for all $1\le i,j\le 7$.

Using the explicit expressions for the $\beta_i$'s it is straightforward to check that
$$
\lambda+\rho\,=\,\varpi_1'+\varpi_3'+\varpi'_4+
\varpi'_5-\varpi'_6+2\varpi'_7\,=\,
s_{\beta_6}s_{\beta_4}s_{\beta_2}(\varpi'_1+\varpi'_3+\varpi'_5+\varpi'_6+
2\varpi'_7).
$$

Set $\nu:=s_{\beta_2}s_{\beta_4}s_{\beta_6}
\centerdot\lambda$ and recall the notation introduced in Subsection~\ref{3.7}. The above shows that
$\nu+\rho\in\Lambda^+$ and $\Pi_\nu^0=\{\beta_2,\beta_4\}=\tau_\Lambda(s_{\beta_6}s_{\beta_4}
s_{\beta_2}),$
Thanks to
[\cite{Ja}, Corollar~10.10] we have that
$$d\big(U(\g)/I(\lambda)\big)=
d\big(U(\g)/I(s_{\beta_6}s_{\beta_4}s_{\beta_2}
\centerdot\nu)\big)=
d\big(U(\g)/I(s_{\beta_6}s_{\beta_4}s_{\beta_2}
\centerdot\eta)\big)$$
for any regular $\eta\in\Lambda^+$, and
\begin{eqnarray*}
d\big(U(\g)/I(s_{\beta_6}s_{\beta_4}s_{\beta_2}
\centerdot\eta)\big)&=&
d\big(U(\g)/I(s_{\beta_2}\underline{s_{\beta_4}
s_{\beta_6}}\centerdot\eta)\big)\,=\,
d\big(U(\g)/I(s_{\beta_2}s_{\beta_4}
\centerdot\eta)\big)\\
&=&d\big(U(\g)/I(s_{\beta_2}s_{\beta_4}
\centerdot\nu)\big)\,=\,
d\big(U(\g)/I(\nu)\big).
\end{eqnarray*}
Since $\g(\lambda)$ is a Lie algebra of type
of type ${\sf D_6+A_1}$, applying [\cite{Lo2},
Proposition~5.3.2] now shows that
$$d\big(U(\g)/I(\lambda)\big)=\,
d\big(U(\g)/I(\nu)\big)=\,d\big(U(\g)/
I(s_{\beta_2}s_{\beta_4}\centerdot\eta)\big)=\,
64+\dim\O_\lambda(w_0s_{\beta_2}s_{\beta_4})$$
where $w_0$ is the longest element of $W_\lambda$ and
$\O_\lambda(w_0s_{\beta_2}s_{\beta_4})$ is the special nilpotent
orbit in $\g(\lambda)$ attached to the double cell of $W_\lambda$
containing $w_0s_{\beta_2}s_{\beta_4}$.

Let $\t'=\t\cap\g_1(\lambda)$ and identify $\beta_1,\ldots,\beta_6$ with a basis of simple roots in the root system of $\g_1(\lambda)$ with respect to $\t'$
(this will also enable us to view $\varpi'_1,\ldots,\varpi'_6$ as the corresponding system of fundamental weights).
Let $\rho'=\sum_{i=1}^6$ $\varpi_i'$ and denote by
$\nu'$ the restriction of $\nu+\rho-\rho'$ to $\t'$. Let $I_1(\nu')$ be the annihilator in $U(\g_1(\lambda))$ of the irreducible $\g_1(\lambda)$-module of highest weight $\nu'$.
Since
$\langle\nu+\rho,\beta_7^\vee\rangle\in\Z^{>0}$, the aove shows that
$
d\big(U(\g)/I(\nu)\big)=64+\dim{\rm VA}(I_1(\nu'))
$ and
$$
\dim\O_\lambda(w_0s_{\beta_2}s_{\beta_4})=\,\dim{\rm VA}(I_1(\nu'))={\rm VA}(I_1(-\varpi'_2-\varpi'_4)).
$$
Let $X^\vee$ be an element from the nilpotent orbit of $\g_1(\lambda)$ corresponding to the orbit of $X$ under the Spaltenstein duality. As $X$ is regular in a Levi subalgebra of $\g_1(\lambda)$ whose semisimple part has type ${\sf A_1+A_1}$, it
follows from [\cite{BV}, Appendix], for example,
that ${\bf p}(X^\vee)=(7,5)$.
Applying the recipe described in [\cite{C}, p.~395] we now observe that the Dynkin label of
the orbit of $X^\vee$ equals
$$\textstyle{{2\atop{}}{0\atop{}}{2\atop{}}
{0\atop2}{2\atop{}}}\,=\,2(\nu'+\rho').$$
But then [\cite{BV}, Proposition~5.10] yields that $X$ lies in the open orbit of ${\rm VA}(I_1(\nu'))$.
As a result, $\dim {\rm VA}(I(\lambda))=64+28=92=\dim\O(e).$ Since Condition~(A) holds for $\lambda+\rho$ by our choice of pinning, we now deduce that ${\rm VA}(I(\lambda))=\overline{\O(e)}$, that is Condition~(B) holds for $\lambda+\rho$. As $e$ has standard Levi type, Condition~(D) holds for $\lambda+\rho$  automatically. So $I(-2\varpi_3-\frac{1}{2}(\varpi_1+\varpi_4+\varpi_5+\varpi_6+\varpi_7))$ is a multiplicity-free primitive ideal in $\mathcal{X}_{\O(e)}$.

As in Subsection~\ref{3.9}, we are going to search for the second  multiplicity-free primitive ideal $I(\lambda')\in\mathcal{X}_{\O(e)}$
by imposing two additional assumptions on $\lambda'+\rho=\sum_{i=1}^7b_i\varpi_i$:
\begin{itemize}
\item[(a)\ ]
$\Phi_{\lambda'}=\Phi_\lambda$;

\smallskip

\item[(b)\ ]
$\langle\lambda'+\rho,\beta_7^\vee\rangle=1$.
\end{itemize}
Condition~(C) together with (b) leads to the following
system of linear equations:
\begin{eqnarray*}
\widetilde{b}_2+\widetilde{b}_3+2\widetilde{b}_4+
\widetilde{b}_5+\widetilde{b}_6&=&2\\
4\widetilde{b}_1+7\widetilde{b}_2+8\widetilde{b}_3+
12\widetilde{b}_4+
9\widetilde{b}_5+6\widetilde{b}_6+3\widetilde{b}_7
&=&32\\
6\widetilde{b}_1+9\widetilde{b}_2+12\widetilde{b}_3+
18\widetilde{b}_4+
15\widetilde{b}_5+10\widetilde{b}_6+5\widetilde{b}_7
&=&48\\
2\widetilde{b}_1+3\widetilde{b}_2+4\widetilde{b}_3+
6\widetilde{b}_4+
5\widetilde{b}_5+4\widetilde{b}_6+3\widetilde{b}_7
&=&18.
\end{eqnarray*}
Here  $\widetilde{b}_i=2b_i$ for all $1\le i\le 7$. After several unsuccessful attempts we realised that setting $\widetilde{b}_1=b_5=3$, $\widetilde{b}_2=-\widetilde{b}_3=4$, $\widetilde{b}_4=-1$ and
$\widetilde{b_i}=1$ for $i\in \{5,6,7\}$ provides us with a promising solution
which
leads to the weight
$$\lambda'+\rho\,=\textstyle{\frac{1}{2}}(3\varpi_1+4\varpi_2-4\varpi_3-\varpi_4+
3\varpi_5+\varpi_6+\varpi_7).$$
By our choice of pinning, Conditions~(A) holds for $\lambda'+\rho$ and computations show that
$$\lambda'+\rho\,=\,2\varpi'_1-\varpi'_2+2\varpi'_3+
\varpi'_4+\varpi'_5-2\varpi'_6+\varpi'_7\,=\,
s_{\beta_6}s_{\beta_4}s_{\beta_2}s_{\beta_3}
s_{\beta_5}\big(\textstyle{\sum}_{i\ne
3,5}\,\varpi'_i\big).$$ Then
$\lambda'=s_{\beta_6}s_{\beta_4}s_{\beta_2}
s_{\beta_3}s_{\beta_5}\centerdot\mu'$
where $\mu'=
s_{\beta_5}s_{\beta_3}s_{\beta_2}s_{\beta_4}s_{\beta_6}(\lambda'+\rho)-\rho\in\Lambda^+$,
and
$$\Pi_{\mu'}^0=\{\beta_3,\beta_5\}\subseteq
\tau_\Lambda(s_{\beta_6}s_{\beta_2}s_{\beta_4}
s_{\beta_3}s_{\beta_5}).$$
Applying [\cite{Ja}, Corollar~10.10] we that
$$d\big(U(\g)/I(\lambda')\big)=d\big(U(\g)/
I(s_{\beta_6}s_{\beta_4}s_{\beta_2}s_{\beta_3}s_{\beta_5}\centerdot\mu')\big)=
d\big(U(\g)/I(s_{\beta_6}s_{\beta_4}
s_{\beta_2}s_{\beta_3}s_{\beta_5}\centerdot\eta)\big)$$
for any regular $\eta\in\Lambda^+$ and
\begin{eqnarray*}
d\big(U(\g)/I(\lambda')\big)&=&
d\big(U(\g)/I(s_{\beta_6}s_{\beta_4}s_{\beta_5}
\underline{s_{\beta_2}s_{\beta_3}}\centerdot\eta)
\big)=\,
d\big(U(\g)/I(s_{\beta_6}s_{\beta_4}s_{\beta_5}
s_{\beta_2}\centerdot\eta)\big)\\
&=&d\big(U(\g)/I(s_{\beta_2}s_{\beta_5}
\underline{s_{\beta_4}s_{\beta_6}}
\centerdot\eta)\big)=\,
d\big(U(\g)/I(s_{\beta_2}s_{\beta_5}
s_{\beta_4}\centerdot\eta)\big)\\
&=&
d\big(U(\g)/I(s_{\beta_4}s_{\beta_5}
s_{\beta_2}\centerdot\eta)\big)=\,
d\big(U(\g)/I(s_{\beta_2}\underline{s_{\beta_4}
s_{\beta_5}}\centerdot\eta)\big)\\
&=&
d\big(U(\g)/I(s_{\beta_2}s_{\beta_4}\centerdot\eta)
\big)=\,d\big(U(\g)/I(\lambda))=\,\dim
{\rm VA}(\O(e)),
\end{eqnarray*}
where the last equality follows from our earlier results in this subsection. Since Condition~(D) is vacuous in the present case, we conclude that $\lambda'+\rho$ satisfies all Losev's conditions and hence
$I(\varpi_2-3\varpi_3+\frac{1}{2}(\varpi_1-3\varpi_4
+\varpi_5-\varpi_6-\varpi_7))$
is another multiplicity-free primitive ideal in $\mathcal{X}_{\O(e)}$.

If $I(\lambda)=I(\lambda')$ then $\lambda'+\rho=w(\lambda+\rho)$ for
some $w\in W$. Since $\Phi_\lambda=\Phi_{\lambda'}$ it must be that
$w^{-1}(\Phi_\lambda)=\Phi_\lambda$. But then $w$ must preserve the ${\sf A_1}$-component of
$\Phi_\lambda$ forcing $w^{-1}(\beta_7)=\pm\beta_7$ The
latter, however, is impossible since $\lambda+\rho$ and
$\lambda'+\rho$ take different positive values at $\beta_7^\vee$. Hence $I(\lambda)$ and $I(\lambda')$ are
the distinct multiplicity-free primitive ideals in $\mathcal{X}_{\O(e)}$.
\subsection{Type $({\sf E_6, A_1})$}\label{4.8}
In this case $e$ is special and $e^\vee\in\g^\vee$ is subregular. So it can be assumed that
$h^\vee=\textstyle{{2\atop{}}{2\atop{}}{0\atop 2}
{2\atop{}}{2\atop{}}}$. Due to Condition~(A) our pinning for $e$ is
\[\xymatrix{*{\circ}\ar@{-}[r]& *{\circ} \ar@{-}[r] & *{\bullet} \ar@{-}[r]\ar@{-}[d] & *{\circ} \ar@{-}[r]
&*{\circ}\\ && *{\circ}&&}\]
and we choose
$$\tau=\textstyle{{0\atop{}}{(-1)\atop{}}{2\atop (-1)}
{(-1)\atop{}}{0\atop{}}}$$ to be our optimal cocharacter. As
$\dim\,\g_e=56$, the total number of positive $0$-roots and
$1$-roots is $(56-5)/2=25$. These roots are given
below.
\begin{table}[htb]
\label{data1}
\begin{tabular}{|c|c|}
\hline$0$-roots  & $1$-roots
\\ \hline
$\scriptstyle{{1\atop{}}{0\atop{}}{0\atop0}{0\atop{}}{0\atop{}}}$,
$\scriptstyle{{0\atop{}}{0\atop{}}{0\atop0}{0\atop{}}{1\atop{}}}$,
$\scriptstyle{{0\atop{}}{1\atop{}}{1\atop1}{0\atop{}}{0\atop{}}}$,
$\scriptstyle{{0\atop{}}{1\atop{}}{1\atop0}{1\atop{}}{0\atop{}}}$,
$\scriptstyle{{0\atop{}}{0\atop{}}{1\atop1}{1\atop{}}{0\atop{}}}$,&
$\scriptstyle{{0\atop{}}{1\atop{}}{1\atop0}{0\atop{}}{0\atop{}}}$,
$\scriptstyle{{0\atop{}}{0\atop{}}{1\atop 0}{1\atop{}}{0\atop{}}}$,
$\scriptstyle{{0\atop{}}{0\atop{}}{1\atop1}{0\atop{}}{0\atop{}}}$,
$\scriptstyle{{1\atop{}}{1\atop{}}{1\atop0}{0\atop{}}{0\atop{}}}$,\\
$\scriptstyle{{1\atop{}}{1\atop{}}{1\atop1}{0\atop{}}{0\atop{}}}$,
$\scriptstyle{{1\atop{}}{1\atop{}}{1\atop0}{1\atop{}}{0\atop{}}}$,
$\scriptstyle{{0\atop{}}{0\atop{}}{1\atop1}{1\atop{}}{1\atop{}}}$,
$\scriptstyle{{0\atop{}}{1\atop{}}{1\atop0}{1\atop{}}{1\atop{}}}$,
$\scriptstyle{{1\atop{}}{1\atop{}}{1\atop0}{1\atop{}}{1\atop{}}}$,&
$\scriptstyle{{0\atop{}}{0\atop{}}{1\atop0} {1\atop{}}{1\atop{}}}$,
$\scriptstyle{{0\atop{}}{1\atop{}}{2\atop1}{1\atop{}}{0\atop{}}}$,
$\scriptstyle{{1\atop{}}{1\atop{}}{2\atop1}{1\atop{}}{0\atop{}}}$,
$\scriptstyle{{0\atop{}}{1\atop{}}{2\atop1}{1\atop{}}{1\atop{}}}$,\\
$\scriptstyle{{1\atop{}}{2\atop{}}{2\atop1}{1\atop{}}{0\atop{}}}$,
$\scriptstyle{{0\atop{}}{1\atop{}}{2\atop1}{2\atop{}}{1\atop{}}}$,
$\scriptstyle{{1\atop{}}{1\atop{}}{2\atop1}{2\atop{}}{1\atop{}}}$,
$\scriptstyle{{1\atop{}}{2\atop{}}{2\atop1}{1\atop{}}{1\atop{}}}$,
$\scriptstyle{{1\atop{}}{2\atop{}}{3\atop2}{2\atop{}}{1\atop{}}}$.&
$\scriptstyle{{1\atop{}}{1\atop{}}{2\atop1}{1\atop{}}{1\atop{}}}$,
$\scriptstyle{{1\atop{}}{2\atop{}}{3\atop1}{2\atop{}}{1\atop{}}}$.\\
\hline
\end{tabular}
\end{table}

The
$(1,2,3,5,6)$-contributions of the $0$-roots and $1$-roots are
$(8,10,14,14,8)$ and $(4,6,8,8,4)$,
respectively, and the total contribution of all roots is
$(6,8,11,11,6)$. In view of [\cite{Bo}, Planche~V]  Condition~(C) for
$\lambda+\rho=\sum_{i=1}^6a_i\varpi_i$
reads
\begin{eqnarray*}
4a_1+3a_2+5a_3+6a_4+4a_5+2a_6&=&18\\
a_1+2a_2+2a_3+3a_4+2a_5+a_6&=&8\\
5a_1+6a_2+10a_3+12a_4+8a_5+4a_6&=&33\\
4a_1+6a_2+8a_3+12a_4+10a_5+5a_6&=&33\\
2a_1+3a_2+4a_3+6a_4+5a_5+4a_6&=&18.
\end{eqnarray*}
A perfect solution to this system of linear equations is given by setting $a_4=0$ and $a_i=1$ for
$i=1,2,3,5,6$.
It follows that $\lambda+\rho=\frac{1}{2}h^\vee$ satisfies Condition (C). As
$\langle\lambda,\alpha_4^\vee\rangle=0$,
Condition~(A) also holds for $\lambda+\rho$. Since $e$ is special and
$\lambda+\rho=\frac{1}{2}h^\vee$, Condition~(B) follows from [\cite{BV},
Proposition~5.10]. Since Condition (D) is  vacuous in the present case and  $\g_e=[\g_e,\g_e]$ by [\cite{dG}], we conclude that
$I(\lambda)=I(-\varpi_4)$ is the only multiplicity-free primitive
ideal in $\mathcal{X}_{\O(e)}$. Not surprisingly, it coincides with the Joseph ideal of $U(\g)$.
\subsection{Type $({\sf E_6, 3A_1})$}\label{4.9}
Here our pinning for $e$ is
\[\xymatrix{*{\bullet}\ar@{-}[r]& *{\circ} \ar@{-}[r] & *{\bullet} \ar@{-}[r]\ar@{-}[d] & *{\circ} \ar@{-}[r]&*{\bullet}\\ && *{\circ}&&}\]
and we take
$$\tau=\textstyle{{2\atop{}}{(-2)\atop{}}{2\atop (-1)}
{(-2)\atop{}}{2\atop{}}}$$ as an optimal cocharacter.
Since $\dim\,\g_e=38$, the total number of positive $0$-roots and
$1$-roots is $(38-6)/2=16$. These roots are given  below.
\begin{table}[htb]
\label{data1}
\begin{tabular}{|c|c|}
\hline$0$-roots  & $1$-roots
\\ \hline
$\scriptstyle{{1\atop{}}{1\atop{}}{0\atop0}{0\atop{}}{0\atop{}}}$,
$\scriptstyle{{0\atop{}}{1\atop{}}{1\atop0}{0\atop{}}{0\atop{}}}$,
$\scriptstyle{{0\atop{}}{0\atop{}}{1\atop0}{1\atop{}}{0\atop{}}}$,
$\scriptstyle{{0\atop{}}{0\atop{}}{0\atop0}{1\atop{}}{1\atop{}}}$,&
$\scriptstyle{{0\atop{}}{0\atop{}}{1\atop1}{0\atop{}}{0\atop{}}}$,
$\scriptstyle{{1\atop{}}{1\atop{}}{1\atop 1}{0\atop{}}{0\atop{}}}$,
$\scriptstyle{{0\atop{}}{0\atop{}}{1\atop1}{1\atop{}}{1\atop{}}}$,
$\scriptstyle{{1\atop{}}{1\atop{}}{2\atop1}{1\atop{}}{0\atop{}}}$,
$\scriptstyle{{0\atop{}}{1\atop{}}{2\atop1}{1\atop{}}{1\atop{}}}$,\\
$\scriptstyle{{1\atop{}}{1\atop{}}{1\atop0}{1\atop{}}{0\atop{}}}$,
$\scriptstyle{{0\atop{}}{1\atop{}}{1\atop0}{1\atop{}}{1\atop{}}}$,
$\scriptstyle{{1\atop{}}{2\atop{}}{3\atop2}{2\atop{}}{1\atop{}}}$.&
$\scriptstyle{{1\atop{}}{1\atop{}}{1\atop1} {1\atop{}}{1\atop{}}}$,
$\scriptstyle{{1\atop{}}{2\atop{}}{2\atop1}{1\atop{}}{1\atop{}}}$,
$\scriptstyle{{1\atop{}}{1\atop{}}{2\atop1}{2\atop{}}{1\atop{}}}$,
$\scriptstyle{{1\atop{}}{2\atop{}}{3\atop1}{2\atop{}}{1\atop{}}}$.\\
\hline
\end{tabular}
\end{table}
The
$(2,3,5)$-contributions of the $0$-roots and $1$-roots are
$(2,6,6)$ and $(9,9,9)$,
respectively, and the total contribution of all roots is
$(\frac{11}{2},\frac{15}{2},\frac{15}{2})$.

The orbit $\O(e)$ is non-special and $\textstyle{\frac{1}{2}}\dim\O(e)=(78-38)/2=20=
|\Phi^+|-(15+1),$ hence we seek $\lambda+\rho=\sum_{i=1}^6a_i\varpi_i$ for which $\Phi_\lambda$ has type ${\sf A_5+A_1}$. So it is reasonable assume that the $a_i$'s are half-integers.
Setting $\widetilde{a}_i:=2a_i$ for $1\le i\le 6$ we rewrite Condition~(C) for
$\lambda+\rho$ as follows:
\begin{eqnarray*}
\widetilde{a}_1+2\widetilde{a}_2+2\widetilde{a}_3+
3\widetilde{a}_4+2\widetilde{a}_5+\widetilde{a}_6
&=&11\\
5\widetilde{a}_1+6\widetilde{a}_2+10\widetilde{a}_3+
12\widetilde{a}_4+8\widetilde{a}_5+4\widetilde{a}_6
&=&45\\
4\widetilde{a}_1+6\widetilde{a}_2+8\widetilde{a}_3+
12\widetilde{a}_4+10\widetilde{a}_5+5\widetilde{a}_6
&=&45.
\end{eqnarray*}
A perfect solution to this system of linear equations is given by setting $\widetilde{a}_i=1$
for all $1\le i\le 6$, leading to $$\lambda+\rho=\textstyle{\frac{1}{2}}
\textstyle{
\sum}_{i=1}^6\varpi_i=\textstyle{\frac{1}{2}}\rho.$$
It is straightforward to see that $\Phi_\lambda$ contains the positive roots
$$
\xymatrix{{\gamma_1}\ar@{-}[r]&{\gamma_2}\ar@{-}[r]& {\gamma_3} \ar@{-}[r] & {\gamma_4}
\ar@{-}[r]&{\gamma_5}&
{\gamma_6}}
$$
where
\begin{eqnarray*}
&&\gamma_1\,=\,{\scriptstyle{0\atop{}}{1\atop{}}{1\atop0}{0\atop{}}{0\atop{}}},\
\
\gamma_2\,=\,{\scriptstyle{0\atop{}}{0\atop{}}{0\atop0}{1\atop{}}{1\atop{}}},\
\
\gamma_3\,=\,{\scriptstyle{0\atop{}}{0\atop{}}{1\atop1}{0\atop{}}{0\atop{}}},\\
&&\gamma_4\,=\,{\scriptstyle{1\atop{}}{1\atop{}}{0\atop0}{0\atop{}}{0\atop{}}},\
\
\gamma_5\,=\,{\scriptstyle{0\atop{}}{0\atop{}}{1\atop0}{1\atop{}}{0\atop{}}},\
\
\gamma_6\,=\,{\scriptstyle{0\atop{}}{1\atop{}}{1\atop1}{1\atop{}}{0\atop{}}}.
\end{eqnarray*}
Since the subsystems of type ${\sf A_5+A_1}$ are maximal in $\Phi$, the roots $\gamma_1,\ldots,\gamma_6$ form the basis of $\Phi_\lambda$ contained in $\Phi^+$. Since $\langle\lambda+\rho,\gamma_i^\vee\rangle\in\Z^{>0}$ for all $i$, this yields $$\dim\,{\rm
VA}(I(\lambda))=|\Phi|-|\Phi_\lambda|=\dim\,\O(e).$$
Since $\langle\lambda+\rho,\alpha_i^\vee\rangle\not\in\Z^{>0}$ for $i\in\{1,4,6\}$,
Condition~(A) holds for
$\lambda+\rho$, forcing
${\rm VA}(I(\lambda))=\overline{\O(e)}$. Since Condition~(D) is vacuous and $\g_e=[\g_e,\g_e]$ by
[\cite{dG}], we deduce that $I(-\frac{1}{2}\rho)$ is the only
multiplicity-free primitive ideal in $\mathcal{X}_{\O(e)}$.
\subsection{Type $({\sf E_6, 2A_2+A_1})$}\label{4.10}
Here the pinning for $e$ is unique:
\[\xymatrix{*{\bullet}\ar@{-}[r]& *{\bullet} \ar@{-}[r] & *{\circ} \ar@{-}[r]\ar@{-}[d] & *{\bullet} \ar@{-}[r]&*{\bullet}\\ && *{\bullet}&&}\]
and we choose
$$\tau=\textstyle{{2\atop{}}{2\atop{}}{(-5)\atop 2}
{2\atop{}}{2\atop{}}}$$ as our optimal cocharacter.
As $\dim\,\g_e=24$, the total number of positive $0$-roots and
$1$-roots is $(24-6)/2=9$. These roots are given below.
\begin{table}[htb]
\label{data1}
\begin{tabular}{|c|c|}
\hline$0$-roots  & $1$-roots
\\ \hline
$\scriptstyle{{1\atop{}}{1\atop{}}{2\atop1}{1\atop{}}{1\atop{}}}$,
$\scriptstyle{{0\atop{}}{1\atop{}}{2\atop1}{2\atop{}}{1\atop{}}}$,
$\scriptstyle{{1\atop{}}{2\atop{}}{2\atop1}{1\atop{}}{0\atop{}}}$.
&
$\scriptstyle{{1\atop{}}{1\atop{}}{1\atop1}{0\atop{}}{0\atop{}}}$,
$\scriptstyle{{0\atop{}}{1\atop{}}{1\atop 1}{1\atop{}}{0\atop{}}}$,
$\scriptstyle{{0\atop{}}{0\atop{}}{1\atop1}{1\atop{}}{1\atop{}}}$,
$\scriptstyle{{1\atop{}}{1\atop{}}{1\atop0}{1\atop{}}{0\atop{}}}$,
$\scriptstyle{{0\atop{}}{1\atop{}}{1\atop0}{1\atop{}}{1\atop{}}}$,
$\scriptstyle{{1\atop{}}{2\atop{}}{3\atop2}{2\atop{}}{1\atop{}}}$.
\\
\hline
\end{tabular}
\end{table}
The
$4$-contributions of the $0$-roots and $1$-roots are
$6$ and $8$,
respectively, and the total contribution of all roots is $7$.

Our orbit is non-special and $$\textstyle{\frac{1}{2}}\dim\O(e)=(78-24)/2=27=
|\Phi^+|-(3+3+3).$$ Therefore, it is reasonable to seek $\lambda+\rho=\sum_{i=1}^6a_i\varpi_i$ for which $\Phi_\lambda$ has type ${\sf A_2+A_2+A_2}$.
In the present case Condition~(C) boils down to one linear equation
$$
2a_1+3a_2+4a_3+
6a_4+4a_5+2a_6
=7.
$$
Setting $a_i=\frac{1}{3}$
for all $1\le i\le 6$ provides us with a perfect solution which leads to $\lambda+\rho=\textstyle{\frac{1}{3}}\rho.$
It is straightforward to see that $\Phi_\lambda$ contains the positive roots
$$
\xymatrix{{\gamma_1}\ar@{-}[r]&{\gamma_2}
&
{\gamma_3}\ar@{-}[r]&{\gamma_4}
&
{\gamma_5} \ar@{-}[r] & {\gamma_6}}
$$
where
\begin{eqnarray*}
&&\gamma_1\,=\,{\scriptstyle{1\atop{}}{1\atop{}}{1\atop0}{0\atop{}}{0\atop{}}},\
\
\gamma_2\,=\,{\scriptstyle{0\atop{}}{0\atop{}}{1\atop1}{1\atop{}}{0\atop{}}},\
\
\gamma_3\,=\,{\scriptstyle{0\atop{}}{1\atop{}}{1\atop1}{0\atop{}}{0\atop{}}},\\
&&\gamma_4\,=\,{\scriptstyle{0\atop{}}{0\atop{}}{1\atop0}{1\atop{}}{1\atop{}}},\
\
\gamma_5\,=\,{\scriptstyle{0\atop{}}{1\atop{}}{1\atop0}{1\atop{}}{0\atop{}}},\
\
\gamma_6\,=\,{\scriptstyle{1\atop{}}{1\atop{}}{1\atop1}{1\atop{}}{1\atop{}}}.
\end{eqnarray*}
Since the subsystems of type ${\sf A_2+A_2+A_2}$ are maximal in $\Phi$, the roots $\gamma_1,\ldots,\gamma_6$ form the basis of $\Phi_\lambda$ contained in $\Phi^+$. Since $\langle\lambda+\rho,\gamma_i^\vee\rangle\in\Z^{>0}$ for all $i$, this yields $\dim\,{\rm
VA}(I(\lambda))=|\Phi|-|\Phi_\lambda|=\dim\,\O(e).$
Since $\langle\lambda+\rho,\beta^\vee\rangle\not\in\Z^{>0}$ for any root $\beta\in\Phi^+$ which can be expressed as a linear combination of $\alpha_i$ with $i\in\{1,2,3,5,6\}$,
Condition~(A) holds for
$\lambda+\rho$.  Since Condition~(D) is again vacuous and $\g_e=[\g_e,\g_e]$ by
[\cite{dG}], we argue as before to conclude that $I(-\frac{2}{3}\rho)$ is the only
multiplicity-free primitive ideal in $\mathcal{X}_{\O(e)}$.
\section{\bf The case of
rigid nilpotent orbits in Lie algebras of type ${\sf F_4}$ and ${\sf G_2}$.}
In this section we assume that $\Phi$ has type ${\sf F_4}$ or ${\sf G_2}$.
Since in both cases there are roots of different lengths,
we distinguish between $\Phi$ with $\Phi^\vee$ and regard $\Phi_\lambda$ as a root
subsystem of $\Phi^\vee$.
\pagebreak
\subsection{Type $({\sf F_4, A_1})$}\label{5.1}
Here our pinning for $e$ is
\[\xymatrix{*{\bullet}\ar@{-}[r]& *{\circ} \ar@{=}[r]|*\dir{>} & *{\circ} \ar@{-}[r]&*{\circ}}\]
and we choose $\tau=(2,-1,0,0)$ as an optimal cocharacter. Since $\dim\g_e=36$, the total
number of $0$-roots and $1$-roots is $(36-4)/2=16$. These roots are given below.
\begin{table}[htb]
\label{data1}
\begin{tabular}{|c|c|}
\hline$0$-roots  & $1$-roots
\\ \hline
$\scriptstyle{0010}$,
$\scriptstyle{0001}$,
$\scriptstyle{0011}$,
$\scriptstyle{1220}$,
$\scriptstyle{1221}$,&
$\scriptstyle{1100}$,
$\scriptstyle{1110}$,
$\scriptstyle{1111}$,
$\scriptstyle{1121}$,\\
$\scriptstyle{1222}$,
$\scriptstyle{1231}$,
$\scriptstyle{1232}$,
$\scriptstyle{1242}$.&
$\scriptstyle{1120}$,
$\scriptstyle{1122}$,
$\scriptstyle{2342}$.
\\
\hline
\end{tabular}
\end{table}
The $(2,3,4)$-contributions of $0$-roots and $1$-roots are $(12,18,10)$ and $(9,12,6)$, respectively, and the total
contribution of all roots is $(\frac{21}{2},15,8)$. The orbit $\O(e)$ is non-special and
$\frac{1}{2}\dim\O(e)=(52-36)/2=8=|\Phi^+|-16.$ So it is reasonable to assume that $\Phi_\lambda\subset\Phi^\vee$ has type
${\sf B_4}$ or ${\sf C_4}$.
Since the centre of the standard Levi subalgebra $\l$ of $\g$ with $[\l,\l]$ generated by $e_{\pm \alpha_1}$
is spanned by $3\alpha_1^\vee+6\alpha_2^\vee+4\alpha_3^\vee+3\alpha_4^\vee$,
$4\alpha_1^\vee+8\alpha_2^\vee+6\alpha_3^\vee+3\alpha_4^\vee$ and
$2\alpha_1^\vee+4\alpha_2^\vee+3\alpha_3^\vee+2\alpha_4^\vee$, Condition~(C) for $\lambda+\rho=\sum_{i=1}^4a_i\varpi_i$
reads
\begin{eqnarray*}
3\widetilde{a}_1+6\widetilde{a}_2+4\widetilde{a}_3+3\widetilde{a}_4&=&21\\
4\widetilde{a}_1+8\widetilde{a}_2+6\widetilde{a}_3+3\widetilde{a}_4&=&30\\
2\widetilde{a}_1+4\widetilde{a}_2+3\widetilde{a}_3+2\widetilde{a}_4&=&16
\end{eqnarray*}
where $\widetilde{a}_i=2a_i$. One obvious solution to this system of linear equations is
$(\widetilde{a}_1,\widetilde{a}_2,\widetilde{a}_3,\widetilde{a}_4)=(1,1,2,2)$, which leads to
$$\lambda+\rho=\textstyle{\frac{1}{2}}\varpi_1+\textstyle{\frac{1}{2}}\varpi_2+\varpi_3+\varpi_4.$$
Using [\cite{Bo}, Planche~VIII] it is easy to check that
$\lambda+\rho=4\varepsilon_1+\frac{3}{2}\varepsilon_2+\varepsilon_3+\frac{1}{2}\varepsilon_4$.
Then $\Phi_\lambda$ contains
\[\xymatrix{ {\beta_1^\vee} \ar@{=}[r]| *\dir{<} & {\beta_2^\vee}  \ar@{-}[r]&{\beta_3^\vee} \ar@{-}[r]& {\beta_4^\vee}}\]
where $\beta_1=\varepsilon_2-\varepsilon_4$, $\beta_2=\varepsilon_4$, $\beta_3=\frac{1}{2}(\varepsilon_1-
\varepsilon_2-\varepsilon_3-\varepsilon_4)$ and $\beta_4=\varepsilon_3$. Hence $\Phi_\lambda$ contains a root
subsystem of type ${\sf B_4}$. From the maximality in $\Phi^\vee$ of root subsystems of type ${\sf B_4}$ it follows that
$\beta_1^\vee,\beta_2^\vee,\beta_3^\vee,\beta_4^\vee$ form the basis of $\Phi_\lambda$ contained in $(\Phi^+)^\vee$.
Since
$\langle\lambda+\rho,\alpha_1^\vee\rangle\not\in\Z^{>0}$, $\dim\O(e)=|\Phi|-|\Phi_\lambda|$, and
$\langle\lambda+\rho,\beta_i^\vee\rangle\in\Z^{>0}$ for $1\le i\le 4$, we see that $\lambda+\rho$ satisfies Condition~(A)
and ${\rm VA}(I\lambda))=\overline{\O(e)}$. Since $e$ has standard Levi type, $\lambda+\rho$ satisfies all Losev's conditions.
As $\g_e=[\g_e,\g_e]$ we thus deduce that $I(-\frac{1}{2}(\varpi_1+\varpi_2))$ is the only multiplicity-free
primitive ideal of $\mathcal{X}_{\O(e)}$. We thus recover the well-known Joseph ideal of $U(\g)$.
\subsection{Type $({\sf F_4,
\widetilde{A}_1})$}\label{5.2}
We choose the following pinning for $e$:
\[\xymatrix{*{\circ}\ar@{-}[r]& *{\circ} \ar@{=}[r]|*\dir{>} & *{\bullet} \ar@{-}[r]&*{\circ}}\]
and take $\tau=(0,-2,2,-1)$ as our optimal cocharacter. Since $\dim\g_e=30$, the total number of $0$-roots and $1$-roots is $(30-4)/2=13$. The roots are given below.
\begin{table}[htb]
\label{data1}
\begin{tabular}{|c|c|}
\hline$0$-roots  & $1$-roots
\\ \hline
$\scriptstyle{1000}$,
$\scriptstyle{1110}$,
$\scriptstyle{0110}$,
$\scriptstyle{1220}$,
$\scriptstyle{0122}$,&
$\scriptstyle{0011}$,
$\scriptstyle{0121}$,
$\scriptstyle{1121}$,
$\scriptstyle{1231}$.\\
$\scriptstyle{1122}$,
$\scriptstyle{1232}$,
$\scriptstyle{1342}$,
$\scriptstyle{2342}$.&
\\
\hline
\end{tabular}
\end{table}

The $(1,2,4)$-contributions of $0$-roots and $1$-roots are $(8,14,10)$ and $(2,4,4)$, respectively, and the total
contribution of all roots is $(5,9,7)$.

The orbit $\O(e)$ is special and $e^\vee\in\g^\vee$ has type ${\sf F_4(a_1)}$. Since the Dynkin diagram of $\Phi^\vee$ has the form
$$\xymatrix{*{\circ}\ar@{-}[r]& *{\circ} \ar@{=}[r]|*\dir{<} & *{\circ} \ar@{-}[r]&*{\circ}}$$
we may assume that $\frac{1}{2}h^\vee=1011$.
The centre of the standard Levi subalgebra $\l$ of $\g$ with $[\l,\l]$ generated by $e_{\pm \alpha_3}$
is spanned by $2\alpha_1^\vee+3\alpha_2^\vee+2\alpha_3^\vee+\alpha_4^\vee$,
$3\alpha_1^\vee+6\alpha_2^\vee+4\alpha_3^\vee+2\alpha_4^\vee$ and
$2\alpha_1^\vee+4\alpha_2^\vee+3\alpha_3^\vee+
2\alpha_4^\vee$. Therefore, Condition~(C) for $\lambda+\rho=\sum_{i=1}^4a_i\varpi_i$
reads
\begin{eqnarray*}
2a_1+3a_2+2a_3+a_4&=&5\\
3a_1+6a_2+4a_3+2a_4&=&9\\
2a_1+4a_2+3a_3+2a_4&=&7
\end{eqnarray*}
where $a_i\in\Z$. Clearly,
$(a_1,a_2,a_3,a_4)=(1,1,-1,2)$ is a nice solution to this system of linear equations, and it leads to
$$\lambda+\rho=\varpi_1+\varpi_2-2\varpi_3+\varpi_4=
s_3s_2(\textstyle{\frac{1}{2}}h^\vee).$$
Set $\mu:=\frac{1}{2}h^\vee-\rho$. Then $\mu\in P^+(\Phi)$ and $\Pi_\mu^0=\{\alpha_2\}=\tau(s_3s_2)$, so that
$$d\big(U(\g)/I(\lambda)\big)=
d\big(U(\g)/I(s_3s_2
\centerdot\mu)\big)=\,
d\big(U(\g)/I(s_3s_2
\centerdot\nu)\big)=\,d\big(U(\g)/I(s_2s_3
\centerdot\nu)\big)$$
for any regular $\nu\in P^+(\Phi)$; see [\cite{Ja}, Corollar~10.10]. Since $\alpha_2,\alpha_3\in\Pi$ span a root subsystem of type ${\sf B_2}$ in $\Phi$  we now apply
[\cite{Ja}, 10A.2] to deduce that
$$
d\big(U(\g)/I(s_2s_3
\centerdot\nu)\big)=\,
d\big(U(\g)/I(s_2
\centerdot\nu)\big)
=\,d\big(U(\g)/I(s_2\centerdot\mu)\big)=\,
d\big(U(\g)/I(\mu)\big).
$$
In view of [\cite{BV}, Proposition~5.10], this yields $\dim {\rm VA}(I(\lambda))=\dim{\rm VA}(I(\frac{1}{2}h^\vee-\rho))=\dim\O(e)$.
But then $I(\lambda) = I(\frac{1}{2}h^\vee)$ by the maximality of $I(\frac{1}{2}h^\vee-\rho)$. In particular, ${\rm VA}(I(\lambda))=\overline{\O(e)}$ implying that Condition~(B) holds for $\lambda+\rho$. Note that
Condition~(A) holds for $\lambda+\rho$ by our choice of pinning and Condition~(D) is vacuous in the present case. Since $\g_e=[\g_e,\g_e]$ by [\cite{dG}], we conclude that $I(\lambda)=I(\frac{1}{2}h^\vee-\rho)=I(-\varpi_2)$ is
the only multiplicity-free primitive ideal in $\mathcal{X}_{\O(e)}$.
\subsection{Type $({\sf F_4,
A_1+\widetilde{A}_1})$}\label{5.3} This orbit is special and $e^\vee\in\g^\vee$ has type ${\sf F_4(a_2)}$, so that $\frac{1}{2}h^\vee=1010$. Keeping this in mind, we choose our pinning for $e$ as follows:
\[\xymatrix{*{\circ}\ar@{-}[r]& *{\bullet} \ar@{=}[r]|*\dir{>} & *{\circ} \ar@{-}[r]&*{\bullet}}\]
and take $\tau=(-1,2,-2,2)$ as an optimal cocharacter
Since $\dim\g_e=24$, the total number of $0$-roots and $1$-roots is $(24-4)/2=10$ and roots are displayed below.
\begin{table}[htb]
\label{data1}
\begin{tabular}{|c|c|}
\hline$0$-roots  & $1$-roots
\\ \hline
$\scriptstyle{0110}$,
$\scriptstyle{0011}$,
$\scriptstyle{0121}$,
$\scriptstyle{2342}$.&
$\scriptstyle{1100}$,
$\scriptstyle{1111}$,
$\scriptstyle{1122}$,
$\scriptstyle{1221}$,
$\scriptstyle{1232}$,
$\scriptstyle{1342}$.
\\
\hline
\end{tabular}
\end{table}

The $(1,3)$-contributions of $0$-roots and $1$-roots are $(2,8)$ and $(6,12)$, respectively, and the total
contribution of all roots is $(4,10)$.
The centre of the standard Levi subalgebra $\l$ of $\g$ with $[\l,\l]$ generated by $e_{\pm \alpha_2}$
and $e_{\pm\alpha_4}$
is spanned by $2\alpha_1^\vee+3\alpha_2^\vee+2\alpha_3^\vee+
\alpha_4^\vee$ and
$4\alpha_1^\vee+8\alpha_2^\vee+6\alpha_3^\vee+
3\alpha_4^\vee$. So Condition~(C) for $\lambda+\rho=\sum_{i=1}^4a_i\varpi_i$
reads
\begin{eqnarray*}
2a_1+3a_2+2a_3+a_4&=&4\\
4a_1+8a_2+6a_3+3a_4&=&10
\end{eqnarray*}
where $a_i\in\Z$. One obvious solution to this system of linear equations is given by $(a_1,a_2,a_3,a_4)=(1,0,1,0)$, which leads to
$\lambda+\rho=\frac{1}{2}h^\vee$. This weight satisfies Condition~(A) by our choice of pinning, whilst Condition~(D) is again vacuous.
Combining [\cite{BV}, Proposition~5.10] with
[\cite{dG}] we conclude that $I(\lambda)=I(-\varpi_2-\varpi_4)$ is the only multiplicity-free primitive ideal in $\mathcal{X}_{\O(e)}$.
\subsection{Type $({\sf F_4,
A_2+\widetilde{A}_1})$}\label{5.4}
Here our pinning for $e$ is
\[\xymatrix{*{\bullet}\ar@{-}[r]& *{\bullet} \ar@{=}[r]|*\dir{>} & *{\circ} \ar@{-}[r]&*{\bullet}}\]
and our optimal cocharacter is $\tau=(2,2,-3,2)$. Since $\dim\g_e=18$, the total
number of $0$-roots and $1$-roots is $(18-4)/2=7$. These roots are listed below.
\begin{table}[htb]
\label{data1}
\begin{tabular}{|c|c|}
\hline$0$-roots  & $1$-roots
\\ \hline
$\scriptstyle{1121}$,
$\scriptstyle{0122}$,
$\scriptstyle{1220}$,
$\scriptstyle{1342}$.
&
$\scriptstyle{1110}$,
$\scriptstyle{0111}$,
$\scriptstyle{1232}$.
\\
\hline
\end{tabular}
\end{table}
The $3$-contributions of $0$-roots and $1$-roots are $10$ and $5$, respectively, and the total
contribution of al roots is $\frac{15}{2}$. The orbit $\O(e)$ is non-special and
$\frac{1}{2}\dim\O(e)=(52-18)/2=17=|\Phi^+|-7.$ So it is reasonable to assume that $\Phi_\lambda\subset\Phi^\vee$ has type
${\sf A_3+\widetilde{A}_1}$ (a priori it may happen
that $\Phi_\lambda$ has type ${\sf D_4}$, but a closer look reveals that this cannot occur).
Since the centre of the standard Levi subalgebra $\l$ of $\g$ with $[\l,\l]$ of type ${\sf A_2+\widetilde{A}_1}$
is spanned by $4\alpha_1^\vee+8\alpha_2^\vee+6\alpha_3^\vee+
3\alpha_4^\vee$ , Condition~(C) for $\lambda+\rho=\sum_{i=1}^4a_i\varpi_i$
reads
\begin{eqnarray*}
4\widetilde{a}_1+8\widetilde{a}_2+6\widetilde{a}_3+
3\widetilde{a}_4\,=\,30
\end{eqnarray*}
where $\widetilde{a}_i=4a_i$ for all $1\le i\le 4$. One obvious solution to this linear equation is
$(\widetilde{a}_1,\widetilde{a}_2,\widetilde{a}_3,\widetilde{a}_4)=(1,1,2,2)$, which leads to
$$\lambda+\rho=\textstyle{\frac{1}{4}}\varpi_1+\textstyle{\frac{1}{4}}\varpi_2+\textstyle{\frac{1}{2}}\varpi_3+
\textstyle{\frac{1}{2}}\varpi_4.$$
Our discussion in Subsection~\ref{5.1} implies that
$\lambda+\rho=2\varepsilon_1+\frac{3}{4}\varepsilon_2+\frac{1}{2}\varepsilon_3+\frac{1}{4}\varepsilon_4$.
Then $\Phi_\lambda$ contains
\[\xymatrix{{\gamma_1^\vee} \ar@{-}[r] & {\gamma_2^\vee}  \ar@{-}[r]&{\gamma_3^\vee} & {\gamma_4^\vee}}\]
where $\gamma_1=\frac{1}{2}(\varepsilon_1-
\varepsilon_2-\varepsilon_3+\varepsilon_4)$, $\gamma_2 = \varepsilon_3$, $\gamma_3=\frac{1}{2}(\varepsilon_1+
\varepsilon_2-\varepsilon_3-\varepsilon_4)$
and $\gamma_4=\varepsilon_2+\varepsilon_4$.
Furthermore,
using [\cite{Bo}, Planche~VIII], it is straightforward to check that
$$\Phi_\lambda^\vee\cap\Phi^+\,=\,
\{\varepsilon_1,\,\varepsilon_3,\,\varepsilon_2+\varepsilon_4,\,\textstyle{\frac{1}{2}}
(\varepsilon_1+\varepsilon_2\pm\varepsilon_3-
\varepsilon_4),\,\textstyle{\frac{1}{2}}
(\varepsilon_1-\varepsilon_2\pm\varepsilon_3+
\varepsilon_4)\}.
$$
From this description it is immediate that
that
$\gamma_1^\vee,\gamma_2^\vee,\gamma_3^\vee,\gamma_4^\vee$ form the basis of $\Phi_\lambda$ contained in $(\Phi^+)^\vee$.
Condition~(A) holds for $\lambda+\rho$ because
$\langle\lambda+\rho,\beta^\vee\rangle\not\in\Z^{>0}$ for all positive roots contained in the root subsystem of $\Phi$ spanned by $\alpha_1,\alpha_2,\alpha_4$. Since $\dim\O(e)=|\Phi|-|\Phi_\lambda|$ and
$\langle\lambda+\rho,\gamma_i^\vee\rangle\in\Z^{>0}$ for $1\le i\le 4$, it follows that
${\rm VA}(I\lambda))=\overline{\O(e)}$. Since $e$ has standard Levi type,  $\lambda+\rho$ satisfies all Losev's conditions.
As $\g_e=[\g_e,\g_e]$ we thus deduce that $I(-\frac{3}{4}\varpi_1-\frac{3}{4}\varpi_2-\frac{1}{2}\varpi_3-\frac{1}{2}\varpi_4)$ is the only multiplicity-free
primitive ideal of $\mathcal{X}_{\O(e)}$.
\subsection{Type $({\sf F_4,
\widetilde{A}_2+A_1})$}\label{5.5}
In this case $\g_e=\mathbb{C}e\oplus[\g_e,\g_e]$ and according to [\cite{GRU}] the algebra $U(\g,e)$ affords $2$ one-dimensional representations. Our goal
in this subsection is to find  Duflo realisations of the corresponding primitive ideals of $U(\g)$. The pinning for $e$ is unique:
\[\xymatrix{*{\bullet}\ar@{-}[r]& *{\circ} \ar@{=}[r]|*\dir{>} & *{\bullet} \ar@{-}[r]&*{\bullet}}\]
and we chose $\tau=(2,-5,2,2)$ as out optimal cocharacter. Since $\dim\g_e=16$, the total
number of $0$-roots and $1$-roots is $(16-4)/2=6$. These roots are listed below.
\begin{table}[htb]
\label{data1}
\begin{tabular}{|c|c|}
\hline$0$-roots  & $1$-roots
\\ \hline
$\scriptstyle{1222}$,
$\scriptstyle{1231}$.&
$\scriptstyle{1120}$,
$\scriptstyle{0121}$,
$\scriptstyle{1111}$,
$\scriptstyle{2342}$.
\\
\hline
\end{tabular}
\end{table}
The $2$-contributions of $0$-roots and $1$-roots are $4$ and $6$, respectively, and the total
contribution of al roots is $5$. The orbit $\O(e)$ is non-special and
$\frac{1}{2}\dim\O(e)=(52-16)/2=18=|\Phi^+|-6.$ So it is reasonable to assume that $\Phi_\lambda$ has type
${\sf A_2+\widetilde{A}_2}$.
Since the centre of the standard Levi subalgebra $\l$ of $\g$ with $[\l,\l]$ of type ${\sf A_1+\widetilde{A}_2}$
is spanned by $3\alpha_1^\vee+6\alpha_2^\vee+4\alpha_3^\vee+
2\alpha_4^\vee$ , Condition~(C) for $\lambda+\rho=\sum_{i=1}^4a_i\varpi_i$
reads
\begin{eqnarray*}
3\widetilde{a}_1+6\widetilde{a}_2+4\widetilde{a}_3+
2\widetilde{a}_4\,=\,15
\end{eqnarray*}
where $\widetilde{a}_i=3a_i$ for all $1\le i\le 4$. One obvious solution to this linear equation is
$(\widetilde{a}_1,\widetilde{a}_2,\widetilde{a}_3,\widetilde{a}_4)=(1,1,1,1)$, which leads to
$$\lambda+\rho=\textstyle{\frac{1}{3}}\varpi_1+\textstyle{\frac{1}{3}}\varpi_2+\textstyle{\frac{1}{3}}\varpi_3+
\textstyle{\frac{1}{3}}\varpi_4=\textstyle{\frac{1}{3}}\rho.$$
Using [\cite{Bo}, Planche~VIII] one observes that $\lambda+\rho=\frac{11}{6}\varepsilon_1+\frac{5}{6}\varepsilon_2+\frac{1}{2}\varepsilon_3+\frac{1}{6}\varepsilon_4$.
Then $\Phi_\lambda$ contains
\[\xymatrix{{\gamma_1^\vee} \ar@{-}[r] & {\gamma_2^\vee}  & {\gamma_3^\vee}  \ar@{-}[r]&{\gamma_4^\vee}}\]
where $\gamma_1=\varepsilon_1-\varepsilon_2$, $\gamma_2=\varepsilon_2+\varepsilon_4$,
$\gamma_3=\varepsilon_3$ and $\gamma_4=\frac{1}{2}(\varepsilon_1+
\varepsilon_2-\varepsilon_3-\varepsilon_4)$. Furthermore, direct verification based on 
the above expression for $\lambda+\rho$ shows that 
$$\Phi_\lambda\cap(\Phi^+)^\vee\,=\,\{\gamma_1^\vee,\gamma_2^\vee,\gamma_3^\vee,\gamma_4^\vee\}.$$  
Since
$\langle\lambda+\rho,\beta^\vee\rangle\not\in\Z^{>0}$ for all positive roots contained in the root subsystem of $\Phi$ spanned by $\alpha_1,\alpha_3,\alpha_4$, Condition~(A) holds for $\lambda+\rho$. Since $\dim\O(e)=|\Phi|-|\Phi_\lambda|$ and
$\langle\lambda+\rho,\gamma_i^\vee\rangle\in\Z^{>0}$ for $1\le i\le 4$, we deduce that
${\rm VA}(I\lambda))=\overline{\O(e)}$. Since $e$ has standard Levi type, it follows that $\lambda+\rho$ satisfies all Losev's conditions and hence
$I(\lambda)=I(-\frac{2}{3}\rho)$ is a multiplicity-free
primitive ideal of $\mathcal{X}_{\O(e)}$.

In order to find  a highest weight $\lambda'+\rho=\sum_{i=1}^4b_i\varpi_i$, leading to the second multiplicity-free primitive ideal in $\mathcal{X}_{\O(e)}$ we again impose that $\Phi_{\lambda'}=\Phi_\lambda$. Using [\cite{Bo}, Planche~VIII] we compute:
\begin{eqnarray*}
\lambda'+\rho&=&
b_1(\varepsilon_1+\varepsilon_2)
+b_2(2\varepsilon_1+\varepsilon_2+\varepsilon_3)+
b_3(\textstyle{\frac{3}{2}}\varepsilon_1+\textstyle{\frac{1}{2}}\varepsilon_2+\textstyle{\frac{1}{2}}\varepsilon_3+
\textstyle{\frac{1}{2}}\varepsilon_4)+b_4\varepsilon_1\\
&=&(b_1+2b_2+\textstyle{\frac{3}{2}}b_3+b_4)\varepsilon_1+(b_1+b_2+\textstyle{\frac{1}{2}}b_3)\varepsilon_2+(b_2+\textstyle{\frac{1}{2}}b_3)\varepsilon_3+\textstyle{\frac{1}{2}}b_3\varepsilon_4.
\end{eqnarray*}
Since
$(\lambda'+\rho\,\vert\,\varepsilon_1-\varepsilon_2)
\in\Z^{>0}$, $(\lambda'+\rho\,\vert\,\varepsilon_2+\varepsilon_4)
\in\Z^{>0}$, $2(\lambda'+\rho\,\vert\,\varepsilon_3)\in\Z^{>0}$ and
$2(\lambda'+\rho\,\vert\,
\varepsilon_1+\varepsilon_2-\varepsilon_3-
\varepsilon_4)\in\Z^{>0}$, it must be that
$b_2+b_3+b_4\in\Z^{>0}$, $b_1+b_2+b_3\in\Z^{>0}$,
$2b_2+b_3\in\Z^{>0}$ and $2b_1+2b_2+b_3+b_4\in\frac{1}{2}\Z^{>0}$.
Recall that $b_i\in\frac{1}{3}\Z$ for all $1\le i\le 4$ and
$3b_1+6b_2+4b+2b_4=5$ due to Condition~(C).
We seek a small positive value of $(\lambda'+\rho\,\vert\,\varepsilon_1-\varepsilon_2)$, but we cannot take $1$
as we also require that $\lambda'+\rho\not\in W\cdot(\lambda+\rho)$. So we impose that $(\lambda'+\rho\,\vert\,\varepsilon_1-\varepsilon_2)=2$.

Setting $(b_1,b_2,b_3,b_4)=(-\frac{1}{3},-\frac{1}{3},\frac{5}{3},\frac{2}{3})$ we obtain a solution which satisfies all the requirements mentioned above and leads to $$\lambda+\rho\,=-\textstyle{\frac{1}{3}}\varpi_1-
\textstyle{\frac{1}{3}}\varpi_2+\textstyle{\frac{5}{3}}\varpi_3+\textstyle{\frac{2}{3}}\varpi_4\,=\,
\textstyle{\frac{13}{6}}\varepsilon_1+\textstyle{\frac{1}{6}}\varepsilon_2
+\textstyle{\frac{1}{2}}\varepsilon_2
+\textstyle{\frac{1}{2}}\varepsilon_3
+\textstyle{\frac{5}{6}}\varepsilon_4.
$$
It is straightforward to check that
$\langle\lambda'+\rho,\gamma_i^\vee\rangle\in\Z^{>0}$ for $1\le i\le 4$ and
$\langle\lambda'+\rho,\beta^\vee\rangle\not\in\Z^{>0}$ for all $\beta\in\Phi^+$ that can be expressed as a linear combination of  $\alpha_1,\alpha_3,\alpha_4$.
As $\lambda'+\rho$ and $-\lambda-\rho$ have the same image in $P(\Phi)/\frac{1}{3}P(\Phi)$, we also have that $\Phi_\lambda=\Phi_{\lambda'}$.
Arguing as before we now deduce that $\lambda'+\rho$ satisfies all Losev's conditions. As a consequence,
$I(-\frac{4}{3}\varpi_1-\frac{4}{3}\varpi_2+\frac{2}{3}\varpi_3-\frac{1}{3}\varpi_4)$
is another multiplicity-free
primitive ideal of $\mathcal{X}_{\O(e)}$. Using the  above expressions for $\lambda+\rho$ and $\lambda'+\rho$ in the $\varepsilon$-basis, it is easy to check that $(\lambda+\rho\,|\,\lambda+\rho)\ne
(\lambda'+\rho\,|\,\lambda'+\rho)$. Therefore, $I(\lambda)\ne I(\lambda')$.
\subsection{Type $({\sf G_2,
A_1})$}\label{5.6}
In the next two subsections our root system $\Phi$ has type ${\sf G_2}$ and $\Pi=\{\alpha_1,\alpha_2\}$ where $\alpha_1$
is a short root. In the present case, we can chose $e$ and $\tau$ such that $e=e_{\alpha_2}$ and $\tau=(-1,2)$. The orbit $\O(e)$ is non-special and $\dim\g_e=8$. Since $\dim\g-\dim\g_2=|\Phi|-6$, it is reasonable to assume that $\Phi_\lambda$ has type ${\sf A_2}$ and hence consists of all long roots of $\Phi^\vee$.
The number of $0$-roots and $1$-roots is $(8-2)/2=3$ and the roots are $2\alpha_1+\alpha_2$, $\alpha_1+\alpha_2$ and $3\alpha_1+2\alpha_2$. The total $\alpha_1$-contribution of all roots is
$(2+1+3)/2=3$.

In view of the above we may assume that $\lambda+\rho=a_1\varpi_1+a_2\varpi_2$ where $a_1,a_2\in\frac{1}{3}\Z$. Since the centre of the standard Levi subalgebra $\l$ with $[\l,\l]$ generated by $e_{\pm \alpha_2}$ is spanned by $2\alpha_1^\vee+3\alpha_2^\vee$,  Condition~(C) for $\lambda+\rho$ reads
$$2a_1+3a_2=3.$$ Setting $(a_1,a_2)=(1,\frac{1}{3})$ we obtain a nice solution to this linear equation leading to
$$\lambda+\rho=\varpi_1+\textstyle{\frac{1}{3}}\varpi_2.$$
Using [\cite{Bo}, Planche~IX] it is straightforward to check that $\langle\lambda+\rho,\gamma^\vee\rangle\in\Z$ if and only if $\gamma$ is a short root. Then $\Phi_\lambda$ consists of all long roots of $\Phi^\vee$ as desired. Since $\langle
\lambda+\rho,\gamma^\vee\rangle\in\Z^{>0}$ for all
positive short roots $\gamma\in\Phi$ and
$\langle\lambda+\rho,\alpha_2^\vee\rangle\not\in\Z^+$, we argue as in the previous cases to conclude that
$I(\lambda)=I(-\frac{2}{3}\varpi_2)$ is a multiplicity-free primitive ideal of $\mathcal{X}_{\O(e)}$. Such an ideal is unique
because $\g_e=[\g_e,\g_e]$. So  it should not come as a surprise that $I(\lambda)$ coincides with the Joseph ideal of $U(\g)$.
\subsection{Type $({\sf G_2,
\widetilde{A}_1})$}\label{5.7} We have finally come to our last case where $e=e_{\alpha_1}$ (recall that $\alpha_1$ is the simple short root in $\Pi$). Then $\g_e=\mathbb{C}e\oplus[\g_e,\g_e]$ and hence we may expect  $U(\g,e)$ to afford at least $2$ one-dimensional representations. In fact, [\cite{GRU}] shows that there exactly two of them.
The orbit $\O(e)$ is non-special and $\dim\g_e=6$. Since $\dim\O(e)= 14-6=8=|\Phi|-4$, we should seek $\lambda+\rho=a_1\varpi_1+a_2\varpi_2$ such that $\Phi_\lambda$ has type ${\sf A_1+\widetilde{A}_1}$.
For that to hold, $a_1$ and $a_2$ must be half-integers. We may also assume that $\tau=(2,-3)$.
The number of $0$-roots and $1$-roots
is $(6-2)/2=2$, and the roots are $3\alpha_1+2\alpha_2$ and $2\alpha_1+\alpha_2$, respectively. The total $\alpha_2$-contribution of
these roots is $(2+1)/2=\frac{3}{2}$. Since
 the centre of the standard Levi subalgebra $\l$ with $[\l,\l]$ generated by $e_{\pm \alpha_1}$ is spanned by $\alpha_1^\vee+2\alpha_2^\vee$,  Condition~(C) for $\lambda+\rho$ reads
$$a_1+2a_2\,=\,\textstyle{\frac{3}{2}}.$$
Clearly,
$(a_1,a_2)=(\frac{1}{2},\frac{1}{2})$ is a nice solution to this linear equation, and it leads to
$$\lambda+\rho\,=\textstyle{\frac{1}{2}}\rho\,=\,
-\textstyle{\frac{1}{2}}\varepsilon_1-\varepsilon_2+\textstyle{\frac{3}{2}}\varepsilon_3.$$
Using [\cite{Bo}, Planche~IX] it is easy to check that
$\Phi_\lambda^\vee\cap\Phi^+ =\,\{\gamma_1,\gamma_2\}$ where
$\gamma_1=\varepsilon_3-\varepsilon_1$ and $\gamma_2=-2\varepsilon_2+\varepsilon_1+
\varepsilon_3$.
Since $\langle\lambda+\rho,\alpha_1^\vee\rangle\not\in
\Z^{>0}$ and $\langle\lambda+\rho,\gamma_i^\vee\rangle
\in\Z^{>0}$ for  $i=1,2$, we now can argue as before to conclude that $\lambda+\rho$ satisfies all Losev's conditions.

In order to find another highest weight, $\lambda'+\rho=b_1\varpi_1+b_2\varpi_2$, leading to a multiplicity-free primitive ideal of $\mathcal{X}_{\O(e)}$ we shall once again assume that $\Phi_\lambda=\Phi_{\lambda'}$. Then $b_2\in\frac{1}{2}\Z$ and $b_1=\frac{3}{2}-2b_2$ thanks to Condition~(C). Since $\varpi_1=2\alpha_1+\alpha_2=\varepsilon_3-
\varepsilon_2$ and
$\varpi_2=\widetilde{\alpha}=3\alpha_1+2\alpha_2
=2\varepsilon_3-\varepsilon_1-\varepsilon_2$, we obtain
\begin{eqnarray*}
\lambda'+\rho&=&(\textstyle{\frac{3}{2}}-2b_2)(\varepsilon_3-\varepsilon_2)+b_2(2\varepsilon_3
-\varepsilon_1-\varepsilon_2)\\
&=&\textstyle{\frac{3}{2}}\varepsilon_3+(b_2-\textstyle{\frac{3}{2}})\varepsilon_2-b_2\varepsilon_1.
\end{eqnarray*}
As a consequence,
\begin{eqnarray*}
\langle\lambda'+\rho,\gamma_1^\vee\rangle&=&2(\lambda'+\rho\,|\,\varepsilon_3-\varepsilon_1)\,=\,\textstyle{\frac{3}{2}}+b_2\in\Z^{>0}\\
\langle\lambda'+\rho,\gamma_2^\vee\rangle&=&
\textstyle{\frac{2}{6}}(\lambda'+\rho\,|\,-2\varepsilon_2+\varepsilon_1+\varepsilon_3)
\,=\,
\textstyle{\frac{3}{2}}-b_2\in\Z^{>0}.
\end{eqnarray*}
As $b_2\in\frac{1}{2}\Z$, we have only two solutions, namely, $b_2=\pm\frac{1}{2}$. Setting $b_2=\frac{1}{2}$ yields $\lambda+\rho=\frac{1}{2}\rho$ which we have already met, whilst setting $b=-\frac{1}{2}$ gives $\lambda'+\rho=\frac{5}{2}\varpi_1-\frac{1}{2}\varpi_2$ which is what we are looking for. As $\langle\lambda'+\rho,\alpha_1^\vee\rangle\not\in
\Z^{>0}$ and $\langle\lambda'+\rho,\gamma_i^\vee\rangle
\in\Z^{>0}$ for  $i=1,2$, we see that $I(\lambda)=I(\frac{1}{2}\rho)$ and  $I(\lambda')=I(\frac{3}{2}\varpi_1-\frac{3}{2}\varpi_2)$ are  multiplicity free primitive ideals of $\mathcal{X}_{\O(e)}$. Using the above expressions for $\lambda+\rho$ and $\lambda'+\rho$ in the $\varepsilon$-basis it is easy to check that $(\lambda+\rho\,|\lambda+\rho)\ne (\lambda+\rho\,|\lambda'+\rho)$. Hence $I(\lambda)\ne I(\lambda')$.
\begin{rem}
The primitive ideals $I(\lambda)$ and $I(\lambda')$ from Subsection~\ref{5.7} were first discovered by Joseph [\cite{Jo2}] who proved, by using his Goldie rank polynomials, that they are multiplicity free and exhaust the
whole set of completely prime primitive ideals in $\mathcal{X}_{\O(e)}$. He also claimed 
without a proof that
the corresponding graded ideals ${\rm gr}(I(\lambda))$ and
 ${\rm gr}(I(\lambda'))$ of the symmetric algebra $S(\g)$ are prime. This is, of course, a remarkable property possessed only by some very special primitive ideals. However, Joseph's claim was later refuted by Vogan who showed in [\cite{Vo}]
 that the ideal ${\rm gr}(I(\lambda))$ cannot be prime and {\it conjectured} the primeness of 
 ${\rm gr}(I(\lambda'))$;
 see [\cite{Vo}, Conjecture~5.6]. It seems that this conjecture is still open. 
 The complete primeness of 
 $I(\lambda)$ was later reproved by Levasseur and Smith [\cite{LS}] who used a completely different method involving the Joseph ideal in type ${\sf B_3}$.

Vogan also proved in [\cite{Vo}] that the primitive ideals of $U(\g)$ arising as  left (or right) annihilators of Harish-Chandra bimodules associated with irreducible unitary representations of the complex Lie group $G$ are always completely prime. His argument is quite short and relies on a 
ring-theoretic lemma due to Kaplansky. 
To add more spice to this story we mention that only one of the completely prime ideals $I(\lambda)$ and $I(\lambda')$ 
can be attached (in a meaningful way) to an irreducible unitary representation of the complex Lie group of type ${\sf G_2}$. This follows from
Duflo's classification of irreducible unitary representations of the complex Lie groups of rank $2$; see [\cite{Du2}].
\end{rem}

Suppose $e$ is a rigid nilpotent element in a finite dimensional simple Lie algebra $\g$. The above remark brings up the following important and challenging problems:
\begin{itemize}
\item[1.] Describe all primitive ideals $I\in\mathcal{X}_{\O(e)}$ for which ${\rm gr}(I)$ is a prime ideal of $S(\g)$ (such ideals are necessarily multiplicity free). Is it true that the subset of such ideals in $\mathcal{X}_{\O(e)}$
is always non-empty?

\medskip

\item[2.] Describe all multiplicity-free primitive ideals $I\in\mathcal{X}_{\O(e)}$ which arise as left (or right) annihilators of Harish-Chandra bimodules associated with irreducible unitary representations of the complex Lie group $G$.  Is it true that the subset of such ideals in $\mathcal{X}_{\O(e)}$
is always non-empty?

\medskip

\item[3.] Describe the rigid orbits $\O(e)$ for which {\it all}
completely prime ideals in $\mathcal{X}_{\O(e)}$ are multiplicity free. Is it true that all rigid orbits in $\g$ have this property?

\end{itemize}
\begin{rem}
Levasseur and Smith showed in [\cite{LS}] that the orbit from Subsection~\ref{5.7} has a non-normal Zariski closure in $\g$. Comparing the results of this paper with results
of Kraft--Procesi, Broer and Sommers on orbits with normal closures
one observes that for any rigid nilpotent element $e$ in a finite dimensional simple Lie algebra $\g$ of type other than ${\sf E_7}$ and ${\sf E_8}$  the affine variety $\overline{\O(e)}\subset\g$ is normal if and only if the algebra $U(\g,e)$ affords a unique one-dimensional representation. This statement is likely to hold for all simple Lie algebras, but verifying it for types ${\sf E_7}$ and ${\sf E_8}$ 
would require more information on nilpotent orbits with normal Zariski closures than is currently available in the literature. A possible link between 
normality and the number of one-dimensional representations
 of $U(\g,e)$ for $e$ rigid was suggested to the author by Thierry Levasseur.
\end{rem}
\begin{rem}
A well-known conjecture of
McGovern states that any non-maximal completely prime primitive ideal of $U(\g)$ is parabolically induced from a completely prime primitive ideal of $U(\l)$ for some proper Levi subalgebra $\l$ of $\g$; see [\cite{Mc}, Conjecture~8.3].
One can see by inspection that the primitive ideals $I$ constructed in this paper 
have the form $I={\rm Ann}_{U(\g)}\,L(\lambda)$ for some $\lambda\in\Lambda^+$. Therefore, all of them are maximal in $U(\g)$. 
In conjunction with [\cite{PT}, Theorem~5] and some results on the number of one-dimensional $U(\g,e)$-modules proved in [\cite{P2}] this shows that McGovern's conjecture holds true for all
multiplicity-free primitive ideals of $U(\g)$
whose associated varieties are not listed in [\cite{PT}, Table~0].  That list contains just one induced orbit
in types ${\sf E_6}$, ${\sf E_7}$, ${\sf F_4}$ and four induced orbits in type ${\sf E_8}$.
\end{rem}

In the table that follows we collect some  information on rigid orbits in exceptional Lie algebras. One should keep in mind that the expressions for $2\rho_e=\sum_{i=1}^\ell m_i\alpha_i$ in the sixth column of the table are tied up with
our choice of pinning for $e$. Here our convention is that
$$2\rho_e\,=\,\textstyle{{m_1\atop{}}{m_3\atop{}}{m_4\atop m_2}{m_5\atop{}}{\,\cdots\,\,\atop{}}{m_\ell\atop{}}},\quad\ 2\rho_e\,=\,m_1\,m_2\,m_3\,m_4,\quad\  2\rho_e\,=\,m_1\,m_2$$ in types ${\sf E}_\ell$, ${\sf F_4}$ and ${\sf G_2}$, respectively. The seventh column of the table contains the values of $\frac{1}{2}h^\vee-\rho$ for all nonzero {\it special} rigid orbits in exceptional Lie algebras. We note that the Dynkin labels of $\O(e)$ for $e$ rigid and of $\O(e^\vee)\subset\g^\vee$ for $e$ special rigid are listed in [\cite{Lo2}, pp.~4861, 4862]. The dimensions of the centralisers of nilpotent elements $e$ in exceptional Lie algebras $\g$
and the isomorphism classes of the component groups $\Gamma=G_e/G_e^\circ$ can be found in many places; see [\cite{C}, pp.~401--407], for example. Finally, in [\cite{dG}] one can find the values of $\dim\big(\g_e/[\g_e,\g_e]\big)$ for all such $e$.

\vfill

\pagebreak

\begin{table}
{\sc Some data for nonzero rigid orbits in exceptional Lie algebras}

\bigskip

\medskip

\begin{tabular}{|c|c|c|c|c|c|c|c|c|c|}
\hline\hline Type &Dynkin label & $\Gamma$ &$|\Phi^+(0)|$&
$|\Phi^+(1)|$ &$2\rho_e$ &  $\frac{1}{2}h^\vee-\rho$ & $|\mathcal{E}|$  \\ \hline
${\sf E_8}$& ${\sf A_1}$ & $1$ & 63& 28 &$\scriptstyle{{72\atop{}}{142\atop{}}{224\atop 106}{172\atop{}}{132\atop{}}{90\atop{}}{46\atop{}}}$ & $-\varpi_4$ &1 \\
${\sf E_8}$&${\sf 2A_1}$ & $1$ & 42 & 32 
&$\scriptstyle{{60\atop{}}{118\atop{}}{182\atop 88}{142\atop{}}{116\atop{}}{74\atop{}}{38\atop{}}}$& $-\varpi_4-\varpi_6$ &1 \\
${\sf E_8}$&${\sf 3A_1}$ & $1$ &  37 & 27 &$\scriptstyle{{56\atop{}}{103\atop{}}{157\atop 77}{125\atop{}}{100\atop{}}{66\atop{}}{34\atop{}}}$&  &1 \\
${\sf E_8}$&${\sf 4A_1}$ & $1$ & 28& 28 &$\scriptstyle{{46\atop{}}{94\atop{}}{135\atop 71}{113\atop{}}{84\atop{}}{60\atop{}}{29\atop{}}}$&  &1\\
${\sf E_8}$& ${\sf A_2+A_1}$ & $S_2$ & 30 & 22 &$\scriptstyle{{44\atop{}}{86\atop{}}{130\atop 65}{104\atop{}}{83\atop{}}{54\atop{}}{28\atop{}}}$& 
$-\textstyle{\sum}_{i=2,3,5,7}\,\varpi_i$ &1\\
${\sf E_8}$&${\sf A_2+2A_1}$ & $1$ & 23& 24 &$\scriptstyle{{40\atop{}}{80\atop{}}{116\atop 60}{98\atop{}}{76\atop{}}{50\atop{}}{26\atop{}}}$&  $-\textstyle{\sum}_{i=2,3,5,6}\,\varpi_i$&1
\\
${\sf E_8}$&${\sf A_2+3A_1}$ & $1$ & 22 & 21 &$\scriptstyle{{38\atop{}}{73\atop{}}{105\atop 54}{87\atop{}}{66\atop{}}{46\atop{}}{23\atop{}}}$&  &1
\\
${\sf E_8}$&${\sf 2A_2+A_1}$ & $1$ & 21 & 18 &$\scriptstyle{{34\atop{}}{68\atop{}}{98\atop 50}{80\atop{}}{63\atop{}}{42\atop{}}{22\atop{}}}$&  &1\\
${\sf E_8}$&${\sf A_3+A_1}$ & $1$ & 21 & 17 &$\scriptstyle{{32\atop{}}{68\atop{}}{97\atop 49}{79\atop{}}{64\atop{}}{42\atop{}}{22\atop{}}}$&  &2
\\
${\sf E_8}$&${\sf 2A_2+2A_1}$ & $1$ & 16& 20&$\scriptstyle{{33\atop{}}{63\atop{}}{90\atop 46}{76\atop{}}{56\atop{}}{38\atop{}}{21\atop{}}}$ &  &1
\\
${\sf E_8}$&${\sf A_3+2A_1}$ & $1$ & 16 & 18&$\scriptstyle{{30\atop{}}{60\atop{}}{87\atop 44}{72\atop{}}{56\atop{}}{37\atop{}}{19\atop{}}}$ &  &1 \\
${\sf E_8}$&${\sf D_4(a_1)+A_1}$ & $S_3$ & 16 & 16 &$\scriptstyle{{28\atop{}}{56\atop{}}{84\atop 42}{68\atop{}}{52\atop{}}{37\atop{}}{18\atop{}}}$&  $-\textstyle{\sum}_{i\ne 4,7}\,\varpi_i$&1 \\
${\sf E_8}$&${\sf A_3+A_2+A_1}$ & $1$ & 14& 15 &$\scriptstyle{{26\atop{}}{51\atop{}}{75\atop 38}{62\atop{}}{48\atop{}}{33\atop{}}{17\atop{}}}$ &  &1
\\
${\sf E_8}$&${\sf 2A_3}$ & $1$ & 12 & 14 &$\scriptstyle{{23\atop{}}{47\atop{}}{67\atop 34}{55\atop{}}{42\atop{}}{29\atop{}}{14\atop{}}}$&  &1
\\
${\sf E_8}$&${\sf A_4+A_3}$ & $1$ & 8 & 12 &$\scriptstyle{{18\atop{}}{36\atop{}}{54\atop 28}{44\atop{}}{34\atop{}}{23\atop{}}{11\atop{}}}$&  &1 \\
${\sf E_8}$&${\sf A_5+A_1}$ & $1$ & 8 & 11 &$\scriptstyle{{19\atop{}}{36\atop{}}{54\atop 27}{43\atop{}}{33\atop{}}{21\atop{}}{11\atop{}}}$&  &$\ge 2$ \\
${\sf E_8}$&${\sf D_5(a_1)+A_2}$ & $1$ & 8& 11&$\scriptstyle{{18\atop{}}{34\atop{}}{52\atop 26}{42\atop{}}{33\atop{}}{23\atop{}}{11\atop{}}}$ &  &$\ge 2$
\\
${\sf E_7}$&${\sf A_1}$ & $1$ & 30 & 16&$\scriptstyle{{26\atop{}}{50\atop{}}{80\atop 37}{57\atop{}}{40\atop{}}{21\atop{}}}$ & $-\varpi_4$ &1 \\
${\sf E_7}$&${\sf 2A_1}$ & $1$ & 21 & 16 &$\scriptstyle{{22\atop{}}{42\atop{}}{64\atop 31}{47\atop{}}{36\atop{}}{17\atop{}}}$& $-\varpi_4-\varpi_6$ &1 \\
${\sf E_7}$&${\sf (3A_1)'}$ & $1$ & 16 & 15 &$\scriptstyle{{20\atop{}}{35\atop{}}{53\atop 26}{40\atop{}}{30\atop{}}{15\atop{}}}$&  &1 \\
${\sf E_7}$&${\sf 4A_1}$ & $1$ & 15 & 13 &$\scriptstyle{{17\atop{}}{36\atop{}}{48\atop 25}{38\atop{}}{26\atop{}}{14\atop{}}}$&  &1 \\
${\sf E_7}$&${\sf A_2+2A_1}$ & $1$ & 10 & 12 &$\scriptstyle{{14\atop{}}{27\atop{}}{38\atop 19}{30\atop{}}{22\atop{}}{11\atop{}}}$& $-\textstyle{\sum}_{i\ne 1,4,7}\,\varpi_i$ &1 \\
${\sf E_7}$&${\sf 2A_2+A_1}$ & $1$ & 8& 10 &$\scriptstyle{{11\atop{}}{23\atop{}}{32\atop 17}{25\atop{}}{18\atop{}}{9\atop{}}}$&  &1 \\
${\sf E_7}$&${\sf (A_3+A_1)'}$ & $1$ & 8 & 9&$\scriptstyle{{10\atop{}}{22\atop{}}{31\atop 16}{24\atop{}}{18\atop{}}{9\atop{}}}$ &  &2 \\
${\sf E_6}$&${\sf A_1}$ & $1$ & 15 & 10&$\scriptstyle{{12\atop{}}{22\atop{}}{35\atop 16}{22\atop{}}{12\atop{}}}$ & $-\varpi_4$ &1 \\
${\sf E_6}$&${\sf 3A_1}$ & $1$ & 7 & 9 &$\scriptstyle{{9\atop{}}{15\atop{}}{22\atop 11}{15\atop{}}{9\atop{}}}$&  &1 \\
${\sf E_6}$&${\sf 2A_2+A_1}$ & $1$ & 3 & 6&$\scriptstyle{{5\atop{}}{10\atop{}}{14\atop 8}{10\atop{}}{5\atop{}}}$ &  &1 \\
${\sf F_4}$&${\sf A_1}$ & $1$ & 9 & 7& $\scriptscriptstyle{14\,\,21\,\,30\,\,16}$&  &1 \\
${\sf F_4}$&${\sf \widetilde{A}_1}$ & $S_2$ & 9 & 4 & $\scriptscriptstyle{10\,\,18\,\,27\,\,14}$& $-\varpi_2$ &1 \\
${\sf F_4}$&${\sf A_1+\widetilde{A}_1}$ & $1$ & 4 & 6& $\scriptscriptstyle{8\,\,15\,\,20\,\,12}$ & 
$-\varpi_2-\varpi_4$ &1 \\
${\sf F_4}$&${\sf A_2+\widetilde{A}_1}$ & $1$ & 4 & 3 & $\scriptscriptstyle{5\,\,11\,\,15\,\,8}$&  &1 \\
${\sf F_4}$&${\sf \widetilde{A}_2+A_1}$ & $1$ & 2 & 4 & $\scriptscriptstyle{6\,\,10\,\,14\,\,7}$&  &2 \\
${\sf G_2}$&${\sf A_1}$ & $1$ & 2 & 1  & $\scriptscriptstyle{6\,\,4}$&  &1 \\
${\sf G_2}$&${\sf \widetilde{A}_1}$ & $1$ & 1 & 1 & $\scriptscriptstyle{5\,\,3}$&  &2 \\
\hline
\end{tabular}
\end{table}

\end{document}